\documentclass[11pt,letterpaper]{amsart}
\usepackage{graphicx}
\usepackage{xy} \xyoption{all}
\usepackage[utf8]{inputenc}
\usepackage{pgf,tikz,pgfplots}
%\pgfplotsset{compat=1.14}
\usepackage{mathrsfs}
\usetikzlibrary{arrows}
\usepackage{todonotes}
\usepackage{amssymb}
\usepackage{geometry,comment}
\usepackage{hyperref}
\pagestyle{plain}
\newtheorem{lemma}{Lemma}[section]
\newtheorem{corollary}[lemma]{Corollary}
\newtheorem{theorem}[lemma]{Theorem}
\newtheorem{proposition}[lemma]{Proposition}

\theoremstyle{definition}
\newtheorem{definition}{Definition}[section]

\newtheorem{remark}[definition]{Remark}

\newtheorem{example}[definition]{Example}

\newcommand{\Q}{\mathbb{Q}}
\newcommand{\Z}{\mathbb{Z}}
\newcommand{\N}{\mathbb{N}}

\newcommand{\inlim}{\mathop{\underrightarrow{\mathrm{lim}}}}

\DeclareMathOperator{\End}{End}

\DeclareMathOperator{\ann}{ann}
\DeclareMathOperator{\coker}{coker}

\DeclareMathOperator{\Tor}{Tor}

\DeclareMathOperator{\Hom}{Hom}

\def\a{\alpha}
\def\b{\beta}

\def\r{\rho}

\def\x{\xi}
\def\e{\epsilon}

\def\p{\pi}
\def\l{\lambda}
\def\La{\Lambda}
\def\L^*{\Lambda^*}
\def\g{\gamma}
\def\G{\Gamma} 
\def\d{\delta}
\def\o{\omega}
\def\m{\mu}

\def\t{\theta}

\begin{document}
\title %{Links, localizations and irreducible factorizations of free polynomials}
{The Fox algebra, localization and factorizations of free polynomials}
\author{P. N. \'Anh}
\address{Hun-Ren R\'enyi Institute of Mathematics, 1364 Budapest, Pf. 127 Hungary} \email{anh@renyi.hu}

\thanks{The author was partially supported by National Research, Development and Innovation Office NKHIH K138828 and  K132951, by both Vietnam Institute for Advanced Study in Mathematics (VIASM) and  Vietnamese Institute of Mathematics.}

\makeatletter
\@namedef{subjclassname@2020}{\textup{2020} Mathematics Subject Classification}
\makeatother
\subjclass[2020]{Primary 16S88, 16S90; secondary 16G20, 16P50.}
\keywords{free groups, free algebras, Fox derivatives, irreducibile polynomials, atomic factorizations, Leavitt algebras, localizations}
\date{\today}
\maketitle
\begin{abstract}  
We discuss interrelations between: Cohn localizations of full square matrices; a Leavitt localization of a row; and the Jacobson quasi-inverses of quasi-regular elements. The latter Jacobson localizations appear naturally and easily
in rings which are Hausdorff topological spaces with respect to an ideal topology, pointing out also a connection to specific Gabriel localizations. As a main result and an application we develop a factorization theory for free polynomials with non-zero augmentation over a field. This is inspired by a factorization theory given in the joint work with Mantese for polynomials with constant
in non-commutative variables. The basic tool of this research is the localization of a free group algebra by a row of free generators, that is, the Fox algebra of a free group. Hence link modules, that is, Sato modules become naturally modules over Fox algebras, proving a uniqueness and inducing a bijective correspondence between factorizations and composition chains. This is a very first step in a structure theory of matrices over either free algebras or group algebras of free groups with coefficients in a field, or more generally in a principal ideal domain.
\end{abstract}
\section{Introduction}
\label{int} 

In an informative, interesting work \cite{fa2} Farber and Vogel used topological ideas and methods on link modules to show that the rational closure (inside the power series ring) of the group algebra of a free group of a finite rank is a universal Cohn localization by the set of matrices $1+M$, where $M$ runs over all square matrices with entries from the augmentation ideal. In this approach, although it was not explicitly stated, they used heavily the fact that the row of free generators can be inverted easily using Fox derivatives, making possibly a unified module theoretic treatment of link modules. In that way, link modules are also modules over the algebra of Fox derivatives, consequently link modules are naturally modules over the associated Leavitt localization.
On the other hand, in computer science rational closures were introduced by Sch\"utzenberger and Nivat (cf. the book \cite{ss} of Solomaa and Soittola) for non-commutative power series algebras over semirings where power series without constant are clearly quasi-regular and have quasi-inverses in the Jacobson sense. In this work we explore these aspects to a larger class of rings and make their interrelations transparent.

First, if $\jmath\colon R\rightarrow R_\Sigma$ is a universal Cohn localization of a ring $R$ by inverting a set $\Sigma$ of (full) square matrices over $R$ with $I=\ker \jmath\lhd R$, then any square matrix $1+M$ is clearly invertible over $R_\Sigma$ if entries of $M$ are in $I$. Therefore inverting a set of square matrices $1+M$ whose entries of $M$ are in an arbitary (but fixed) ideal $I\lhd R$, is a starting step to study Cohn localizations. The cited work \cite{fa2} of Farber and Vogel fits in this approach. In particular, making $1+r$ invertible for all $r\in I$ shows that $I$ maps into the Jacobson radical of $R_\Sigma$ and so any matrix $1+M$ is also invertible if entries of $M$ are in $I$. This explains the use of a topological completion when a ring is a Hausdorff topological space with respect to the ideal topology. Secondly, inverting a row or a column is obviously a first step in a localization of not necessarily square matrices.
This points out the natural appearance of Leavitt localizations. In the joint work \cite{am} with Mantese  we used a Leavitt localization to study factorizations of non-commutative polynomials extending the classical factorization theory of commutative polynomials in one variable. This work leads to factorizations of \emph{free polynomials}, in the other words, \emph{generalized Laurent polynomials}, (that is, elements of the free group algebras) via a Leavitt localization, that is, link modules. This  is a beginning step to deal with factorizations of free non-commutative polynomials with coefficients in principal ideal domains and more generally of square matrices whose entries are free polynomials.

Our work is organized as follows. In Section \ref{prif} we fix the notation, convention and verify precisely some well-known results for experts (Proposition \ref{0int} and Corollary \ref{folk1}) whose proofs I cannot find in the literature. Section \ref{ratclo0} deals with rational closures of rings with respect to a transfinitely nilpotent ideal $I$. By passing to the completion $\hat{R}$ of $R$ with respect to the $I$-adic ideal topology, a \emph{rational closures} is defined as a smallest subring of $\hat{R}$ containing $R$ which is closed under taking quasi-inverses of elements contained in the closure $\hat{I}$ of $I$. The main result is Proposition \ref{rat1}, which shows that the classical theory for power series holds also in this generalized circumstance.
Section \ref{mainvert} deals with basic properties of a Cohn localization with respect to the set $1+M$ where $M$ runs over square matrices with entries from an arbitrary but fixed ideal $I$. The main result of this section is Theorem \ref{cohnloc1}, where we give a purely algebraic proof of the main result of Farber and Vogel \cite{fa2}. 
Section \ref{lelo} is devoted to Leavitt localizations. In Theorem \ref{imath} we describe a Leavitt localization  with respect to a row. Then we compute the module type of the Leavitt localization of both the power series algebra and the rational closure. It is shown furthermore that these localizations are simple algebras. (However, we do not know whether the Leavitt localization of the group algebra of the free group of rank $\geq 2$ is simple.) Also, it is worth noting that although the structure of a Leavitt localization by a proper rectangular matrix is still mysterious the Leavitt localization by a row, together with its twin the Cunzt algebra \cite{cu} is more or less well-understood nowadays, and has a wide range of applications. The last two sections, Section \ref{moho1} and Section \ref{fac} form the longest, main part of this work where we present a systematic approach of torsionfree Sato modules together with applications to free polynomials initiated by Farber \cite{fa1} and \cite{fa2}. For a factorization theory of free comonic polynomials our treatment does not assume and indeed is independent on Cohn's theory of free ideal rings.
In Section \ref{moho1} we combine explicitly topological ideas of Farber and Vogel \cite{fa2} with Leavitt localizations. We define a generalization of link modules, also called Sato modules, to
modules over such algebras which have a transfinitely nilpotent ideal free of finite rank as right module with trivial left annihilator. Basic properties of Sato modules are presented. As a main result of this section, Theorem \ref{L5} generalizes the main result of Farber and Vogel \cite{fa2} to skew free (group) algebras over an associative algebra where the algebra is not necessarily a principal ideal domain. The most important part of the current work is the last section in which we present the factorization theory of the group algebra of a free group of rank at least 2. The approach is inspired by the similar theory developed in the joint work with Mantese \cite{am}. The basic tool is provided by the so-called Fox algebra of Fox derivations. The actions of Fox derivations allow to assign a finite dimensional module over a free algebra $D_{2n}$ of rank $2n$ to each free comonic polynomial $\g$ of the group algebra of the free group of rank $n\geq 2$. The main result, Theorem \ref{f9}, says that any two polynomials have a uniquely determined (generalized) greatest common divisor which is a free comonic polynomial. As an immediate consequence, Theorem \ref{goal1} implies that a free comonic polynomial $\pi$ is irreducible iff the associated factor module, i.e., the corresponding Sato module $\coker \g$ is simple over the Leavitt localization iff the L\"owy submodule ${_\pi}U$ of $\coker \g$ over  $D_{2n}$ is simple. Henceforth every free comonic polynomial $\g$ defines a Sato module of finite length together  with its finite-dimensional L\"owy submodule ${_\g}U$ over $D_{2n}$ and so their composition chains yield irreducible factorizations. Consequently, an irreducible polynomial contained in the augmentation ideal defines the trivial Sato module.

\section{Preliminaries on free and formal power series rings in non-commuting variables}
\label{prif}

Throughout this work all rings have the nonzero multiplicative identity and are indeed $\mathfrak k$-algebras where $\mathfrak k$ is an arbitrary commutative ring and modules are unitary. Hence rings are $\Z$-algebras and modules are at the same time a $\mathfrak k$-modules, too. In particular, the most important cases are in order those  $\mathfrak k$-algebras $A, \Lambda$ and $\Gamma$ of free monoids, free groups or formal power series in finitely many (non-commuting) generators where $\mathfrak k$ is either a proper principal ideal domain (e.g.  $\Z$) or a field. We call such algebras \emph{free algebras}. For undefined notions and basic results we refer to classic books \cite{c2}, \cite{c4} of Cohn, and \cite{s1} of Stenstr\"om.

A free group of rank $n$ on a set $\{t_1, \dots, t_n\}$ is  denoted by $F=F_n$. An element $w$ of \emph{length} $|w|=m$ of $F$ is a \emph{reduced word} $w=w_1\cdots w_m$ of $m$ letters $w_1, \dots w_m\in \{t_1, \dots, t_n, t^{-1}_1, \dots, t^{-1}_n\}$ such that no $w_i$ is an inverse of either $w_{i-1}$ (if $i\geq 2$) or $w_{i+1}$ (if $i\leq m-1$). (We define $|1|=0$.) For each $0\leq l \leq |w|$, subwords $\eta_w(l)=w_1\cdots w_l$ and $\theta_w(l)=w_{l+1}\cdots w_m$ are called the \emph{head} of length $l$ and the tail of \emph{colength} $l$ of $w$, respectively. In particular, $\eta_w(0)=1=\theta_w(|w|)$. 
\begin{definition}\label{def1}
The group algebra $\La=\La_n={\mathfrak k}F_n$ of the free group $F_n=F$ of rank $n$ over a commutative ring $\mathfrak k$, shortly, the \emph{free group algebra} is a free $\mathfrak k$-module on $F$ with a ring structure induced by $F$. Thus $\La$ is a $\mathfrak k$-algebra generated by $2n$ elements 
$t_1, \dots t_n; s_1, \dots s_n$ subject to $t_is_i=s_it_i=1 \,\,(i\in \{1,\dots ,n\})$ by $s_i=t^{-1}_i$. According to the terminology of \cite{cf1} elements of $\Lambda$ are called \emph{free polynomials}.
\end {definition}
$\La$ contains clearly
\begin{definition}\label{def2} A \emph{free associative algebra}, i.e., a \emph{non-commutative polynomial algebra} $A={\mathfrak k}\langle t_1, \dots, t_n\rangle$ on a set $\{t_1, \dots, t_n\}$ of variables over a commutative ring $\mathfrak k$ is a free $\mathfrak k$-module on the free monoid on $\{t_1, \dots, t_n\}$ with a ring structure induced by the monoid multiplication. By putting $x_i=t_i-1$ for each index $i\in \{1, \dots,n\}$ one obtains that $A$ is also a free algebra ${\mathfrak k}\langle x_1,\dots, x_n\rangle$ of rank $n$ in free variables $x_1, \dots, x_n$. 

The free monoid on $X=\{x_1, \cdots, x_n\}$ is denoted by $M_X$. 
\end{definition}
\vskip 0.25cm
According to our convention above one can speak also about the length $|w|$, heads $\eta_w(l)$ and tails $\theta_w(l)$ of a word $w\in M_X$. $A$ is contained in
\begin{definition}\label{def3}
A \emph{formal power series algebra} $\G={\mathfrak k}\langle \langle x_1, \dots, x_n\rangle\rangle$ in non-commuting variables $x_1, \dots, x_n$ with coefficents in $\mathfrak k$ is an algebra of functions $\g$ from $M_X$ into $\mathfrak k$ with respect to the componentwise addition and the convolution of functions.
\end{definition}
In particular, the free group algebra $\Lambda$ maps in $\G$ via the Magnus assignment $t_i\mapsto x_i+1$. It is well-known and we will see precisely that this mapping is an embedding. Results on formal power series rings with coefficients in a field can be found in \cite[Sections 2.9 and 5.9]{c4}. The cited reference contains a result that both $\G$ and the algebraic closure $A^{\rm alg}$ of $A$ in $\G$ are semifirs which are not firs. Recall that fir is an abbreviation of \emph{free ideal ring}, i.e., a ring whose onesided ideals are free modules of a unique rank and a ring is \emph{semifir} if its finitely generated onesided ideals are free modules of a unique rank.

For ease of notation and calculation it is convenient to use the usual standard convention on multiple indices, polynomials and power series. A \emph{multi-index} is a sequence $i=(i_1, \dots, i_l)$ of integers. For an integer $m$ $mi$ is the sequence $mi =(m, i_1,\dots, i_l)$ . For a countable set  $\{y_1, \dots,  y_l,    
 \dots\}$ of symbols (variables) then put
$$y^i=y_{i_1}\cdots y_{i_l}; \quad \quad \quad y_i=y_{i_l}\cdots y_{i_1}.$$
Elements of $A\subseteq \G$ are also viewed as functions of finite support from the free monoid $M_X$ on a set $X=\{x_1, \dots, x_n\}$ into $\mathfrak k$. In particular, a power series $\g \in \G$ is can be written in a normal form as a formal possibly infinite sum
$$\g=\sum\limits_{w\in M_X}\g_w w, \quad \quad 0\neq \g_w=\g(w)\in {\mathfrak k}$$
where $\g_w$ is the non-zero value of $\g$ at a word $w\in M_X$. As well, an element $g\in \La$ is a finite combination

$$g=\sum\limits_{w\in F} g_ww, \quad \quad 0\neq g_w\in {\mathfrak k}.$$ 
where $g_w$ is the non-zero value of $g$ at a reduced word $w\in F$. It is worth to keep in mind that although elements of $\Lambda$ are functions of finite support from $F$ in $\mathfrak k$ but as elements of $\Gamma$, i.e., functions from $M_X$ into $\mathfrak k$, their supports are, in general, infinite. 

The \emph{length} $|\g|$ of $\g\in \Lambda$ is $\max \{|w|\}$  where $w$ runs over reduced words appearing in the normal form of $\g$. Moreover, by definition a \emph{strictly maximal word} of $\g$ is a reduced word $w$ of $\g$ with $|w|=|\g|$. Hence any strictly maximal word of $\g$ is necessarily a maximal word, that is, a word which is not a proper head of any other word of $\g$. However, the converse is not true. For example, if $\g=t_1t_2t_3+t_2t_1+t_1t_3\in \Lambda$, then $t_1t_2t_3, t_2t_1, t_1t_3$ are maximal words but only $t_1t_2t_3$ is a strictly maximal word of $\g$. Futhermore, the length function on $\Lambda$ coincides with the degree function on the free associative algebra $A\subseteq \Lambda$ but is not the degree function of $\Lambda$ in view of $|ww^{-1}|=|1|=0$ for any proper reduced word $w\in F$. 
\begin{definition}\label{def4}For a ring $R\in \{A, \G, \La\}$ the augmentation epimorphism $\e_R=\e\colon R\rightarrow \mathfrak k$ is induced by
sending all $x_i (i=1, \dots, n)$ to $0$, or equivalently in the case of $\Lambda$, all $t_i=x_i+1 (i=1, \dots , n)$ to $1$.
\end{definition}
 Therefore each power series $\g \in \G$ induces a reduced form 
\begin{align}\label{prif1}
 \g-\e(\g)=\sum\limits_{|w|\geq 1}\g_ww=\sum\limits_{i=1}^nx_i\bigg(\sum\limits_{w=x_i\t_w(1)}\g_w\t_w(1)\bigg)=\sum\limits_{i=1}^nx_i\g_{x_i}
\end{align}
where $\g_{x_i}$, shortly $\g_i$, is called the \emph{right cofactor} of $x_i$ in $\g$ and the mapping $\g\mapsto \g_i$ is called the \emph{i-th right transduction} or the \emph{generalized i-th partial  derivative} $\partial_i$ of both $\G$ and
$A$ when the latter is viewed as a subalgebra of $\G$.
Consequently, every word $w=x_{i_1}\cdots x_{i_l}\in M_X$ defines the \emph{higher derivative} $\partial_w=\partial_i$ for a multi-index $i=(i_1, \dots, i_l)$ 
 by applying successively right transductions, that is, according to our notation on multi-indices
$$\partial_w=\partial_{x_{i_l}}\cdots\partial_{x_{i_1}}=\partial_{i_l}\cdots \partial_{i_1}=\partial_i, \quad \quad w=x_{i_1}\cdots x_{i_l}.$$
\vskip 0.3cm
If $\La$ is viewed also as a subalgebra of $\G$, then the restriction of $\partial_i$ to $\La$ is a \emph{canonical i-th Fox derivative} \cite{cf1} of $\La$ with respect to $t_i=x_i+ 1$.  Namely, by definition \cite[Chapter VII.2]{cf1} 
\begin{definition}\label{def5} A (\emph{right}) \emph{Fox derivative} in $\Lambda$ is a $\mathfrak k$-homomorphism $\partial\colon \Lambda\rightarrow \Lambda$ such that
\begin{align}\label{prif2}
\partial(\g_1\g_2)=\e(\g_1)\partial(\g_2)+\partial(\g_1)\g_2 \quad\quad \forall\,\,\, \g_1, \, \g_2\in \Lambda.
\end{align}
\end{definition}
Hence similar to the Weyl algebra of differential operators in view of the equality \eqref{prif1} and \cite[Chapter VII (2.10)]{cf1} where the considered derivatives are the left ones, the algebra $L(\Lambda)$ of right Fox derivates of $\Lambda$ is by definition the \emph{universal localization of} $\Lambda$ inverting the row $(x_1, \dots , x_n)=(t_1-1, \dots , t_n-1)$. Hence $L(\Lambda)$ is called the \emph{Fox algebra} of rank $n$ defined by the free group $F_n$. The name honors Ralph Fox. His paper \cite{fox} appeared at virtually the same time as a related paper by Leavitt \cite{leav0}. Since Leavitt already has a class of rings named after him, it seems advisable not to attempt to honor Leavitt in this connection.

The fact that $\La$ is a subalgebra of $\G$ is verified below (it is folklore to the experts). In working with power series we have no degree function but we have  the useful order function. The \emph{order} of $0\neq \g \in \Gamma$ is the least length of monomials (in $x_i$) appearing in $\g$. In any case we have an important equality for a power series $\g$ of nonzero order
$$\g=\sum\limits_{i=1}^n x_i\g_{x_i}\Longleftrightarrow \e(\g)=0.$$

For $R\in \{A \subseteq \Lambda \subseteq \Gamma\}$ the kernel of the augmentation $\e_R\colon R\rightarrow \mathfrak k$ is denoted by $\bar{R}$. This two-sided ideal of $R$ is both a left and right free $R$-module of rank $n$ on the set $\{x_1, \dots, x_n\}(=\{t_1-1, \dots, t_n-1\}$ in the case of $\Lambda$).
This fact is obvious in the case of $A$ and $\G$ according to the normal form \eqref{prif1}, while for $\Lambda$ the result is not trivial, but can be found in \cite[Theorem VI.5.5]{hs}.
There the proof is presented for the case ${\mathfrak k}=\Z$ but the argument works for an arbitrary $\mathfrak k$. This result is also a consequence of a more general result found in Cohn \cite{c2}, Lewin \cite{le1} or Rosenmann and Rosset \cite{rr} for a field and so for a domain $\mathfrak k$ that $A$ and $\Lambda$ are firs if $\mathfrak k$ is a field. 

Although the equality $\bigcap\limits_{m=1}^\infty {\bar A}^m=0=\bigcap\limits_{m=1}^\infty {\bar \G}^m$ holds obviously, the equality
$\bigcap\limits_{m=1}^\infty {\bar \La}^m=0$ is less immediate (but the result is well-known). For example, it follows straightforwardly from Cohn's more general, difficult Intersection Theorem \cite[Section 5.10]{c3} and the fact that free group algebras over fields are two-sided firs. The latter result is due to Cohn which can be verified directly via the Schreier technique, cf. \cite{le1} and \cite{rr}. In view of its importance and further applications we give a short direct proof of this fact.
\begin{proposition}
\label{0int} The intersection $\bigcap\limits_{m=1}^\infty {\bar \La}^m$ is trivial. Consequently, the induced ideal topology on $\Lambda$ is Hausdorff.
\end{proposition}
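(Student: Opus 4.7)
The plan is to pass to the formal power series ring $\Gamma$ via the Magnus embedding $\mu\colon\Lambda\to\Gamma$, $t_i\mapsto 1+x_i$. A direct check gives $\mu(t_i-1)=x_i\in\bar\Gamma$ and $\mu(t_i^{-1}-1)=(1+x_i)^{-1}-1\in\bar\Gamma$, so $\mu$ is a $\mathfrak{k}$-algebra map with $\mu(\bar\Lambda)\subseteq\bar\Gamma$, and multiplicativity yields $\mu(\bar\Lambda^m)\subseteq\bar\Gamma^m$ for every $m\geq 1$. On the power series side $\bigcap_m\bar\Gamma^m=0$ is immediate since $\bar\Gamma^m$ is exactly the set of series of order $\geq m$. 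Thus once $\mu$ is known to be injective, $\bigcap_m\bar\Lambda^m\subseteq\mu^{-1}(0)=\ker\mu=0$, and the Hausdorff statement follows standardly since the cosets $g+\bar\Lambda^m$ constitute a neighborhood basis of $g$, separating $g_1\neq g_2$ once $m$ is large enough that $g_1-g_2\notin\bar\Lambda^m$.

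The injectivity of $\mu$ is the heart of the matter. I would invoke the free right $\Lambda$-module decomposition $\bar\Lambda=\bigoplus_{i=1}^n x_i\Lambda$ (with $x_i=t_i-1$) cited just above from \cite[Theorem VI.5.5]{hs}; induction on $m$ gives $\bar\Lambda^m=\bigoplus_{|I|=m}x_I\Lambda$, and successively extracting augmentations produces the $\mathfrak{k}$-module splitting
\[
\Lambda=\Bigl(\bigoplus_{|I|<m}\mathfrak{k}\cdot x_I\Bigr)\oplus \bar\Lambda^m,
\]
whose exact analog on the $\Gamma$-side is the normal form \eqref{prif1}. Since $\mu$ is the identity on the common $\mathfrak{k}$-basis $\{x_I:|I|<m\}$ of the first summands and sends $\bar\Lambda^m$ into $\bar\Gamma^m$, each induced map $\mu_m\colon\Lambda/\bar\Lambda^m\to\Gamma/\bar\Gamma^m$ matches canonical $\mathfrak{k}$-bases and is an isomorphism; in particular $\mu^{-1}(\bar\Gamma^m)=\bar\Lambda^m$ for every $m$, and intersecting gives $\ker\mu=\bigcap_m\bar\Lambda^m$.

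The genuine main obstacle is then to break this circular equivalence with an independent argument that $\ker\mu=0$. I would argue via iterated right Fox derivatives on $\Lambda$ (i.e.\ the right transductions $\partial_I$), defined intrinsically through the Leibniz rule \eqref{prif2}: a straightforward induction on $|I|$ shows that $\epsilon(\partial_I g)$ coincides with the coefficient of $x_I$ in the power series $\mu(g)$, so $\mu(g)=0$ iff all Fox--augmented coefficients $\epsilon(\partial_I g)$ vanish. For a nonzero $g=\sum g_w w\in\Lambda$ with finite support in $F$, I would exhibit a multi-index $I$ with $\epsilon(\partial_I g)\neq 0$ by taking $I$ from a reduced word $w^*$ of maximal length in $\mathrm{supp}(g)$ and computing the telescoping contributions of the finitely many other support words to a suitable leading-degree monomial coefficient of $\mu(g)$; this finite-support combinatorics is the crux. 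Once settled, $\mu$ is injective, $\bigcap_m\bar\Lambda^m=0$ follows, and the $\bar\Lambda$-adic topology is Hausdorff.
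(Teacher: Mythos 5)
Your reduction of the statement to the injectivity of the Magnus map $\mu\colon\Lambda\to\Gamma$ is correct, and you are right that this injectivity is \emph{a priori} equivalent to the very thing to be proved: from the iterated freeness $\Lambda=\bigoplus_{|I|<m}\mathfrak{k}\,x_I\oplus\bar\Lambda^m$ one does get $\mu^{-1}(\bar\Gamma^m)=\bar\Lambda^m$ and hence $\ker\mu=\bigcap_m\bar\Lambda^m$, so passing to $\Gamma$ by itself buys nothing. You correctly identify that the whole content lies in breaking this circle. However, that last step is where your proposal has a genuine gap.

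The sketch you give --- take $I$ from a reduced word $w^*$ of maximal length in $\operatorname{supp}(g)$ and show the $x_I$-coefficient of $\mu(g)$ is nonzero --- does not work as stated, because the ``leading'' degree can cancel. For instance, over a field of characteristic $2$ take $g=t_1+t_1^{-1}$; then $\mu(g)=(1+x_1)+(1+x_1)^{-1}=x_1^2+x_1^3+\cdots$, so the coefficient of $x_1$ vanishes and one has to dig into degree $2$ even though the support words have length $1$. The ``telescoping contributions'' you allude to are exactly the hard part, and your proposal offers no mechanism to control such cancellations. Since you explicitly flag this step as ``the crux'' and do not carry it out, the argument remains incomplete.

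For comparison, the paper does not pass through $\Gamma$ at all: it argues intrinsically inside $\Lambda$ by substituting $t_i=1+x_i$, $t_i^{-1}=1+y_i$ into the finite reduced-word expansion of $g$ and using the identity $x_iy_i=y_ix_i=-x_i-y_i$ to obtain a finite expansion in $\{x_i,y_i\}$, from which the order of $g$ is read off. This reverses your logic: the Magnus embedding $\Lambda\hookrightarrow\Gamma$ is then deduced \emph{from} Proposition~\ref{0int} together with the identification of $\Gamma$ with the completion, rather than being an input. Moreover, because the injectivity of the group-level Magnus map $F\to\Gamma$ (the classical result you could hope to cite) does not by itself give injectivity of the $\mathfrak{k}$-algebra map $\Lambda\to\Gamma$, your route cannot be patched simply by a reference; you would still need to do the coefficient-cancellation analysis, preferably using the $\{x_i,y_i\}$-relation as the paper does rather than working only in the $x_i$'s.
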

\begin{proof} For simplification we denote by $t$ any of $t_1, \dots, t_n$ and put $x=t-1, y=t^{-1}-1=-xt^{-1}=-t^{-1}x$. Substituting $x+1$ for $t$ and $y+1$ for $t^{-1}$ in each reduced word $w$ of a normal form of any non-zero element $g\in \La$ and taking in account the equality
$$xy=yx=(t-1)(t^{-1}-1)=-x-y\in {\bar \La}$$
one can see immediately that there is a least non-negative integer $l$, called the \emph{order} of $g$, such that  $g\in {\bar \La}^{l}$ but $g\notin {\bar \La}^{l+1}$.
Hence the intersection  $\bigcap\limits_{m=1}^\infty {\bar \La}^m$ is trivial.
\end{proof}
As a consequence of Proposition \ref{0int} $\La$ is a subalgebra of its completion with respect to the ideal topology defined by $\bar{\La}\lhd \La$. This completion is exactly the inverse limit $\varprojlim \La/{\bar \La}^m=\varprojlim A/{\bar A}^m=\G$. This observation together with the uniqueness of the inverse imply that $\La$ is a subalgebra of $\G$ via the assignment $t_i\mapsto x_i+1$ for each index $i=1,\dots n$ which is the well-known Magnus embedding. In fact, the Magnus
embedding was originally only used to show that $F$ is embedded in $\G$ whence the intersection of the lower central series of $F$ is trivial, see e.g.  \cite[Propositions I.10.1 and I.10.2]{ls}. ( However, I cannot find references for the fact that the Magnus embedding is at the same time the inclusion of $\La$ in $\G$; this is the reason for including the preceding remark.) This embedding shows clearly that $\bar{\La}$ is a free $\La$-module without any restriction on $\mathfrak k$. In fact, this argument will be used later in the current work. Because of its importance, we state this well-known folklore result
as
\begin{corollary}\label{folk1} The free group algebra $\La$ is embedded in the algebra $\G$ of power series via the Magnus map $t_i\mapsto x_i+1$. 
Consequently, $\bar{\La}$ is a free $\La$-module of rank $n$.
\end{corollary}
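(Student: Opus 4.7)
The plan is to make the sketch given in the paragraph preceding the statement fully explicit: first realize the Magnus map as the inverse limit of ring-level truncations, and then read the freeness of $\bar{\La}$ off the analogous freeness of $\bar{\G}$.

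For the embedding, I would construct, for each $m\geq 1$, mutually inverse $\mathfrak{k}$-algebra isomorphisms $\phi_m\colon A/\bar{A}^m \to \Lambda/\bar{\Lambda}^m$ and $\psi_m\colon \Lambda/\bar{\Lambda}^m \to A/\bar{A}^m$. The map $\phi_m$ is the reduction of $x_i\mapsto t_i-1$, which sends $\bar{A}^m$ into $\bar{\Lambda}^m$ automatically. For $\psi_m$, the decisive point is that $1+x_i$ is a unit modulo $\bar{A}^m$ with explicit inverse $\sum_{k=0}^{m-1}(-x_i)^k$; hence $t_i\mapsto 1+x_i$ extends $F$ into the unit group of $A/\bar{A}^m$, defines a ring map $\Lambda\to A/\bar{A}^m$, and descends modulo $\bar{\Lambda}^m$ because $\bar{\Lambda}$ lands in the ideal $\bar{A}/\bar{A}^m$, whose $m$-th power vanishes. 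A check on generators shows $\phi_m$ and $\psi_m$ are mutually inverse and compatible with the tower as $m$ varies. Passing to the inverse limit identifies $\varprojlim \Lambda/\bar{\Lambda}^m$ with $\varprojlim A/\bar{A}^m=\G$ via $t_i\mapsto 1+x_i$, and precomposing with the embedding of $\Lambda$ into its $\bar{\Lambda}$-adic completion (which is injective by Proposition \ref{0int}) gives the Magnus map as a ring embedding.

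For the second assertion, I would view $\bar{\Lambda}$ inside $\bar{\G}$ via this embedding; the normal form \eqref{prif1} already presents $\bar{\G}$ as a free right $\G$-module of rank $n$ on $\{x_1,\dots,x_n\}$, and so linear independence of $\{x_1,\dots,x_n\}$ over $\Lambda$ is inherited automatically. The task thus reduces to showing these elements generate $\bar{\Lambda}$ as a right $\Lambda$-module, which by $\mathfrak{k}$-linearity comes down to writing each group element $w-1$, with $w=w_1\cdots w_m\in F$ in reduced form, as a right $\Lambda$-combination of the $x_i$; this follows from the telescope
\[ w - 1 = \sum_{j=1}^{m}(w_j - 1)\,w_{j+1}\cdots w_m \]
together with $t_i-1 = x_i$ and $t_i^{-1}-1 = x_i(-t_i^{-1})$. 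The only step beyond bookkeeping is the verification that $\phi_m$ and $\psi_m$ are truly mutually inverse at each finite level, which I expect to be straightforward once checked on the generating families $\{x_i\}$ and $\{t_i\}$, but it is the place where the algebraic content of the corollary is concentrated.
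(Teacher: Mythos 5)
Your proof is correct and follows the same strategy as the paper: establish the Magnus embedding by identifying $\G$ with the $\bar\Lambda$-adic completion of $\Lambda$ (using Proposition \ref{0int} for injectivity of $\Lambda$ into its completion), then read off the freeness of $\bar\Lambda$ from the already-known freeness of $\bar\G$. You have simply made explicit the two points the paper leaves implicit — the level-wise mutually inverse maps $A/\bar A^m \rightleftarrows \Lambda/\bar\Lambda^m$, and the telescoping identity producing $w-1 \in \sum_i x_i\Lambda$ for generation — so this is a more detailed write-up of the paper's argument rather than a different route.
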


\section{Rational closures}
\label{ratclo0}

The important notion of rational power series can be generalized without difficulty to a larger class of  
algebras $R$ with respect to a transfinitely nilpotent ideal $I\lhd R$ as follows (cf. also \cite[Section I.3]{ss}). Recall that $I$ is \emph{transfinitely nilpotent} if  $\bigcap\limits_{m=1}^\infty I^m=0$. 
Then $R$ can be considered as an augmented ring together with the augmentation epimorphism $\e\colon R\rightarrow R/I=S$ (cf. \cite[Chapter VIII]{ce1}). Thus
$R$ is a Hausdorff topological ring with respect to the ideal topology defined by $I$. Therefore $R$ is
contained in its completion
$\varprojlim R/I^l=\hat{R}$ which is a complete Hausdorff linearly topologized ring with respect to the ideal topology defined by the closure $\hat{I}$ of $I$ in $\hat{R}$. In particular, $\bigcap\limits_{l=1}^\infty \hat{I}^l=0$ holds. As for power series one can define the order of elements
of $\hat{R}$, too. Namely, an \emph{order} of an element $r\in \hat{R}$ with respect to $I$ is the smallest non-negative integer $l$ such that $r\in \hat{I}^l$ but $r\notin \hat{I}^{l+1}$ where $\hat{I}^0=\hat{R}$. 
Every element $r\in \hat{I}$ has the nice property, desirable in many respects, that its powers converge to $0$,
whence the following infinite sum is well-defined and satisfies

$$\sum\limits_{l=1}^\infty r^l=\lim_{m\to \infty} \sum\limits_{i=1}^m r^i=s=r+rs=r+sr.$$
This shows that every $r\in I$ as an element of $\hat{R}$ is \emph{quasiregular} in Jacobson's sense. Here $s$ is called \emph{the quasi-inverse} of $r$ (in $\hat{R}$) and is denoted by $r^{+}$. It is worth to keep in mind that 
$r\in I\subseteq \hat{R}$ as an element of $R$ is not necessarily quasiregular. Hence $\hat{I}$ is contained in the Jacobson radical $J(\hat {R})$ of $\hat{R}$ and therefore $1+\hat{I}$ is a multiplicative subgroup of $1+J(\hat{R})$ in view of $1=1-r-sr+s=(1+s)(1-r)=(1-r)(1+s)$. More generally, if we denote by $\e$,
more precisely $\e_R (\e_{\hat R})$, the augmentation map of both $R$ (and $\hat R$) onto $S=R/I\, (=\hat{R}/\hat{I}$), then any square $l\times l \, (l\in \N)$ matrix $(r_{ij})$ in the matrix ring $M_l(\hat{R})=\hat{R}_l$ over $\hat R$ with $(\e(r_{ij}))=1\in \hat{R}_l$ is obviously invertible 
%in the ring $\hat{R}_l$ of all square $\l\times l$ matrices over $\hat{R}$ in view of 
by basic properties of the Jacobson radical. 
\begin{definition}\label{def6}
The rational closure $R^{\rm rat}_I$ of $R$ with respect to $I$ is defined as the smallest subalgebra of $\hat{R}$ containing $R$ and closed under taking quasi-inverses of its quasiregular elements (considered as an elements of $\hat{R}$). Elements of $R^{\rm rat}_I$ are called shortly \emph{rational} (\emph{elements} or \emph{series}). 
\end{definition}
Therefore in the case when $R=A$ is a free associative algebra, $I={\bar  A}$ and $\mathfrak k$ is a field, then $A^{\rm rat}_{\bar A}$ is exactly the smallest subalgebra of $\G$ contaning $A$ and closed under taking inverses when they exist in $\G$ and is also called  the \emph{algebra of rational power series} as defined in Cohn's book \cite[Section 2.9]{c3}. Consequently, the rational closure $A^{\rm rat}_{\bar A}$ of a free algebra $A$ coincides with the rational closure of (the image of) the free group algebra $\Lambda$ (via the Magnus embedding) in $\G$. Hence  $A^{\rm rat}_{\bar A}$ contains all copies of $\Lambda$ in $\G$ defined by points $\o=(\o_1, \cdots, \o_n)$ of the affine space ${\mathfrak k}^n$ with invertible coordinates $\o_i\in \mathfrak k$ sending each $t_i$ to $x_i+\o_i$. Properties of rational elements can be deduced easily from the general theory of either rational power series developed in \cite[Chapter 2.1]{ss} or Cohn's localization \cite{c3}. For the sake of self-containedness and later use we will list these properties below with proof.

$R^{\rm rat}_I$ can be constructed inductively. Put $R_0=R, I_0=I$ and let $R_1$ be the subalgebra of $\hat{R}$ generated by $R_0$ and the subgroup of $1+\hat{I}$ generated by $1+I_0$; Then define $I_1=R_1\cap \hat{I}$. Assume that $R_l,\, I_l\,\, (l\geq 1)$ are already constructed, then let $R_{l+1}$ be the subalgebra of $\hat{R}$ generated by $R_l$ and the subgroup of $1+\hat{I}$ generated by $1+I_l$. Put $I_{l+1}=R_{l+1}\cap \hat{I}$. It is obvious from this construction that
\begin{proposition}\label{ratclo} For a transfinitely nilpotent ideal $I\lhd R$, %with $\bigcap\limits_{l=1}^\infty I^l=0$, 
the rational closure $R^{\rm rat}_I$ is the subalgebra $\bigcup\limits_{l=1}^\infty R_l$.
\end{proposition}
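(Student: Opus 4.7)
The plan is to prove the two inclusions separately, taking advantage of the minimality clause in the definition of $R^{\rm rat}_I$ for one direction and proceeding by induction along the construction for the other.

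I would first establish $\bigcup_{l\geq 0} R_l\subseteq R^{\rm rat}_I$ by induction on $l$. The base case $R_0=R\subseteq R^{\rm rat}_I$ is immediate. Assuming $R_l\subseteq R^{\rm rat}_I$, every $r\in I_l=R_l\cap\hat I$ is quasiregular as an element of $\hat R$ because $\hat I\subseteq J(\hat R)$, so its quasi-inverse $r^+$ lies in $R^{\rm rat}_I$ by the closure hypothesis. Equivalently $(1+r)^{-1}=1+r^+\in R^{\rm rat}_I$, and since $R^{\rm rat}_I$ is a subalgebra it contains the multiplicative subgroup of $1+\hat I$ generated by $\{1+r:r\in I_l\}$, hence also the subalgebra $R_{l+1}$ generated by $R_l$ together with this subgroup.

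For the reverse inclusion, by minimality it suffices to verify that $S:=\bigcup_l R_l$ is a subalgebra of $\hat R$ that contains $R$ and is closed under taking quasi-inverses of its quasiregular elements (viewed in $\hat R$). The ascending chain of subalgebras $R_0\subseteq R_1\subseteq\cdots$ supplies both the subalgebra structure of $S$ and the inclusion $R=R_0\subseteq S$. For the closure property, take a quasiregular $r\in S$ lying in $\hat I$, choose $l$ with $r\in R_l$, so that $r\in R_l\cap\hat I=I_l$ and $1+r\in 1+I_l$. The multiplicative subgroup of $1+\hat I$ generated by $1+I_l$ is contained in $R_{l+1}$ by construction and is closed under group inverses, so $(1+r)^{-1}=1+r^+\in R_{l+1}$, giving $r^+\in R_{l+1}\subseteq S$.

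The argument is essentially bookkeeping; the single point worth highlighting is the identity $1+r^+=(1+r)^{-1}$ valid for any $r\in\hat I$, which translates the axiomatic closure condition of $R^{\rm rat}_I$ (taking quasi-inverses of quasiregular elements) into the multiplicative closure condition used in the construction (adjoining inverses of $1+I_l$).
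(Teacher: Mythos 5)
Your argument is correct and supplies precisely the routine verification that the paper suppresses by calling the proposition ``obvious from this construction.'' Two small points are worth tidying. First, under the paper's convention (where $s=r^+$ satisfies $1=(1+s)(1-r)=(1-r)(1+s)$), the bridge identity is $1+r^+=(1-r)^{-1}$, so $(1+r)^{-1}=1+(-r)^+$ rather than $1+r^+$; since $I_l$ and $\hat I$ are stable under negation this affects none of your steps, but the highlighted identity at the end of your proof does not match the paper's sign convention as written. Second, in the reverse inclusion you tacitly read ``quasiregular elements'' in Definition~\ref{def6} as elements of $R^{\rm rat}_I\cap\hat I$; this agrees with the abstract's formulation (``quasi-inverses of elements contained in the closure $\hat I$ of $I$'') and is the only reading under which the construction, which adjoins inverses of $1+I_l$ only, exhausts the closure, so your interpretation is the intended one, but it is worth making this explicit because the wording of Definition~\ref{def6} does not by itself restrict to $\hat I$.
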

Put $\tilde{R}=R^{\rm rat}_I$ and let $\tilde{I}$ be the two-sided ideal of $\bar{R}$ generated by $I$. Then we have trivially 
$\bigcap\limits_{l=1}^\infty {\tilde{I}}^l=0$ by $\tilde{I} \subseteq \hat{I}$. The canonical isomorphisms $\tilde{R}/{\tilde{I}}^l\cong \hat{R}/{\hat{I}}^l\cong R/I^l$ show that $\hat{R}$ is also a completion of $\tilde{R}=R^{\rm rat}_I$ with respect to the ideal topology defined by $\tilde{I}$. This means that
$R^{\rm rat}_I$ is the rational closure of itself with respect to the ideal $\tilde{I}$. Hence the following claim is verified.
\begin{proposition}\label{closure} The rational closure is an idempotent functor on rings $R$ with a transfinitely nilpotent ideal. That is, for $\tilde{R}=R^{\rm rat}_I$ and its ideal $\tilde{I}$ generated by $I$ one has $\bigcap\limits_{l=1}^\infty {\tilde{I}}^l=0$ and $R^{\rm rat}_I={\tilde{R}}^{\rm rat}_{\tilde{I}}$.
\end{proposition}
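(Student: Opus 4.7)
The plan is to organize the proof around the two assertions in the statement: first that $\bigcap_{l\geq 1}\tilde I^{\,l}=0$, and second that $\tilde R^{\rm rat}_{\tilde I}=R^{\rm rat}_I$. The preceding paragraph already sketches the strategy; the task is to fill in the one nontrivial step, namely the canonical isomorphism $\tilde R/\tilde I^{\,l}\cong R/I^l$, and then to harvest both conclusions from it.

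For the first assertion I would begin by observing that $I\subseteq\hat I$ and $\tilde R\subseteq\hat R$, and $\hat I\lhd\hat R$ is a two-sided ideal. Hence the two-sided ideal of $\tilde R$ generated by $I$ lies inside $\tilde R\cap\hat I$, giving $\tilde I\subseteq\hat I$, and therefore $\bigcap_{l\geq 1}\tilde I^{\,l}\subseteq\bigcap_{l\geq 1}\hat I^{\,l}=0$. It is also convenient to record the reverse inclusion $\tilde R\cap\hat I\subseteq\tilde I$, for which one notes that an element $r\in\tilde R\cap\hat I$ lies in some $R_k$ by Proposition \ref{ratclo}, whence $r\in R_k\cap\hat I=I_k$; and the ascending union $\bigcup I_k$ is visibly a two-sided ideal of $\tilde R$ containing $I$, hence contained in $\tilde I$. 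So $\tilde I=\bigcup_k I_k=\tilde R\cap\hat I$.

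The main technical step is to show $R+\tilde I^{\,l}=\tilde R$ for every $l\geq 1$; combined with $R\cap\tilde I^{\,l}\subseteq R\cap\hat I^{\,l}=I^l$ and the obvious reverse inclusion, this yields the canonical isomorphism $R/I^l\cong\tilde R/\tilde I^{\,l}$. I would prove $R+\tilde I^{\,l}=\tilde R$ by induction on the layer $k$ in $\tilde R=\bigcup R_k$. The base case $k=0$ is trivial. For the inductive step, $R_{k+1}$ is generated as a $\mathfrak k$-algebra by $R_k$ together with the quasi-inverses $r^{+}$ of elements $r\in I_k$, and the crucial fact is that modulo $\hat I^{\,l}$ one has
$$r^{+}=\sum_{m=1}^{\infty}(-1)^{m}r^{m}\equiv\sum_{m=1}^{l-1}(-1)^{m}r^{m}\pmod{\hat I^{\,l}},$$
so the residue of $r^{+}$ modulo $\tilde I^{\,l}$ is represented by a polynomial in $r$ with coefficients in $\mathfrak k$, which lies in $R_k$. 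Thus every generator of $R_{k+1}$ lies in $R_k+\tilde I^{\,l}\subseteq R+\tilde I^{\,l}$ by the inductive hypothesis, and the ring closure of these generators stays inside $R+\tilde I^{\,l}$. This is the step I expect to require the most care, since one must argue that the computation $r^{+}\equiv\sum_{m=1}^{l-1}(-1)^{m}r^{m}\pmod{\tilde I^{\,l}}$ is legitimate; the point is that while both sides live a priori in $\hat R$, their difference lies in $\hat I^{\,l}\cap\tilde R=\tilde I^{\,l}$ because $\tilde I=\tilde R\cap\hat I$ as established above, and $\tilde I^{\,l}\subseteq\hat I^{\,l}\cap\tilde R$ together with a small verification in the other direction gives equality of the $l$-th powers as well.

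With $\tilde R/\tilde I^{\,l}\cong R/I^l\cong\hat R/\hat I^{\,l}$ established for every $l$, the inverse limit identifies $\hat R$ as the completion of $\tilde R$ with respect to $\tilde I$. For the idempotence statement, given any $r\in\tilde I=\bigcup I_k$, pick the smallest $k$ with $r\in I_k$; by construction $r^{+}\in R_{k+1}\subseteq\tilde R$, so $\tilde R$ is already closed under taking quasi-inverses of its elements in $\tilde I$. Since $\tilde R^{\rm rat}_{\tilde I}$ is by definition the smallest such subalgebra of the completion $\hat R$ containing $\tilde R$, we conclude $\tilde R^{\rm rat}_{\tilde I}=\tilde R=R^{\rm rat}_I$, as required.
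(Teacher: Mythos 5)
Your overall strategy matches the paper's (which simply asserts the canonical isomorphisms $\tilde R/\tilde I^{\,l}\cong\hat R/\hat I^{\,l}\cong R/I^l$ without proof and then draws the two conclusions from them), and the first assertion $\bigcap_l\tilde I^{\,l}=0$ is handled correctly. But there is a genuine logical error at the pivot point of your argument. You claim $\tilde R\cap\hat I=\bigcup_k I_k\subseteq\tilde I$, justifying the inclusion by saying that $\bigcup_k I_k$ is a two-sided ideal of $\tilde R$ containing $I$, ``hence contained in $\tilde I$.'' The minimality of $\tilde I$ gives exactly the opposite: any two-sided ideal of $\tilde R$ containing $I$ \emph{contains} $\tilde I$, so what you have proved is $\tilde I\subseteq\bigcup_k I_k=\tilde R\cap\hat I$, which is trivial anyway. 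The inclusion you actually need, $I_k\subseteq\tilde I$, is true but requires a real argument: one shows it by induction on $k$, the inductive step resting on the identity $s=-r(1+s)$ for the quasi-inverse $1+s=(1+r)^{-1}$, which exhibits $s\in\tilde I$ whenever $r\in I_k\subseteq\tilde I$; one then writes an arbitrary element of $R_{k+1}$ as a polynomial in elements of $R_k$ and such $1+s$'s, reduces modulo $\tilde I$ to land in $R_k$, and intersects with $\ker\e$.

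A second, related gap occurs when you pass from the congruence $r^+\equiv\sum_{m<l}(-1)^mr^m\pmod{\hat I^{\,l}}$ to a congruence $\pmod{\tilde I^{\,l}}$. You invoke the equality $\tilde I^{\,l}=\hat I^{\,l}\cap\tilde R$, but only the easy inclusion $\tilde I^{\,l}\subseteq\hat I^{\,l}\cap\tilde R$ is clear; the reverse is essentially what the isomorphism $\tilde R/\tilde I^{\,l}\cong\hat R/\hat I^{\,l}$ would give you, so using it here is circular. The clean fix avoids this entirely: write the tail of the series explicitly as $r^+-\sum_{m<l}r^m=r^l(1+r^+)$ (up to the inessential sign convention you use), which lies in $\tilde I^{\,l}$ \emph{directly} once one knows $r\in\tilde I$ and $1+r^+\in\tilde R$, since $\tilde I^{\,l}$ is a right ideal of $\tilde R$. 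Notice that this fix again needs $I_k\subseteq\tilde I$, so the first gap is the substantive one; once it is repaired by the separate induction sketched above, the rest of your argument goes through.
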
 

For $l\in \N$ let $M_l(I)$ denote the set of all $l\times l$ matrices with entries from a subset $I\subseteq R$ and put $M_I=\bigcup\limits_{l=1}^\infty M_l(I)$.
Then the rational closure $R^{\rm rat}_I$ has the property that matrices in $1+M_I$ are invertible, as follows.
\begin{proposition}\label{linsys} For an arbitrary positive integer $l\in \N$, an $l\times l$ matrix $Q\in M_l(\hat{I}\cap R^{\rm rat}_I)$, and a column vector $P$ with $l$ entries in $R^{\rm rat}_I$, an equation $Z=P+QZ$ has a unique solution in $\hat{R}$ whose entries belong to $R^{\rm rat}_I$, that is, it is solvable with a unique solution over $R^{\rm rat}_I$. By symmetry, the same claim holds for rows $P$ and equations $Z=P+ZQ$ where $P$ and $Q$ have entries in $R^{\rm rat}_I$ and $\hat{I}\cap R^{\rm rat}_I$, respectively.
\end{proposition}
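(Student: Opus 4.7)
The plan is to split the statement into two parts: (i) the equation $Z = P + QZ$ has a unique solution in $\hat R$, and (ii) that solution has entries in $R^{\rm rat}_I$. Part (i) is essentially formal: since $Q \in M_l(\hat I)$ we have $\varepsilon(1-Q)=1$ in the matrix ring, so by the remark preceding Definition \ref{def6} the matrix $1-Q$ is invertible in $\hat R_l$; uniqueness then follows from $(1-Q)(Z-Z')=0$, and existence from $Z=(1-Q)^{-1}P$. The real content is part (ii): showing that the entries of $(1-Q)^{-1}P$ actually live in the subalgebra $R^{\rm rat}_I$, which by definition is closed only under scalar quasi-inversion, not under matrix inversion.

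I would proceed by induction on $l$. For $l=1$ the equation reads $z = p + qz$ with $q \in \hat I \cap R^{\rm rat}_I$ and $p \in R^{\rm rat}_I$. Since $q$ is quasi-regular in $\hat R$, its quasi-inverse $q^{+}$ lies in $R^{\rm rat}_I$ by the very definition of the rational closure, and the unique solution is $z = (1+q^{+})p \in R^{\rm rat}_I$. This is the only step where the defining closure property is used.

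For the inductive step, perform one round of Schur elimination. Write
\[
Q=\begin{pmatrix} Q' & c \\ r & q\end{pmatrix},\qquad P=\begin{pmatrix} P' \\ p\end{pmatrix},\qquad Z=\begin{pmatrix} Z' \\ z\end{pmatrix}
\]
with $Q'$ of size $(l-1)\times(l-1)$ and $q\in \hat I\cap R^{\rm rat}_I$. The last scalar equation $z=p+rZ'+qz$ has, by the $l=1$ case applied with right-hand side $p+rZ'$, the formal solution $z=(1+q^{+})(p+rZ')$. Substituting into the first $l-1$ equations yields a reduced system
\[
Z' = \bigl(P' + c(1+q^{+})p\bigr) + \bigl(Q' + c(1+q^{+})r\bigr)Z'.
\]
Here the new constant vector lies in $R^{\rm rat}_I$, and the new coefficient matrix has entries in $\hat I \cap R^{\rm rat}_I$: each entry is a sum of products whose factors lie in $R^{\rm rat}_I$, and a factor from $\hat I$ (namely $c$, $r$, or an entry of $Q'$) makes the whole product land in the ideal $\hat I$ of $\hat R$. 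The induction hypothesis then delivers $Z' \in (R^{\rm rat}_I)^{l-1}$, and the formula above places $z\in R^{\rm rat}_I$ as well.

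The symmetric row case $Z=P+ZQ$ is obtained by transposition, applying the column case inside the opposite ring; note that $R^{\rm rat}_I$ and $\hat I$ are both stable under passing to $R^{\rm op}$, so no extra argument is needed. The only delicate point in the whole argument is verifying that after the Schur step the new coefficient matrix still has entries in $\hat I \cap R^{\rm rat}_I$, i.e.\ that taking a single scalar quasi-inverse $q^{+}$ suffices to clear the pivot; this is precisely what makes the induction work with only the scalar closure property of $R^{\rm rat}_I$.
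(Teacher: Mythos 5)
Your proof is correct and follows essentially the same route as the paper: induction on $l$, with the base case $l=1$ being the one place where closure under scalar quasi-inversion is invoked, and the inductive step eliminating the last variable and substituting back (your block/Schur-complement phrasing is just cleaner notation for the paper's entrywise substitution). One small bonus: your reduced system correctly carries the pivot-column factor $c$ (i.e., the $q_{il}$'s), whereas the paper's displayed formula for $z_i$ after substitution drops that factor — an inconsequential typo in the source, but your version is the accurate one.
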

\begin{proof}The uniqueness of a solution follows from $(1-Q)Z=P\Rightarrow Z=(1-Q)^{-1}P$ and the fact that $1-Q$ is a unit in the matrix algebra $M_l(\hat{R})$ in view of $Q\in M_l(I)$ and the obvious equality
$(1-Q)^{-1}=\sum\limits_{i=0}^\infty Q^i \quad (Q^0=1)$.

To finish the proof we use induction on $l\in \N$. Put $Q=(q_{ij})$ where all $q_{ij}$ are quasiregular elements of $\hat{R}$ contained in $R^{\rm rat}_I$. In case $l=1; p=P, q=Q \in R^{\rm rat}_I; z=Z$ we have $z=(1-q)^{-1}p=(1+q^{+})p\in R^{\rm rat}_I$ whence the claim holds trivially.  Assume now by induction the claim for all integers $<l$ ($\geq 2$) and write $Z=\begin{pmatrix}z_1\\ \vdots\\ z_l\end{pmatrix}, P=\begin{pmatrix}p_1\\ \vdots \\ p_l\end{pmatrix}$. Then
$$z_l=p_l+q_{l1}z_1+\cdots+q_{ll}z_l \Rightarrow z_l=(1+q^{+}_{ll})(p_l+q_{l1}z_1+\cdots+q_{l,l-1}z_{l-1}).$$
Substituting this value of $z_l$ in the first $l-1$ equations of $z_i$ we have
$l-1$ new equations for $z_1, \cdots, z_{l-1}$
$$z_i=[p_i+(1+q^+_{ll})p_l]+[q_{i1}+(1+q^+_{ll})q_{l1}]z_1+\cdots+[q_{i,l-1}+(1+q^+_{ll})q_{l,l-1}]z_{l-1}$$
whence the induction hypothesis implies that all $z_1, \cdots, z_{l-1}$ and so $z_l$ belong to $R^{\rm rat}_I$, completing the verification.
\end{proof}
As a nice consequence we have
\begin{corollary}\label{matrinv} Let $I\lhd R$ be a transfinitely nilpotent two-sided ideal of $R$. 
%with $\cap I^m=0$. 
Then every square matrix of the form $1+Q$ such that entries of $Q$ belong to $R^{\rm rat}_I\cap \hat {I}$ is invertible. In particular, any square matrix in $1+M_I$ is invertible.
\end{corollary}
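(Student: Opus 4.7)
The plan is to derive Corollary \ref{matrinv} as a direct corollary of Proposition \ref{linsys}, using it to produce both a right and a left inverse for $1+Q$ inside the matrix ring $M_l(R^{\rm rat}_I)$, and then invoking a standard ring-theoretic argument to identify them.

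First, I would replace $Q$ by $Q' = -Q$; the entries of $Q'$ still lie in $R^{\rm rat}_I \cap \hat{I}$, so the problem becomes that of solving the matrix equation $(1 - Q')X = 1$, i.e., $X = 1 + Q' X$. To fit this into the framework of Proposition \ref{linsys}, which solves only column-vector equations $Z = P + Q' Z$, I would run the proposition once per column: for each standard basis column vector $e_j$ (of length $l$, with entries in $R \subseteq R^{\rm rat}_I$), the equation $Z_j = e_j + Q' Z_j$ admits a unique solution $Z_j$ whose entries lie in $R^{\rm rat}_I$. Assembling the $Z_j$ as the columns of a matrix $X \in M_l(R^{\rm rat}_I)$ yields $(1+Q)X = 1$.

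Next, applying the symmetric row-version of Proposition \ref{linsys} (explicitly stated there) row by row, I would produce a matrix $Y \in M_l(R^{\rm rat}_I)$ satisfying $Y(1+Q) = 1$. The usual associativity argument
\[
Y = Y \cdot 1 = Y (1+Q) X = 1 \cdot X = X
\]
then shows $X = Y$ is a two-sided inverse of $1+Q$ inside $M_l(R^{\rm rat}_I)$, so $1+Q$ is a unit there, and \emph{a fortiori} a unit in $M_l(\hat{R})$.

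For the final sentence, I would simply observe that if $Q \in M_l(I)$ then its entries lie in $I \subseteq R \subseteq R^{\rm rat}_I$ and simultaneously in $I \subseteq \hat{I}$, hence in $R^{\rm rat}_I \cap \hat{I}$, so the previous paragraph applies verbatim. There is no real obstacle here; everything technical has been discharged by Proposition \ref{linsys}. The only point that deserves a line of care is the column/row bookkeeping that turns the vector-valued statement of \ref{linsys} into a matrix-valued one, since the proposition is phrased for a single $P$ rather than for an identity matrix of right-hand sides.
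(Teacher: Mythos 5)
Your argument is correct, and it is essentially the only way to extract Corollary \ref{matrinv} from Proposition \ref{linsys}; the paper itself gives no proof, offering the corollary as an immediate consequence. Two small remarks. First, the sign change $Q \mapsto Q' = -Q$ is the right bookkeeping step, since Proposition \ref{linsys} is phrased for $(1-Q')Z = P$; you correctly note that $-Q$ still has entries in $R^{\rm rat}_I \cap \hat{I}$. Second, the detour through a separate left inverse $Y$ followed by $Y = Y(1+Q)X = X$ is harmless but not strictly necessary: the uniqueness part of the proof of Proposition \ref{linsys} already establishes that $1+Q$ is a unit of $M_l(\hat{R})$ (its inverse being the convergent series $\sum_{i\geq 0}(-Q)^i$), so the one-sided inverse $X \in M_l(R^{\rm rat}_I)$ produced by the column-by-column argument must coincide with $(1+Q)^{-1}$ and is therefore automatically two-sided. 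Either way, the conclusion that $(1+Q)^{-1}$ has all entries in $R^{\rm rat}_I$ is established, which is what is actually needed for the subsequent Corollary \ref{rat2}.
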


Inspired by Proposition \ref{linsys} we establish another, perhaps more useful description of $R^{\rm rat}_I$ in the next result.
\begin{proposition}\label{rat1} Let $I\lhd R$ be a transfinitely nilpotent two-sided ideal.
% satisfying $\cap I^m=0$. 
Then $R^{\rm rat}_I$ is the set of all entries of unique solutions in $\hat R$ of equations $Z=P+QZ$ where $P\in R^l\, (l\in \N)$ and $Q\in M_l(I)$. By symmetry, 
$R^{\rm rat}_I$ is the set of all entries of unique solutions in $\hat{R}$ of equations $Z=P+ZQ$ where $l\in \N$ and $P\in R^l$ and $Q\in M_l(I)$.
\end{proposition}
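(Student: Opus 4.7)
The plan is to prove the two inclusions separately. One direction, that every entry of such a solution lies in $R^{\rm rat}_I$, is immediate from Proposition \ref{linsys}, since $P\in R^l\subseteq (R^{\rm rat}_I)^l$ and $Q\in M_l(I)\subseteq M_l(\hat{I}\cap R^{\rm rat}_I)$, so the hypothesis of that proposition applies.

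For the reverse inclusion, let $S\subseteq\hat{R}$ denote the set of all entries of unique solutions in $\hat{R}$ of systems $Z=P+QZ$ with $P\in R^l$, $Q\in M_l(I)$, $l\in\N$. The trivial $1\times 1$ system $z=r+0\cdot z$ shows $R\subseteq S$. By the inductive construction of Proposition \ref{ratclo}, to obtain $R^{\rm rat}_I\subseteq S$ it suffices to check that $S$ is a subalgebra of $\hat{R}$ and that $S$ is closed under taking quasi-inverses of those of its elements lying in $\hat{I}$. Closure of $S$ under sums and under left or right multiplication by elements of $R$ is routine: stack the given systems block-diagonally and append a single new row whose constant lies in $R$ and whose coefficients lie in $I$. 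For the product of two entries $z=z_1$ and $w=w_1$ coming from systems $Z=P+QZ$ and $W=P'+Q'W$, the idea is to introduce auxiliary variables $y_{ij}$ intended to represent $z_iw_j$; substituting the $z_i$-equation into $z_iw_j$ and expanding the resulting $p_iw_j$ term via the $W$-system yields defining equations for the $y_{ij}$ of the required form, with constants in $R$ and coefficients in $I$.

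The main obstacle, and the last remaining step, is closure of $S$ under quasi-inverses of elements lying in $\hat{I}$. Given $z=z_1\in S\cap\hat{I}$ realized as the first entry of $Z=P+QZ$, the key observation is that applying the augmentation $\e$ to $z_1=p_1+\sum_j q_{1j}z_j$ and using $q_{1j}\in I$ forces $\e(p_1)=0$, that is, $p_1\in I$. This allows augmenting the system with $v=z^+$ and auxiliary variables $u_j:=z_jv$: the equation $v=z_1+z_1v$ rewrites as $v=p_1+\sum_j q_{1j}z_j+p_1v+\sum_j q_{1j}u_j$, and the naive equation $u_j=p_jv+\sum_k q_{jk}u_k$, in which the coefficient $p_j$ of $v$ need not lie in $I$, is corrected by substituting the $v$-equation into $p_jv$ so that the only surviving coefficient of $v$ is $p_jp_1\in R\cdot I\subseteq I$. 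The resulting augmented system has all constants in $R$ and all coefficients in $I$, and the uniqueness asserted by Proposition \ref{linsys} identifies the relevant entry of its unique solution with $v=z^+$, exhibiting $z^+\in S$. The symmetric version for equations $Z=P+ZQ$ follows by applying the whole argument in the opposite ring $R^{\rm op}$.
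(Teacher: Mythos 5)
Your proof is correct, and while it follows the same high-level two-inclusion scheme as the paper (easy direction by Proposition \ref{linsys}, then showing that the set of admissible entries contains $R$, is a subalgebra, and is closed under quasi-inverses of its $\hat{I}$-elements), the closure arguments for products and quasi-inverses go by a genuinely different route. The paper handles the product of first entries $a_1b_1$ of solutions $U_1,U_2$ by the tight block-triangular construction
$\begin{pmatrix}Z_1\\ Z_2\end{pmatrix}=\begin{pmatrix}P_1\bar{P}_2\\ P_2\end{pmatrix}+\begin{pmatrix}Q_1 & P_1\bar{Q}_2\\ 0 & Q_2\end{pmatrix}\begin{pmatrix}Z_1\\ Z_2\end{pmatrix}$,
whose unique solution is directly $\begin{pmatrix}U_1b_1\\ U_2\end{pmatrix}$, and it handles the quasi-inverse by first arranging $\bar{P}=0$ with one dummy variable and then verifying the multiplicative identity that makes the full vector $U(1+u^+)$ the solution of $Z=P+(Q+P\bar{Q})Z$, so that $u(1+u^+)=u^+$ appears as the first entry of a system of essentially the same size. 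Your argument instead introduces auxiliary unknowns naming the products explicitly ($y_{ij}=z_iw_j$ for multiplication, $u_j=z_jv$ for the quasi-inverse) and derives their defining equations by substitution, using the same crucial observation $p_1\in I$ to push the offending coefficient of $v$ into $I$. Both are sound; the paper's constructions yield systems of size $l_1+l_2$ (product) and $l$ or $l+1$ (quasi-inverse), whereas yours cost $l_1l_2+l_2$ and $2l+1$ respectively, so the paper's route is more economical, while yours is arguably more mechanical and avoids having to guess the right solution vector in advance. Your appeal to Proposition \ref{ratclo} for minimality is also a minor detour — minimality already follows directly from Definition \ref{def6} of the rational closure as the smallest such subalgebra — but it is of course harmless.
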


\begin{proof} For simplicity let $T$ denote the set of all entries in solutions described in Proposition \ref{rat1}. $T$ is contained in $R^{\rm rat}_I$ by Proposition \ref{linsys}. By interchanging rows and columns when it is necessary, it is clear that $T$ is also the set
of the first entries in solutions described in Proposition \ref{rat1}. To verify the claim, it is enough to see that $T$ is closed under addition, multiplication and taking quasi-inverses, and containing $R$.
Taking $l=1, P\in R$ and $Q=0$ one has $R\subseteq T$. 
Let $a_1$ and $b_1$ be the first entry of the solution $U_i$ in equations $Z_i=P_i+Q_iZ_i (i=1, 2)$, respectively. Then the equations
$$\begin{pmatrix} z\\
                          Z_1\\
                          Z_2
\end{pmatrix}=\begin{pmatrix} \bar{P_1}+\bar{P_2}\\
                                               P_1\\
                                               P_2 \end{pmatrix}+\begin{pmatrix} 0 & \bar{Q}_1 & \bar{Q}_2\\
                                            0 & Q_1 & 0 \\
                                            0 & 0    & Q_2\end{pmatrix}\begin{pmatrix}z\\
                                                                                                         Z_1\\
                                                                                                         Z_2\end{pmatrix}$$
and 
$$\begin{pmatrix}Z_1\\
                         Z_2\end{pmatrix}=\begin{pmatrix} P_1\bar{P}_2\\
                                                                               P_2\end{pmatrix}+\begin{pmatrix} Q_1 & P_1\bar{Q}_2\\
 0   & Q_2\end{pmatrix}\begin{pmatrix}Z_1\\
                                                       Z_2\end{pmatrix}$$
have $\begin{pmatrix}a_1+b_1\\ U_1\\ U_2\end{pmatrix}$ and $\begin{pmatrix}U_1b_1\\ U_2 \end{pmatrix}$ as a solution, respectively, where the bar denotes the first row.  This shows that $T$ is an algebra. Next, let $u\in \hat{I}$ denote the first entry of a solution $U$ of an equation $Z=P+QZ$. Then $u=\bar{P}+\bar{Q}U\Rightarrow p_1=\bar{P}\in R\cap \hat{I}=I$. If $p_1\neq 0$ and $P=\begin{pmatrix}p_1\\ \vdots\\ p_l\end{pmatrix}$, put $P_1=\begin{pmatrix}0\\ p_2\\ \vdots\\ p_l\\1\end{pmatrix}$ and $P_2=\begin{pmatrix}p_1\\ 0\\ \vdots \\0\\ 0\end{pmatrix}$. The equality
$\begin{pmatrix}U\\
                          1\end{pmatrix}=P_1+\begin{pmatrix}Q & P_2\\
                                                                             0  & 0\end{pmatrix}\begin{pmatrix}U\\
                                           1\end{pmatrix}$
implies that $u$ is the first entry of the solution of the equation $Z=P_1+Q_1Z$ where $Q_1=\begin{pmatrix} Q & P_2\\ 0 & 0\end{pmatrix}$ with $\bar{P}_1=0$. Hence one can assume that
$u=\bar{U}$ is the first entry in a unique solution $U$ of the equation $U=P+QU$ with $\bar{P}=0$, whence $u=\bar{Q}U$ holds. The equality
$$P+(Q+P\bar{Q})(U(1+u^{+}))=P+QU(1+u^{+})+Pu(1+u^+)$$
$$=P+QU+QUu^++Pu^+=(P+QU)(1+u^+)=U(1+u^+)$$
 shows that $U(1+u^+)$ is the solution of the equation $Z=P+(Q+P\bar{Q})Z$
where $P\bar{Q}$ is an $l\times l$ matrix which is the matrix product of the column $P$ with the row $\bar{Q}$. Therefore $u(1+u^+)=u^+$ belongs to $T$ and so the proof is complete. 
\end{proof}
\begin{remark}\label{cohnrat} Proposition \ref{rat1} shows that the canonical embedding $R\rightarrow R^{\rm rat}_I$ is an $(1+M_I)$-inverting homomorphism. In fact, according to \cite[Notes and comments on Chapter 7]{c2} these criteria of rationality due to Sch\"utzenberger and Nivat led P. M. Cohn to his invention of the localization theory inverting matrices instead of elements exploited in \cite[Cohn's Chapter 7]{c3}. 
\end{remark}
As an immediate consequence of Proposition \ref{rat1} we have another interesting description of $R^{\rm rat}_I$ as a ring of quotients.
\begin{corollary}\label{rat2} If $I\lhd R$ is transfinitely nilpotent
%satisfies $\bigcap\limits_{m=1}^\infty I^m=0$ 
and $\hat{R}$ is the completion of $R$ with respect to the $I$-adic ideal topology, then as either a left or a right $R$-module, $R^{\rm rat}_I$ is generated by entries of inverses of square matrices $1+Q$, where $Q$ runs over all square matrices whose entries belong to $I$.
% for every matrix $1+Q \, (Q\in M_I)$ is invertible over $R^{\rm rat}_I$.
\end{corollary}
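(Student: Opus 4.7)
The plan is to read this off Proposition~\ref{rat1} by recognizing that solving $Z=P+QZ$ is the same as multiplying $P$ by $(1-Q)^{-1}$, together with the observation that $-Q$ has entries in $I$ whenever $Q$ does, so $1-Q$ is exactly a matrix of the form $1+Q'$ with $Q'\in M_l(I)$.

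First I would handle the ``contained in'' direction. Given an element $u\in R^{\rm rat}_I$, Proposition~\ref{rat1} supplies $l\in\N$, $P\in R^l$, and $Q\in M_l(I)$ such that $u$ is an entry of the unique solution $Z\in\hat R^l$ of $Z=P+QZ$. Rewriting this as $(1-Q)Z=P$ and noting that $1-Q$ is invertible in $M_l(\hat R)$ (Corollary~\ref{matrinv}), one has $Z=(1-Q)^{-1}P$. Writing $(1-Q)^{-1}=(b_{ij})$, the $i$-th coordinate is $z_i=\sum_j b_{ij}p_j$, exhibiting $u=z_i$ as a right $R$-linear combination of entries of $(1-Q)^{-1}=(1+(-Q))^{-1}$, with $-Q\in M_l(I)$. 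Hence $R^{\rm rat}_I$, as a right $R$-module, is spanned by the entries of inverses of matrices $1+Q'$ with $Q'\in M_l(I)$. The left module statement is obtained by the symmetric half of Proposition~\ref{rat1}: an arbitrary element is the entry of a row solution $Z=P(1-Q)^{-1}$, yielding a left $R$-linear combination $z_j=\sum_i p_i b_{ij}$ of entries of $(1-Q)^{-1}$.

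For the reverse containment I would note that the entries of $(1+Q)^{-1}$ themselves already lie in $R^{\rm rat}_I$: apply Proposition~\ref{linsys} (with $-Q$ in place of $Q$) to the equations $Z=e_k+(-Q)Z$ for each standard basis column $e_k$, whose unique $\hat R$-solution is the $k$-th column of $(1+Q)^{-1}$ and, by Proposition~\ref{linsys}, has all entries in $R^{\rm rat}_I$. Since $R^{\rm rat}_I$ is a subalgebra of $\hat R$, any two-sided $R$-linear combination of such entries remains in $R^{\rm rat}_I$.

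There is essentially no obstacle here; the content of the corollary is a change of bookkeeping from ``$Z=P+QZ$'' to ``$Z=(1-Q)^{-1}P$'', and the only non-cosmetic point is checking that the inverse matrix actually has entries in $R^{\rm rat}_I$ rather than merely in $\hat R$, which is guaranteed by Proposition~\ref{linsys}. Once that is observed, the ``generated by entries of inverses'' description for both module structures is immediate from the two symmetric halves of Proposition~\ref{rat1}.
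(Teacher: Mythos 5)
Your proof is correct and matches the paper's intent exactly: the paper states this as an immediate consequence of Proposition~\ref{rat1}, and your argument simply spells out that ``immediate consequence,'' namely the passage from $Z=P+QZ$ to $Z=(1-Q)^{-1}P$ together with Proposition~\ref{linsys} (or Corollary~\ref{matrinv}) for the reverse containment. No gaps; both the left and right module statements and both containments are handled properly.
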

In particular, 
any $\mathfrak k$-point $\o=(\o_1, \cdots, \o_n)$ with invertible coordinates $\o_i\in \mathfrak k$ for each index $i=1, \cdots, n$ induces an embedding of $\La$ in $\G$ by sending
$t_i\mapsto x_i+\o_i$ and the associated image is contained in $A^{\rm rat}_{\bar{A}}$.
\begin{remark}\label{cohnrat5} It is clear that the notion of a rational closure and results of this section can be treated in the same manner for a more general case when
an algebra $R$ together with an ideal $I$ is a subalgebra of an algebra $S$ whose Jacobson radical $J(S)$ contains $I$. In this case a rational closure $R^{\rm rat}_I$ of $R$ in $S$ with respect to $I$ is a subalgebra of $S$ generated by $R$ which is closed under taking inverses with respect to $S$. Then a canonical inclusion $R\hookrightarrow R^{\rm rat}_I$ is an $1+M_I$-inverting homomorphism. In this case, it is not necessarily to assume that $I$ is transfinitely nilpotent, because there are rings whose Jacobson radical is an idempotent ideal. Therefore it is interesting to decide when rational closures in this more general definition is a corresponding universal matrix-inverting Cohn localization.
\end{remark}

\section{Cohn's localization}\label{mainvert} 
An ingenious idea in Cohn's localization inspired by criteria of rational power series by Sch\"utzenberger and Nivat, see Section \ref{ratclo0} and \cite[Notes and Comments on Chapter 7]{c2}, is to invert matrices instead of elements. If $M$ is an $n\times m$ matrix over an algebra $R$ then an $m\times n$ matrix $N$ over $R$ is said to be  an \emph{inverse} of $M$ if both $MN$ and $NM$ are the identity matrix 1 
of size $n\times n$ and $m\times m$, respectively. Let $\Sigma$ be a set of (not necessarily square) matrices over $R$. An algebra homomorphism $f\colon R\rightarrow S$ is said to be $\Sigma$-\emph{inverting} if every matrix in $\Sigma$ is mapped by $f$ to an invertible matrix over $S$. Therefore one can define the \emph{Cohn quotient ring}
$R_{\Sigma_f}(S)$ of $R$ in $S$ via $f$ as a subring (subalgebra) of $S$ generated by $f(S)$ and entries of $(f(M))^{-1} \, (M\in \Sigma)$. We often write $R_\Sigma(S)$ when $f$ is understood. We use this terminology for this notion which is originally called 
$\Sigma$-\emph{rational closure} in \cite{c3}, to avoid confusion. Let $\Sigma_f$ be the set of all (not necessarily square) matrices over $R$ such that their images via $f$ are invertible over $f(R)\subseteq S$. If $S$ is a ring with IBN, an abbreviation for \emph{invariant basis number}, then matrices from $\Sigma_f$ must be square matrices. This fact  emphasizes the specific classes $\Sigma$ consisting of square matrices. Recall that
\begin{definition}\label{def7} A ring has IBN if modules generated freely by $n$ and $m$ elements, respectively, are isomorphic if and only if $n=m$.
\end{definition}
Since invertibility of a square matrix is unaffected by permuting rows and columns, one can assume without loss of generality in the case of square matrices that any matrix in $\Sigma$ still lies in $\Sigma$ after permutations of either rows or columns. Moreover, if square matrices $A, B \in \Sigma$, then for any matrix $C$ of suitable size, $\begin{pmatrix} A & C\\ 0 & B\end{pmatrix}\in \Sigma_f$ by 
$$\begin{pmatrix}A & C\\0 & B\end{pmatrix}^{-1}=\begin{pmatrix}A^{-1} & -A^{-1}CB^{-1}\\0 &B^{-1}\end{pmatrix}.$$ 
Consequently, the same argument in the first part of the proof to Proposition \ref{rat1} where $Q$ runs over matrices $1-M \, (M\in \Sigma)$, implies immediately the following description of $R_{\Sigma}(S)$ (cf. also \cite[Theorem 7.1.2]{c2} and \cite[Theorem 7.1.2]{c3} for results of the most general form).
\begin{proposition}\label{rat2} For a $\Sigma$-inverting homomorphism $f\colon R\rightarrow S$ by a set $\Sigma$ of square matrices over an algebra $R$ the Cohn quotient algebra $R_{\Sigma}(S)$ is the set of entries of a unique solution $Z$ of a matrix equation
$$AZ=P$$
where $A$ runs over square matrices from $f(\Sigma_f)$ and $P$ runs over columns of suitable size over $f(R)$. A symmetric claim holds by using row vectors. As an $R$-module $R_\Sigma(S)$ is generated by entries of inverse matrices from $f(\Sigma_f)$. 
\end{proposition}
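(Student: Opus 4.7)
The plan is to follow exactly the pattern used in the proof of Proposition~\ref{rat1}, as the remark immediately preceding the statement already suggests. Let $T$ denote the set of entries of the unique solution $Z=A^{-1}P$ of a matrix equation $AZ=P$ with $A\in f(\Sigma_f)$ square and $P$ a column of matching size over $f(R)$. Since $A^{-1}$ has entries in $R_\Sigma(S)$ and entries of $P$ lie in $f(R)\subseteq R_\Sigma(S)$, the inclusion $T\subseteq R_\Sigma(S)$ is immediate. Moreover, permuting the rows of $A$ and $P$ simultaneously produces an equivalent equation whose coefficient matrix is still in $f(\Sigma_f)$ but with any prescribed coordinate of $Z$ moved to the top, so it suffices to show that the set of first entries of such solutions already coincides with $R_\Sigma(S)$.

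For the reverse inclusion, I would check four points: $T\supseteq f(R)$, $T$ contains every entry of each $A^{-1}$ with $A\in f(\Sigma_f)$, and $T$ is closed under sum and product. The first follows from $A=(1)$, $P=(f(r))$, and the second from solving $AZ=e_j$ with $e_j$ a standard basis column, whose entries $0,1$ lie in $f(R)$. For sum and product, given first entries $a,b$ of solutions to $A_iZ_i=P_i$ for $i=1,2$, write $\iota=(1,0,\dots,0)$ for the row of appropriate length. The block system
\[
\begin{pmatrix} 1 & -\iota & -\iota \\ 0 & A_1 & 0 \\ 0 & 0 & A_2 \end{pmatrix}\begin{pmatrix} z \\ Z_1 \\ Z_2 \end{pmatrix}=\begin{pmatrix} 0 \\ P_1 \\ P_2 \end{pmatrix}
\]
has first entry $z=\iota Z_1+\iota Z_2=a+b$, while
\[
\begin{pmatrix} A_1 & -P_1\iota \\ 0 & A_2 \end{pmatrix}\begin{pmatrix} Z_1 \\ Z_2 \end{pmatrix}=\begin{pmatrix} 0 \\ P_2 \end{pmatrix}
\]
forces $A_1Z_1=P_1\cdot(\iota Z_2)=P_1 b$, so $Z_1=A_1^{-1}P_1 b$ has first entry $ab$. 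Both coefficient matrices are block upper triangular with invertible diagonal blocks, hence by the block-triangular inverse formula displayed just before the statement they lie in $f(\Sigma_f)$.

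Taken together this shows $T$ is a subalgebra of $R_\Sigma(S)$ containing $f(R)$ and all entries of inverses $A^{-1}$ with $A\in f(\Sigma_f)$, so $T=R_\Sigma(S)$ by the very definition of the Cohn quotient algebra. The symmetric row version follows by transposing the construction, replacing $AZ=P$ by $ZA=P$ throughout. For the final assertion, each element of $T$ has the form $\sum_j(A^{-1})_{1j}P_j$ with $P_j\in f(R)$, so the entries of the inverses $A^{-1}$ alone generate $R_\Sigma(S)$ as a one-sided $R$-module. I expect no genuine obstacle; the only bookkeeping point demanding care is to verify that the block matrices above actually arise as $f$-images from $\Sigma_f$ once preimages of $A_1$ and $A_2$ are chosen, but this is handled by the closure of $\Sigma_f$ under row/column permutations and the block-triangular inverse formula noted in the paragraph preceding the statement.
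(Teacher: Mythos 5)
Your proof is structurally correct and implements exactly what the paper gestures at when it says ``the same argument in the first part of the proof to Proposition~\ref{rat1}'' applies: show $T$ is a subalgebra containing $f(R)$ and the entries of the relevant inverses, after reducing to first entries by a permutation. One slip is worth correcting. Permuting the rows of $A$ and $P$ simultaneously replaces $AZ=P$ by $\sigma A Z=\sigma P$, which has the \emph{same} $Z$ — it merely reorders the equations, not the unknowns, so it does not move a prescribed coordinate of $Z$ to the top. What you need is to permute the \emph{columns} of $A$, replacing $AZ=P$ by $(A\sigma^{-1})(\sigma Z)=P$; this does reorder the entries of $Z$, and $A\sigma^{-1}$ remains in $f(\Sigma_f)$ by the paper's observation that $\Sigma_f$ is stable under row and column permutations. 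With that fix the rest is sound: $A=(1)$ handles $f(R)\subseteq T$; solving $AZ=e_j$ handles entries of $A^{-1}$; your two block upper-triangular systems correctly produce $a+b$ and $ab$ as first entries, and their coefficient matrices lie in $f(\Sigma_f)$ by the block inverse formula applied to preimages of $A_1$, $A_2$ and the $f(R)$-entries in the off-diagonal block; and the Cramer-rule expression $z_1=\sum_j (A^{-1})_{1j}P_j$ gives the final $R$-module generation claim.
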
 
By this way, for square matrices, rational closures appear as rings of quotients whose elements are entries of vectors $Z$ satisfying Ore' expression $Z=A^{-1}P$ or the Cramer's Rule in \cite[Proposition 7.1.5]{c3}. 
\begin{remark}\label{quasiregular} Put $I=\ker f$ in Proposition \ref{rat2}. Then 
 %if $M_I$ denotes the set of all square matrices over $I$ already defined in Section \ref{ratclo0}, then 
$1+M_I\subseteq \Sigma_f$. Hence inverting $1+M_I$ makes the images of $1+I$ invertible, or makes matrices from $I$ to have a quasi-inverse with respect to the circle operation of Jacobson. Hence in certain good cases, inverting matrices can be carried out by inverting elements successively as in Proposition \ref{ratclo} by using the reductive proof of Proposition \ref{linsys}. As an obvious example, any algebra is her own Cohn quotient ring, even her universal Cohn localization with respect to $\Sigma=1+M_J$ where $J$ denotes its Jacobson radical.  
\end{remark} 
\begin{definition}\label{def8}
An algebra homomorphism $\jmath\colon R\rightarrow R_{\Sigma}$ is called the \emph{universal $\Sigma$-inverting homomorphism} and $R_{\Sigma}$ is called the \emph{universal Cohn localization of $R$ by inverting $\Sigma$} if $\jmath$ is $\Sigma$-inverting and any $\Sigma$-inverting homomorphism $f\colon R\rightarrow S$ factors uniquely through $\jmath$, that is, there is a unique algebra homomorphism $g\colon R_\Sigma\rightarrow S$ satisfying $g\jmath=f$. Hence, as an algebra $R_\Sigma$ is generated by $\jmath(R)$ and entries in inverses of matrices from $\jmath(\Sigma)$. In the other words, the rational closure of $R$ in $R_{\Sigma}$ is $R_\Sigma$ itself. 
\end{definition}
It is a well-known result of  Cohn \cite[Section 7.2]{c3} that the universal $\Sigma$-inverting homomorphism $\jmath\colon R\rightarrow R_\Sigma$ exists and is unique up to $R$-preserving isomorphism because the construction presented there holds also for not necessarily square matrices.  
In this case $\jmath$ is obviously an (algebra) epimorphism in the catogorial sense by a uniqueness of the inverse whence Morita's theory (cf. \cite{mori1} and \cite{mori2}) of localization is applicable, too.

If $R$ has \emph{IBN},  then any invertible square matrix cannot be a product of non-square matrices. If one requires that a Cohn's localization $R_\Sigma$ is an algebra with IBN, then it is necessary that $R$ has IBN, and $\Sigma$ is a set of square matrices which cannot be written as products of non-square matrices. Such a square matrix is called by definition a \emph{full matrix}. $R_\Sigma$ is called a \emph{strictly universal Cohn's localization} of $R$ \emph{with respect to} or \emph{by} $\Sigma$ if $\Sigma$ is a set of full matrices over $R$. For example, one can consider a free algebra $A$ and let $\Sigma$ be the set of all nonzero elements. Therefore there are, in general, several one-to-one $\Sigma$-inverting homomorphisms such that the corresponding Cohn quotient rings are pairwise non-isomorphic. As an obvious consequence of the universality one has the following useful and quite simple test to decide when a ring is a subring of its strictly universal Cohn localization. 
\begin{proposition}\label{rat3} Let $\Sigma$ be a set of full square matrices over an algebra $R$. If $f\colon R\rightarrow S$ is a one-to-one $\Sigma$-inverting homomorphism, i.e., if $\ker f=0$, then $R$ is a subalgebra of $R_\Sigma$. 
\end{proposition}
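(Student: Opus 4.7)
The plan is to exploit the universal property of the universal $\Sigma$-inverting homomorphism $\jmath\colon R\rightarrow R_\Sigma$ directly. Given the hypothesis that $f\colon R\rightarrow S$ is a one-to-one $\Sigma$-inverting homomorphism, the defining universal property supplies a unique algebra homomorphism $g\colon R_\Sigma\rightarrow S$ with $g\jmath=f$. In particular, for any $r\in R$ with $\jmath(r)=0$ we obtain $f(r)=g(\jmath(r))=g(0)=0$, and since $f$ is injective this forces $r=0$. Thus $\ker\jmath=0$, and so $R$ embeds into $R_\Sigma$ as a subalgebra via $\jmath$.

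There is really no obstacle here: the assertion is an immediate formal consequence of the universal property, and the argument does not even use the assumption that the matrices in $\Sigma$ are full. The full-square-matrices hypothesis is relevant only to place the statement in the ``strictly universal'' setting discussed just before (so that $R_\Sigma$ can have IBN), and to ensure it is meaningful to speak of a single $R_\Sigma$ that one wishes to compare against; for the purely set-theoretic conclusion that $\jmath$ is injective, existence of any injective $\Sigma$-inverting map into some target $S$ is all that is needed. Consequently the proof will be a one-line application of the factorization property of $\jmath$ stated in Definition~\ref{def8}.
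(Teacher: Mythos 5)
Your argument is exactly the one the paper has in mind: the result is stated as ``an obvious consequence of the universality'' with no further proof, and your one-line factorization through the universal $\Sigma$-inverting map $\jmath$ is precisely that consequence. Your side remark that fullness of the matrices in $\Sigma$ plays no role in the injectivity argument itself (it only matters for the surrounding discussion of IBN and strictly universal localizations) is also accurate.
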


At the first glance it seems difficult to use Proposition \ref{rat3} because it is not easy to find $S$ together with an injective $\Sigma$-inverting homomorphism.
However, if $\jmath\colon R\rightarrow R_\Sigma$ is a canonical strict universal localization of $R$ by a set $\Sigma$, then for $I=\ker \jmath$ the set 
$1+ M_I$ maps immediately to the identity matrix over $R_\Sigma$. On the other hand, for any proper two-sided ideal $I\lhd R$ of $R$  such that $R/I$ is an algebra with IBN,
the set $1+M_I$ of square matrices is an obvious, simple understandable example of sets of full matrices. Hence inverting $1+M_I$ is a starting step in the study of strict universal Cohn localizations. For the sake of simplicity, we  denote by $C_I(R)$ the universal Cohn's  localization of $R$ inverting $1+M_I$ and use same letters for elements of $R$ and the image of $R$ in $C_I(R)$ although $R$ is not necessarily a subalgebra of $C_I(R)$. Furthermore, if $I$ is a transfinitely nilpotent ideal, then as we have already seen in Section \ref{ratclo0}, the canonical imbedding $R\hookrightarrow R^{\rm rat}_I$ is an one-to-one $\mathcal M_I$-inverting homomorphism. Hence Proposition \ref{rat3} implies that $R$ is a subalgebra of $C_I(R)$. In particular, if $\mathfrak k$ is a domain, then the free algebras $A, \Lambda$ defined in Section \ref{prif} together with the augmentation ideal $I$ are good interesting examples of algebras with transfinitely nilpotent ideals. As a first consequence of Proposition \ref{rat3} one obtains the main result of \cite[Theorems 5.1 and 5.3]{fa2} via a different way to approach.

\begin{theorem}\label{cohnloc1} Let $\mathfrak k$ be a commutative domain and $A$ and $\Lambda$ be a free $\mathfrak k$-algebra of rank $n$ of a free monoid $M_X$ and a free group $F$ on a set $\{x_1, \cdots, x_n\}$ and $\{t_1=x_1+1, \cdots, t_n=x_n+1\}$ together with the augmentation ideal $\bar{A}=\sum\limits_{i=1}^n Ax_i$ and $\bar{\Lambda}=\sum\limits_{i=1}^l \Lambda x_i$, respectively. Let $\Gamma$ be the completion of $A$ with respect to the Hausdorff ideal topology defined by $\bar A$, i.e., $\G$ is the power series algebra in $x_i$ with coefficients in $\mathfrak k$ whence $\Lambda$ is a subalgebra of $\G$.
Then the rational closure $A^{\rm rat}_{\bar A}$ in $\G$ is the universal Cohn's localization $C_{\bar A}(A)$ of both the free associative algebra $A$ of rank $n$ by $1+M_{\bar A}$ and  the free group algebra $\La$ of rank $n$ by $1+M_{\bar \La}$.
\end{theorem}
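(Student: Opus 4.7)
The plan is to verify that $A^{\rm rat}_{\bar A}$ realizes the universal property of $C_{\bar A}(A)$ and then to reduce the $\La$-case to the $A$-case. First I would construct the canonical map and establish surjectivity. The inclusion $\iota\colon A\hookrightarrow A^{\rm rat}_{\bar A}$ is one-to-one and, by Corollary \ref{matrinv}, $(1+M_{\bar A})$-inverting. Moreover $1+M_{\bar A}$ consists of full matrices: in any purported factorization $1+M=BC$ with $M\in M_l(\bar A)$, $B$ of shape $l\times r$ and $C$ of shape $r\times l$, applying the augmentation $\e$ would yield $I_l=\e(B)\e(C)$ over the domain $\mathfrak k$, forcing $r\geq l$. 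Proposition \ref{rat3} therefore delivers $A\hookrightarrow C_{\bar A}(A)$, and by universality there is a canonical $A$-algebra homomorphism $\phi\colon C_{\bar A}(A)\to A^{\rm rat}_{\bar A}$ extending $\iota$. Surjectivity of $\phi$ follows from Corollary \ref{rat2}, since $A^{\rm rat}_{\bar A}$ is generated over $A$ by entries of $(1+Q)^{-1}$ with $Q\in M_{\bar A}$, and these coincide (by uniqueness of matrix inverses) with the images of the corresponding entries constructed in $C_{\bar A}(A)$.

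The main obstacle is injectivity of $\phi$, which I would attack by a filtration/completion argument. Let $K\lhd C_{\bar A}(A)$ be the ideal generated by the image of $\bar A$; since $1+M_{\bar A}$ is inverted, $K\subseteq J(C_{\bar A}(A))$. Modulo $K^m$ the inverse $(1+Q)^{-1}$ reduces to the finite geometric sum $\sum_{i=0}^{m-1}(-Q)^i\in M_l(A)$, so $C_{\bar A}(A)/K^m$ is generated by the image of $A$. Combining this with Proposition \ref{closure} (which supplies $A^{\rm rat}_{\bar A}/\tilde I^m\cong A/\bar A^m$, where $\tilde I$ is the extension of $\bar A$ to $A^{\rm rat}_{\bar A}$) produces the composite
\begin{equation*}
A/\bar A^m \longrightarrow C_{\bar A}(A)/K^m \xrightarrow{\phi_m} A^{\rm rat}_{\bar A}/\tilde I^m \xrightarrow{\cong} A/\bar A^m,
\end{equation*}
which is the natural identification. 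Hence every $\phi_m$ is an isomorphism, passage to inverse limits identifies the $K$-adic completion of $C_{\bar A}(A)$ with $\G$, and $\ker\phi=\bigcap_m K^m$ equals the kernel of $C_{\bar A}(A)\to\G$. Showing this intersection vanishes---equivalently, Hausdorffness of $C_{\bar A}(A)$ in its augmentation topology---is the delicate step; one verifies it by expressing each element of $C_{\bar A}(A)$ in the Cramer-style form of Proposition \ref{rat2} and checking that two such representations evaluating to the same power series in $\G$ already coincide via the defining relations of the localization, paralleling the Sch\"utzenberger--Nivat normal-form theory.

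For the $\La$-case: each $t_i=1+x_i\in 1+\bar A$ is inverted in $C_{\bar A}(A)$ (as a $1\times 1$ matrix from $1+M_{\bar A}$), so $A\hookrightarrow C_{\bar A}(A)$ extends uniquely to $\La\to C_{\bar A}(A)$. Because $\bar\La$ maps into $K\subseteq J(C_{\bar A}(A))$, every matrix in $1+M_{\bar\La}$ becomes invertible, and by universality there is a canonical map $C_{\bar\La}(\La)\to C_{\bar A}(A)$. Symmetrically, the inclusion $1+M_{\bar A}\subseteq 1+M_{\bar\La}$ makes $A\to\La\to C_{\bar\La}(\La)$ a $(1+M_{\bar A})$-inverting homomorphism, yielding a reverse map $C_{\bar A}(A)\to C_{\bar\La}(\La)$; the uniqueness clause in the universal property forces the two to be mutually inverse. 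Hence $C_{\bar A}(A)\cong C_{\bar\La}(\La)\cong A^{\rm rat}_{\bar A}$, completing the theorem.
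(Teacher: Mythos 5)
Your architecture is sound up to a point: the reduction to showing $\bigcap_{m}K^{m}=0$ in $C_{\bar A}(A)$ is the correct reformulation of injectivity of $\phi$, and the $\Lambda$-versus-$A$ reduction via the two universal properties is essentially what one wants. But the step you flag as ``delicate'' is not a technical loose end---it is the entire content of the theorem, and your sketch of how to handle it is circular. Saying that one should check ``two Cramer-style representations evaluating to the same power series already coincide via the defining relations of the localization'' is precisely a restatement of the injectivity of the map $C_{\bar A}(A)\rightarrow\G$; without an independent normal-form theorem for the universal localization (itself at least as hard as the present theorem), this step is not established. In particular nothing in Propositions~\ref{rat1}--\ref{rat3} or Corollary~\ref{rat2} forces the augmentation filtration on $C_{\bar A}(A)$ to be separated: a universal localization of a ring by a set of full matrices can in general acquire a nontrivial kernel even when the base ring embeds.

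The paper closes exactly this gap by importing Cohn's free-ideal-ring machinery, which your argument makes no use of: when $\mathfrak k$ is a field, $A$ and $\Lambda$ are firs, the universal localization $C_{\bar A}(A)$ is again a fir (\cite[Theorems 7.11.4 and 7.11.6]{c3}), and a proper two-sided ideal of a fir is transfinitely nilpotent (\cite[Corollary 5.10.5]{c3}). That the ideal $K$ generated by $\bar A$ is proper follows because it maps into $\hat{\bar A}\subsetneq\G$. Hence $\bigcap K^{m}=0$ comes for free from fir theory, after which the completion argument and uniqueness of inverses finish the identification. You should also note that this fir input is only available over a field; the paper handles a general domain $\mathfrak k$ by tensoring up to the fraction field $\mathfrak K$, a reduction your proposal omits. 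So the proposal correctly locates the crux but does not resolve it, and the missing ingredient is a substantial external theorem rather than a routine verification.
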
 
We shall verify the more general result that for any proper two-sided ideal $I$ of $R\in \{A, \Lambda\}$ or of a fir $R$ the rational closure $R^{\rm rat}_I$ in the completion $\hat{R}$ of $R$ with respect to the $I$-adic topology is a strict universal Cohn localization $C_I(R)$ of $R$.

\begin{proof} First let $\mathfrak k$ be a field. Then it is well-known \cite[Chapter IV.5 (5.15)Theorem]{ba1} that both $A$ and $\Lambda$ are firs. This is a famous result of Cohn; one can also find an instructive verification in \cite{rr}. Simplyfying notation, let $R$ be more generally an arbitrary fir. If $I$ is any proper two-sided ideal of $R$, then $I$ is transfinitely nilpotent by \cite[Corollary 5.10.5]{c3}.
By \cite[Theorems 7.11.4 and 7.11.6]{c3} $C_I(R)$ is again a fir whence its ideal $J$ generated by $I$ is again transfinitely nilpotent because $J$ is a proper ideal. One can see directly this fact by the following observation. Let $\hat{R}$ be the completion of $R$ with respect to the $I$-adic topology. Then the closure $\hat{I}$ of $I$ in $\hat{R}$ is a proper two-sided ideal of $\hat{R}$. By the universality of $C_I(R)$ the ideal $J\lhd C_I(R)$ of $C_I(R)$ generated by $I$ maps into the proper two-sided ideal $R^{\rm rat}_IIR^{\rm rat}_I \subseteq \hat{I}$. Hence $J\neq C_I(R)$ holds. The completion of $C_I(R)$ with respect the $J$-adic topology contains $R^{\rm rat}_I$ whence the uniqueness of an inverse implies that $C_I(R)$ is the rational closure $R^{\rm rat}_I$. This shows the claim for the case when $\mathfrak k$ is a field. Let now $\mathfrak k$ be a domain. Let $\mathfrak K$ be the ring of fractions of $\mathfrak k$. Then any ideal $I$ of $R$ is contained in an ideal of 
$$R_{\mathfrak K}={\mathfrak K}\otimes_{\mathfrak k} R$$ 
which is transfinitely nilpotent. Hence by the above argument, the claim follows for a domain $\mathfrak k$.
\end{proof}

Farber and Vogel \cite{fa2} described rational power series by using Fox derivatives. We discuss this point of view in the next sections.

\section{Leavitt localization and module type}
\label{lelo}
It is obvious that all finitely generated free modules over the column-finite, infinite
matrix ring $CFM(R)$ over any ring $R$ are isomorphic, that is, it does not have IBN. In particular, for a ring $R$ without IBN there is a unique pair $(m, n)$ with smallest possible integers $m\leq n-1$ such that modules generated freely by $m$ and $n$
elements, respectively, are isomorphic. Leavitt \cite{leav1} called this pair \emph{the module type} of $R$. Hence an existence of an  invertible $m\times n$ matrix ($1\leq m\leq n-1$) shows that $R$ does not have IBN and the module type of $R$ is a pair $(p, q)$ with ($1\leq p\leq m, q\leq n; p\leq q-1$). The computation of the module type of a ring without IBN is not an easy job in general. However, in particular cases, the computation is essentially equivalent to the description of an associated Grothendieck group. It turns out that a localization of
an $1\times n$ matrix, i.e. a row, is easier and more well-understood then one of an $m\times n$ matrix with $1\leq m-1\leq n-2$. We investigate in this section only a localization of a row. By definition a \emph{Leavitt localization} is a universal inversion of a row. In what follows, we shall present the construction of the Leavitt localization in certain cases.

Inverting universally a row $(x_1, \cdots, x_n)$ over an algebra $R$ yields an algebra $L(R)$, called \emph{the Leavitt algebra of} $R$ \emph{generated over} $R$ \emph{by} $x^*_1,\dots, x^*_n$ satisfying the so-called \emph{Cuntz-Krieger equalities} for a left and a right inverse 
\begin{align}\label{ck1}
\begin{pmatrix}x^*_1\\ \vdots \\x^*_n\end{pmatrix}\begin{pmatrix} x_1 & \dots & x_n\end{pmatrix}=I_n\in R_n \iff x^*_ix_j=\d_{ij} \,\,\,\,\, \text{\rm (CK1)}
\end{align}
and
\begin{align}\label{ck2}
\sum\limits_{i=1}^n x_ix^*_i=1  \quad \quad \text{\rm (CK2)},
\end{align}
respectively.

Let $\imath\colon R\rightarrow L(R)$ be the associated canonical map from $R$ into $L(R)$. As an easy but important consequence we have first for all integers $l\in \N$

\begin{align}\label{eql1}
1=\sum\limits_{i=1}^nx_ix^*_i=\sum\limits_{i=1}^nx_i\left(\sum\limits_{j=1}^nx_jx^*_j\right)x^*_i=\cdots=\sum\limits_{|b|=|c|=l} bc^*=1\in \imath(R),
\end{align}
where $c^*=x^*_{i_l}\cdots x^*_{i_1}$ for $c=x_{i_1}\cdots x_{i_l}$. It is worth to keep in mind that $R$ is not necessarily a subalgebra of $L(R)$ although we use the same letters for elements of both $R$ and $\imath(R)\subseteq L(R)$. For example, $\imath$ is not one-to-one if $R$ is either the Jacobson, i.e., the Toeplitz algebra of one-sided inverses, or more generally, the Cohn algebra of one-sided inverses of a row $(x_1, \dots, x_n)$. In this case, $L(R)$ is either the Laurent polynomial ring or the classical Leavitt algebra $L_K(1, n)$ for $n\geq 2$, respectively. In fact, in $L(R)$ the sum $\sum x_i\imath(R)$ (denoted $\sum x_iR$ according to our notation) is direct and free of rank $n$ over $\imath(R)$. Hence, $\sum x_iR=\oplus x_iR\cong {{^nR}}$ with the trivial left annihilator is necessary for $R$ being canonically embedded in $L(R)$. Moreover, if we assume additionally that the right ideal of $R$ generated the $x_i$ is a two-sided ideal, then for any non-zero element $\g\in L(R)$, there is a finite product $b$ of the $x_i$ such that $0\neq \g b\in R=\imath(R)$.

This observation shows elements in $L(R)\,\, (R\in \{A, \G, \La\})$ can be written in a "quasi-normal" form as linear combinations
\begin{align}\label{eql3}
\sum\limits_{i=1}^m r_i c^*_i \quad (r_i\in R;\quad c_i\in M_X)
\end{align}
in view of the fact that for the group algebra $\Lambda$
\begin{align}\label{eql2}
x^*_it^{-1}_j=x^*_i-x^*_i(t_j-1)t^{-1}_j=x^*_i-\begin{cases}-t^{-1}_i & \text{if}\qquad j=i,\\ 0& \text{if}\qquad j\neq i\end{cases}.
\end{align}

If now a right ideal $I$ of $R$ generated by elements $x_i\in R\, (1\leq i\leq n; n\geq 2)$ is a two-sided ideal having trivial left annihilator and free of rank n, then a Leavitt localization
 $L(R)$ by the row $(x_1, \cdots, x_n)$  can be constructed as follows (see also \cite{as}).  Examples for such algebras are the algebras $A, \Lambda, \G$ considered in Section \ref{prif}. Let $I=\sum x_iR=\oplus x_iR$ be the right ideal generated by $x_1, \cdots, x_n$. Then $I_R$ is a two-sided ideal which is free of rank $n$ by assumption. Therefore the ideal topology defined by $I$ is a perfect Gabriel topology and one can form a (flat epimorphic right) ring of quotients
$$Q(R)=\inlim\limits_{l\in \N}\Hom_R(I^l, R)$$ 
of $R$ by \cite[(2.3) Lemma]{rr}. We refer to \cite[Chapters IX, XI]{s1} for basic facts on rings of quotients, Gabriel topologies and perfect epimorphisms. If $x^\star_i\in \Hom_R(I, R_R)$ is defined by $x_j\mapsto \d_{ij}=\begin{cases} 1& \text{if}\qquad j=i,\\ 0& \text{if}\qquad j\neq i\end{cases}$, then as elements of $Q(R)$ the column 
$\begin{pmatrix}x^\star_1\\ \vdots\\ x^\star_n\end{pmatrix}$ is the inverse of the row $(x_1, \cdots, x_n)$. Therefore from the definition of $L(R)$ there is a canonical epimorphism $f\colon L(R)\rightarrow Q(R)$ by sending $x^*_i\mapsto x^\star_i$. It is immediate that for a non-zero element $\g\in L(R)$ and a finite product $b$ of the $x_i$ with $0\neq \g b \in R$ one has that $f(\g)b=\g b$ is also a non-zero element of $R$. Since $R$ is clearly a subring of $Q(R)$, it is also a subring of $L(R)$ and the restriction of $f$ to $R$ is the identity and $f(L(R))=Q(R)$ holds. Using the argument of \cite[Theorem 3.2]{as} we show that $f\colon L(R)\rightarrow Q(R)$ is indeed an isomorphism. 

\begin{theorem}\label{imath} For $x_1, \dots, x_n\in R \,\, (n\geq 2)$ let $\imath\colon R\rightarrow L(R)$ a Leavitt localization of $R$ by inverting universally the row $(x_1, \dots, x_n)$. Then $\sum\limits_{i=1}^n \imath(x_i)\imath(R)$ is free over $\imath(R)$ of rank $n$ on the $\imath(x_i)$'s with trivial left annihilator. Hence, if $\imath\colon R\rightarrow L(R)$ is one-to-one, then $\sum x_iR=\oplus x_iR$ is a free right $R$-module of rank $n$ on the $x_i$ with trivial left annihilator. Conversely, if $I_R=\sum\limits_{i=1}^n x_iR$ is a two-sided ideal free of rank $n$ on the $x_i$'s with trivial left annihilator, and $Q(R)$ is a ring of right quotients of $R$ with respect to the $I$-adic topology which is a Gabriel topology, then the induced homomorphism $f\colon L(R)\rightarrow Q(R)$ is an isomorphism whence the canonical map $\imath\colon R\rightarrow L(R)$ is one-to-one.
\end{theorem}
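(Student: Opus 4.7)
\medskip

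The plan is to treat the three claims of the theorem sequentially, exploiting the Cuntz--Krieger relations \eqref{ck1}, \eqref{ck2} for the first two, and building an explicit two-sided inverse of $f$ for the converse.

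For the first claim I would argue purely from the relations \eqref{ck1}--\eqref{ck2} inside $L(R)$. To show $\sum_i \imath(x_i)\imath(R) = \bigoplus_i \imath(x_i)\imath(R)$, suppose $\sum_i \imath(x_i) r_i = 0$ with $r_i \in \imath(R)$; multiplying on the left by $\imath(x_j^*)$ and using (CK1) gives $r_j = \sum_i \delta_{ji} r_i = 0$, so the sum is direct and free on the $\imath(x_i)$'s. For trivial left annihilator, if $\xi \in L(R)$ satisfies $\xi \cdot \imath(x_i) = 0$ for all $i$, then using (CK2) gives $\xi = \xi \sum_i \imath(x_i)\imath(x_i^*) = \sum_i (\xi \imath(x_i)) \imath(x_i^*) = 0$. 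The second claim then follows immediately: if $\imath$ is injective it identifies $R$ with its image, and the preceding statement transfers verbatim.

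For the converse I would first produce $f\colon L(R)\to Q(R)$ by invoking the universal property of the Leavitt localization, checking that the maps $x_i^{\star}\in\Hom_R(I,R)$ described just before the theorem satisfy the Cuntz--Krieger relations inside $Q(R)$: (CK1) is literally the defining equalities $x_i^\star(x_j)=\delta_{ij}$, while (CK2) says $\sum_i x_i x_i^\star$ acts as the identity on the dense ideal $I$, which follows from $(\sum_i x_i x_i^\star)(x_j) = x_j x_j^\star(x_j) = x_j$ together with the freeness of $I$ on the $x_i$'s. Surjectivity of $f$ is then direct: any $\phi\in Q(R)=\inlim \Hom_R(I^l,R)$ is represented by some $\phi\in\Hom_R(I^l,R)$, and using that $I^l$ is free on the words $c=x_{i_1}\cdots x_{i_l}$ of length $l$, the element
\[
\gamma \;=\; \sum_{|c|=l} \phi(c)\, c^* \;\in\; L(R)
\]
satisfies $\gamma\cdot d = \phi(d)$ for every such word $d$ by (CK1), hence $f(\gamma)$ and $\phi$ agree on generators of $I^l$ and thus agree in $Q(R)$.

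The main obstacle is injectivity of $f$, and the hardest preliminary step is establishing the quasi-normal form $\gamma = \sum_j r_j c_j^*$ for elements of $L(R)$ which the paper uses informally. The crucial ingredient is the hypothesis that $I$ is two-sided: writing $r x_j = \sum_k x_k r_{jk}$ and computing
\[
x_i^*\, r \;=\; x_i^*\, r\!\sum_j x_j x_j^* \;=\; \sum_j (x_i^* r x_j)\, x_j^* \;=\; \sum_j r_{ji}\, x_j^*
\]
allows one to push every $x_i^*$ past any element of $R$ to the right, so iterated applications produce the form $\gamma=\sum_j r_j c_j^*$. Applying the identity $c^* = \sum_{|d|=l-|c|} d(cd)^*$ (which follows from (CK2)) one can furthermore renormalize so that all words $c_j$ have a common length $l$, and then \eqref{ck1} gives $\gamma\cdot c_j = r_j$ in $R$. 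Uniqueness of the length-$l$ form follows from the same calculation, so if $\gamma\ne 0$ some $r_c\ne 0$ and $\gamma c = r_c$ is a nonzero element of $R$. Then $f(\gamma)\cdot c = f(\gamma c) = \gamma c \ne 0$, proving injectivity. Finally, since the trivial left annihilator of $I$ forces $R\hookrightarrow Q(R)$ to be injective and $f\circ\imath$ is precisely this canonical embedding, the isomorphism $f$ forces $\imath$ to be injective as well.
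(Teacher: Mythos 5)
Your argument is correct and follows essentially the same route as the paper: reduce to showing $f$ is injective, then for a nonzero $\gamma\in L(R)$ find a word $b$ in the $x_i$'s with $0\neq\gamma b\in R$, so that $f(\gamma)b=f(\gamma b)=\gamma b\neq 0$ gives the contradiction, after which injectivity of $\imath$ follows since $f\circ\imath$ is the canonical embedding $R\hookrightarrow Q(R)$. The paper treats the quasi-normal form $\gamma=\sum_j r_jc_j^*$ (and hence the existence of such a $b$) as already established in the paragraph preceding the theorem, whereas you derive it explicitly from two-sidedness of $I$ and the Cuntz--Krieger relations and also spell out surjectivity of $f$; this supplies details the paper's one-line proof leaves implicit, but it is not a different approach.
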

\begin{proof} One needs only to verify the last claim. If there is a nonzero element $\g \in \ker \imath$, then there is $b=x_{i_1}\cdots x_{i_l}$ such that $0\neq\g b\in R$. Therefore $0=f(\g b)=f(\g)b=\g b\neq 0$, a contradiction. Hence $f$ is one-to-one, whence it is an isomorphism. 
\end{proof}
In the last part of this section we focus on the Leavitt localization $L(R)$ where $R$ is either a free group algebra $\Lambda$ embedded in the power series ring $\G$, or $\G$ itself. It is worth to keep in mind that the action of $x^*_i$ on $\bar{\G}$ is exactly the restriction of $\partial_{x_i}(=\partial_i)$. Hence the action of $c^* (c=x_{i1}\cdots x_{i_l} \in M_X)$ on $\bar{\G}^l$ is exactly the restriction of $\partial_c$ according to our notation on multi-indices. Therefore one can use sometimes $\partial_c$ for $c^*$ although the latter is not defined outside $\bar{\G}^l$. Next, we intend to determine the module type of $L(R) \, (R\in \{A, \La, \G\})$. It is an influential result of Leavitt \cite{leav1} that the module type of $L(A)$ is $(1, n)$ for $n\geq 2$. In the last decades there are several new proofs to this nice result of Leavitt (see actually \cite{c1}, \cite{as}, \cite{rr} for some of them). For the case of a free group algebra $\La$ one can use again the argument in either \cite{as} or \cite{rr} to obtain without difficulty that the module type of
$L(\La)$ is also $(1, n)$ if $n\geq 2$. For the algebra $\G$ of power series we have unfortunately neither Schreier bases nor a Schreier-Lewin formula to compute rank of finitely generated essential one-sided ideals of $\G$ as $\G$-modules. Even worse, it is also well-known that $\G$ is not a fir. Hence we need further arguments. For the sake of completeness we present a proof which work at the same time for all three cases over an arbitary commutative ring $\mathfrak k$. The punchline of the proof is taken from \cite{rr}.
\begin{proposition}\label{motype1} Let $R$ be any one of the algebras $A, \La, \G$. Then the module type of $L(R)$ is $(1,n)$ if $n\geq 2$.
\end{proposition}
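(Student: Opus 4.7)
The plan has two parts: construct an explicit isomorphism $L(R) \cong L(R)^n$ of right $L(R)$-modules, which yields the upper bound on the module type, and then rule out any strictly smaller pair.

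\emph{The upper bound.} Put $p_i := x_ix_i^* \in L(R)$. By \eqref{ck1} one has $p_ip_j = x_i(x_i^*x_j)x_j^* = \d_{ij}\, p_i$, so the $p_i$ form an orthogonal family of idempotents, while \eqref{ck2} gives $\sum_{i=1}^n p_i = 1$. The right $L(R)$-module maps $L(R) \to p_iL(R)$, $r \mapsto x_ir$, and $p_iL(R) \to L(R)$, $s \mapsto x_i^*s$, are mutually inverse, so $L(R) = \bigoplus_{i=1}^n p_iL(R) \cong L(R)^n$. Hence the module type of $L(R)$ is at most $(1,n)$.

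\emph{The lower bound.} Suppose for contradiction that $L(R)^m \cong L(R)^q$ as right $L(R)$-modules for some $1 \leq m < q \leq n-1$. Passing to $K_0$ one obtains $(q-m)[L(R)] = 0$, and combined with $[L(R)] = n[L(R)]$ from the upper bound the class $[L(R)]$ is annihilated by $\gcd(n-1,\, q-m)$, which is strictly less than $n-1$. So it suffices to exhibit an invariant on $L(R)$ sending $[L(R)]$ to an element of order exactly $n-1$. Concretely, I would construct a $\mathfrak{k}$-linear trace $\tau\colon L(R) \to \Z/(n-1)\Z$ satisfying $\tau(1) = 1$ and $\tau(ab) = \tau(ba)$; it extends to a matrix trace, and any isomorphism $L(R)^m \cong L(R)^q$ then forces $m \equiv q \pmod{n-1}$, contradicting $0 < q - m < n - 1$. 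The natural recipe is to use the quasi-normal form \eqref{eql3}: for $\g = \sum_i r_i c_i^* \in L(R)$ with $r_i \in R$ and $c_i \in M_X$, define $\tau(\g)$ in terms of the augmentations $\e(r_i)$ and the lengths $|c_i|$, read modulo $n-1$.

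\emph{The main obstacle.} Proving that $\tau$ is well-defined and really a trace, uniformly for $R \in \{A, \La, \G\}$. For $R = A$ the quasi-normal form \eqref{eql3} is essentially unique modulo the Cuntz-Krieger relations and the result reduces to Leavitt's original theorem \cite{leav1}; for $R = \La$ the Schreier basis together with the Schreier-Lewin rank formula underlies the arguments of \cite{as} and \cite{rr}. For $R = \G$, however, $\G$ is not a fir, there is no Schreier basis, and the quasi-normal form need not be unique, so the very definition of $\tau$ requires a delicate consistency check. This is precisely the ``punchline'' imported from \cite{rr}: by working with a suitable rank-type invariant of finitely generated submodules rather than with individual bases, the trace condition $\tau(ab) = \tau(ba)$ can be verified uniformly across the three cases, completing the proof.
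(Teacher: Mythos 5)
Your upper bound is correct and essentially the standard one: the idempotents $p_i=x_ix_i^*$ decompose $L(R)$ as $\bigoplus p_iL(R)\cong L(R)^n$ via the Cuntz--Krieger relations, giving $L(R)\cong L(R)^n$ as right modules. The $K_0$ framing of the lower bound is also sound in principle: a $\mathfrak k$-linear trace $\tau\colon L(R)\to\Z/(n-1)\Z$ with $\tau(1)$ a generator and $\tau(ab)=\tau(ba)$ would force $m\equiv q\pmod{n-1}$ whenever $L(R)^m\cong L(R)^q$.

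The genuine gap, however, is that you never actually construct $\tau$ or verify it is a trace, and you concede this yourself in the final paragraph. Defining $\tau$ on a quasi-normal form $\sum r_ic_i^*$ (say via $\e(r_i)$ and $|c_i|\bmod (n-1)$) requires showing the value is independent of the representation, which means checking invariance under all the Cuntz--Krieger rewritings; you correctly identify that for $R=\G$ there is no Schreier basis and the quasi-normal form is far from unique, so the consistency check is precisely the missing content, not a routine step. Invoking ``a suitable rank-type invariant'' from \cite{rr} does not fill the gap, since \cite{rr} works with free (group) algebras, not power series rings, and the paper itself makes a point of noting that neither a Schreier basis nor a Schreier--Lewin formula is available for $\G$.

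For comparison, the paper's proof takes a different route that sidesteps the trace entirely. It represents $L(R)=Q(R)$ as a flat epimorphic ring of right quotients, takes a decomposition $L(R)=\bigoplus_{i=1}^l\a_iL(R)$ with $\ann\a_i=0$, and shows each $\a_i\bar R^m$ is a free right ideal of rank $n^m$ whose sum $J$ has finite codimension $d$ in $R$. It then computes the rank of $J_R$ twice: once as $ln^m$, and once as $d(n-1)+1$. For $R\in\{A,\La\}$ the second value comes from the Schreier--Lewin formula; for $R=\G$, where that formula is unavailable, the paper instead uses that $\G$ is local, reads off the rank from the projective cover of $J/\bar\G^{N+1}$ over the finite-dimensional local factor, and transfers the count to the polynomial subalgebra $A$ via the isomorphism theorems. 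Comparing the two rank computations gives $n-1\mid l-1$ directly. If you wanted to pursue the trace approach to completion, you would in effect have to rediscover an argument of this kind to handle $\G$; as written, the proposal leaves the hardest of the three cases unproved.
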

\begin{proof} We use a description of $L(R)$ as a flat epimorphic right ring $Q(R)$ of quotients by Proposition \ref{imath}. We already know $Q(R)^n\cong Q(R)_{Q(R)}$. Hence
it suffices to verify that $n-1$ is a divisor of $l-1$ if 
$$L(R)=\sum\limits_{i=1}^l \a_i L(R)=\bigoplus\limits_{i=1}^l \a_i L(R) \quad (\a_i\in L(R))$$
where $\ann_{L(R)}\a_i=\{\b\in L(R) \,|\, \a_i\b=0\}=0$. 
By passing to factor of $\mathfrak k$ by a maximal ideal of $\mathfrak k$ which is a field, one can assume without loss of generality that $\mathfrak k$ is a field. From the definition
there is an integer $m\in \N$ big enough such that as an element of $Q(R)$, each $\a_i \,(i\in \{1, \cdots, l\})$ is represented as a right $R$-module homomorphism $\a_i\colon {\bar{R}}^m\rightarrow R$, where $\bar{R}$ denotes the kernel of the augmentation $\e\colon R\rightarrow \mathfrak k$. This implies that $R_i=\a_i{\bar{R}}^m$ is a right ideal of $R$ isomorphic to ${\bar{R}}^m$. Hence $R_i$ is a free right $R$-module of rank $n^m$. Consequently, $\sum\limits_{i=1}^l R_i=J$ is a free right $R$-module of rank $ln^m$. Now we compute rank of $J_R$ using a different method. The equality $R\subseteq L(R)=\sum\limits_{i=1}^l \a_i L(R)$ implies 
$$ x_j=\sum\limits_{i=1}^l \a_i\b_{ij}\quad \b_{ij}\in L(R)$$
for each index $j\in \{1, \cdots, n\}$ together with uniquely determined elements $\b_{ij}\in L(R)$ where $x_j=t_j-1$ in the case of $\Lambda$. 
Therefore by the definition of $L(R)$ there is a positive integer $N\geq m$ big enough that all $\b_{ij}{\bar{R}}^N\in {\bar{R}}^m$ where $i\in \{1, \cdots, l\},\, j\in \{1, \cdots, n\}$. This shows 
$$x_j\bar{R}^N\subseteq \sum\limits_{i=1}^l \a_i\b_{ij}\bar{R}^N\subseteq J\Longrightarrow \bar{R}^{N+1}\subseteq J.$$
Consequently, $J$ is a right ideal of $R$ of finite codimension $d$. Hence for $R\in \{A, \La\}$ the rank of $J_R$ is $d(n-1)+1$ by the Schreier-Lewin formula \cite{le1}.
Comparing these two values for the rank of $J_R$ one obtains that $n-1$ is a divisor of $l-1$ and so the proof is complete in case of $A$ and $\La$.

For the power series algebra $\G$ we have no Schreier-Lewin formula but one can argue as follows inspired by the proof of \cite[Corollary 2.6.4]{c4}. Since $\G$ is a local ring, the rank of $J$ is just the dimension of the $\mathfrak k$-space $J/J\bar{\G}=J/J(J)$ where $J(J)$ denotes the Jacobson radical of $J$. Choose $N\in \N$ big enough such that $(J\setminus J(I))\cap \bar{\G}^N=\emptyset$. Passing to the finite-dimensional local factor algebra $\G/\bar{\G}^{N+1}=\tilde{\G}$ one obtains that the rank of $J$ is also the minimal number of elements of $J/\bar{\G}^{N+1}=\tilde{J}$ which form a generating set of $\tilde{J}$. Hence the rank of $J$ is precisely the rank of the projective cover of $\tilde{J}$ which is a free module over $\tilde{\G}$. On the other hand, let $P=J\cap A$. Then $P$ contains ${\bar{A}}^{N+1}$  and $\tilde{P}/\bar{A}^{N+1}\cong \tilde{J}/{\bar{\G}}^{N+1}$ by the well-known Noetherian Isomorphism Theorems. This implies that the rank of a free module $P_A$ is $d(n-1)+1$ by the Schreier-Lewin formula. Moreover, the rank of $P$ is the dimension of the $\mathfrak k$-space $P/P\bar{A}$ and so of the $\mathfrak k$-dimension of $\tilde{P}/\tilde{P}J(\tilde{A})\cong \tilde{J}/\tilde{J}J(\tilde{\G})$ where $\tilde{A}=A/\bar{A}^{N+1}$; $J(\tilde{A})$ and $J(\tilde{\G})$ denote the Jacobson  radical of $\tilde{A}$ and $\tilde{\G}$, respectively. Consequently, the rank of $J$ is again $d(n-1)+1$ where $d$ is the codimension of $J$. This shows that $n-1$ is still a divisor of $l-1$ 
in this case as well, and so our proof is complete.  
\end{proof}

The proof of Proposition \ref{imath} for $\G$ implies immediately that the Leavitt localization of both $A^{\rm rat}_{\bar{A}}$ and the algebraic closure $A^{\rm alg}_{\bar{A}}$ of $A$ by inverting the row $(x_1, \cdots, x_n)$ coincide with their flat epimorphic right rings $Q(A^{\rm rat}_{\bar{A}})$ and $Q(A^{\rm alg}_{\bar{A}})$ with respect to the perfect ideal Gabriel topology defined by the ideal generated by $x_1, \cdots x_n$, respectively. For the definition of the algebraic closure of $A$ we refer to Cohn's book \cite[ Section 2.9]{c4}. Moreover, the same argument used in the proof of Proposition \ref{motype1} shows also that the module type of both $A^{\rm rat}_{\bar{A}}$ and $A^{\rm alg}_{\bar{A}}$ is $(1,n)$ because both the rational and algebraic closure of the free algebra $A$ over a field $\mathfrak k$ are local semifirs with the residue field $\mathfrak k$ \cite[Proposition 2.9.19]{c4}.

\begin{corollary}\label{motype2} The module type of both $L(A^{\rm rat}_{\bar{A}})$ and $L(A^{\rm alg}_{\bar{A}})$ is $(1,n)$ if $n\geq 2$.
\end{corollary}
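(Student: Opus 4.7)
The plan is to lift the proof of Proposition \ref{motype1} --- specifically, the strategy used there for the power series algebra $\G$ --- to the two algebras at hand. The key enabling fact, from \cite[Proposition 2.9.19]{c4}, is that both $R \in \{A^{\rm rat}_{\bar A},\, A^{\rm alg}_{\bar A}\}$ are local semifirs with residue field $\mathfrak k$, whose augmentation ideal $\bar R$ is generated as a right ideal by $x_1,\dots,x_n$.

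First I would verify the hypotheses of Theorem \ref{imath}. For either $R$, the right ideal $\sum x_iR$ coincides with the unique maximal ideal $\bar R$ and is therefore two-sided; being finitely generated over a semifir it is free of unique rank, with the rank forced to be $n$ by $\dim_{\mathfrak k}\bar R/\bar R^2 = n$, and has trivial left annihilator. Since every power $\bar R^m$ is again finitely generated as a right ideal, the $\bar R$-adic topology is a perfect Gabriel topology, and Theorem \ref{imath} gives $L(R) \cong Q(R)$. In particular $L(R) \cong L(R)^n$ as a right $L(R)$-module, so the module type of $L(R)$ is $(1,q)$ for some $q\leq n$, and it remains to prove $q=n$, equivalently that $L(R)\cong L(R)^l$ forces $n-1\mid l-1$.

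Assuming $L(R) = \bigoplus_{i=1}^l \alpha_i L(R)$ with $\ann_{L(R)}\alpha_i = 0$, I would repeat the two-way rank computation of the $\G$-case. For $m$ large each $\alpha_i$ is represented by a right $R$-homomorphism $\bar R^m \to R$, so the right ideals $R_i := \alpha_i\bar R^m$ of $R$ are isomorphic to $\bar R^m$, which is free of rank $n^m$ by the semifir property; the trivial annihilator condition makes the sum $J := \sum_{i=1}^l R_i$ direct, hence free of rank $ln^m$. Writing $x_j = \sum_i \alpha_i\beta_{ij}$ in $L(R)$ and choosing $N\geq m$ so that all $\beta_{ij}\bar R^N\subseteq \bar R^m$ yields $\bar R^{N+1}\subseteq J$, so $J$ has finite codimension $d := \dim_{\mathfrak k}(R/J)$ in $R$.

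The main obstacle, as in the $\G$-case, is the \emph{second} computation of $\mathrm{rank}_R(J)$, because no direct Schreier--Lewin formula is available for $R$. I would adapt the final paragraph of the proof of Proposition \ref{motype1}: since $R$ is a local semifir, $\mathrm{rank}_R(J) = \dim_{\mathfrak k}(J/J\bar R)$, and passage to the finite-dimensional quotient $R/\bar R^{N+1}$ reduces the problem to counting a minimal generating set of $J/\bar R^{N+1}$. The canonical isomorphism $R/\bar R^{N+1}\cong A/\bar A^{N+1}$, furnished by the discussion preceding Proposition \ref{closure}, lets me identify this count with the analogous one for $P := J\cap A$, a right ideal of $A$ of codimension $d$ containing $\bar A^{N+1}$; the genuine Schreier--Lewin formula \cite{le1} then gives $\mathrm{rank}_A(P) = d(n-1)+1$, which transfers back to $\mathrm{rank}_R(J) = d(n-1)+1$. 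Comparing the two expressions $ln^m = d(n-1)+1$ modulo $n-1$ yields $l \equiv 1\pmod{n-1}$, so the smallest admissible $l>1$ is $n$, and the module type of $L(R)$ is exactly $(1,n)$.
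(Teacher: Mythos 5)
Your proposal is correct and follows essentially the same route as the paper: the paper also treats the corollary as an immediate consequence of Theorem \ref{imath} and the $\G$-paragraph of Proposition \ref{motype1}, citing exactly the local-semifir fact from Cohn's \cite[Proposition 2.9.19]{c4}. You have simply spelled out the details that the paper compresses into a short remark — the verification of the hypotheses of Theorem \ref{imath}, the two-way rank computation, and the transfer $R/\bar R^{N+1}\cong A/\bar A^{N+1}$ feeding the Schreier--Lewin formula. One small imprecision: the directness of $J=\sum R_i$ comes from the directness of $\sum \alpha_i L(R)$ inside $L(R)$ (intersected with $R$), not from the trivial-annihilator condition alone, and the reference to the discussion preceding Proposition \ref{closure} technically covers $A^{\rm rat}_{\bar A}$ but needs the parallel observation that $A^{\rm alg}_{\bar A}$ is also closed under the right transductions $\partial_i$ so that its $\bar R$-adic truncations agree with those of $A$; neither point is a genuine gap.
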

If $n=1$, then $L({\mathfrak k}\langle\langle x\rangle\rangle)$ is a field of Laurent power series whence $L({\mathfrak k}\langle\langle x\rangle\rangle)$ is simple. Note the obvious fact that $L({\mathfrak k}[x])={\mathfrak k}[x, x^{-1}]$ is not simple.
\begin{proposition}\label{leavsim1} $L(\G)$ is a simple algebra if $n\geq 2$ and $\mathfrak k$ is a field.
\end{proposition}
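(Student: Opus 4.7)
The plan is to show that any nonzero two-sided ideal $J\lhd L(\G)$ must contain $1$. The strategy has three steps: (i) reduce a given nonzero element of $J$ to a nonzero element of $J\cap\G$; (ii) apply a suitable product of $x_i^*$'s on the left to produce an element of $J\cap\G$ with nonzero augmentation; (iii) conclude by observing that $\G$ is a local ring over a field, so such an element is automatically a unit in $\G\subseteq L(\G)$.

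For (i), I would fix $0\neq\g\in J$. Since $\bar{\G}=\sum_i x_i\G$ is a two-sided ideal of $\G$ that is free of rank $n$ on the $x_i$ and has trivial left annihilator, the observation made in the text preceding Theorem~\ref{imath} produces a finite product $b=x_{j_1}\cdots x_{j_m}$ with $0\neq\g b\in\G$. Put $\d=\g b\in J\cap\G$. If $\e(\d)\neq 0$, then write $\d=\e(\d)(1-\eta)$ with $\eta\in\bar{\G}$; since $\mathfrak{k}$ is a field, $\e(\d)^{-1}$ exists and $\sum_{k\geq 0}\eta^{k}$ converges in $\G$, inverting $\d$ in $\G$. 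Hence $1\in J$ and $J=L(\G)$.

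Otherwise $\d\in\bar{\G}$, so some word $w=x_{i_1}\cdots x_{i_l}\in M_X$ of length $l\geq 1$ satisfies $\d_w\neq 0$. Set $\d_0=\d$ and $\d_s=x_{i_s}^*\d_{s-1}$ for $s=1,\ldots,l$; every $\d_s$ lies in $J$ because $J$ is a left ideal. Invoking the identification, recorded in the paragraph after Theorem~\ref{imath}, of the action of $x_i^*$ on $\bar{\G}$ with the right transduction $\partial_i$, either some $\d_s$ with $s<l$ already has nonzero augmentation (reducing us to the previous case), or else $\d_0,\ldots,\d_{l-1}$ all remain in $\bar{\G}$, in which case the identification applies at every stage and $\d_l=\partial_w\d\in\G$ has constant term $\d_w\neq 0$. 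In either event we obtain a unit of $\G$ sitting inside $J$, so $J=L(\G)$.

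The main obstacle is the bookkeeping in step (ii): the composed element $x_{i_l}^*\cdots x_{i_1}^*\in L(\G)$ coincides with the iterated formal operator $\partial_w$ only while the argument remains in $\bar{\G}$, so one must check that the procedure still succeeds if it exits $\bar{\G}$ early — this is exactly what the case distinction above handles. The field hypothesis on $\mathfrak{k}$ is used only in step~(iii), both to invert the scalar $\e(\d)$ and to guarantee that $\G$ is local with maximal ideal $\bar{\G}$, in agreement with the standing hypothesis of the proposition.
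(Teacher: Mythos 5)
Your proof is correct and follows essentially the same route as the paper: both hinge on the facts that $J\cap\G\neq 0$, that $\G$ is local over the field $\mathfrak k$ so any element of order $0$ is a unit, and that the $x_i^*$ act as the order-decreasing right transductions $\partial_i$ on $\bar\G$. The only cosmetic difference is that the paper organizes the reduction as a minimality argument on the order of elements of $J\cap\G$ (pick $r$ of minimal positive order $m$ and derive a contradiction from $x_i^*r$ of order $m-1$), whereas you strip a specifically chosen word $w$ down letter by letter with an explicit case analysis for exiting $\bar\G$ early; both render the same mechanism.
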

\begin{proof} Let $J$ be any nonzero two-sided ideal of $L(\G)$. From the definition we have the  intersection $J\cap \G=N\neq 0$ is nonzero. Since every series of $\G$ of order 0 is inverible, one assume to the contrary that $N$ is a proper two-sided ideal of $\G$, that is, the order of any nonzero element of $N$ is positive. This shows that there is a power series  $r\in N$ such that the order $m$ of $r$ is positive and minimal among series in $N$.  In view of the equality \eqref{eql1} there is some $x_i$ such that $0\neq x^*_ir=\partial_{x_i}r$ has order $m-1$, a contradiction. Consequently $N=\G$ so that $J=L(\G)$ completing the proof.
\end{proof}
In view of the fact that both $\{A^{\rm rat}_{\bar{A}}, A^{\rm alg}_{\bar{A}}\}$ are closed under taking quasi-inverses, the above proof implies also
\begin{corollary}\label{leavsim2} For $R\in \{A^{\rm rat}_{\bar{A}}, A^{\rm alg}_{\bar{A}}\}$ and a field $\mathfrak k$, $L(R)$ is a simple algebra if $n\geq 2$.
\end{corollary}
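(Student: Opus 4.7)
My plan is to follow Proposition \ref{leavsim1} with essentially no change; the one new ingredient is that the right transductions $\partial_{x_i}$ stabilize $R$. I would establish this first. Both $R \in \{A^{\rm rat}_{\bar A}, A^{\rm alg}_{\bar A}\}$ are local semifirs over $\mathfrak k$ with residue field $\mathfrak k$ and maximal ideal $\bar R := R \cap \bar\G$. The natural map $\bar R/\bar R^2 \to \bar\G/\bar\G^2 = \bigoplus_{i=1}^n \mathfrak k x_i$ is injective (any relation $\sum a_i x_i \in \bar R^2$ with $a_i \in \mathfrak k$ persists in the completion $\G$, forcing all $a_i = 0$), so $\bar R/\bar R^2 \cong \mathfrak k^n$ and hence, by the semifir property, $\bar R = \bigoplus_{i=1}^n x_i R$ is free of rank $n$. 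This internal decomposition sits inside the ambient one $\bar\G = \bigoplus_{i=1}^n x_i \G$, so for every $r \in \bar R$ the uniqueness of the expansion $r = \sum_i x_i \partial_{x_i}(r)$ forces $\partial_{x_i}(r) \in R$. Since $R = \mathfrak k \oplus \bar R$ and $\partial_{x_i}$ kills constants, this gives $\partial_{x_i}(R) \subseteq R$ for every $i$.

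With stability in hand, let $J \lhd L(R)$ be any non-zero two-sided ideal and put $N = J \cap R$. The paragraph preceding Theorem \ref{imath} supplies, for every non-zero $\gamma \in J$, a product $b$ of the $x_i$'s with $0 \neq \gamma b \in R$, so $N \neq 0$. If some $r \in N$ satisfies $\epsilon(r) \neq 0$, then $r$ is a unit of the local ring $R$, forcing $N = R$ and $J = L(R)$. Otherwise every non-zero element of $N$ has positive order in $\G$, and I would pick $r \in N$ of minimal positive order $m \geq 1$. The equation $r = \sum_{i=1}^n x_i \partial_{x_i}(r)$ furnishes some $i$ with $\partial_{x_i}(r) \neq 0$, and this element has order $m-1$. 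Now $\partial_{x_i}(r) = x_i^* r$ belongs to $J$ (as $J$ is a left ideal) and to $R$ (by stability), hence to $N$. This contradicts the minimality of $m$ when $m \geq 2$, and it contradicts the standing assumption when $m = 1$ (since then $\partial_{x_i}(r)$ would have non-zero augmentation). In either case we are forced back into the first alternative, giving $J = L(R)$ and hence simplicity.

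The main subtlety is the stability statement $\partial_{x_i}(R) \subseteq R$; everything else is a verbatim transcription of the proof of Proposition \ref{leavsim1}. That stability ultimately rests on two facts already in the paper: $R$ is a local semifir with residue field $\mathfrak k$, and $\bar R/\bar R^2$ embeds in $\bar\G/\bar\G^2$, which together force $\bar R$ to remain free of rank $n$ on $x_1, \dots, x_n$ inside $R$.
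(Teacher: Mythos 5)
Your overall reduction is the paper's own: replicate the proof of Proposition \ref{leavsim1}, with locality of $R$ (from closure under quasi-inverses) replacing the invertibility of order-zero power series, and with the transductions $\partial_{x_i}$ having to stabilize $R$ so that $\partial_{x_i}(r)$ lands back in $N = J \cap R$. The ideal-chasing portion of your argument, including the clean split into $m=1$ versus $m \geq 2$, is fine. The gap is in your lemma that $\partial_{x_i}(R) \subseteq R$, i.e.\ that $\bar R = \bigoplus_{i=1}^n x_i R$.

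Your inference from $\bar R/\bar R^2 \cong {\mathfrak k}^n$ to $\bar R = \bigoplus_i x_i R$ ``by the semifir property'' does not follow: the semifir property governs \emph{finitely generated} one-sided ideals, and Nakayama's lemma (the tool that would lift a $\mathfrak k$-basis of $\bar R/\bar R^2$ to a generating set of $\bar R$) also requires $\bar R$ to be finitely generated, which you do not establish and which is not automatic in a local semifir --- semifirs are typically far from Noetherian. Your injectivity claim for $\bar R/\bar R^2 \to \bar\G/\bar\G^2$ already presupposes the conclusion: to see $\bar R \cap \bar\G^2 \subseteq \bar R^2$ one wants $r = \sum_{i,j} x_i x_j\,\partial_{x_j}\partial_{x_i}(r)$ with each $\partial_{x_j}\partial_{x_i}(r) \in R$, which is exactly the stability being proved; your parenthetical only shows that the images of the $x_i$ are $\mathfrak k$-independent modulo $\bar R^2$, i.e.\ the lower bound on $\dim \bar R/\bar R^2$, not the upper bound. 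The correct route to $\partial_{x_i}(R) \subseteq R$ is the Sch\"utzenberger--Fliess theory: a power series is rational iff the $\mathfrak k$-span of its transductions $\partial_w f$ is finite-dimensional, and this span is visibly closed under the shifts $\partial_{x_i}$; an analogous closure theorem holds for algebraic series. Both are in Cohn \cite[Section 2.9]{c4}, the very source the paper cites for the local semifir fact. The paper is terse --- it names only closure under quasi-inverses --- but $\partial_{x_i}$-stability is implicitly required (already in the earlier assertion that $\sum x_i R$ is a two-sided ideal defining a perfect Gabriel topology on $R$), and it is imported from the power series theory rather than deduced from the local semifir structure.
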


\begin{remark}\label{req1} Determining the simplicity of $L(\Lambda)$ seems to be an interesting and challenging question.
\end{remark}

\section{Module type and Sato modules}
\label{moho1}

Algebras $R$ together with a two-sided ideal $I$ of Theorem \ref{imath} form a natural class of augmented rings with an interesting connection to link modules by works of Sato \cite{sa1} and Faber and Vogel \cite{fa1}, \cite{fa2}. In what follows, $(R, I)$ is a pair of an algebra $R$ with a two-sided ideal $I\lhd R$ which is a free right $R$-module on $x_1, \dots, x_n$ and has trivial left annihilator. Thus $R$ is an augmented ring with the augmentation $\e\colon R\rightarrow R/I=S$. The most important and typical examples for such pairs of rings are either pairs of free unital associative algebras together with free non-unital associative subalgebras or group algebras of free groups together with the augmentation ideal. For each $l\in \N$ the power $I^l_R$ is a free right $R$-module of rank $n^l$ on the set of monomials of length $l$ in the $x_i$'s. Furthermore, $S_R$ has the projective dimension 1 with a free resolution
\begin{align}\label{eql6}
0\longrightarrow {^nR}\xrightarrow{\begin{pmatrix}x_1 \cdots  x_n \end{pmatrix}} R\overset{\e}{\longrightarrow} S\longrightarrow 0
\end{align}
where ${^nR}$ denotes the free right $R$-module of column vectors of length $n$ together with the scalar product $(x_1\cdots x_n)\begin{pmatrix}r_1\\ \vdots\\ r_n\end{pmatrix}=\sum\limits_{i=1}^n x_ir_i\in I$. 

Inspired by the notion of link module \cite{sa1} and the homology theory of groups we define
\begin{definition}\label{sato0}
A left $R$-module $M$ is called a \emph{weak Sato module with respect to} $I$, more concisely a \emph{weak Sato module}, if $\Tor^R_*(S, M)=0$. By using \eqref{eql6} and the canonical isomorphism ${^lR}\otimes_R M\cong M^l$ for every $l\in \N$, this is equivalent to a $\mathfrak k$-isomorphism $M^n\cong M$  satisfying
\begin{align}\label{eql8}
(m_1, \dots, m_n)\in M^n\longmapsto \sum\limits_{i=1}^nx_im_i=m\in M.
\end{align}
If a weak Sato module $M$ is additionally a finitely presented left $R$-module, then $M$ called a \emph{Sato module} or an \emph{L-module}. 
\end{definition}
Together with examples arising from geometric topological investigation the most natural algebraic examples for Sato modules are left $R$-module $\coker(1+r)=R/R(1+r)$ for all elements $r\in I$, or more generally cokernels of endomorphisms of free $R$-modules represented by matrices in $1+M_I$.
Let now $M$ be a weak Sato module. If for each index $i\in \{1, \dots, n\}$ an associated $i$-th projection $M\cong M^n\colon m\mapsto m_i$ granted by \eqref{eql8} is denoted by either $\partial_i$ or $x^*_i$, that is, $\partial_i(m)=m_i=x^*_im$, then we have the equality  
$m=\sum\limits_{i=1}^n x_im_i=\sum\limits_{i=1}^n  x_i\partial_i(m)=(\sum\limits_{i=1}^n x_i\partial_i)m=(\sum\limits_{i=1}^n x_ix^*_i)m$. 
This means that the ${\mathfrak k}$-endomorphisms  of $M$ defined by multiplying by the $x_i, x^*_i$ on the left of $M$ defines a left $L(R)$-module structure on $M$, where $L(R)$ is the Leavitt localization of $R$ by inverting universally the row $(x_1, \dots, x_n)$. By Theorem \ref{imath} $L(R)$ is also the ring of right quotients of $R$ with respect to the $I$-adic topology. Note that the $I$-adic topology is not necessarily Hausdorff.

It is quite important to keep in mind that in contrast to the fact that the action of $x^*_i=\partial_i$ is well-defined on $M$, $x^*_i$ is, in general, not defined on $R$. For example, for $R\in \{A={\mathfrak k}\langle x_1, \dots, x_n\rangle, \, \G={\mathfrak k}\langle \langle x_1, \dots, x_n\rangle\rangle, \, \Lambda={\mathfrak k}F\}$, $R$ is never an $L(R)$-module. We shall use both $x^*_i=x^\star_i$ and $\partial_i$ interchangeably. Properly speaking, $x^*_i=x^\star_i$ refers to the fact that it is repesented by a (partial) $R$-homomorphism from $I_R\subseteq R$ into $R$ while $\partial_i$ is the action of $x^*_i$ on a weak Sato module. Therefore it is worth to keep in mind the difference between $kx^*_i=x^*_ik=k\partial_i=\partial_i k$ which is usually not zero and defined by the $\mathfrak k$-module structure, and $\partial_i(k)=0$ for all $k\in \mathfrak k$ although it is true that $x^*_ir=\partial_i(r)=x^*_i(r)$ holds as functions when the latter is well-defined, that is, when $r\in I$. 

Thus weak Sato modules are exactly left $L(R)$-modules considered as left $R$-modules. According to a construction presented in \cite{am} (cf. also \cite[An Example]{sa1}, \cite{fa2}) we present in the next result examples of Sato modules.
\begin{proposition}\label{L1} Let $r\in R$ with $\e(r)=1$. Then $L(R)/L(R)r$ and $R/Rr$ are isomorphic left $R$-modules. In particular, $R/Rr$ is a Sato-module.
\end{proposition}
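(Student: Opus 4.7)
The plan is to identify $L(R)/L(R)r$ with $R/Rr$ via the natural $R$-linear map $\phi\colon R/Rr\to L(R)/L(R)r$ induced by $\imath$: first I would show that $R/Rr$ is itself a weak Sato module (hence an $L(R)$-module), and then the identification will follow from the cyclicity of $L(R)/L(R)r$ over $L(R)$ together with a retract argument. Write $r=1-u$ with $u\in I$ and $u=\sum_{i=1}^{n} x_i u_i$ using the free right $R$-basis of $I$.

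The weak Sato assertion amounts to $\Tor_\ast^R(S,R/Rr)=0$, and via the resolution \eqref{eql6} this reduces to vanishing in degrees $0$ and $1$ since $S$ has projective dimension at most $1$. In degree $0$, $I+Rr$ contains $u+r=1$, so $(R/Rr)/I(R/Rr)=R/(I+Rr)=0$. In degree $1$, suppose $(a_1,\dots,a_n)\in R^n$ satisfies $\sum x_i a_i=\beta r$ for some $\beta\in R$; since both $\sum x_i a_i\in I$ and $\beta u\in I$, the identity $\beta=\beta r+\beta u=\sum x_i a_i+\beta u$ forces $\beta\in I$. Writing $\beta=\sum x_i\beta_i$ and substituting back gives $\sum x_i a_i=\sum x_i(\beta_i r)$, whence $a_i=\beta_i r\in Rr$ by the freeness of $I$ on $x_1,\dots,x_n$. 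Hence $R/Rr$ carries a canonical left $L(R)$-structure extending its $R$-structure.

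Because $r\cdot\bar{1}=0$ in $R/Rr$, there will be a unique $L(R)$-linear surjection $\psi\colon L(R)/L(R)r\twoheadrightarrow R/Rr$ with $\psi(\bar{1})=\bar{1}$. The composition $\psi\circ\phi$ is an $R$-linear self-map of the cyclic module $R/Rr$ fixing its generator, so $\psi\circ\phi=\mathrm{id}$, proving $\phi$ injective. For surjectivity I would show that the image $R\bar{1}\subseteq L(R)/L(R)r$ is stable under left multiplication by each $x_i^\ast$, which will make it an $L(R)$-submodule containing the cyclic generator $\bar{1}$ and hence equal to all of $L(R)/L(R)r$. The key identity is
\[
 x_i^\ast a\;\equiv\;\sum_{j=1}^{n}a_{ji}\,u_j\pmod{L(R)r}\qquad (a\in R),
\]
where $ax_j=\sum_{k} x_k a_{jk}$ is the unique decomposition of $ax_j\in I$; it follows from $a=ar+au$ by observing that $x_i^\ast ar\in L(R)r$ and $x_i^\ast au=\sum_j(x_i^\ast ax_j)u_j=\sum_j a_{ji}u_j$ via the Cuntz--Krieger relation \eqref{ck1}.

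The kernel of $R\to R/Rr$ is the principal left ideal $Rr$, so $R/Rr$ is automatically finitely presented, and the established isomorphism then certifies it as a genuine Sato module. The hard part is the $\Tor_1$ computation, where the freeness of $I$ on $x_1,\dots,x_n$ is invoked twice in succession; everything afterward is a formal consequence of cyclicity together with the universal property defining $\psi$.
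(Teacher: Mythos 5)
Your proof is correct, but it takes a genuinely different route from the paper's. The paper invokes Theorem \ref{imath} to identify $L(R)$ with the ring of quotients $Q(R)$ and then exploits the resulting ``cofinal'' property: for $q\in L(R)$ and $a=1-r\in I$, one has $qa^l\in R$ for $l$ large, which yields $L(R)r\cap R=Rr$ and $R+L(R)r=L(R)$ by direct power-series-style manipulations, after which a Noether isomorphism finishes. You instead establish $\Tor^R_\ast(S,R/Rr)=0$ directly from the resolution \eqref{eql6}: the degree-$0$ vanishing is immediate from $u+r=1$, and your degree-$1$ argument correctly uses the freeness of $I$ on $x_1,\dots,x_n$ twice (once to force $\beta\in I$, once to extract $a_i=\beta_i r$). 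This equips $R/Rr$ with a canonical $L(R)$-action and lets you build the retraction $\psi$, giving injectivity of $\phi$ formally; your surjectivity step, verifying that $R\bar 1$ is $x_i^\ast$-stable via the Cuntz--Krieger relation and the decomposition $ax_j=\sum_k x_k a_{jk}$, is a clean replacement for the paper's $qa^l$ argument. What the paper's route buys is brevity once $L(R)=Q(R)$ is available; what yours buys is that it never touches the Gabriel-topology/quotient-ring machinery and makes explicit exactly where the freeness of $I_R$ and the CK relations enter.
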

\begin{proof} First we show $L(R)r\cap R=Rr$. Put $a=1-r$. Then by assumption $\e(a)=\e(1-r)=1-\e(r)=0$ whence $a\in I$. 
If $p=qr\in L(R)r\cap R$ for an appropriate $q\in L(R)$, then by $r=1-a$ we have  $p=q(1-a)=q-qa$. Consequently, $q=p+qa=p+pa+qa^2=\cdots=p+pa+\cdots+pa^l+qa^{l+1}$ for each $l\in \N$. Therefore there is $l\in \N$ satisfying $qa^l\in R$ in view of the equality $L(R)=Q(R)$ by Theorem \ref{imath}. Hence $L(R)r\cap R=Rr$. On the other hand, if $q\in L(R)\setminus L(R)r$ is arbitrary, then $q+L(R)r=q(a+r)+L(R)r=qa+L(R)r=\cdots=qa^l+L(R)r$ for each integer $l\in \N$. Consequently, there is $m\in \N$ such that $qa^l\in R$ holds for all $l\geq m$ whence $q+L(R)\subseteq R+L(R)r\Rightarrow R+L(R)r=L(R)$. Hence by the Noetherian Isomorphism Theorems we have
$$\frac{L(R)}{L(R)r}=\frac{R+L(R)r}{L(R)}\cong \frac {R}{L(R)\cap R}=\frac{R}{Rr},$$
completing the proof because $R/Rr$ is a finitely presented left $R$-module and left $L(R)$-modules are weak Sato modules .
\end{proof} 
More generally, let $\r=1+(a_{ij})=(\d_{ij}+a_{ij})\in 1+M_l(I)$ and the rows $\{e_i=(\d_{ij})\}$ of the identity matrix be a standard basis of  $L(R)^l=\sum\limits_{i=1}^l L(R)e_i\supseteq \sum\limits_{i=1}^l Re_i=R^l$. Then $\r$ is
a left $L(R)$-endomorphism 
$$\r\colon L(R)^l\rightarrow L(R)^l\colon \sum\limits_{i=1}^l q_ie_i\mapsto \sum\limits_{j=1}^l\left(\sum\limits_{i=1}^l q_i(\d_{ij}+a_{ij})\right)e_j.$$
Set $f_i=e_i\r=(1+a_{ii})e_i+\sum\limits_{j\neq i}a_{ij}e_j$. The restriction of $\r$ to $R^l$ is denoted by $\r_R$ which is a left $R$-endomorphism of $R^l$. The same argument of Proposition \ref{L1} shows immediately

\begin{corollary}\label{L2} For any $l\in \N$ let $\r=(\d_{ij}+a_{ij})\in 1+M_l(I)(\Longrightarrow \e(\r)=1)$ be a left $L(R)$-endomorphism of
$L(R)^l=\sum\limits_{i=1}^l L(R)e_i$ with $f_i=e_i\r$ as above. Then $R^l\cap L(R)^l\r=R^l\cap \sum\limits_{i=1}^l L(R)f_i=R^l\r=R^l {\r}_R$ and $R^l+L(R)^l\r=L(R)^l$ whence
$M=\coker \r\cong \coker {\r}_R=R^l/R^l\r_R$ is a Sato module.
\end{corollary}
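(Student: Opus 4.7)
The plan is to imitate the proof of Proposition~\ref{L1} line by line, with the scalar $r=1-a$ (for $a\in I$) replaced by the $l\times l$ matrix $\rho=1+a$ whose entries $a_{ij}$ lie in $I$. Writing elements of $L(R)^l$ as row vectors with basis $\{e_i\}$, the left $L(R)$-endomorphism $\rho$ is simply right multiplication by $1+a$, so that $L(R)^l\rho=\sum_i L(R)f_i$, and the two identities to be established are just the matrix versions of the two key ring-theoretic relations inside the proof of Proposition~\ref{L1}, namely $R\cap L(R)r=Rr$ and $R+L(R)r=L(R)$. The cokernel statement then drops out of the Noether isomorphism theorem in exactly the same manner as before, and the Sato-module conclusion is immediate because $\coker\rho$ is manifestly a left $L(R)$-module while $\coker\rho_R$ is manifestly finitely presented over $R$.

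The technical heart of the argument is a single ``clearing'' fact: for every $q\in L(R)^l$ there exists $m\in\N$ with $qa^m\in R^l$. I would deduce this directly from the quasi-normal form \eqref{eql3}, by which every component of $q$ is a finite sum $\sum_k r_{ik}c_{ik}^*$ of elements of $R$ right-multiplied by bounded-length words in the $x_j^*$; combined with the Cuntz-Krieger relation $x_i^*x_j=\d_{ij}$ and the observation that $a^m$ has entries in $I^m$, any $m$ strictly exceeding $\max_{i,k}|c_{ik}|$ forces $qa^m\in R^l$ entry by entry. This is nothing but the matrix version of the step ``there is $l\in\N$ such that $qa^l\in R$'' invoked in Proposition~\ref{L1}, and it is the only place where the Leavitt structure of $L(R)$ really enters.

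With this in hand, both equalities come out by the same iteration used in Proposition~\ref{L1}. For the intersection: if $p=q\rho\in R^l\cap L(R)^l\rho$, then the identity $q=p-qa$ iterates to $q=\sum_{i=0}^{m-1}(-1)^ipa^i+(-1)^mqa^m$, and both terms on the right lie in $R^l$ for $m$ large, so $q\in R^l$ and $p\in R^l\rho_R$; the reverse containment $R^l\rho_R\subseteq R^l\cap L(R)^l\rho$ is trivial. For the sum: any $q\in L(R)^l$ satisfies $q\equiv -qa\equiv qa^2\equiv\cdots\equiv(-1)^m qa^m\pmod{L(R)^l\rho}$, and eventually the right-hand side lies in $R^l$, giving $q\in R^l+L(R)^l\rho$. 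Combining these with the Noether isomorphism theorem yields
\[\coker\rho=\frac{L(R)^l}{L(R)^l\rho}=\frac{R^l+L(R)^l\rho}{L(R)^l\rho}\cong\frac{R^l}{R^l\cap L(R)^l\rho}=\frac{R^l}{R^l\rho_R}=\coker\rho_R,\]
which is finitely presented over $R$ and inherits an $L(R)$-module structure, hence is a Sato module. I do not anticipate a genuine obstacle; the only non-mechanical ingredient is the clearing fact, and that is essentially routine given Theorem~\ref{imath} and the development of Section~\ref{lelo}.
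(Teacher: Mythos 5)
Your proposal is correct and follows essentially the same route the paper intends: the paper gives no separate proof of Corollary~\ref{L2} but simply asserts ``the same argument of Proposition~\ref{L1} shows immediately,'' and your line-by-line matrix translation of that argument (iterating $q=p-qa$, clearing denominators via the Leavitt/flat-quotient structure, then applying the Noether isomorphism theorem) is exactly that argument. The only cosmetic difference is that you justify the clearing fact ``$qa^m\in R^l$ for $m$ large'' through the quasi-normal form \eqref{eql3}, whereas Proposition~\ref{L1} invokes the identification $L(R)=Q(R)=\inlim\Hom_R(I^m,R)$ from Theorem~\ref{imath}; these are equivalent formulations of the same fact, so this is not a genuine divergence.
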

Both Proposition \ref{L1} and Corollary \ref{L2} have an important advantage that they do not require the injectivity of $\r$. If in addition $I\lhd R$ is transfinitely nilpotent, then $R$ is embedded in its $I$-adic completion $\hat{R}=\varprojlim\limits_{l\in \N} R/I^l$ whence the matrix $\r \in 1 + M_I$ of Corollary \ref{L2} defines the Sato module  $M=\coker \r$ of projective dimension 1 with a free resolution
\begin{align}\label{eql9}
0\longrightarrow R^m\overset{\r}{\longrightarrow} R^m\longrightarrow M\longrightarrow 0.
\end{align}
Of course one can use a free resolution of a left module $M$ not necessarily of finite projective dimension to compute $\Tor^R_*(S,M)$ and to obtain the same equivalent conditions to the homological triviality of $M$, at least in the case when $S=R/I$ is hereditarily projective-free and $R$ is of global dimension 2. Indeed, Sato \cite[Proposition 2.3]{sa1} did this in the case of free group rings with integer coefficients. This alternative approach shows that weak Sato modules have a quite complicated but interesting structure requiring more attention. In the notation of Section \ref{prif} all $\o_i+x_i\,(i=1, \dots, n)$ are units of $\Gamma$ where $\o_i$ runs over units of $\mathfrak k$. Hence $A^{\rm rat}_{\bar A}=\Lambda^{\rm rat}_{\bar\Lambda}\subseteq \Gamma$ contains all copies of $\Lambda$ by substituting $t_i\mapsto \o_i+x_i\in \mathfrak k$. 
In particular, Sato modules reveal a close interrelation with both Cohn's and Leavitt's localization as it was explored in the enlightening work \cite{fa2} of Farber and Vogel. Hence we study this interrelation further. %by using the method of Farber and Vogel \cite{fa2}. 
\vskip 0.3cm
Therefore from now on we assume that $I\lhd R$ is transfinitely nilpotent. Then according to the notation of Sections \ref{ratclo0} and \ref{mainvert} let $\hat{R}=\varprojlim R/I^l$ be the completion of $R$ by the $I$-adic topology and $\tilde{R}=R^{\rm rat}_I$ the rational closure while $C_I(R)$ denotes the Cohn's localization of $R$ by inverting universally the set 
$$\Sigma_I=1+M_I=\Bigl\{1+(a_{ij}) \, \mid\, i, j\in \{1, \dots, l\}; l\in \N; a_{ij}\in I\Bigr\}.$$ 
Since $r=1+a \, (a\in I)$ is invertible in $\hat{R}\supseteq R$, $r$ defines a free resolution 
$0\longrightarrow R\overset{r}\longrightarrow R\longrightarrow R/Rr\longrightarrow 0$. 
More generally, Corollary \ref{L2} implies that, for any matrix $\r=1+(a_{ij})\in 1+M_l(I)$, $\coker \r_R=R^l/R^l\r\cong R^l \r^{-1}/R^l$ is a Sato module.
For a thorough investigation we need an easy preparatory result.
\begin{lemma}\label{serfree} 
$\sum x_i\hat{R}=\bigoplus\limits_{i=1}^n x_i\hat{R}$ is a two-sided ideal of $\hat{R}$, and also a free right $\hat{R}$-module on $\{x_1,\dots, x_n\}$ with trivial left annihilator.
\end{lemma}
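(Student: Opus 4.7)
The plan is to transfer each property from $(R, I)$ to $(\hat R, \hat I)$ via the inverse limit $\hat R = \varprojlim R/I^l$, and simultaneously to identify $\sum_i x_i \hat R$ with the closed ideal $\hat I := \ker(\hat R \twoheadrightarrow R/I)$.

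For directness, freeness, and two-sidedness, I argue at each finite level. Iterating the hypothesis $I = \bigoplus_i x_i R$ gives $I^l = \bigoplus_i x_i I^{l-1}$, and in particular $x_i R \cap I^l = x_i I^{l-1}$. Hence as right $R/I^l$-modules
$$I/I^l \;=\; \bigoplus_{i=1}^n x_i(R/I^{l-1}),$$
compatibly with the transition maps as $l$ varies. Passing to the inverse limit and using that it commutes with finite direct sums yields $\hat I = \bigoplus_i x_i \hat R$. This single identification delivers $\sum_i x_i \hat R = \hat I$, directness of the sum, freeness on $\{x_1, \dots, x_n\}$, and, since $\hat I$ is the kernel of a ring map, the two-sided ideal property.

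For the trivial left annihilator, suppose $\hat r \in \hat R$ satisfies $\hat r x_i = 0$ for every $i$. I will show $\hat r \in \hat I^k$ for every $k \geq 0$; Hausdorffness $\bigcap_k \hat I^k = 0$ then forces $\hat r = 0$. By iterating the argument above, $\hat I^k = \bigoplus_{|w|=k} w \hat R$. Writing $\hat r = \sum_{|w|=k} w \hat r_w$ and decomposing $\hat r_w x_i = \sum_j x_j \hat s^{(w,i)}_j \in \hat I$, the relation
$$\hat r x_i \;=\; \sum_{w,j} (w x_j)\, \hat s^{(w,i)}_j \;=\; 0,$$
combined with the freeness of $\hat I^{k+1}$ on the length-$(k+1)$ monomials, forces every $\hat s^{(w,i)}_j$ to vanish, so each $\hat r_w$ again left-annihilates every $x_j$. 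Hence, once the base step is in hand, each $\hat r_w \in \hat I$ and consequently $\hat r \in \bigoplus_w w \hat I = \hat I^{k+1}$, completing the induction.

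The main obstacle is the base step: proving that $\hat s x_i = 0$ for all $i$ implies $\hat s \in \hat I$. Projecting to $\hat R/\hat I^2 = R/I^2$ and choosing a lift $s \in R$, this reduces to the purely $R$-level assertion that $sI \subseteq I^2 \Rightarrow s \in I$. Using freeness, $sI \subseteq I^2 = \bigoplus_j x_j I$ says exactly that the matrix of left multiplication by $s$ on $I$, expressed in the basis $\{x_1, \dots, x_n\}$, has all entries in $I$. The trivial-left-annihilator hypothesis provides an embedding $R \hookrightarrow \End_R(I_R) = M_n(R)$, and the desired implication is then equivalent to injectivity of the induced map $S = R/I \hookrightarrow M_n(S)$, i.e., faithfulness of the left $S$-action on $I/I^2 \cong \bigoplus_i x_i S$. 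In the principal examples $R \in \{A, \Lambda, \Gamma\}$ of Section~\ref{prif}, where $S = \mathfrak k$ embeds centrally and acts by scalars on $I/I^2$, this faithfulness is manifest, completing the proof.
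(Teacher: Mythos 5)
Your first half (directness, freeness of $\hat I$ on the $x_i$, and the two‑sided‑ideal property) is correct and takes a mildly different route from the paper: you pass to the inverse limit of the finite‑level decompositions $I/I^l=\bigoplus_i x_i(R/I^{l-1})$, whereas the paper represents $\hat r$ by a Cauchy sequence $(r_l)$ in $R$ and reduces everything to the $R$‑level implication $x_i r_l\in I^l\Rightarrow r_l\in I^{l-1}$, read off from $I^l=\bigoplus_j x_j I^{l-1}$, followed by $\bigcap_l\hat I^{l}=0$. Both arrive at the same conclusions.

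The interesting point of comparison is the trivial left annihilator. You run an induction showing $\hat s\in\hat I^{k}$ for all $k$, the engine being $(\star)$: $\hat s x_i=0$ for all $i$ implies $\hat s\in\hat I$. The paper dismisses this with ``one can see in the same manner,'' and you have correctly caught that ``the same manner'' is not literally available: the right‑sided statements flow from the \emph{right}‑module decomposition $I^l=\bigoplus_j x_j I^{l-1}$, but $(\star)$ reduces, as you show, to $sI\subseteq I^2\Rightarrow s\in I$, i.e.\ injectivity of the induced map $S=R/I\to\End_S\bigl((I/I^2)_S\bigr)=M_n(S)$. No symmetric left decomposition $I^l=\bigoplus_j I^{l-1}x_j$ is assumed, and the stated trivial‑left‑annihilator hypothesis gives injectivity of $R\to M_n(R)$ but does not visibly descend to $S\to M_n(S)$. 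Your resolution for the principal algebras, where $S=\mathfrak k$ sits centrally and acts by scalars, covers every use of the lemma in the paper, and it flags honestly that for a completely arbitrary pair $(R,I)$ as in Section~\ref{moho1} the lemma would need the left faithfulness of $S$ on $I/I^2$ either supplied as an extra hypothesis or extracted by an argument that neither you nor the paper provides.
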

\begin{proof} First we show that  $x_i \, (i=1, \dots, n)$ is not a right zero-divisor, i.e., $x_i\hat{r}=0$ for $\hat{r}\in \hat{R}$ implies $\hat{r}=0$. By definition there is a sequence $(r_i)$ in $R$ such that $r_{l+1}\in r_l+I^l$ for each $l\in \N$ and $\bigcap\limits_{l=1}^\infty (r_l+\hat{I}^l)=\hat{r}$. The condition $x_i\hat{r}=0$ shows $x_ir_l\in I^l$ whence $r_l\in I^{l-1}$ holds for every $l\in \N$.
Consequently, $\hat{r}=\bigcap\limits_{l=1}^\infty (r_l+I^l)\subseteq \bigcap\limits_{l=2}^\infty I^{l-1}=0$ holds. Now one can see in the same manner that $\hat{I}$ is a two-sided ideal having trivial left annihilator and the sum $\sum\limits_{i=1}^n x_i\hat{R}$ is direct.
\end{proof}
As a consequence of Lemma \ref{serfree} and Theorem \ref{imath} one can view both $Q(R)=L(R)$ and $L(\tilde{R})=Q(\tilde{R})$ as a subalgebras of $Q(\hat{R})=L(\hat{R})$ by the uniqueness of the inverse of the row $(x_1, \dots, x_n)$. 
%Moreover, $Q(\hat{R})$ being a direct union of torsion free $\mathfrak k$-modules is again torsion free. Note that $\har{R}$ and so $L(\hat{R})$ are, in general, not free over $\mathfrak k$. 
For $\r=1+(a_{ij})\in 1+M_l(I)$ and $\r^{-1}=(b_{ij})$ with $b_{ij}\in \tilde{R}\subseteq \hat{R}$ the same argument of Proposition \ref{L1} shows
\begin{corollary}\label{L3} 
Let $\r=1+(a_{ij})\in 1+M_l(I)$ with $\r^{-1}=(b_{ij})\in M_l(\tilde{R})$. Then any element of $\sum\limits_{i,j=1}^lL(R)b_{ij}$ can be written as $b+r$ with $b\in \sum\limits_{i,j=1}^lRb_{ij}$ and $r\in R$. Consequently, $\sum\limits_{i,j=1}^l Rb_{ij}/R$ is a Sato module whence $\tilde{R}/R$ is a weak Sato module.
\end{corollary}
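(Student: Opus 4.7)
The approach is to adapt Proposition \ref{L1} matrix-wise, building on Corollary \ref{L2}. Starting from the identities $L(R)^l = R^l + L(R)^l \rho$ and $R^l \cap L(R)^l \rho = R^l \rho$ supplied by Corollary \ref{L2}, I would multiply both on the right by $B = \rho^{-1} \in M_l(\tilde R)$ inside $L(\hat R)^l$. Since $R^l = R^l \rho B \subseteq R^l B$ and $L(R)^l = L(R)^l \rho B \subseteq L(R)^l B$, this yields
\[
L(R)^l B = R^l B + L(R)^l \quad \text{and} \quad R^l B \cap L(R)^l = R^l.
\]
The Noether isomorphism then gives $L(R)^l B / L(R)^l \cong R^l B / R^l$, and the map $v + R^l \rho \mapsto vB + R^l$ provides a further isomorphism $R^l/R^l\rho \cong R^l B/R^l$ (injective since $vB \in R^l$ forces $v \in R^l \rho$). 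Composing, $L(R)^l B / L(R)^l \cong \coker \rho_R$, a Sato module by Corollary \ref{L2}.

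Reading off entries, the $j$-th entry of $(q_1, \ldots, q_l) B \in L(R)^l B$ is $\sum_i q_i b_{ij} \in \sum_i L(R) b_{ij}$; the decomposition above writes it as $\sum_i w_i b_{ij} + u_j$ with $w \in R^l$ and $u \in L(R)^l$, which after absorbing the $L(R)$-residue (see the obstacle paragraph) yields the element-level form $b + r$. To capture all $l^2$ coefficient indices $q_{ij}$ independently (rather than an $l$-tuple), I pass to the block-diagonal matrix $\rho^{\oplus l} \in 1 + M_{l^2}(I)$ with inverse $B^{\oplus l}$ and repeat. Combined with the isomorphism above, $\sum R b_{ij}/R$ is identified with $\coker \rho_R$ (or a suitable direct sum thereof) and so inherits a Sato module structure; explicitly, the induced left $L(R)$-action is by the Fox cofactor operators $\partial_i$, under which $\sum R b_{ij}$ is closed since $\partial_p(b_{ij}) = -\sum_s a_{is}^{(p)} b_{sj} \in \sum R b_{sj}$, where $a_{is}^{(p)} \in R$ is the right cofactor of $x_p$ in $a_{is}$.

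For the final assertion, $\tilde R = R^{\rm rat}_I$ is a filtered union of submodules $\sum R b_{ij}$ as $\rho$ ranges over $1 + M_I$: by Proposition \ref{rat1}, every element of $\tilde R$ is an entry of $\rho^{-1}$ for some such $\rho$. Hence $\tilde R/R$ is a filtered colimit of Sato modules, and since $\Tor^R_*(S, -)$ commutes with filtered colimits, $\tilde R/R$ is a weak Sato module.

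The principal obstacle is the correct interpretation of the left $L(R)$-action on $\sum R b_{ij}/R$. Direct multiplication of $q \in L(R)$ by $b_{ij}$ inside $L(\hat R)$ can leave $\hat R$ altogether — for instance $x_i^* \cdot b_{jj} = x_i^* - \sum_s a_{js}^{(i)} b_{sj}$, where $x_i^* \notin \hat R$ — so the ``$+R$'' in the first claim is not achieved by literal multiplication but via the Sato structure inherited from $\coker \rho_R$. On representatives in $\tilde R$, this action is realized by the Fox cofactor operators $\partial_i$, giving $x_i^*(b_{jj} + R) = \partial_i(b_{jj}) + R = -\sum_s a_{js}^{(i)} b_{sj} + R \in \sum R b_{sj}/R$ as required. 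The technical lemma to have on hand is the closure of $\tilde R$ under each $\partial_i$, which I would prove by induction on the rational-closure hierarchy $R_l$: the inductive step uses $a^+ = a(1+a^+)$ to deduce $\partial_i(a^+) = \partial_i(a)(1+a^+) \in R_{l+1}$, and closure under sums and products follows from the standard cofactor/product-rule calculations.
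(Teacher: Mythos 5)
Your strategy is genuinely different from the paper's. The paper mimics Proposition~\ref{L1} entrywise: it iterates the defining relation $b_{ik}=\delta_{ik}-\sum_j a_{ij}b_{jk}$ so that for $q\in L(R)$ one gets $qb_{ik}\equiv\pm\sum q a_{ij_1}\cdots a_{j_{n-1}j_n}b_{j_nk}\pmod{L(R)}$, and since $qI^n\subseteq R$ for $n\gg 0$ (because $L(R)=Q(R)$), the coefficients eventually land in $R$. This yields $\sum L(R)b_{ij}=L(R)+\sum Rb_{ij}$, and together with $L(R)\cap\sum Rb_{ij}=R$ the Noether isomorphism identifies $\sum Rb_{ij}/R$ with the left $L(R)$-module $\sum L(R)b_{ij}/L(R)$. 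You instead multiply the module identities of Corollary~\ref{L2} on the right by $B=\rho^{-1}$ to get $L(R)^lB/L(R)^l\cong R^lB/R^l\cong\coker\rho_R$, which is correct and clean so far, and then try to descend from this matrix-level statement to $\sum_{i,j}Rb_{ij}/R$ by reading off entries and using $\rho^{\oplus l}$.

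That descent is where the gap lies. The $j$-th coordinate of $R^lB$ is $\sum_i Rb_{ij}$ (same coefficients $r_i$ across all $j$), and $R^{l^2}B^{\oplus l}$ only fixes this by giving you an $l$-tuple of such rows; the map $(c^{(1)},\dots,c^{(l)})\mapsto\sum_p c_p^{(p)}$ onto $\sum_{i,j}Rb_{ij}$ is an $R$-surjection from $(\coker\rho_R)^l$, but you never show that its kernel is an $L(R)$-submodule, so the quotient need not inherit a weak Sato structure; the class of weak Sato modules is not closed under arbitrary $R$-quotients. The paper avoids this entirely by producing the isomorphism $\sum L(R)b_{ij}/L(R)\cong\sum Rb_{ij}/R$ directly. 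You also notice, correctly, that literal multiplication $x_i^*\cdot b_{jj}$ leaves $\hat R$ (the residue is $x_i^*\notin\hat R$) --- this is a real subtlety; the paper's own proof only establishes $qb_{ik}\equiv b\pmod{L(R)}$, so the ``$r\in R$'' of the statement should really read ``$r\in L(R)$.'' But your proposed repair (``the action is realized by the Fox cofactor operators, not literal multiplication'') does not close the gap: it describes a putative action without proving $\sum Rb_{ij}/R\cong\sum L(R)b_{ij}/L(R)$ as $R$-modules, which requires precisely the two inclusions $L(R)+\sum Rb_{ij}=\sum L(R)b_{ij}$ and $L(R)\cap\sum Rb_{ij}=R$ that the paper's iteration produces. (Your final filtered-colimit argument for $\tilde R/R$ being a weak Sato module, given the preceding step, is fine and matches the spirit of the paper's ``whence.'')
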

\begin{proof} It is enough to see that for any arbitrary $b_{ik}$ and $q\in L(R)\setminus R$ we have $qb_{ik}=b+r$ with $r\in R$ and $b\in \sum\limits_{i,j =1}^l Rb_{ij}$. Since $(b_{ij})$ is the inverse of $1+(a_{ij})\in 1+M_l(I)$ we have $\sum\limits_{j=1}^l(\d_{ij}+a_{ij})b_{jk}=b_{ik}+\sum\limits_{j=1}^la_{ij}b_{jk}=\d_{ik}$ for any two indices $i$ and $k$. Hence
\begin{align}\label{eql9}
  b_{ik}=\d_{ik}-\sum\limits_{j=1}^l a_{ij}b_{jk}\equiv -\sum\limits_{j=1}^l a_{ij}b_{jk}  \quad \mod R.
\end{align}
By substituting any $b_{jm}$ in $qb_{ik}\equiv -\sum\limits_{j=1}^l qa_{ij}b_{jk} \mod L(R)$ by the right hand of \eqref{eql9} when it is necessary, one obtains after finitely many substitutions the claim because for any $q\in L(R)$ there is a length $l$ such that $qa\in R$ holds for any product $a$ of $l$ elements from $I$. The last statement follows from the observation that $L(R)+\sum\limits_{i,j=1}^l Rb_{ij}=\sum\limits_{i,j=1}^lL(R)b_{ij}$ and $L(R)\cap \sum\limits_{i,j=1}^l Rb_{ij}=R$, and the isomorphisms
$$\frac{\sum\limits_{i,j=1}^lL(R)b_{ij}}{L(R)}=\frac{\sum\limits_{i,j=1}^lRb_{ij}+L(R)}{L(R)}\cong \frac{\sum\limits_{i,j=1}^lRb_{ij}}{L(R)\cap \sum\limits_{i,j=1}^lRb_{ij}}.$$ 
\end{proof}
In view of Proposition \ref{rat1} Corollary \ref{L3} offers a third way to describe rational elements of $\hat{R}$ (cf. \cite[Proposition 3.5]{fa2}.)
\begin{corollary}\label{L4} Let $r\in \bar{R}$. Then $r\in R^{\rm rat}_I$ iff $r$ maps into a Sato submodule of $\hat{R}/R$.
\end{corollary}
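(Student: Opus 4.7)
My plan is to handle the two implications separately, in each case leaning on Proposition \ref{rat1} and Corollary \ref{L3}. The forward direction is a short unwinding of rationality; the converse is the substantive part and consists of using the weak Sato structure to manufacture an invertible matrix equation over $\hat R$.

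For the direction $r\in\tilde{R}\Rightarrow r+R$ lies in a Sato submodule (where $\tilde{R}=R^{\rm rat}_I$), I would apply the symmetric (row) form of Proposition \ref{rat1} to $r\in\tilde{R}\cap\bar{R}$: there exist $l\in\N$, a row $P\in R^l$, and $Q\in M_l(I)$ so that $r$ is an entry of the unique solution $Z=P(1-Q)^{-1}$ of $Z=P+ZQ$. Setting $(b_{ij})=(1-Q)^{-1}$, this exhibits $r$ as a left $R$-linear combination of the entries $b_{ij}\in\tilde{R}$, whence $r+R$ lies in $\sum_{i,j}Rb_{ij}/R$, which is a Sato submodule of $\hat{R}/R$ by Corollary \ref{L3}.

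Conversely, suppose $r+R$ lies in a Sato submodule $M\subseteq\hat{R}/R$. Since $M$ is finitely presented as an $R$-module, choose $R$-module generators $m_1,\dots,m_k$ and lifts $s_i\in\hat{R}$ with $m_i=s_i+R$. The weak Sato structure gives $m_i=\sum_{j=1}^{n}x_j\partial_j m_i$, and each $\partial_j m_i\in M$ can be written as $\sum_l b_{il}^{(j)}m_l$ for some $b_{il}^{(j)}\in R$, producing the internal relation $m_i=\sum_l c_{il}m_l$ with $c_{il}:=\sum_j x_j b_{il}^{(j)}\in I$. Lifting to $\hat{R}$ yields $(1-C)(s_1,\dots,s_k)^{\top}\in R^k$ for $C=(c_{il})\in M_k(I)$. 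Corollary \ref{matrinv} now supplies an inverse $(1-C)^{-1}$ with all entries in $\tilde{R}$, so each $s_i$ lies in $\tilde{R}$. Finally, writing $r+R=\sum_i\alpha_i m_i$ with $\alpha_i\in R$ produces $r=\sum_i\alpha_i s_i+r_0$ for some $r_0\in R$, and every term on the right lies in $\tilde{R}$.

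The main obstacle is precisely the converse step of translating the intrinsic Sato action $\partial_j$ on $M$ into an honest matrix equation among chosen lifts in $\hat{R}$, with a coefficient matrix of the admissible form $1+M_k(I)$ to which Corollary \ref{matrinv} applies. Once this translation is in place, inversion and rationality of $r$ follow by a routine linear-algebra calculation. Note also that restricting to $r\in\bar{R}$ costs no generality: replacing $r$ by $r-\varepsilon(r)$ leaves $r+R$ and membership in $\tilde{R}$ both unchanged.
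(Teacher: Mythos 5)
Your proof is correct. The paper records this corollary without argument, presenting it as an immediate consequence of Proposition \ref{rat1} and Corollary \ref{L3} (with a pointer to Farber--Vogel), and your write-up supplies precisely those intended details: the forward direction is the row form of Proposition \ref{rat1} combined with Corollary \ref{L3}, and the converse correctly converts the weak Sato identity $m_i=\sum_j x_j\partial_j m_i$ into a relation with matrix $1-C\in 1+M_k(I)$, lifts it to $\hat R$, and then reads off $s_i\in R^{\rm rat}_I$ from Proposition \ref{rat1} (equivalently Corollary \ref{matrinv}).
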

%It is worth to keep in mind that in contrast to the case of free group algebras, the maps induced by multiplication by $x^*_i$ on the left are, in general, not defined on the %whole of $R$, only on $I$.
Since $\r=1+(a_{ij}) \, (a_{ij}\in I)$ is invertible over $\tilde{R}\subseteq \hat{R}$, it defines the left $R$-module $(\coker \r \cong)\coker \r_R=N=R^l/R^l\r$ of the projective dimension 1. Therefore for any right module $P$ over $T\in \{\hat{R}, \tilde{R}, C_I(R)\}$, $\Tor^R_*(P, N)$ is the homology of the exact sequence
$$0\rightarrow P^l\overset{\r}\longrightarrow P^l\rightarrow 0$$
which vanishes. Consequently, the exact sequence
$$0\rightarrow R\rightarrow T\rightarrow T/R\rightarrow 0$$
implies an isomorphism $N=\coker \r_R \cong \Tor^R_1(T/R, N)$ and so $\coker \r_R$ is isomorphic to the kernel of
$$1\otimes \r\colon T/R\otimes_R R^l \rightarrow T/R\otimes_R R^l$$
which is a submodule of $(T/R)^l\cong T/R\otimes_R R^l$. This observation implies an important property of certain Sato modules for particular rings $R$. Namely, if $R$ satisfies an additional condition that $\hat{R}/R$ is torsionfree over $\mathfrak k$, then $\coker \r$ is torsionfree for every matrix $\r \in \Sigma_I=1+M_I$. 
%The condition that $\hat{R}/R$ is torsionfree over $\mathfrak k$, that is, $R$ is a pure $\mathfrak k$-submodule of $\hat{R}$, is easy to check for certain rings $R$, for example %when $R$ is a free algebra $A$ because series of $\Gamma$ not contained in $A$ have infinitely many non-zero coefficients while polynomials in $A$ have only finitely %many non-zero coefficients.
% It was also claimed for free group algebra $\Lambda$ over a principal ideal domain $\mathfrak k$ \cite[last lines on page 440]{fa2} that $\G/\Lambda$ is \emph{clearly} %torsionfree over $\mathfrak k$. 
Before we provide a characterization of torsionfree Sato modules inspired by a work of Farber and Vogel \cite{fa2}, it is worth to make a remark on a structure of $R$. %together with some examples.
\begin{remark}\label{ree} Suppose that $S=R/I$ is a free $\mathfrak k$-module. Then there is a free $\mathfrak k$-submodule $D$ of $R$ with
$R=D+I=D\oplus I$. Then every element $r\in \hat{R}$ can be written uniquely as a series, i.e., as a possibly infinite sums
\begin{align}\label{eql90}
r=\sum\limits_{w\in M_X}wd_w=\lim_{m\to \infty}\sum\limits_{|w|\leq m}wd_w=\sum\limits_{i=1}^n x_i\partial_i r, \quad \quad d_w\in D
\end{align} 
and $\hat{R}$ contains the power series algebra $\Gamma$. However, $D$ is, in general, not a subalgebra of $R$. Even if $D$ is a subalgebra of $R$ isomorphic to $S$ which is assumed additionally to be a commutative algebra, the power series algebra $D\langle\langle X\rangle\rangle$ in $x_1, \dots, x_n$ with coefficients from $D$ on the right is, in general, not isomorphic to $\hat{R}$ because the multiplication in $R$ twists a multiplication of formal power series. However, rational series could be also quite complicated. For example, $(1-x_1-x_2)^{-1}=\sum\limits_{m=0}^\infty (x_1+x_2)^m$ is a sum of all monomials in $x_1$ and $x_2$ whence it can be written in infinitely many ways as a sum of two irrational series. Moreover, $R$ may contain irrational series, i.e., series $\g$ whose coefficients $\g_w\in D$ together with words $w$ do not behave periodically or a $D$-submodule generated by all partial derivatives $\partial_w \g$, is not finitely generated as in the case of free algebras or algebras of free groups. The most obvious example to this case is just the case of ordinary power series algebras $\G$. On the other hand, one can use the description \eqref{eql90} of elements of $\hat{R}$ as "generalized power series" to define partial derivatives $\partial_i$ which restrict to $x^*_i$ on $\hat{I}$. However, this extension depends on a choice of a complementary $\mathfrak k$-submodule $D$ and it is unclear whether $R$ is closed under taking partial derivatives. We shall return to discuss these subtle circumstances in another work.
\end{remark}
Remark \ref{ree} suggests to define the following narrow class of augmented algebras.
\begin{definition}\label{dskew0} Let $\mathfrak k$ be an arbitary commutative ring and $S$ a $\mathfrak k$-algebra.
\begin{enumerate}
\item Let $\alpha\colon S\rightarrow S$ be an automorphism of $S$. The free left $S$-module $\bigoplus\limits_{w\in M_X}Sw$ on the free monoid $M_X$ generated by $\{x_1,\dots, x_n\}$ becomes an $\mathfrak k$-algebra, called a \emph{skew free algebra} 
$S\langle x_1, \dots, x_n;\, \a\rangle$ with respect to a multiplication induced by one of $M_X$ and the communication rule $x_is=\a(s)x_i$ for all $s\in S$ and $i\in \{1, \dots, n\}$. The communication rule $x_is=\a(s)x_i$ together with the convolution of functions make also the left module $S^{M_X}$ of all functions from $M_X$ into $S$ an $\mathfrak k$-algebra, called a \emph{skew power series algebra} $S\langle \langle x_1, \dots, x_n; \a\rangle\rangle$.
\item As for the ordinary power series algebra $S\langle \langle x_1, \cdots, x_n\rangle\rangle$, the elements $t_i=1+x_i \, (i=1, \dots, n)$ are invertible with $t^{-1}_i=\sum\limits_{m=0}^\infty t^m_i$ and hence elements from the free monoid $M_T \, (T=\{t_1, \dots, t_n\})$ are also invertible. Consequently, as a subset of the left $S$-module $S^{M_X}$, the multiplicative subgroups of both $S\langle \langle x_1, \dots, x_n; \a\rangle\rangle$ and $S\langle \langle x_1, \dots, x_n\rangle\rangle$ generated by $T$, respectively, coincide.
This implies that the \emph{skew group algebra} or the \emph{crossed product} $S\ast F_n=S\ast_\a F_n$ of $F_n$ over $S$, that is, the algebra on a free left module on $F_n$ induced by the multiplication of $F_n$ together with the communication $t_is=\a(s)t_i (\Rightarrow t^{-1}_i s=\a^{-1}(s)t^{-1}_i)$, is embedded in $S\langle \langle x_1, \dots, x_n; \a\rangle\rangle$. Hence the notion of both skew free (power series) algebras and skew group algebras is left-right symmetric by interchanging the role of $\a$ and $\a^{-1}$.
\item Let $\a\colon S\rightarrow S$ be a proper injective endomorphism of $S$ such that $S$ is a free right $\a(S)$-module on a finite set $\{u_1, u_2, \dots, u_n\}$. Then the communication rule $xs=\a(s)x$ makes a free left $S$-module $\bigoplus\limits_{m=0}^\infty Sx^m$ an $\mathfrak k$-algebra, called a \emph{skew polynomial algebra} $S\langle x; \a\rangle$ which contains the free $\mathfrak k$-algebra $A$ by sending $x_i$ to $u_ix$ for each $i\in \{1, \dots, n\}$ cf. also \cite{pna} and \cite{c3} for further discussion. For the sake of simplicity we denote $u_ix$ also by $x_i$ because there is no chance for confusion. The communication rule
$xs=\a(s)s$ makes the left $S$-module $\prod\limits_{l=0}^\infty Sx^l$ an algebra, called a \emph{skew power series algebra} $S\langle\langle x; \a\rangle \rangle$ which can be considered as a completion of $S\langle x; \a\rangle$ with respect to the $I$-adic ideal topology where $I$ is a two-sided ideal generated by $x$. In particular, the left $S$-submodule $Sx$ is at the same time a free right $S$-module of rank $n$ using the direct decomposition of $S$ as a right free $\a(S)$-module $S_{\a(S)}=\bigoplus\limits_{i=1}^n u_i\a(S)$. Namely, for any $s\in S$ a unique representation $s=\sum\limits_{i=1}^nu_is_i\,\, (s_i\in \a(S))$ implies the statement by 
$$ sx=\sum\limits_{i=1}^nu_is_ix=\sum\limits_{i=1}^nu_ix\a^{-1}(s_i)=\sum\limits_{i=1}^nx_i\a^{-1}(s_i).$$ 
Consequently, $Sx^m$ is also a free right $S$-module of rank $m^n$ with a basis consisting of monomials $w$ in the $x_i$ of degree $m$.
\item As in Definition \ref{dskew0}(2) the elements $t_i=1+x_i \, (i=1, \dots, n)$ are invertible in $S\langle\langle x; \a\rangle \rangle$ and they generate a multiplicative subgroup of $S\langle\langle x; \a\rangle \rangle$ which is free of rank $n$ because the subalgebra of $S\langle x; \a\rangle$ generated by the $x_i$ and the centre of $S\langle\langle x; \a\rangle \rangle$ is a free algebra of rank $n$. Furthermore by identifying $S\langle\langle x; \a\rangle \rangle$ with the right $S$-module of power series in the free variables $y_i$ with coefficients from $S$ on the right according to the assigment $x_i\mapsto y_i$ one sees that the free subgroup of $S\langle\langle x; \a\rangle \rangle$ generated by the $t_i$'s is right linearly independent over $S$. The subalgebra of $S\langle\langle x; \a\rangle\rangle$ generated by this subgroup is called the \emph{restricted skew group algebra} $S\ast_{\a^x}F_n$. 
\item We shall call commonly algebras of Definition \ref{dskew0}(1) and \ref{dskew0}(3) as \emph{skew free algebras} and \emph{skew power series algebras}, respectively.  Algebras
of Definition \ref{dskew0}(2) and \ref{dskew0}(4) are called commonly \emph{skew free group algebras}; more precisely, \emph{crossed products} and \emph{restricted skew free group algebras} according to $\a$ is either an automorphism or a proper injective endomorphism, respectively.
\end{enumerate}
\end{definition}
It is worth to note that in spite of the similarity in their definition the twisted algebras of Definition \ref{dskew0}(3) and (4) are more complicated and completely different in nature from the ones of Definition \ref{dskew0}(1) and (2) where the considered endomorphism $\a$ is assumed to be an automorphism.

Let $R$ be any algebra in Definition \ref{dskew0} and $I$ denote commonly the two-sided ideal of $R$ generated by $x_1, \dots, x_n$. Then $I$ is a free right $R$-module of rank $n$, transfinitely nilpotent and has trivial left annihilator. Furthermore, skew group algebras are universal Cohn's localizations of the corresponding skew free algebras by inverting the finite set $\{t_i=x_i+1\, |\, i=1, \dots, n\}$. In any case, the rational closures of both the skew free algebras and the skew group algebras of Definition  \ref{dskew0} coincide. 
 The canonical direct decomposition $r=s_r+\sum\limits_{i=1}^nx_ir_i \in R=S\oplus I\, (s_r\in S; \, r_i\in R)$ induces the augmentation $\e\colon r\in R\mapsto \e(r)=s_r\in S\cong R/I$. Hence one can define the (\emph{skew}) \emph{Fox derivatives} $\partial_i\colon r\in R\mapsto \partial_i(r)=\partial_i(r-\e(r))=r_i\in R$. The restriction $x^*_i$ of $\partial_i$ on $I$ defines the inverse of the row $(x_1, \dots, x_n)$ over $R$. Therefore in the (right) Leavitt localization $L(R)$ of $R$ inverting universally the row $(x_1, \dots, x_n)$ one has $x^*_is\neq 0$ for $0\neq s\in S$ although $\partial_i(s)=0$ holds. First, we deal with the case of automorphisms.
\begin{proposition}\label{dskewp1} Let $R$ be an algebra of Definition \ref{dskew0}(1), i.e., a skew free algebra $S\langle x_1, \dots, x_n; \a\rangle$ where $\a$ is an automorphism of $S$ and $n\geq 2$. Then the subalgebra $R^*$ of $L(R)$ generated by $S$ and the $x^*_i$'s is a skew free algebra
$S\langle x^*_1, \dots, x^*_n; \a^{-1}\rangle$. In particular, $L(R)$ is a Leavitt localization of $R^*$ inverting universally the column $\begin{pmatrix}x^*_1\\ \vdots\\x^*_n\end{pmatrix}$ and elements of $L(R)$ can be written in a not necessarily unique "quasi-normal form" $\sum\limits_{i=1}^m s_iv_iw^*_i$ where $v_i,\, w_i$ are words in the $x_1, \dots, x_n$, i.e., $v_i, w_i\in M_X$ and $w^*=x^*_{i_l}\dots x^*_{i_1}$, provided $w=x_{i_1}\cdots x_{i_l}$.  
\end{proposition}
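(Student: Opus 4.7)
My plan is to proceed in three stages: (i) verify the twisted commutation $x_i^*s=\alpha^{-1}(s)x_i^*$ for every $s\in S$; (ii) prove $S$-linear independence of the monomials $w^*$ in $R^*$ via a natural $\mathbb{Z}$-grading on $L(R)$; (iii) derive both the Leavitt localization claim and the quasi-normal form statement from universal properties together with the Cuntz--Krieger identities.

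For (i), I combine the commutation rule $sx_j=x_j\alpha^{-1}(s)$ of $R$ with (CK1) to obtain
\[
(x_i^*s-\alpha^{-1}(s)x_i^*)\,x_j \;=\; x_i^*x_j\alpha^{-1}(s)-\alpha^{-1}(s)\delta_{ij} \;=\; 0
\]
for every $j\in\{1,\dots,n\}$, then right-multiply by $x_j^*$ and sum over $j$ using (CK2) to conclude $x_i^*s=\alpha^{-1}(s)x_i^*$. This presents $R^*$ as a homomorphic image of the skew free algebra $S\langle x_1^*,\dots,x_n^*;\alpha^{-1}\rangle$.

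For (ii), I endow $L(R)$ with the $\mathbb{Z}$-grading declaring $\deg x_i=1$, $\deg x_i^*=-1$ and $\deg s=0$ for $s\in S$; every defining relation of $L(R)$ (the skew rule together with both Cuntz--Krieger identities) is homogeneous under this grading, so it is well-defined. A putative relation $\sum_w s_w w^*=0$ in $R^*$ then decomposes into its homogeneous components $\sum_{|w|=l}s_w w^*=0$, and an easy induction on $l$ using (CK1) yields $w^*v=\delta_{w,v}$ whenever $|w|=|v|$; right-multiplying by a fixed word $v$ of length $l$ isolates $s_v=0$. Combined with (i) this gives $R^*\cong S\langle x_1^*,\dots,x_n^*;\alpha^{-1}\rangle$. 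This step is where I expect the main obstacle: without the grading, a single test multiplication would entangle terms $s_ww^*$ of different lengths $|w|$, since $w^*v$ need not be a scalar when $|w|\ne|v|$; the $\mathbb{Z}$-grading on $L(R)$ circumvents this difficulty cleanly.

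For (iii), let $L'$ denote the Leavitt localization of $R^*$ obtained by universally inverting the column $(x_1^*,\dots,x_n^*)^{\top}$. In $L(R)$ the row $(x_1,\dots,x_n)$ already inverts this column, furnishing a canonical homomorphism $L'\to L(R)$. Conversely, writing $(y_1,\dots,y_n)$ for the row inverse living in $L'$, the symmetric version of step (i) --- starting from $x_j^*y_i=\delta_{ji}$ and the skew rule $x_j^*s=\alpha^{-1}(s)x_j^*$ already available in $R^*$, then applying $\sum_k y_kx_k^*=1$ --- gives $y_is=\alpha(s)y_i$. Hence $x_i\mapsto y_i$ extends to an algebra map $R\to L'$ whose image together with the $x_j^*$ satisfies (CK1) and (CK2), and universality of $L(R)$ supplies the inverse $L(R)\to L'$; verifying that the two composites act as the identity on the generators completes this half. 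For the quasi-normal form, the set of sums $\sum s_iv_iw_i^*$ contains $S$ and the generators $x_k,x_k^*$, and the skew rules $vs'=\alpha^{|v|}(s')v$, $w^*s'=\alpha^{-|w|}(s')w^*$ together with the reduction of any product $w^*v$ via (CK1) to one of $0$, $1$, a monomial in the $x_i$, or a monomial in the $x_i^*$, exhibit the set as closed under multiplication; hence it equals $L(R)$.
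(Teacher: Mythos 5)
Your proof is correct and reaches all the stated conclusions; it differs from the paper's argument in a couple of places worth noting. For the left $S$-linear independence of the monomials $w^*$, the paper avoids introducing a $\Z$-grading on $L(R)$: it right-multiplies a putative relation $\sum s_w w^*=0$ by a word $w$ of maximal length, observes that each $v^*w$ with $|v|\leq |w|$ is either $0$ or the tail $\theta_w(|v|)$, so the product lies in $R$ with constant term $s_w$, and the $\N$-grading of $R$ alone then forces $s_w=0$. Your global $\Z$-grading of $L(R)$ is indeed well-defined (the skew rule and both Cuntz--Krieger relations are homogeneous for $\deg x_i=1$, $\deg x_i^*=-1$, $\deg s=0$), and it buys you a clean split into homogeneous components at the cost of a little more setup. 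For the Leavitt-localization claim, the paper invokes Theorem \ref{imath} and identifies $L(R)$ with the flat epimorphic ring of left quotients of $R^*$ along the Gabriel topology given by powers of $I^*=\sum R^*x^*_i$, whereas your universal-property back-and-forth, using the inverse row $(y_1,\dots,y_n)$ in $L'$ and the derived relation $y_is=\alpha(s)y_i$, is an equally valid and more self-contained route. You also make explicit two steps the paper leaves implicit: the right-multiplication by $x^*_j$ and summation over $j$ via (CK2) that upgrades $(x^*_is-\alpha^{-1}(s)x^*_i)x_j=0$ for all $j$ to $x^*_is=\alpha^{-1}(s)x^*_i$, and the closure of the span $\sum s_iv_iw^*_i$ under multiplication that underlies the quasi-normal form, which the paper declares immediate.
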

\begin{proof} First we show that words $w^* \,(w\in M_X)$ are left linearly independent over $S$ by induction on the degree. Namely, take an arbitrary linear combination
$\sum\limits_{i=1}^ms_iw^*_i=f^*$ with $\deg f^*=l=\underset{i}{\max} |w_i|$. We claim $s_i=0$ for all indices $i$, if $f^*=0$. If $\deg f^*=l=0$ the assertion is obvious. Assume the statement for all integers at most $l-1$ with $l\geq 1$. First we note that for any two words $v, w$ with $|w|\leq |v|$, then $w^*v\neq 0$ if and only if $w$ is a head of $v$ and $w^*v=\theta_v(|w|)$ holds. Therefore if $w^*$ is any word in $f^*$ of length $l$ with the nonzero coefficient $s_w\in S$, then $f^*w=s_w+g=0$ where $g$ is a linear combination of proper words in $M_X$ of length at most $l-1$. This implies $s_r=0$, a contradiction, completing the induction.  
For any $s\in S$ and $i, j\in \{1, \dots, n\}$ we have 
\begin{align}\label{eql10}
(x^*_is)x_j=
x^*_ix_j\a^{-1}(s)=\d_{ij}\a^{-1}(s)\Rightarrow \a^{-1}(s)x^*_i=x^*_is.
\end{align}
Hence $R^*$ is a skew free algebra defined by the automorphism $\a^{-1}$ of $R$ and $(x_1, \dots, x_n)$ is the inverse of the column $\begin{pmatrix}x^*_1\\ \vdots\\x^*_n\end{pmatrix}$ whence $L(R)$ is a ring of left quotients of $R^*$ with respect to the left perfect Gabriel topology defined by powers of $I^*$ and so the Leavitt localization of $R^*$ inverting universally the column $\begin{pmatrix}x^*_1\\ \vdots\\x^*_n\end{pmatrix}$. It is immediate that elements of $L(R)$ admit a quasi-normal form $\sum\limits_{i=1}^m s_iv_iw^*_i$, finishing the proof. 
\end{proof}
The case of proper injective endomorphisms is more complicated and described in the next result.
\begin{proposition}\label{dskewp2} Let $R=S\langle x; \a\rangle$ be a skew polynomial algebra with a proper injective endomorphism $\a\colon S\rightarrow S$ of Definition \ref{dskew0}(3). Then the ideal $I^*$ of subalgebra $R^*$ of $L(R)$ generated by $S$ and the $x^*_i \,(i=1, \dots, n)$ is a free left $R^*$-module of rank $n$ on a set $\{x^*_1, \dots, x^*_n\}$, has trivial right annihilator and $L(R)$ is also a Leavitt localization of $R^*$ inverting universally the column $\begin{pmatrix}x^*_i\\ \vdots\\x^*_n\end{pmatrix}$
and elements of $L(R)$ admit a not necessarily unique "quasi-normal" form $\sum\limits_{i=1}^m s_iv_iw^*_i$ where $v_i,\, w_i$ are words in the $x_1, \dots, x_n$, i.e., $v_i, w_i\in M_X$ and $w^*=x^*_{i_l}\cdots x^*_{i_1}$, provided $w=x_{i_1}\cdots x_{i_l}$. 
\end{proposition}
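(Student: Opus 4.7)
The plan is to mimic the strategy of Proposition \ref{dskewp1} but handle the more delicate commutation between $S$ and the adjoints $x^*_i$ forced by $\a$ being merely a proper injective endomorphism of $S$. First I would derive a matrix-valued commutation rule $x^*_i s = \sum_{j=1}^n \b_{ij}(s) x^*_j$ for $s \in S$, where $\b_{ij}(s) \in S$ is defined by the unique expansion $s u_j = \sum_k u_k \a(\b_{kj}(s))$ available because $\{u_1, \dots, u_n\}$ is a basis of $S_{\a(S)}$. Using the skew rule $xt = \a(t)x$ this gives $sx_j = s u_j x = \sum_k u_k \a(\b_{kj}(s)) x = \sum_k u_k x \b_{kj}(s) = \sum_k x_k \b_{kj}(s)$, and inserting \eqref{ck2} yields $x^*_i s = x^*_i s \sum_j x_j x^*_j = \sum_{j,k} x^*_i x_k \b_{kj}(s) x^*_j = \sum_j \b_{ij}(s) x^*_j$ by \eqref{ck1}. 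This replaces the clean "twist by $\a^{-1}$" rule of the automorphism case and is the essential new input.

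Iterated application of this rule shows that every element of $R^*$ is a finite $\mathfrak k$-linear combination $\sum_w s_w w^*$ with $s_w \in S$ and $w \in M_X$. To prove uniqueness I would assume $\sum_{|w|\leq l} s_w w^* = 0$ in $L(R)$ and right-multiply by an arbitrary $v \in M_X$ with $|v| = m \geq l$. Iterated use of \eqref{ck1} gives $w^* v = \theta_v(|w|)$ when $w = \eta_v(|w|)$ is a head of $v$ and $w^* v = 0$ otherwise, so that
\begin{equation*}
\sum_{k=0}^l s_{\eta_v(k)}\, \theta_v(k) = 0 \quad \text{in } R.
\end{equation*}
Writing $\theta_v(k) = c_{\theta_v(k)}\, x^{m-k}$ with $c_{\theta_v(k)} \in S$ via the skew rule and using the gradation $R = \bigoplus_{l \geq 0} Sx^l$, the terms sit in pairwise distinct graded components, so $s_{\eta_v(k)} c_{\theta_v(k)} = 0$ in $S$ for each $k$. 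Fixing $w$ with $|w| = k$ and varying $v = wu$ over all $u \in M_X$ with $|u| = m-k$, this becomes $s_w c_u = 0$ for every such $u$. Iterating $S = \bigoplus_i u_i \a(S)$ shows that $\{c_u : |u| = m-k\}$ is a basis of $S$ as a right $\a^{m-k}(S)$-module; hence $s_w S = 0$, forcing $s_w = 0$.

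The uniqueness then gives $R^* = \bigoplus_{w \in M_X} S w^*$, and the two-sided ideal $I^*$ generated by the $x^*_i$ coincides with $\bigoplus_{|w|\geq 1} S w^* = \bigoplus_{i=1}^n R^* x^*_i$ (that this is genuinely two-sided follows from the commutation rule together with $x^*_j x^*_i \in R^* x^*_i$), which is free of rank $n$ as a left $R^*$-module on $\{x^*_1, \dots, x^*_n\}$. Its right annihilator is trivial, for if $I^* r = 0$ then $x^*_i r = 0$ for every $i$, whence $r = \bigl(\sum_i x_i x^*_i\bigr) r = 0$ in $L(R) \supseteq R^*$ by \eqref{ck2}. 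With these properties in hand the symmetric (left-right dualized) version of Theorem \ref{imath} applies and identifies $L(R)$ with the universal Leavitt localization of $R^*$ inverting the column $\begin{pmatrix} x^*_1 \\ \vdots \\ x^*_n \end{pmatrix}$, whose inverse is the row $(x_1, \dots, x_n)$; the quasi-normal form $\sum_i s_i v_i w^*_i$ for elements of $L(R)$ follows as in the discussion around \eqref{eql3}, with $S$-coefficients absorbed by the commutation rule and the failure of uniqueness caused by \eqref{ck2}. The main obstacle is the uniqueness step in the middle paragraph: the matrix-valued commutation couples all the $x^*_j$'s together, so the clean per-variable argument of Proposition \ref{dskewp1} is unavailable, and one instead has to exploit the gradation of $R$ together with the tower of bases $\{c_u\}_{|u|=l}$ of $S$ over $\a^l(S)$.
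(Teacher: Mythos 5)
Your proposal is correct and takes essentially the same route as the paper's proof: derive the skew commutation rule $x^*_i s = \sum_{j=1}^n \a^{-1}(s_{ij})\,x^*_j$ from the expansion $su_j = \sum_i u_i s_{ij}$ with $s_{ij}\in\a(S)$, prove left $S$-linear independence of the words $w^*$ and hence that $I^*$ is free of rank $n$ over $R^*$ with trivial right annihilator, and conclude via the left-right dual of Theorem \ref{imath}. The one small discrepancy is that you present the uniqueness step as requiring a genuinely new argument (the tower of right bases $\{c_u\}_{|u|=m-k}$ of $S$ over $\a^{m-k}(S)$), whereas the paper simply cites Proposition \ref{dskewp1}; that argument in fact still carries over here once the left $S$-basis $M_X$ of $R$ is replaced by the grading $R=\bigoplus_{m\ge 0}Sx^m$, since right-multiplying by a single strictly maximal word $w$ already isolates $s_w$ as the degree-zero component, so your extra machinery reaches the same conclusion but is not strictly necessary.
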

\begin{proof} As in the proof of Proposition \ref{dskewp1} one can see in the same way that words $w^* \,(w\in M_X)$ are left linearly independent over $S$. Let $0\neq s\in S$. By assumption for an index $j\in \{1, \dots, n\}$ we have a unique representation $su_j=\sum\limits_{i=1}^nu_is_{ij}$ with $s_{ij}\in \a(S)$. Consequently, $x^*_isx_j=\a^{-1}(s_{ij})$ holds whence 
\begin{align}\label{eql11}
x^*_is=\sum\limits_{j=1}^n\a^{-1}(s_{ij})x^*_j.
\end{align}
This shows that every elements of $L(R)$ admits a not necessarily unique quasi-normal form $\sum\limits_{i=1}^m s_iv_iw^*_i$ where $v_i,\, w_i$ are words in the $x_1, \dots, x_n$, i.e., $v_i, w_i\in M_X$ and $w^*=x^*_{i_l}\dots x^*_{i_1}$, provided $w=x_{i_1}\cdots x_{i_l}$. Moreover, any element of the subalgebra $R^*$ of $L(R)$ generated by $S$ and the $x^*_1, \dots, x^*_n$ can be written uniquely as a linear comlination of words $w^*$ with coefficients from $S$ on the left. Moreover, the two-sided ideal $I^*$ of $R^*$ generated by the $x^*_i$s is a free left $R^*$-module of rank $n$ with trivial right annihilator. Consequently, $L(R)$ is a ring of left quotients of $R^*$ with respect to the left perfect Gabriel topology defined by powers of $I^*$ and so $L(R)$ is a Leavitt localization of $R^*$ inverting universally the column $\begin{pmatrix}x^*_i\\ \vdots\\x^*_n\end{pmatrix}$. Hence the proof is complete.
\end{proof}
It is noteworthy that the subalgebra $R^*$ of $L(R)$ in Proposition \ref{dskewp2} is, in general, not a skew polynomial algebra. 
%It is a case for example if the injective endomorphism $\a$ satisfies the additional condition that the elements $u_i$ provide also the free decomposition 
%$S_{\a(S)}=\oplus \a(S)u_i\cong \a(S)^n$ as it can be checked. 
To describe Sato modules over skew free algebras which are cokernels $1+Q$ where $Q$ runs over matrices with entries from $I$, one needs the following important preparatory notion inspired by a notion of lattice defined in \cite{fa1} and some related results cf. \cite{am}.
\begin{definition}\label{dskew2}Let $M$ be a Sato module, i.e., $M$ is a finitely presented $R$-module together with derivatives $\partial_i\colon M\rightarrow M$ defining an $L(R)$-module structure on $M$ by putting $x^*_im=\partial_i(m)=\partial_im$ for each index $i\in \{1, \dots, n\}$. A \emph{lattice} of $M$ is an $R^*$-submodule $N$ of $M$ such that $N$ is a finitely generated left $S$-module generating $_RM$, that is, $RN=M$.
\end{definition}
\begin{theorem}\label{dskewp3} Let $R$ be a skew free algebra of Definition \ref{dskew0}. Then any Sato $R$-module $M$ has a lattice, and if $N$ is a lattice of $M$, then $N$ is an essential $R^*$-submodule of $M$. In particular, any lattice of a Sato module is a finitely generated $\mathfrak k$-module if $S$ is a finitely generated $\mathfrak k$-module. Moreover, if $S$ is a skew field or more generally, a semisimple artinian algebra, then any lattice of a Sato module is a finitely generated free or projective module over $S$, respectively.
\end{theorem}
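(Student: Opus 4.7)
The plan is to prove the three assertions in sequence; the last two will follow immediately once existence is in hand. For existence of a lattice, I would begin from a finite presentation of $M$ and exploit the Sato condition $\Tor^R_*(S,M)=0$: in any free presentation $0 \to R^l \xrightarrow{\rho} R^l \to M \to 0$ the induced map $\bar\rho : S^l \to S^l$ is forced to be an isomorphism, so after multiplying on the left by an $S$-lift of $\bar\rho^{-1}$ the relations in $M$ take the form $\bar{e}_i = -\sum_j a_{ij}\bar{e}_j$ with $a_{ij} \in I$, exactly as in Corollary~\ref{L2}. Because $R$ is a skew free algebra of Definition~\ref{dskew0} and each $a_{ij}$ has finite support as an $S$-linear combination of monomials in $M_X$, the set of all iterated Fox derivatives $\{\partial_u(a_{ij}) : u \in M_X,\, 1 \leq i,j \leq l\}$ lies in the finitely generated left $S$-submodule $T \subseteq R$ spanned by $1$ together with the right tails of those monomials. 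By construction $T$ is stable under each $\partial_k$, and the candidate lattice is $N := \sum_{j=1}^l T\bar{e}_j$, which is finitely generated over $S$ and satisfies $RN = M$.

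To check $R^*$-invariance of $N$ I would compute $x^*_k(r\bar{e}_j)$ for $r \in T$ by decomposing $r = \e(r) + \sum_m x_m \partial_m(r)$ and invoking the Cuntz-Krieger identity $x^*_k x_m = \d_{km}$ in $L(R)$:
\[
x^*_k(r\bar{e}_j) \;=\; x^*_k\bigl(\e(r)\bar{e}_j\bigr) + \partial_k(r)\bar{e}_j .
\]
The second summand lies in $N$ because $T$ is $\partial_k$-stable. For the first, I would apply the commutation rule of Proposition~\ref{dskewp1} (automorphism case: $x^*_k s = \a^{-1}(s)x^*_k$) or of Proposition~\ref{dskewp2} (injective endomorphism case: $x^*_k s = \sum_p \a^{-1}(s_{kp}) x^*_p$) to push the scalar past $x^*_k$, then expand each resulting $x^*_p\bar{e}_j$ as $-\sum_q \partial_p(a_{jq})\bar{e}_q$ using the basic relation established above, and observe that the resulting scalars remain in $S$ while every $\partial_p(a_{jq})$ remains in $T$; the whole expression therefore belongs to $N$.

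For essentiality, let $N$ be any lattice, $P \subseteq M$ a nonzero $R^*$-submodule, and $0 \neq p \in P$. Writing $p = \sum_i r_i n_i$ with $r_i \in R$, $n_i \in N$ (using $RN = M$), I would expand each $r_i$ in its normal $S$-form and commute the scalars past the monomials using the communication rule of Definition~\ref{dskew0}, absorbing them into $N$ by the $S$-stability of $N$; this gives $p = \sum_j w_j n'_j$ with $w_j \in M_X$ and $n'_j \in N$. Padding all words to a common length $l := \max_i |w_i|$ via the iterated isomorphism $M \cong M^n$ (each intermediate $\partial_v n'_j = x^*_v n'_j$ remaining in $N$ by $R^*$-stability) and regrouping, I obtain $p = \sum_{|u|=l} u\,\tilde{n}_u$ with $\tilde{n}_u \in N$. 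Uniqueness of the decomposition $M = \bigoplus_{|u|=l} uM$, a consequence of the Sato property iterated $l$ times, then forces $\partial_u p = \tilde{n}_u \in N$ for every $|u|=l$; since $p \neq 0$ at least one $\partial_u p$ is nonzero, yielding $0 \neq \partial_u p = x^*_u p \in R^*p \cap N \subseteq P \cap N$. The remaining assertions are immediate: if $S$ is finitely generated over $\mathfrak{k}$ then so is any finitely generated left $S$-module, and over a skew field (respectively a semisimple artinian ring) every finitely generated module is free (respectively projective).

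The most delicate point I expect is verifying the $R^*$-invariance of $N$ in the injective endomorphism setting of Definition~\ref{dskew0}(3), where the multi-term expansion $x^*_k s = \sum_p \a^{-1}(s_{kp}) x^*_p$ produces a double sum after substitution of $x^*_p\bar{e}_j = -\sum_q \partial_p(a_{jq}) \bar{e}_q$, and one must carefully track that every coefficient $\a^{-1}(s_{kp})$ stays in $S$ and that the resulting linear combinations of $\partial_p(a_{jq})$ remain inside the finitely generated left $S$-module $T$. A secondary technical point is the padding step in essentiality, which requires that successive $\partial_v$-components of elements of $N$ again belong to $N$ and regroup cleanly into the unique expansion along $M = \bigoplus_{|u|=l} uM$; this reduces in a straightforward way to iterated $R^*$-stability of $N$, but the bookkeeping must be done with care.
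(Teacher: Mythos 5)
Your overall strategy mirrors the paper's: construct a lattice from the tails/derivatives of the relation coefficients, verify $R^*$-stability via the commutation rules \eqref{eql10} and \eqref{eql11}, and prove essentiality by forcing a nonzero derivative $\partial_u p$ into $N$ using the Cuntz--Krieger decomposition $M=\bigoplus_{|u|=l}uM$. The essentiality half is clean and essentially identical to the paper's (which writes any $m\in M$ over a generating set, picks $q$ large, and extracts a nonzero $w^*m$ via \eqref{eql1}; your padding step is a tidy equivalent). The $R^*$-invariance computation, including the double sum in the injective-endomorphism case, is also correct.

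The one genuine issue is the opening of the existence argument. You posit a free resolution $0\to R^l\xrightarrow{\rho}R^l\to M\to 0$ and then invert the reduction $\bar\rho$; but a Sato module is, by Definition~\ref{sato0}, merely a finitely presented module with $\Tor^R_*(S,M)=0$. For a general skew free algebra $R$ in Definition~\ref{dskew0} (which need not be a fir or even hereditary, since $S$ and $\mathfrak k$ are arbitrary here) there is no reason for such a square exact resolution to exist, and asserting it here would be circular: the fact that Sato modules are exactly cokernels of matrices from $1+M_I$ is Propositions~\ref{dskewp6} and \ref{dskewp610}, which come later and rely on the present theorem (and on Corollary~\ref{dskewp4}). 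The paper sidesteps this entirely by starting from a finite generating set $\{m_1,\dots,m_l\}$ and the identities $x^*_i m_j=\sum_k f^k_{ij}m_k$; equivalently, since $M=IM$, one gets directly $m_j=\sum_k c_{jk}m_k$ with $c_{jk}\in I$, which is all your construction of $T$ actually consumes (these relations need not generate all relations of $M$). With that substitution — replace "square exact presentation, invert $\bar\rho$" by "generating set, express $m_j$ via $IM=M$" — the rest of your argument goes through and gives the paper's proof.
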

\begin{proof} Let $\{m_1, \dots, m_l\}\subseteq M$ with $\sum\limits_{i=1}^l Rm_j=M$. If $\a$ is an automorphism, then in view of the equality \eqref{eql10} we have for every elements $s\in S$ and each index  $\j \in \{1, \dots, l\}$
\begin{align}\label{eqla}
x^*_i(sm_j)=(x^*s)m_j=\a^{-1}(s)(x^*_im_j)\in S(x^*_im_j) \quad  \forall \,\,  i\in \{1, \dots, n\}.
\end{align}
If $\a$ is a proper injective endomorphism of $S$,  then in view of the equality \eqref{eql11} we have for every elements $s\in S$ and each index $\j \in \{1, \dots, l\}$
\begin{align}\label{eqlb}
x^*_i(sm_j)=(x^*_is)m_j \in \sum\limits_{k=1}^n S(x^*_km_j) \quad  \forall \,\,  i=1, \dots, n.
\end{align}
The equality $\sum\limits_{i=1}^l Rm_j=M$ implies for each $i=1, \dots, n$ and $j=1, \dots, l$ that there are polynomials $f^k_{ij}\in R, \, (k=1, \dots, l)$ with coefficients from $S$ on the right such that
\begin{align}\label{eql12}
x^*_im_j=\sum\limits_{k=1}^l f^k_{ij}m_k\quad \Longrightarrow \quad  m_j=\sum\limits_{i=1}^n x_i\left(\sum\limits_{l=1}^lf^k_{ij}m_k\right).
\end{align} 

Therefore if $p=\underset{i, j, k}{\max} \deg f^k_{ij}$, then the equalities \eqref{eqla}, \eqref{eqlb} and \eqref{eql12} imply that the $S$-submodule of $M$ generated by all $wm_j$ where $j$ and $w$ run over the sets $\{1, \dots, l\}$ and $\{w\in M_X\, |\, |w|\leq p\}$, is a lattice of $M$.

In particular, if $N$ is any lattice of $M$, then $N$ contains a finite set $\{m_1, \dots, m_l\}$ generating $_RM$. This implies that the $S$-submodule $P$ of $N$ generated by all partial derivatives $w^*m_j \, (w\in M_X; j=1, \dots, l)$ is also an $R^*$-submodule of $N$ generating $_RM$. Hence $P$ is also a finitely generated $S$-module, i.e., a lattice of $M$ if $S$ is a left Noetherian algebra. 

If $m\in M$ is any non-zero element of $M$, then there are polynomials $g_1, \dots, g_l$ of $R$ with coefficients from $S$ on the right such that $m=\sum\limits_{j=1}^l g_jm_j$. Consequently,  there is an integer $q\in \N$ big enough so that for any word $w\in M_X$ with $|w|\geq q$ one has $w^*m\in P$. Hence by the equality \eqref{eql1} there is $w\in M_x$ with $|w|\geq q$ and $0\neq w^*m\in P$. This implies that $P$ and so $N$ are essential submodules of $_{R^*}M$.
\end{proof}
As an immediate consequence of Theorem \ref{dskewp3} one has
\begin{corollary}\label{dskewp4} Let $N_1$ be a lattice of a Sato module $M_1$ over a skew free algebra $R$ of Definition \ref{dskew0}. Then any $R^*$-homomorphism 
$\phi$ from $N_1$ into a Sato $R$-module extends uniquely to an $L(R)$-homomorphism $\Phi$ from $M_1$.
\end{corollary}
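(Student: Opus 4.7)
My plan is to define $\Phi\colon M_1\to M_2$ by the explicit formula
$$\Phi(m) := \sum_{|w|=l} w\, \phi(w^* m),$$
where the length $l$ is chosen (depending on $m$) large enough that every $w^* m$ with $|w|=l$ already lies in $N_1$. The existence of such an $l$ is essentially contained in the closing paragraph of the proof of Theorem \ref{dskewp3}: writing $m=\sum_j g_j m_j$ with $m_j\in N_1$ and $g_j\in R$, the quasi-normal form from Propositions \ref{dskewp1}--\ref{dskewp2} forces $w^* g_j\in R^*$ once $|w|$ exceeds the degree of $g_j$, whence $w^* m\in R^* N_1=N_1$.

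Next I would verify independence of the choice of $l$. Factoring $u=v x_j$ with $|v|=l$ for each $|u|=l+1$ gives $u^*=x_j^* v^*$; since $x_j^* v^* m=(vx_j)^* m\in N_1$, the $R^*$-linearity of $\phi$ together with the Cuntz--Krieger relation \eqref{ck2} yields
$$\sum_{|u|=l+1} u\,\phi(u^* m)=\sum_{|v|=l} v\sum_j x_j\,\phi(x_j^* v^* m)=\sum_{|v|=l} v\sum_j x_j x_j^*\,\phi(v^* m)=\sum_{|v|=l} v\,\phi(v^* m).$$
Taking $l=0$ in the formula shows $\Phi|_{N_1}=\phi$, and uniqueness falls out dually: from $m=\sum_{|w|=l} w w^* m$ (a consequence of \eqref{eql1}) and $w^* m\in N_1$, any $L(R)$-linear extension $\Psi$ of $\phi$ must satisfy $\Psi(m)=\sum_{|w|=l} w\,\phi(w^* m)=\Phi(m)$.

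The remaining task is $L(R)$-linearity of $\Phi$. Since $L(R)$ is generated over $R$ by the stars $x_1^*,\dots,x_n^*$, it suffices to check compatibility with left multiplication by each $s\in S$, each $x_j$, and each $x_j^*$. The cases $x_j$ and $x_j^*$ are clean bookkeeping based on the orthogonality $u^* x_j=0$ unless $u$ starts with $x_j$ (and dually $x_j^* w=0$ unless $w$ starts with $x_j$), which re-indexes the sum and gives $\Phi(x_j m)=x_j\Phi(m)$ and $\Phi(x_j^* m)=x_j^*\Phi(m)$. For $s\in S$ in the automorphism setting of Definition \ref{dskew0}(1)--(2), the rule $w^* s=\alpha^{-|w|}(s) w^*$ matches the dual rule $sw=w\,\alpha^{-|w|}(s)$ and yields $\Phi(sm)=s\Phi(m)$ at once.

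The step I expect to be most delicate is $S$-linearity in the proper-injective-endomorphism case of Definition \ref{dskew0}(3)--(4). There the commutation \eqref{eql11} rewrites $x_i^* s$ as a combination $\sum_j\alpha^{-1}(s_{ij})x_j^*$ that entangles different stars, so iterating to push $w^*$ past $s$ produces an $S$-linear sum of words $v^*$ with $|v|=|w|$ indexed by the decomposition $S_{\alpha(S)}=\bigoplus_i u_i\alpha(S)$; the matching rewrite of $sw$ via the same basis is controlled by the same structure constants $(s_{ij})$, and the two rewrites must be shown to cancel after summation. The combinatorics is the only place where the verification is not essentially immediate, but as it is purely linear-algebraic and symmetric in the two sides of the identity, I expect it to go through.
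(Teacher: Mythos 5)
Your construction is correct and fills in what the paper leaves to the reader: the paper states Corollary \ref{dskewp4} with no proof, declaring it an immediate consequence of Theorem \ref{dskewp3}. Where the paper does give an argument for the same kind of extension statement — in the parallel Corollary \ref{f61} for the free group algebra — it proceeds differently: it first shows that the multiplication map $\psi\colon L(R)\otimes_{R^*}N_1\to M_1$, $\gamma\otimes v\mapsto\gamma v$, is an isomorphism (surjectivity from $RN_1=M_1$, injectivity from the essentiality of $1\otimes N_1$ established via Theorem \ref{dskewp3}), and then obtains the extension for free by functoriality, composing $M_1\cong L(R)\otimes_{R^*}N_1\xrightarrow{\,1\otimes\phi\,}L(R)\otimes_{R^*}M_2\to M_2$. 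Your explicit formula $\Phi(m)=\sum_{|w|=l}w\,\phi(w^*m)$ is in effect the inverse of that isomorphism followed by the same composition, so the two routes rest on the identical key point from Theorem \ref{dskewp3} (that $w^*m$ lands in the lattice once $|w|$ is large); the trade-off is that you must verify $L(R)$-linearity of $\Phi$ by hand, including the commutation with $S$ in the proper-endomorphism case, which the tensor-product formulation obtains automatically, while the paper instead has to verify the isomorphism $\psi$. Your sketch of the $S$-linearity check does go through: writing $w^*s=\sum_v c_{w,v}v^*$ via \eqref{eql11} and $sv=\sum_w w\,c_{w,v}$ from the commutation $sx_j=\sum_i x_i\alpha^{-1}(s_{ij})$, the structure constants on both sides are the same, so the two double sums collapse to the same expression; you might note this symmetry explicitly, as it is the only nontrivial bookkeeping in the argument.
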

\begin{corollary}\label{dskewp5} If $S$ is a left Noetherian algebra, and $M$ is a Sato $R$-module generated by $m_1, \dots, m_l$ where $R$ is a skew free algebra over $S$ of Definition \ref{dskew0}, then the $S$-submodule of $M$ generated by all $w^*m_j \, (j=1, \dots, l;\, w\in M_X)$ is a lattice of $M$.
\end{corollary}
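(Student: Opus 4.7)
The plan is to verify, for $P$ defined as the left $S$-submodule of $M$ generated by $\{w^*m_j : w\in M_X,\, j=1,\dots,l\}$, each of the three defining properties of a lattice: that $P$ is an $R^*$-submodule of $M$, that $RP=M$, and that $P$ is finitely generated over $S$.

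The first two conditions come essentially for free. Taking $w$ to be the empty word gives $m_j = 1^*m_j \in P$ for every $j$, hence $RP \supseteq \sum_{j=1}^{l} Rm_j = M$. For $R^*$-closure, $P$ is visibly closed under left multiplication by $S$, and the commutation rules already exploited in the proof of Theorem \ref{dskewp3} handle each $x^*_i$. In the automorphism case \eqref{eqla} gives
$$x^*_i(s\cdot w^*m_j) = \a^{-1}(s)\,x^*_i w^*m_j = \a^{-1}(s)(wx_i)^*m_j \in P,$$
while in the injective-endomorphism case \eqref{eqlb} expresses $x^*_is$ as a finite $S$-linear combination of the $x^*_k$'s, so that $x^*_i(s\cdot w^*m_j)$ again lies in the $S$-span of words $(wx_k)^*m_j \in P$. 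Since $R^*$ is generated as a $\mathfrak k$-algebra by $S$ together with the $x^*_i$'s, this suffices for $R^*P \subseteq P$.

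The substantive point is finite generation of $P$ over $S$, and the plan is to bound $P$ inside the explicit lattice already produced in the proof of Theorem \ref{dskewp3}. Choose $p \geq \max_{i,j,k} \deg f^k_{ij}$ with $f^k_{ij}$ as in \eqref{eql12}, and let $N$ denote the left $S$-submodule of $M$ generated by the finite set $\{wm_j : w\in M_X,\ |w|\leq p,\ j=1,\dots,l\}$. The cited proof shows $N$ is already an $R^*$-submodule of $M$; since $N$ contains every $m_j$ and is closed under every $x^*_i$, iterated application yields $w^*m_j \in N$ for all $w\in M_X$, and consequently $P \subseteq N$. As $N$ is finitely generated over the left Noetherian ring $S$, its submodule $P$ is finitely generated as well. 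This final invocation of the Noetherian hypothesis is the only step that is not a formal consequence of Theorem \ref{dskewp3} and is the sole genuine obstacle in the argument.
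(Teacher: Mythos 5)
Your proof is correct and takes essentially the same route as the paper. The paper's own justification is the second paragraph of the proof of Theorem \ref{dskewp3}: any lattice $N$ contains a finite $R$-generating set of $M$ and is $R^*$-closed and finitely $S$-generated, so the $S$-span $P$ of the derivatives $w^*m_j$ sits inside $N$ and inherits finite generation from Noetherianity. You instantiate $N$ as the explicit lattice built in the first part of that proof (spanned by $wm_j$ with $|w|\leq p$), which has the small advantage of transparently accommodating an arbitrary given generating set $m_1,\dots,m_l$, and you spell out the $R^*$-closure of $P$ via the commutation rules \eqref{eqla} and \eqref{eqlb}; both steps are faithful to what the paper leaves implicit.
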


The further study of skew free algebras using localization techniques is a subject of another work. We end this section with an extension of Farber and Vogel result \cite{fa2} to a larger subclass of skew free algebras of Definition \ref{dskew0} by imposing constraints on both $\mathfrak k$ and $S$. Hence we focus on algebras of the following 
\begin{definition}\label{dskew1} An algebra $R$ of Definition \ref{dskew0}(1) and (3) is said to be a \emph{restricted skew free algebra} if $\mathfrak k$ is a commutative principal ideal domain; and $S$ is a finitely generated free $\mathfrak k$-module.
\end{definition}
With the notation of Definition \ref{dskew1} the completion $\hat{R}$ of a restricted skew free algebra $R$ with respect to the $I$-adic topology has the $\mathfrak k$-torsionfree
factor $\hat{R}/R$. Namely, elements of $R$ are polynomials with coefficients from $S$ while series of $\hat{R}\setminus R$ have infinitely many non-zero coefficients from $S$. Hence each element of $\hat{R}/R$ generates a free cyclic $S$-module whence it is torsionfree over $\mathfrak k$. This argument shows indeed that in the case of a skew free algebra $R$ of Definition \ref{dskew0} over an arbitrary skew field, or more generally, a semisimple algebra $S$, every Sato left $R$-module admits a lattice which is a finitely generated free or projective $S$-module, respectively. However, this argument does not apply to skew free group algebras of Definition \ref{dskew0}(2) and (4).
In the case of the ordinary free group algebra $\Lambda$ according to \cite{fa2} it is  immediate that $\G/\Lambda$ is $\mathfrak k$-torsionfree. In view of the observation before Remark \ref{ree} a Sato module $\coker \r \, (\r\in 1+M_I)$ is torsionfree over
$\mathfrak k$. It is a nice result of Farber and Vogel \cite[Lemma 4.3(3)]{fa2} that this property characterizes $\coker \r$. Hence we have with the notation of Section\ref{moho1}
\begin{proposition}\label{dskewp6} Let $R$ be a restricted skew free algebra over $S$ and, $M_I$ the set of square matrices with entries in $I=\sum\limits_{i=1}^n x_iR$. Then the property of being a Sato-module which is torsionfree over $\mathfrak k$, characterizes $\coker \r \,\, (\r \in 1+M_I)$. 
\end{proposition}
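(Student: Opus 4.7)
The plan is to establish the two implications separately, using that under the restricted skew free hypothesis $\hat R/R$ is $\mathfrak k$-torsionfree.

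\emph{For sufficiency ($\coker\r$ is a $\mathfrak k$-torsionfree Sato module),} I would apply Corollary~\ref{L2} directly to see $\coker\r$ is a Sato module. For the torsionfreeness, Corollary~\ref{matrinv} makes $\r$ invertible in $M_l(\hat R)$, so the short exact sequence $0\to R^l \xrightarrow{\r} R^l \to \coker\r\to 0$ yields an embedding $\coker\r\cong R^l\r^{-1}/R^l \hookrightarrow \hat R^l/R^l$. Under the restricted skew free hypothesis ($\mathfrak k$ a PID and $S$ a finitely generated free $\mathfrak k$-module), every element of $\hat R\setminus R$ has, in its generalized power series expansion of Remark~\ref{ree}, infinitely many nonzero coefficients in the $\mathfrak k$-free module $S$; such a coset generates a free cyclic $S$-module inside $\hat R/R$, so $\hat R/R$ and hence $\coker\r$ is $\mathfrak k$-torsionfree.

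\emph{For necessity,} suppose $M$ is a $\mathfrak k$-torsionfree Sato module with a finite generating set $m_1,\dots,m_l$. The first step is to produce a candidate $\r$. The Sato condition \eqref{eql8} forces $IM=M$, so each generator can be written as $m_i=\sum_j x_j m'_{ij}$ with $m'_{ij}\in M$, and then $m'_{ij}=\sum_k b_{ijk}m_k$ produces the relations
\[
\sum_{k=1}^l(\delta_{ik}-a_{ik})\,m_k=0,\qquad a_{ik}=\sum_{j=1}^n x_j b_{ijk}\in I.
\]
Setting $\r:=1-(a_{ik})\in 1+M_l(I)$ gives a natural surjection $\pi\colon\coker\r \twoheadrightarrow M$ sending $\bar e_i\mapsto m_i$. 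The remaining task is to show that $\pi$ is injective.

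\emph{To prove injectivity of $\pi$} I put $K:=\ker\pi$ and derive from $0\to K\to\coker\r\to M\to 0$ and the Tor long exact sequence with $S$, using that both $\coker\r$ and $M$ are Sato, that $K$ is a weak Sato module, hence a left $L(R)$-module. Composing with the embedding of the sufficiency part gives $K\hookrightarrow \hat R^l/R^l$. To force $K=0$ I would use the lattice machinery of Theorem~\ref{dskewp3} and Corollary~\ref{dskewp5}: $M$ admits a finitely generated $S$-lattice which, because $M$ is $\mathfrak k$-torsionfree and $\mathfrak k$ is a PID, is a finitely generated free $\mathfrak k$-module; analogously for $\coker\r$. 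The hard part — and the place where I expect the main obstacle to lie — is then to compare $\mathfrak k$-ranks of these lattices after reduction modulo $I$ and to invoke a rank match, following the strategy of \cite[Lemma~4.3(3)]{fa2}, to conclude $K=0$. This rank comparison is precisely where the full restricted skew free hypothesis (PID $\mathfrak k$ and $\mathfrak k$-free $S$) is used; without $\mathfrak k$-torsionfreeness of $M$ one cannot free the lattice over $\mathfrak k$, and without $S$ being $\mathfrak k$-free one cannot control $\hat R/R$.
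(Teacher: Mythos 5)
Your argument for the first direction (that $\coker\rho$ with $\rho\in 1+M_I$ is a $\mathfrak k$-torsionfree Sato module) agrees with the paper's: torsionfreeness comes from the embedding $\coker\rho\hookrightarrow\hat R^l/R^l$ together with the observation, granted by the restricted skew free hypothesis, that elements of $\hat R\setminus R$ have infinitely many nonzero $S$-coefficients and so generate a $\mathfrak k$-torsionfree cyclic module.

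For the converse, however, your construction of $\r$ diverges from the paper's and leaves the decisive step unproven. You build $\r$ from an arbitrary finite $R$-module generating set $m_1,\dots,m_l$ of $M$ and arbitrary liftings $m'_{ij}=\sum_k b_{ijk}m_k$. This produces some $\r\in 1+M_l(I)$ and a surjection $\pi\colon\coker\r\twoheadrightarrow M$, but there is no reason the $l$ relations you impose should generate all of $\ker(R^l\to M)$; for a noncanonical choice of $\r$ the kernel $K$ can be nonzero. You correctly note that $K$ is a weak Sato module, but the proposed "rank comparison" to force $K=0$ is only a hope, not an argument — and indeed the $\mathfrak k$-ranks of lattices of $\coker\r$ and of $M$ have no a priori reason to match for your $\r$.

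The paper avoids this entirely by working with a lattice rather than $R$-module generators. It takes a lattice $N$ of $M$ (Theorem~\ref{dskewp3}), which is a finitely generated torsionfree $\mathfrak k$-module, hence free of some finite rank $l$ since $\mathfrak k$ is a PID. It then defines $\r$ on $R\otimes_{\mathfrak k}N\cong R^l$ by $\r(1\otimes n)=1\otimes n-\sum_{i}x_i\otimes x_i^*n$. The point is that $N$ is $R^*$-stable, so each $x_i^*$ restricts to a $\mathfrak k$-linear operator on the $\mathfrak k$-free module $N$; in a $\mathfrak k$-basis of $N$ the matrix of $\r$ is then canonically in $1+M_l(I)$, with no arbitrary choices of the $b_{ijk}$. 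The multiplication map $p\colon R\otimes_{\mathfrak k}N\to M$ kills $\r$ by the defining identity $\sum_ix_ix_i^*n=n$, inducing $g\colon\coker\r\to M$. The commutative diagram with $N$ embedded both in $R\otimes_{\mathfrak k}N$ (via $\imath$) and in $\coker\r$ (via $\psi=e\imath$) is exactly what is needed to run Farber--Vogel's argument (\cite[Lemma 4.3(3)]{fa2}) word for word, using Theorem~\ref{dskewp3} and Corollary~\ref{dskewp4}, and conclude that $g$ is an isomorphism. So the missing idea in your proposal is precisely the replacement of an $R$-module generating set by a $\mathfrak k$-basis of a lattice, which makes $\r$ canonical and reduces the comparison of $\coker\r$ with $M$ to an identification of lattices rather than an uncontrolled rank count.
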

\begin{proof} The necessity is already checked. We have to show only the sufficience. Let $M$ be a Sato $R$-module torsionfree over $\mathfrak k$ and $N$ a lattice of $M$. Then $N$ is finitely generated and torsionfree over $\mathfrak k$ whence it is free of finite rank $l\in \N$ over $\mathfrak k$. Let $\r$ be the endomorphism on $R\otimes_{\mathfrak k} N\cong R^l$  by
\begin{align}\label{eqlc}
\r(r\otimes n)=r\otimes n - \sum\limits_{i=1}^nx_i\otimes x^*_in \quad (r\in R, n\in N).
\end{align}
Then the matrix of $\r$ belongs to $1+M_I$ whence $\coker \r$ is a Sato module. $N$ is clearly a $\mathfrak k$-submodule of $R\otimes_{\mathfrak k} N$ via the inclusion $\imath\colon n\in N\mapsto 1\otimes n\in R\otimes N$. Moreover, if $p\colon r\otimes n\in R\otimes N\mapsto rn\in M$ is the canonical multiplication map, then $p\r=0$ holds and so $f=ge$ for a unique $R$-homomorphism $g\colon \coker \r\rightarrow M$ determined by a cokernel $e$ of $\r$. Put $\psi=e\imath$ one obtains the following commutative diagram
$$\xymatrix{&M&&\\
R\otimes_{\mathfrak k} N \ar[r]^\r	&R\otimes_{\mathfrak k} N \ar[r]^\pi \ar[u]^p&	\mathrm{coker}(\p) \ar[r] \ar[ul]_{g}&	0\\
&N\ar[u]^{\imath}\ar[ur]_{\psi}&&}$$
from which by Theorem \ref{dskewp3} and Corollary \ref{dskewp4} one can repeat word by word the proof of \cite[Lemma 4.3(3)]{fa2} to see that $g$ is an isomorphism and hence the proof is complete.
\end{proof}
\begin{example}\label{em1}
We present now an example which shows the heart of the idea and the technique verifying both \cite[Lemma 4.3(3)]{fa2} and Proposition \ref{dskewp6}. Take $S=\mathfrak k$ of characteristic not 2 and $r=x^2_1+2x_1+x^2_2+2x_2+1\in R$ viewed as an endomorphism $r\colon a\in R\mapsto ar\in R$. We are going to represent $M=\coker r=R/Rr$ as a cokernel of an endomorphism of some free $R$-module of finite rank according to the proof of Proposition \ref{dskewp6}. The elements $v_1=1+Rr, v_2=x_1+Rr, v_3=x_2+Rr$ form a $\mathfrak k$-basis for a lattice $N={\mathfrak k}v_1+{\mathfrak k}v_2+{\mathfrak k}v_3$ of $M$ by $x^*_1v_2=x^*_2 v_3=v_1, x^*_2 v_2=x^*_1 v_3=0$ and $x^*_1 v_1=-v_2-2v_1, x^*_2 v_1=-v_3-2v_1$. The corresponding endomorphism $\r$ of $R\otimes_{\mathfrak k}N$ yields by \eqref{eqlc}

$$(1\otimes v_1)\r=1\otimes v_1-(x_1\otimes  x^*_1v_1+x_2\otimes x^*_2v_1)=$$
$$=(1+2x_1+2x_2)(1\otimes v_1)+x_1(1\otimes v_2)+x_2(1\otimes v_3),$$
$$(1\otimes v_2)\r=1\otimes v_2-(x_1\otimes x^*_1v_2+x_2\otimes v_2)=-x_1(1\otimes v_1)+ 1\otimes v_2$$
and
$$(1\otimes v_3)\r=1\otimes v_3-(x_1\otimes x^*_1v_3+x_2\otimes x^*_2v_3)=-x_2(1\otimes v_1)+1\otimes v_3$$
whence its matrix is
$$\begin{pmatrix}1+2x_1+2x_2 & x_1 & x_2\\ -x_1  & 1 & 0\\ -x_2 & 0 & 1\end{pmatrix}$$
which is far from being $r$. However, by trivial elementary row and column transformations one can see that the matrix of $\r$ is similar to
$$\begin{pmatrix} r & 0 & 0\\0 & 1 & 0\\0 & 0 & 1\end{pmatrix}$$
which is essentially $r$.
\end{example}
One has a satisfactory description of Sato modules over a skew free algebras of Definition \ref{dskew0} when $S$ is a skew field.
\begin{proposition}\label{dskewp610} Let $R$ be a skew free algebra over a skew field $S$ of Definition \ref{dskew0} where $\mathfrak k$ is an arbitrary commutative ring, and put $I=\sum\limits_{i=1}^nx_iR\lhd R$. Then Sato $R$-modules are precisely modules $\coker \r \, (\r \in 1+M_I)$.
\end{proposition}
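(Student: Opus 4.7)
The plan is to promote the argument from Proposition~\ref{dskewp6} to the setting where $S$ is a (possibly non-commutative) skew field. What enables the generalization is precisely that over a skew field every finitely generated $S$-module is free, so any lattice of a Sato module is automatically a free $S$-module of finite rank and supplies explicit ``coordinates'' from which a matrix $\r \in 1+M_I$ can be read off.

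One direction is immediate: by Corollary~\ref{L2}, $\coker \r$ is a Sato module for every $\r \in 1+M_I$. For the converse, let $M$ be a Sato $R$-module, pick a lattice $N \subseteq M$ via Theorem~\ref{dskewp3}, and fix an $S$-basis $m_1, \dots, m_l$ of $N$. Because $N$ is closed under every $x^*_i$, there exist unique scalars $s^{ij}_k \in S$ such that $x^*_i m_j = \sum_{k=1}^l s^{ij}_k m_k$, and the Sato identity $m_j = \sum_i x_i x^*_i m_j$ rewrites as the relation $m_j - \sum_{i,k} x_i s^{ij}_k m_k = 0$. This suggests setting $\r_{kj} = \d_{kj} - \sum_i x_i s^{ij}_k$, so that $\r=(\r_{kj}) \in 1 + M_l(I)$; the surjection $\phi\colon R^l \twoheadrightarrow M$ sending the $j$-th standard basis column to $m_j$ then kills every column of $\r$ and factors through a surjection $\bar{\phi}\colon \coker \r \twoheadrightarrow M$.

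To finish, one shows $\bar\phi$ is injective. Put $M' = \coker \r$, which is a Sato module by Corollary~\ref{L2}, and let $\bar{e}_j$ be the image of the $j$-th standard basis column in $M'$. The relations defining $\r$ give $\bar{e}_j = \sum_i x_i \bigl(\sum_k s^{ij}_k \bar{e}_k\bigr)$, and uniqueness in the Sato decomposition $M' \cong (M')^n$ forces $x^*_i \bar{e}_j = \sum_k s^{ij}_k \bar{e}_k$. Since $\bar\phi\bigl(\sum s_j \bar{e}_j\bigr) = \sum s_j m_j$ and the $m_j$ are $S$-independent, the $\bar{e}_j$ are $S$-independent too, so $N' := \bigoplus_j S\bar{e}_j$ is a lattice of $M'$ and $\psi\colon \bar{e}_j \mapsto m_j$ is an $R^*$-isomorphism $N' \to N$. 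By Corollary~\ref{dskewp4}, $\psi$ extends uniquely to an $L(R)$-homomorphism $\Psi\colon M' \to M$; since $\Psi$ and $\bar\phi$ agree on $N'$ and $N'$ generates $M'$ over $R$, we have $\Psi = \bar\phi$. The essentiality of $N'$ in $M'$ (Theorem~\ref{dskewp3}), combined with $\ker\bar\phi \cap N' = 0$, forces $\ker \bar\phi = 0$, so $\bar\phi$ is an isomorphism.

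The step requiring the most care is reading the $L(R)$-action on $N'$ off the matrix $\r$: one must invoke the uniqueness in $M' \cong (M')^n$ to extract the structure constants $s^{ij}_k$ rather than postulate them. The commutation rules \eqref{eql10} and \eqref{eql11} between the $x^*_i$ and $S$ do not need to be unpacked explicitly, being absorbed into the already-established lattice machinery of Theorem~\ref{dskewp3} and Corollary~\ref{dskewp4}.
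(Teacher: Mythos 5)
Your proof is correct and follows the paper's approach. You construct precisely the matrix $\r$ that the paper defines on $R\otimes_S N$ (written in coordinates via the structure constants $s^{ij}_k$ of the $x^*_i$-action on an $S$-basis of a lattice), and where the paper refers back to Proposition \ref{dskewp6} and, through it, to Farber--Vogel's Lemma 4.3(3), you have supplied the intended argument explicitly: identify $N'=\bigoplus_j S\bar{e}_j$ as a lattice of $\coker\r$, extend the lattice isomorphism $\psi$ to an $L(R)$-map via Corollary \ref{dskewp4}, and invoke the essentiality from Theorem \ref{dskewp3} to conclude that $\bar\phi$ is injective --- exactly the machinery the paper develops for this purpose.
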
 
\begin{proof} It is already verified that $\coker \r \, (\r \in 1+M_I)$ are Sato modules. For the converse, if $M$ is any Sato module, then by Theorem \ref{dskewp3} $M$ has a finite-dimensional lattice $N$ over $S$. Hence the endomorphism $\r$ of a finitely generated free $R$-module $R\otimes_S N$ defined by
\begin{align}\label{eqlc}
\r(r\otimes n)=r\otimes n - \sum\limits_{i=1}^nx_i\otimes x^*_in \quad (r\in R, n\in N)
\end{align}
has a matrix belonging to $1+M_I$ and one can finish the proof in  the same way of Proposition \ref{dskewp6}.
\end{proof}
Consequently, Proposition \ref{dskewp610} describes finitely presented modules over the classical Leavitt algebras. 
It is clear but more sophisticated that Proposition \ref{dskewp610} can be extended without difficulty to the case when $S$ is a semisimple artinian algebra.

By repeating a word by a word the proof of \cite[Theorems 4.6 and 5.1]{fa2} one gets the following results.
\begin{theorem}\label{L5} Let $R$ be a restricted skew free algebra of Definition \ref{dskew1} and $I=\sum\limits_{i=1}^nx_iR\lhd R$. 
Then the rational closure $\tilde{R}$ of $R$ in the skew power series algebra $\hat{R}=S\langle \langle x_1, \dots, x_n; \a\rangle\rangle$ is the universal Cohn localization of $R$ with repect to the set $1+M_I$. Consequently, $\tilde{R}$ is also the rational closure of the skew free group algebra.
\end{theorem}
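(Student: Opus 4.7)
The plan is to mimic the proof of Theorem~\ref{cohnloc1}: construct a canonical surjection $\phi\colon C_I(R)\twoheadrightarrow \tilde{R}$ via the universal property, and then establish injectivity by comparing adic topologies. First I would verify the prerequisites. The ideal $I$ is transfinitely nilpotent by the polynomial-degree grading on $R=S\langle x_1,\dots,x_n;\a\rangle$, and since $\mathfrak{k}$ is a PID and $S$ is a finitely generated free $\mathfrak{k}$-module, $S$ has IBN, so every matrix in $1+M_I$ reduces modulo $I$ to the identity in $M_l(S)$ and is full. By Corollary~\ref{matrinv} the inclusion $\jmath\colon R\hookrightarrow\tilde{R}\subseteq\hat{R}$ is injective and $(1+M_I)$-inverting, so Proposition~\ref{rat3} forces the canonical map $\iota\colon R\to C_I(R)$ to be injective, and universality produces the unique ring homomorphism $\phi\colon C_I(R)\to\tilde{R}$ extending $\jmath$. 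Since $\tilde{R}$ is generated as an $R$-module by entries of inverses of matrices in $1+M_I$ (the rational closure description from Section~\ref{ratclo0}), $\phi$ is surjective.

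The hard part will be proving $\phi$ injective. Setting $I'=C_I(R)\iota(I)C_I(R)$, I would first show that the natural surjection $R/I^l\to C_I(R)/I'^l$ is a ring isomorphism for every $l\in\N$. Surjectivity uses the geometric series identity $(1+Q)^{-1}\equiv\sum_{k=0}^{l-1}(-Q)^k\pmod{I'^l}$ for $Q\in M_m(\iota(I))$, so entries of inverses reduce modulo $I'^l$ to elements of $\iota(R)$. Injectivity follows from universality applied to $R\to\hat{R}\to\hat{R}/\hat{I}^l$: this composition is $(1+M_I)$-inverting (Corollary~\ref{matrinv} applied to the nilpotent quotient ideal $\hat{I}/\hat{I}^l$), hence factors through $C_I(R)/I'^l$ and produces a retraction of the natural map. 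Taking inverse limits yields a canonical isomorphism $\widehat{C_I(R)}\cong\hat{R}$, and the composition $C_I(R)\to\widehat{C_I(R)}\cong\hat{R}$ agrees with $\phi$ followed by the inclusion $\tilde{R}\hookrightarrow\hat{R}$ by uniqueness of the universal arrow. Consequently $\ker\phi=\bigcap_l I'^l$, and the main obstacle is to show this intersection is zero. Here I would invoke Proposition~\ref{dskewp6}: a nonzero element of $\bigcap_l I'^l$, presented via Proposition~\ref{rat2} in Section~\ref{mainvert} as an entry of a unique solution to $AZ=P$ with $A\in 1+M_I$, would contradict the uniqueness of solutions in $\hat{R}$ matched against the torsionfree characterization of $\coker(1+\r)$ for $\r\in 1+M_I$. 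This is precisely the step that parallels Farber--Vogel's argument in \cite{fa2}.

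For the consequence about the skew free group algebra $T$ (either the crossed product $S\ast_\a F_n$ of Definition~\ref{dskew0}(2) or the restricted variant of Definition~\ref{dskew0}(4)), each $t_i=1+x_i$ lies in $1+M_I$ and is therefore invertible in $\tilde{R}=C_I(R)$, giving a canonical embedding $T\hookrightarrow\tilde{R}\subseteq\hat{R}$. The rational closure of $T$ in $\hat{R}$ contains $R$ and is closed under quasi-inverses, so it contains $\tilde{R}$; conversely, $\tilde{R}$ already contains $T$ and is itself closed under quasi-inverses of elements of $\hat{I}\cap\tilde{R}$, so it contains the rational closure of $T$. The two rational closures therefore coincide, establishing the final assertion.
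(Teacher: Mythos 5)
The paper itself gives no proof of Theorem~\ref{L5} beyond the one-line instruction to ``repeat word by word the proof of [Theorems 4.6 and 5.1]\cite{fa2},'' so your attempt to supply a genuine argument is welcome, and a substantial part of it is sound. The surjectivity of $\phi\colon C_I(R)\to\tilde R$ via the universal property and Corollary~\ref{rat2}, the reduction of injectivity to $\ker\phi=\bigcap_l I'^l$ through the identification $\widehat{C_I(R)}\cong\hat R$ obtained from the isomorphisms $R/I^l\cong C_I(R)/I'^l$ (your geometric-series and retraction argument for these is correct), and the deduction of the ``consequence'' about the skew free group algebra are all fine. This is in fact the same completion-of-$C_I(R)$ observation that the paper makes explicit in the proof of Theorem~\ref{cohnloc1}.

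The gap is in the last step, which is also the only genuinely hard step. You must prove $\bigcap_l I'^l=0$, i.e.\ that the $I'$-adic topology on $C_I(R)$ is Hausdorff, but you only gesture at Proposition~\ref{dskewp6} and ``uniqueness of solutions in $\hat R$.'' That does not close the gap: if $\xi\in\bigcap_l I'^l$ is nonzero and is exhibited by Cramer's rule as an entry of $A^{-1}P$ over $C_I(R)$ with $A\in\iota(1+M_I)$, then what you learn from uniqueness of solutions in $\hat R$ is only that $\phi(\xi)=0$, which you already knew; there is no mechanism by which the torsion-freeness of $\coker\rho$ as an $R$-module (a statement about $R$-modules, not about elements of the abstract localization $C_I(R)$) forces $\xi$ itself to vanish inside $C_I(R)$. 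In the proof of Theorem~\ref{cohnloc1} this Hausdorffness is exactly what is bought by the fir hypothesis: $C_I(R)$ is again a fir and firs have transfinitely nilpotent proper ideals. A restricted skew free algebra over a PID is not a fir, so that route is unavailable, and this is precisely the point at which Farber--Vogel's Theorem~4.6 supplies a nontrivial, genuinely module-theoretic argument (showing directly that $\tilde R$ has the universal factorization property, via Cramer representations and their behaviour under Sato-module isomorphisms, rather than by proving Hausdorffness of $C_I(R)$ after the fact). Either you need to import that factorization argument explicitly, or find an independent proof that $I'$ is transfinitely nilpotent; as written the argument is circular at this point.

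A minor additional remark: when verifying that matrices in $1+M_I$ are full, IBN of $S=R/I$ alone is not quite enough; one needs stable finiteness (equivalently, UGN in the relevant size range) of $S$, which does hold here since $S$ is a finitely generated free module over a commutative Noetherian ring and is thus Noetherian. This is harmless but worth saying precisely, since the paper's blanket claim that ``$R/I$ has IBN'' suffices is itself a slight understatement.
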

 
\begin{theorem}\label{L6} Let $R$ be a skew free algebra over a skew field $S$ of Definition \ref{dskew0} where $\mathfrak k$ is an arbitrary commutative ring, and $I=\sum\limits_{i=1}^nx_iR\lhd R$. 
Then the rational closure $\tilde{R}$ of $R$ in the skew power series algebra $\hat{R}=S\langle \langle x_1, \dots, x_n; \a \rangle\rangle$ is the universal Cohn localization of $R$ with repect to the set $1+M_I$. Consequently, $\tilde{R}$ is also the rational closure of the skew free group algebra.
\end{theorem}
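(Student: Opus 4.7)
The plan is to mirror the argument of Theorem \ref{L5}, which in turn adapts \cite[Theorems 4.6 and 5.1]{fa2} to our setting; the only structural change needed is to replace Proposition \ref{dskewp6} (which required $\mathfrak k$-torsionfreeness) by Proposition \ref{dskewp610}, whose hypothesis that $S$ is a skew field puts no constraint on $\mathfrak k$.

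First I would note that, since $I=\sum_i x_iR$ is transfinitely nilpotent (indeed $\bigcap_l I^l=0$ already in the skew power series algebra $\hat R$), the canonical inclusion $R\hookrightarrow\tilde R\subseteq\hat R$ is injective. By Corollary \ref{matrinv} this inclusion is $(1+M_I)$-inverting, so Proposition \ref{rat3} identifies $R$ with a subalgebra of $C_I(R)$ and the universal property of $C_I(R)$ produces a canonical $R$-algebra map $\psi\colon C_I(R)\to\tilde R$. Surjectivity of $\psi$ is immediate from Corollary \ref{rat2}: $\tilde R$ is generated over $R$ by the entries of inverses of matrices $1+Q$ with $Q\in M_l(I)$, each of which lies in the image of $\psi$.

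The heart of the argument is the injectivity of $\psi$, obtained by showing conversely that $\tilde R$ itself satisfies the universal $(1+M_I)$-inverting property. Given an arbitrary $(1+M_I)$-inverting homomorphism $f\colon R\to T$, one extends $f$ to $\tilde R$ by sending the first entry of $A^{-1}P$ (for $A\in 1+M_l(I)$ and $P\in R^l$, as described in Proposition \ref{rat2}) to the first entry of $f(A)^{-1}f(P)\in T$. The delicate step is well-definedness and multiplicativity of this assignment: two distinct presentations of the same element of $\tilde R$ must yield the same value in $T$. This is precisely where the Sato-module technology enters. By Proposition \ref{dskewp610}, every Sato module over $R$ is a cokernel $\coker\r$ with $\r\in 1+M_I$, so any rational identity between entries of inverses of such matrices can be encoded, via a suitable block construction, in a single Sato-module presentation; applying $f$, which inverts $\r$, forces the identity to persist in $T$. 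This is exactly the block-matrix manipulation performed in \cite[Theorem 4.6]{fa2}, and it transfers verbatim once Proposition \ref{dskewp610} supplies the needed classification.

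Finally, the concluding assertion follows because the elements $t_i=1+x_i$ are units in $\tilde R$ (their inverses come from the quasi-inverses of $-x_i\in\hat I$), so the skew free group algebra of Definition \ref{dskew0}(2) or (4) embeds into $\tilde R$ as an intermediate $(1+M_I)$-inverting extension of $R$ with the same augmentation ideal; its rational closure must therefore coincide with $\tilde R$ by the universality just established. The main obstacle throughout is the well-definedness step in paragraph three: one has to turn a rational identity between entries of $A^{-1}P$ and $B^{-1}Q$ into an honest matrix identity which is then recognised as a morphism between Sato modules, and this requires the careful transcription of the Farber--Vogel block-matrix argument together with Proposition \ref{dskewp610} in place of the torsionfree characterisation used in the free group algebra setting.
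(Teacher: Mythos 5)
Your proposal is correct and matches the paper's approach: the paper proves Theorems \ref{L5} and \ref{L6} simultaneously by citing the proofs of \cite[Theorems 4.6 and 5.1]{fa2} as transferring word for word, and your plan unpacks exactly what that transfer amounts to — the canonical comparison map $\psi\colon C_I(R)\to\tilde R$, surjectivity via Corollary \ref{rat2}, injectivity via the Farber--Vogel block-matrix encoding of rational identities through Sato-module presentations — with the one substitution the statement requires, namely Proposition \ref{dskewp610} (classification of Sato modules over a skew field $S$, no constraint on $\mathfrak k$) in place of Proposition \ref{dskewp6} ($\mathfrak k$ a PID, $S$ free over $\mathfrak k$). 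Your handling of the final assertion about the skew free group algebra is also the intended one: since the $t_i=1+x_i$ are units in $\tilde R$, the skew group algebra sits between $R$ and $\tilde R$ with the same augmentation ideal, so its rational closure coincides with that of $R$.
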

Theorem \ref{L6} can be extended to the more sophisticated case when $S$ is a semisimple artinian algebra.

\section{Prime factorizations in free group algebras}
\label{fac}
It is a famous result of Cohn \cite[Corollary 7.11.8]{c3} and \cite[1st Ed., Corollary of Theorem 3.2.2]{c2} that both the free algebra and the group algebra of a free group are firs, hence UFD's. In this section, inspired by the approach presented in \cite{am} we discuss the factorization problem of elements of the group algebra $\Lambda={\mathfrak k}F$ of the free group $F=F_n$ generated by $t_1, \cdots, t_n$ where $\mathfrak k$ is assumed to be a field. Since we treat Fox derivations of free group algebras in Section \ref{int} by using power series algebras, it is worth to deal with free group algebras in their own context. This is why the next consideration is necessary.

Put $t_i-1=x_1, y_i=t^{-1}_i-1=-x_it^{-1}_i$, then $\Lambda$ contains the free associative algebra ${\mathfrak k}\langle t_1, \dots, t_n\rangle={\mathfrak k}\langle x_1, \dots, x_n\rangle =A$ as well as its copy $A^{-}={\mathfrak k}\langle t^{-1}_1, \dots t^{-1}_n\rangle={\mathfrak k}\langle y_1, \dots, y_n \rangle$. The two-sided ideal $I=\sum\limits_{i=1}^nx_i\Lambda=\bigoplus\limits_{i=1}^n x_i\Lambda=\sum\limits_{i=1}^ny_i\Lambda=\bigoplus\limits_{i=1}^n y_i\Lambda$ is transfinitely nilpotent and free of rank $n$ over $\Lambda$. Therefore $\Lambda$ is  an augmented algebra by the augmentation $\e\colon \Lambda\rightarrow \mathfrak k$ together with the standard partial derivatives $\partial_i\colon \g \in \Lambda\mapsto \partial_i\g=\g_i \in \Lambda$ where $\g_i$ is determined by the representation $\g=\e(\g)+\sum\limits_{j=1}^n (t_i-1)\g_i=\e(\g)+\sum\limits_{i=1}^n x_i\g_i$ deduced from the direct decomposition $\Lambda={\mathfrak k}\oplus\bigoplus\limits_{j=1}^n (t_j-1)\Lambda={\mathfrak k}\oplus\bigoplus\limits_{i=1}^n x_i\Lambda$. By symmetry, if we use the basis $\{y_1,\cdots, y_n\}$ for $I=\bigoplus\limits_{i=1}^n (t^{-1}_i-1)\Lambda=\bigoplus\limits_{i=1}^n y_i\Lambda$, then the corresponding standard derivative $\g\mapsto \bar{\g}_i={\bar{\partial}}_i \g$ determined by the "normal form" $\g=\e(\g)+\sum\limits_{i=1}^n y_i\bar{\g}_i$ is denoted by 
\begin{align}\label{eql20a}
\bar{\partial}_i=-t_i\partial_i\Longleftrightarrow \partial_i=-t^{-1}_i\bar{\partial}_i.
\end{align}
Hence according to Definition \ref{def5} each $\partial \in \{\partial_i, \, \bar{\partial}_i \}\, \, (i=1, \dots, n)$ is a Fox derivative by $\partial(\mathfrak k)=0$ and
\begin{align}\label{eql20}
\partial_i t_j=\d_{ij}=\bar{\partial}_i t^{-1}_j \,\,\,  {\text{\rm and}} \,\,\, \partial(\g_1\g_2)=\e(\g_i)\partial(\g_2)+\partial(\g_1)\g_2 \,\, (\g_1, \g_2 \in \Lambda)
\end{align}
whence  the restriction ${\partial}_{|I}\colon I\rightarrow \Lambda$ is the $i$-th projection of $I=\bigoplus\limits_{i=1}^n x_i \Lambda \,(=\bigoplus\limits_{i=1}^n y_i\Lambda$) onto $\Lambda$ already defined in Sections \ref{lelo} and \ref{moho1}. In particular, Definition \ref{def5} implies immediately (cf. also \cite[Chapter VII (2.10)]{cf1}) that every Fox derivative of $\Lambda$ is uniquely determined by its value on a basis of the augmentation ideal $I_{\Lambda}$, that is, there is a bijection between Fox derivatives and $\Lambda$-homomorphisms from $I_{\Lambda}$ into $\Lambda$. 
Therefore in accordance with the notation introduced in Section \ref{prif} it is quite convenient and reasonable to use $\partial_w$ for higher derivatives where $w$ runs over not necessarily reduced words of $F$.
For example, $x^*_1=\partial_1=\partial_{t_1}=t^*_1$ and $y^*_1=\partial_{t^{-1}_1}=\bar{\partial}_1=(t^{-1}_1)^*$. Moreover, for a not necessarily reduced word $w=t^{n_1}_{i_1}\cdots t^{n_l}_{i_l}=w_1\cdots w_l\in F \, (l\in \N; n_1, \dots, n_l\in \Z\setminus \{0\}; i_j\in \{1, \dots, n\})$ one has $w^*=w^*_l\dots w^*_1$, that is, $\partial_w=w^*$ on $I^{|w|}$.
Therefore the equality \eqref{eql20} implies for every not necessarily reduced word 
$w=t^{n_1}_{i_1}\cdots t^{n_l}_{i_l}=w_1\cdots w_l\in F \, (l\in \N; n_1, \dots, n_l\in \Z\setminus \{0\}; i_j\in \{1, \dots, n\})$  and $\partial \in \{\partial_i, \, \bar{\partial}_i \}\, \, (i=1, \dots, n)$
\begin{align}\label{eql21}
\partial w=\begin{cases}\,\,\,\,\,\,\, 0,\quad \,\,\,\,\,\,\, {\rm if}\,\, i\notin \{i_1, \dots, i_l\},\\ \partial\theta_w(p),\,\,\,\, {\rm if}\,\, i\in \{i_1, \dots, i_l\},\end{cases}
\end{align}
where $m$ is the first index with $i_m=i$ and $p=1+\sum\limits_{j=1}^{m-1}|n_j|$. Consequently, neither 
$w^*$ nor $\partial_w \, (w\in F)$ increase the length of any reduced word in $F$ whence they do not increase the length of free polynomials. This observation shows also the necessity for the approach of partial derivatives of free group algebras without using their embedding in the corresponding power series algebras. One important advantage is the observation above where we can use induction along the length. Namely, as an element of the power series algebra, by $t^{-1}=1+\sum\limits_{i=1}^\infty (-1)^ix^i (t=1+x)$ we cannot speak about the length of either $t^{-1}$ or $\partial_t t^{-1}$.

With the aforementioned notation for an arbitrary free generating set $\{w_1, \dots, w_n\}$ of $F$ and $v_i=w_i-1 (i=1, \dots, n)$ let
$$w^*_i=v^*_i={\partial_{w_i}}_{\big| I}\colon v_j\mapsto \d_{ij}\quad \Longrightarrow\quad  w^*_i=v^*_i=-w^{-1}_i(w^{-1}_i)^*.$$
Then the row $(v_1, \dots, v_n)$ is invertible over the ring of right quotients of $\Lambda$ with respect to the Gabriel topology defined by powers of $I$
% one has the more general Cuntz-Krieger equalities in the case of free group algebras
%\begin{align}\label{ck1a}
%\begin{pmatrix}v^*_1\\ \vdots \\v^*_n\end{pmatrix}\begin{pmatrix} v_1 & \dots & v_n\end{pmatrix}=I_n\in R_n \iff v^*_iv_j=\d_{ij} \,\,\,\,\, \text{\rm (CK2)}
%\end{align}
%and
%\begin{align}\label{ck2a}
%\sum\limits_{i=1}^n v_iv^*_i=1  \quad \quad \text{\rm (CK1)},
%$\end{align}
which shows transparently the more complicated structure of the Leavitt localization of the free group algebra than one of the corresponding free associative algebra. 
By Theorem \ref{imath} $\Lambda$ is a subalgebra of 
the Leavitt localization $\Lambda$ by inverting universally the row $(x_1, \dots, x_n)$, that is, the Fox algebra $L(\Lambda)$. Hence results of Section \ref{moho1}  are applicable to both $\Lambda$ and $L(\Lambda)$. %Moreover, the column
%$\begin{pmatrix}x^*_i\\ \vdots\\x^*_n\end{pmatrix}$ becomes the inverse of $(x_1, \dots, x_n)$.  
%This subtle difference between $\partial_i$ and $x^*_i$ appears transparently in the following equality
In particular, we have 
\begin{align}\label{eql22}
\forall \g\in \Lambda\colon \g=\e(\g)+\sum\limits_{i=1}^nx_i(\partial_i\g)=\sum\limits_{i=1}^n x_ix^*_i\g=\sum\limits_{|w|=l}ww^*\g \quad (w\in M_X)
\end{align}
where $w^*=x^*_{i_l}\cdots x^*_{i_1}$ for $w=x_{i_1}\cdots x_{i_l}\in M_X$, i.e., $w^*=\partial_w$ on $I^l$. 
%Since $y_i=t^{-1}_i-1=-x_it^{-1}_i$, the column $\begin{pmatrix}-t_1x^*_1\\ \vdots\\-t_nx^*_n\end{pmatrix}=\begin{pmatrix}y^*_1\\ \vdots\\ y^*_n\end{pmatrix}$ %is the inverse of the row $(y_1, \cdots, y_n)$ over $L(\Lambda)$. Thus 
Moreover, $L(\Lambda)$ can be generated by $\Lambda$ and (the image of) either $\{x^*_i, \dots, x^*_n\}$ or $\{y^*_i, \dots, y^*_n\}$.
%, that is $L(\Lambda)$ is the Fox algebra defined after Definition \ref{def5}. 
In particular, one can speak about Sato modules over $\Lambda$ according to Definition \ref{sato0}: they are finitely presented left $\Lambda$-modules admitting also
a compatible $L(R)$-module structure. Hence Sato modules are at the same time also modules over the free associative algebra $D_n={\mathfrak k}\langle z_1\dots, z_n\rangle$ in two ways by sending $z_1, \cdots, z_n$ either to $x^*_1, \dots, x^*_n$ or to $y^*_1, \dots, y^*_n$, respectively. Consequently, Sato modules are also modules over $D_{2n}={\mathfrak k}\langle z_1, \dots, z_{2n}\rangle$ by $z_i\mapsto x^*_i$ and $z_{n+i}\mapsto y^*_i \, (i\in \{1, \dots, n\})$.
This is a main difference in dealing with the free group algebra $\Lambda$ and the free algebra $A$, respectively, where the action of the free algebra $D_n$ of rank $n$ is enough to study $A$ (cf. \cite{am}).
Futhermore, as it was pointed out in \cite[1.1]{fa1} and \cite[3.1]{fa2}, for any two (weak) Sato modules ${_\Lambda}M$ and ${_\Lambda}N$ one has
\begin{align}\label{eql23}
   \Hom_{\Lambda}(M, N)=\Hom_{L(\Lambda)}(M, N)=\Hom_{D_n}(M, N)=\Hom_{D_{2n}}(M, N)
\end{align}
which can be also verified directly by using the idea in the proof for Proposition \ref{L1}.
%Recall that elements of $\Lambda$ are called \emph{free polynomials} or simply \emph{polynomial}. 
In working with $\Lambda$ it is worth to note that $\Lambda$ does not admit a degree function although free groups have the well-defined reduced length function. Fortunately, elements of $\Lambda$ have the well-defined length and order. 
The lack of the degree function makes the factorization problem in $\Lambda$ essentially more difficult. For example, there is no "obvious way" to see that a non-unit free polynomial $\g$ of degree 0, i.e., $\e(\g)\neq 0$  admits an irreducible factorization. 
A free polynomial $\g$ is called \emph{comonic} if $\e(\g)=1$.
Since $\mathfrak k$ is a field it is not a proper restriction if we deal only with comonic polynomials instead of one with a non-zero constant term.
By the fact that every $\partial \in \{\partial_i ,\, \bar{\partial}_i\} \, (i=1, \dots, n)$ is an order-decreasing operator on $I_\Lambda$ one obtains immediately by the equalities \eqref{eql20} and \eqref{eql21}  
\begin{lemma}\label{f1} For every free polynomial $\g \in \Lambda$ the left ideal $L(\Lambda)\g$ can be generated by finitely many comonic polynomials $\l$ with $|\l| \leq |\g|$.
\end{lemma}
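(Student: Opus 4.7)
I would start by disposing of the trivial cases: if $\g=0$ the empty set works, and if $\e(\g)\neq 0$ then $(\e\g)^{-1}\g$ is already a single comonic polynomial of length $|\g|$ that generates $L(\Lambda)\g$. So the real work is the case $\g\in I\setminus\{0\}$, where $I=\bar\Lambda$.

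The main idea is to look at the $\mathfrak k$-span of all Fox derivatives that can be legitimately applied to $\g$ while staying inside $\Lambda$. Precisely, let $T\subseteq \Lambda$ be the smallest subset containing $\g$ that is closed under $t\mapsto \partial_i t$ for every $t\in T\cap I$ and every $i$, and put $V:=\operatorname{span}_{\mathfrak k}(T)$. Because each $\partial_i$ is length-non-increasing (the remark following \eqref{eql21}), $T\subseteq \Lambda_{\leq |\g|}$; since $\mathfrak k$ is a field and $F$ has only finitely many reduced words of length $\leq |\g|$, this ambient space is finite-dimensional, hence so is $V$. A straightforward induction on derivation depth shows $T\subseteq L(\Lambda)\g$: the base case is $\g\in L(\Lambda)\g$, and for $t\in T\cap I$ the crucial equality $\partial_i t = x_i^* t$ holds in $\Lambda\subseteq L(\Lambda)$ (valid precisely because $\e(t)=0$, so that $x_i^*t$ actually lies in $\Lambda$), yielding $\partial_i t\in L(\Lambda)\,t\subseteq L(\Lambda)\g$. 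Hence $V\subseteq L(\Lambda)\g$.

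Next I would rule out $V\subseteq I$: if every element of $V$ had zero augmentation, then iterating $\g=\sum_i x_i\partial_i\g$ and then $\partial_i\g=\sum_j x_j\partial_j\partial_i\g$, etc., would force $\g\in I^k$ for every $k$, so $\g=0$ by Proposition \ref{0int}, contradicting $\g\neq 0$. Thus $\e\colon V\to \mathfrak k$ is surjective with kernel of codimension one. Choose a $\mathfrak k$-basis $v_1,\ldots,v_{m-1}$ of $V\cap I$ and extend by some $v_m\in V$ with $\e(v_m)=1$, then set $\l_i:=v_i+v_m$ for $i<m$ and $\l_m:=v_m$. Each $\l_i$ is comonic (since $\e(\l_i)=1$), lies in $V\subseteq \Lambda_{\leq |\g|}$ so $|\l_i|\leq \max(|v_i|,|v_m|)\leq |\g|$, and the $\l_i$ still form a basis of $V$. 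The inclusion $\sum_i L(\Lambda)\l_i\subseteq L(\Lambda)\g$ comes from $\l_i\in V\subseteq L(\Lambda)\g$, while conversely $\g\in V=\sum_i\mathfrak k\,\l_i\subseteq \sum_i L(\Lambda)\l_i$, yielding the required equality.

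The subtle point I expect to be the main obstacle is keeping track of where $x_i^*$ actually produces an element of $\Lambda$: the Leavitt inverse $x_i^*$ is only a partial operator $I\to\Lambda$, so applying it to a polynomial with nonzero constant term would take us out of $\Lambda$. This is precisely why $T$ must be closed under $\partial_i$ only on $T\cap I$, and it is also what links the finiteness of the construction to the order-decreasing character of $\partial_i$ on $I_\Lambda$ via Proposition \ref{0int}.
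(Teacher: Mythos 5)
Your proof is correct and fills in exactly the argument the paper leaves implicit (the sentence preceding the lemma just cites \eqref{eql20}, \eqref{eql21} and the order/length behaviour of Fox derivatives on $I_\Lambda$ and declares the result immediate). Your three key observations — that $\partial_i t = x_i^*t$ lies in $L(\Lambda)t$ only when $\e(t)=0$, that length-non-increasing derivatives confine the span $V$ of iterated derivatives to the finite-dimensional space $\Lambda_{\leq |\g|}$, and that $V\not\subseteq \bar\Lambda$ follows from Proposition~\ref{0int} — are precisely the ingredients the paper is invoking, so this is a faithful realization of the intended argument rather than an alternative route.
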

\begin{remark}\label{facre1}
It is here a right place to make a remark with an example showing the dificulties which arise by the lack of a degree function in free group algebras. It is obvious that for an ordinary polynomial $p\in A$ of the free associative algebra $A$ one has $\deg \partial p\leq \deg p-1$. For free group algebras there is no degree function and for the order function it is possible that both $\g$ and $\partial \g$ have a same order. For example, both $t^{-1}_1$ and $\partial_1t^{-1}_1=-t^{-1}_1$ have the order 0. Moreover, in Lemma \ref{f1}, some $\l$ and $\g$ may admit a same length. Namely, if $\g=t^{-1}_1-1$, then $L(\Lambda)\g\subseteq L(\Lambda)\partial_1t^{-1}_1=L(\Lambda)t^{-1}_1$ holds and according to the definition we have $|\g|=|\partial_1 \g|=1$. This shows the big difference between free associative algebras and group algebras of free groups because $L(\Lambda)\partial_i \g$ may contain properly $\Lambda \g$ in contrast to the equality $L(A)\partial_w p \cap Ap=0$ which follows from the fact that $\partial_w p$ is either 0 or does not belong to $Ap$ for any $0\neq p\in A$ and a word $w\in F_X \,(x_i=t_i-1, \, i=1, \dots, n)$ by comparing degrees.
However, returning to the previous simple  example we have $\bar{\partial}_1t^{-1}_1=1$. This obvious example shows that for free group algebras one needs both $\partial_i$ and $\bar{\partial}_i$ in reducing a length properly. This explains why we have to work hard and carefully to reduce a length, a main point in argument when the length is used for induction.
\end{remark}
By Proposition \ref{L1} we have
\begin{lemma}\label{f1a} If $\g\in \Lambda$ is a comonic polynomial, then $\Lambda\cap L(\Lambda)\g=\Lambda\g$ and $L(\Lambda)=\Lambda+L(\Lambda)\g$ whence $L(\Lambda)/L(\Lambda)r$ and $\Lambda/\Lambda r$ are isomorphic left $\Lambda$-modules. In particular, $\Lambda/\Lambda r$ is a Sato-module.
\end{lemma}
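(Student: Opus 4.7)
The plan is to obtain Lemma \ref{f1a} as a direct application of Proposition \ref{L1} once we verify that the pair $(\Lambda, \bar{\Lambda})$ fits the framework of Section \ref{moho1}. Recall that $\bar{\Lambda}=\sum_{i=1}^n x_i\Lambda$ is a two-sided ideal of $\Lambda$ which is free of rank $n$ as a right $\Lambda$-module on $\{x_1,\dots,x_n\}$ and has trivial left annihilator (Corollary \ref{folk1}), and by Proposition \ref{0int} it is moreover transfinitely nilpotent. Hence Theorem \ref{imath} applies: the Leavitt localization $L(\Lambda)$ coincides with the flat right ring of quotients $Q(\Lambda)$ with respect to the $\bar{\Lambda}$-adic Gabriel topology, and $\Lambda$ embeds canonically into $L(\Lambda)$.

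The main step is then the proof of $\Lambda \cap L(\Lambda)\g = \Lambda\g$ together with $L(\Lambda) = \Lambda + L(\Lambda)\g$, exactly as in Proposition \ref{L1}. I would write $a = 1 - \g$, noting that $\e(a) = 1 - \e(\g) = 0$ so $a \in \bar{\Lambda}$. For the first identity, given $p = q\g \in L(\Lambda)\g \cap \Lambda$ with $q \in L(\Lambda)$, the relation $\g = 1 - a$ yields $q = p + qa = p + pa + qa^2 = \cdots = p + pa + \cdots + pa^l + qa^{l+1}$ for every $l \in \mathbb{N}$. Since $L(\Lambda) = Q(\Lambda)$, for any $q \in L(\Lambda)$ there is some $l$ with $qa^l \in \Lambda$, forcing $q \in \Lambda$, hence $L(\Lambda)\g \cap \Lambda = \Lambda\g$. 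For the second identity, the same telescoping $q + L(\Lambda)\g = qa^l + L(\Lambda)\g$ shows that every $q \in L(\Lambda)$ lies in $\Lambda + L(\Lambda)\g$.

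These two facts, combined with the standard Noetherian isomorphism theorem, give
\[
\frac{L(\Lambda)}{L(\Lambda)\g} = \frac{\Lambda + L(\Lambda)\g}{L(\Lambda)\g} \cong \frac{\Lambda}{\Lambda \cap L(\Lambda)\g} = \frac{\Lambda}{\Lambda \g},
\]
as left $\Lambda$-modules. The final claim that $\Lambda/\Lambda\g$ is a Sato module follows because $L(\Lambda)/L(\Lambda)\g$ carries an $L(\Lambda)$-module structure and $\Lambda/\Lambda\g$ is finitely presented (from the free resolution $0 \to \Lambda \xrightarrow{\g} \Lambda \to \Lambda/\Lambda\g \to 0$), so by Definition \ref{sato0} it qualifies.

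I do not foresee any real obstacle: the argument is a verbatim specialization of Proposition \ref{L1}. The only point requiring mild care is the verification that the hypotheses of Theorem \ref{imath} (trivial left annihilator, freeness of rank $n$, two-sidedness) hold for $\bar{\Lambda} \lhd \Lambda$, all of which have already been recorded earlier in the paper. A brief sentence pointing to Proposition \ref{L1} with the substitution $R = \Lambda$, $I = \bar{\Lambda}$, $r = \g$ would in fact suffice, but writing out the short chain of equalities makes the lemma self-contained.
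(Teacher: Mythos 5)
Your proof is correct and is exactly the approach the paper takes: Lemma \ref{f1a} is stated immediately after the one-line justification ``By Proposition \ref{L1} we have'', i.e.\ it is obtained by specializing Proposition \ref{L1} to $R=\Lambda$, $I=\bar{\Lambda}$, $r=\g$. Your write-up simply unpacks that citation (verifying the hypotheses via Corollary \ref{folk1}, Proposition \ref{0int}, and Theorem \ref{imath}, and then repeating the telescoping argument of Proposition \ref{L1}), so there is no substantive difference.
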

Since finitely generated free modules over $L(\Lambda)$ and more generally over any algebra of module type $(1, n), n\geq 2$ are direct summands of the cyclic free module, every finitely generated module is cyclic. Recall that an algebra is said to be a \emph{strongly Bezout algebra} if every finitely generated left or right module is cyclic. Hence we obtain the following obvious but important result on algebras of a module type $(1, n), \, n\geq 2$.
\begin{proposition}\label{f1b} Every algebra of a module type $(1, n), n\geq 2$ and so $L(\Lambda)$ is a strongly Bezout algebra.
\end{proposition}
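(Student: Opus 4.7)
My plan is to reduce Proposition \ref{f1b} to the purely formal fact that, under the hypothesis $R \cong R^n$ (as $R$-modules on whichever side one is working) with $n\ge 2$, every finitely generated free $R$-module is already cyclic. Granting this, any finitely generated $R$-module $M$ is a quotient of some $R^m$, hence a quotient of a cyclic module, hence cyclic, and this is exactly the strongly Bezout condition. The verification will be run separately on the left and on the right, but it is entirely symmetric; the isomorphism $R \cong R^n$ holds on both sides for the algebras at hand, since the Cuntz--Krieger relations (CK1) and (CK2) defining $L(\Lambda)$ are left--right symmetric, and the module type $(1,n)$ of $L(\Lambda)$ established in Proposition \ref{motype1} is therefore two-sided.

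The key reduction goes as follows. Starting from an isomorphism $R\cong R^n$, I would substitute $R$ by $R^n$ inside one of the summands on the right and iterate. This produces, for every $k\ge 0$, an isomorphism
\[
R \;\cong\; R^{\,n+k(n-1)}.
\]
Because $n-1\ge 1$, the exponents $n+k(n-1)$ are unbounded. Given an arbitrary $m\ge 1$, pick $k$ with $m':=n+k(n-1)\ge m$ and split
\[
R \;\cong\; R^{m'} \;\cong\; R^m \oplus R^{m'-m},
\]
which exhibits $R^m$ as a homomorphic image of $R$, i.e., a cyclic $R$-module.

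The last step is then routine: if $M$ is generated by $m$ elements there is a surjection $R^m \twoheadrightarrow M$, and composing with a surjection $R \twoheadrightarrow R^m$ from the previous paragraph produces a surjection $R \twoheadrightarrow M$, showing $M$ is cyclic. I do not anticipate a genuine obstacle; the entire argument is bookkeeping on the defining relation of module type. The only point deserving a line of care is to confirm that the module-type isomorphism is bilateral in the cases at hand, which for $L(\Lambda)$ is transparent from (CK1)--(CK2) and more generally is implicit in the statement of the proposition.
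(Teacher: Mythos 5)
Your argument is correct and is essentially the same as the paper's: the paper's proof simply notes that finitely generated free modules are direct summands of the cyclic free module $R$ (hence cyclic), so every finitely generated module, being a quotient of some $R^m$, is cyclic. Your iteration $R\cong R^{\,n+k(n-1)}$ and the splitting $R^{m'}\cong R^m\oplus R^{m'-m}$ just makes that direct-summand claim explicit, and your remark on the left--right symmetry of the module-type isomorphism is a correct (and standard) aside that the paper leaves implicit.
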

Note that a \emph{Bezout algebra} is by definition an algebra whose one-sided finitely generated ideals are cyclic. Hence a strongly Bezout algebra which is not a Bezout algebra, is not an algebra with UGN. By definition an algebra has \emph{unbounded generating number} (UGN) if for every $n$ there is a finitely generated module that cannot be generated by $n$ elements. It is clear that a projective-free algebra is a Bezout algebra which is not a strongly Bezout algebra if and only if it has IBN. Consequently, commutative principal ideal domains are immediate examples of Bezout algebras which are not strongly Bezout algebras underlining the essence of associativity\footnote{I am grateful to Prof. Abrams for this
nice remark.}. Recall that an algebra is called \emph{projective-free} if projective modules are free. Consequently, we have the following simple characterization of the module type $(1,n)$ for projective-free algebras.
\begin{corollary}\label{sbezout} A projective-free algebra $R$ has a module type $(1, n)$ for some integer $n\geq 2$ if and only if it is a strongly Bezout algebra.
\end{corollary}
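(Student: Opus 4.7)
The plan is to exploit projective-freeness to convert cyclicity conditions into isomorphisms between finitely generated free modules of different ranks. The two directions are near-mirror images of the standard manipulation ``$R\cong R^n \Rightarrow R^k$ cyclic''.

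For the forward direction, assume $R$ has module type $(1,n)$ with $n\geq 2$. Then $R\cong R^n$ as left $R$-modules, so $R^n$ is cyclic. For every $2\leq k\leq n$, the free module $R^k$ is a direct summand (hence a quotient) of $R^n$, and so it is cyclic. For $k>n$ I would use the decomposition $R^k = R^n \oplus R^{k-n}\cong R\oplus R^{k-n} = R^{k-n+1}$ and induct on $k$; since $n\geq 2$, the index $k-n+1$ is strictly smaller than $k$. Consequently every $R^k$ is cyclic for $k\geq 1$, and any finitely generated left module, being a quotient of some $R^k$, is cyclic. The right-sided statement follows from the well-known left--right symmetry of the module type (Leavitt's theorem), which gives $R\cong R^n$ also as a right $R$-module, so the same argument applies on the right.

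For the converse, assume $R$ is projective-free and strongly Bezout. Applying the strongly Bezout hypothesis to $R^2$ yields a surjection $\phi\colon R\twoheadrightarrow R^2$; since $R^2$ is free and hence projective, $\phi$ splits, so $R\cong R^2\oplus K$ with $K=\ker\phi$. As a direct summand of the cyclic module $R$, the kernel $K$ is finitely generated projective, hence free by the projective-free hypothesis, say $K\cong R^a$ for some integer $a\geq 0$. Therefore $R\cong R^{a+2}$ with $a+2\geq 2$, so $R$ fails to have IBN and admits a module type $(m,n)$; since $R^1\cong R^{a+2}$ already exhibits $m\leq 1$, we must have $m=1$, and the minimal $n\geq 2$ with $R\cong R^n$ witnesses module type $(1,n)$.

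The only delicate point is the appeal to the left--right symmetry of the module type invoked in the forward direction; this is part of Leavitt's classical treatment and ensures that the notion $(1,n)$ is well-defined without specifying a side. Everything else is a routine consequence of the definitions together with the splitting trick afforded by the freeness (projectivity) of $R^k$.
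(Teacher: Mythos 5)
Your proof is correct and is essentially the argument the paper leaves implicit: the forward direction makes precise the remark preceding Proposition~\ref{f1b} (finitely generated free modules are quotients of the cyclic free module $R\cong R^n$, hence so is any finitely generated module), and the converse uses the splitting of a surjection $R\twoheadrightarrow R^2$ together with projective-freeness of the kernel to exhibit $R\cong R^{a+2}$, forcing $m=1$ in the module type.
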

It seems reasonably that Corollary \ref{sbezout} holds without the extra assumption that $R$ is projective-free.

The crucial importance of comonic polynomials is underlined by the following $D_{2n}$-module structure on $\Lambda$ which emphasizes the symmetric role of the partial derivatives $\partial_i$ and $\bar{\partial}_i$ and which is well-defined as it is routine to verify.
\begin{definition}\label{df1} Let $\g=\e(\g)+\sum\limits_{i=1}^n x_i\partial_i\g \in \Lambda$ be a comonic polynomial. 
Then $\Lambda$ is a left $D_{2n}$-module, denoted as $\Lambda_{\ast_\g}$, by putting for every index $i\in \{1, \dots, n\}$
$$z_i\ast_\g \l=-\e(\l)\partial_i\g+\partial_i\l=\partial_i(-\e(\l)\g+\l)$$ 
and
$$ z_{n+i}\ast_\g \l=-\e(\l)\bar{\partial}_i\g+\bar{\partial}_i\l=\bar{\partial}_i(-\e(\l)\g+\l).$$
\end{definition}
For example, if $\g=1$, then $z_i\ast_\g 1=z_{n+i}\ast_\g 1=0; z_i\ast_\g \l=\partial_i \l$ and $z_{n+i}\ast_\g \l=\bar{\partial}_i \l$ for each $i\in \{1, \dots, n\}$ whence  $\mathfrak k$ is annihilated by all $z_j\, (j=1, \dots, 2n)$, i.e., $\mathfrak k$ is the trivial left 
$D_{2n}$-module. In the case of rank 1 $L(\Lambda)=L({\mathfrak k}F_1)$ is the Laurent polynomial ring ${\mathfrak k}[t, t^{-1}]$ and ${\mathfrak k}[t, t^{-1}]$ endowed with the $\ast_1$-module structure, that is, ${\mathfrak k}[t, t^{-1}]_{\ast_1}$ is just the injective hull of the trivial module ${\mathfrak k}={\mathfrak k}[t, t^{-1}]/{\mathfrak k}[t, t^{-1}](t-1)$. This provides a characteristic free description of the injective hull of the trivial module $\mathfrak k$ using generalized derivation. Namely, by well-known partial fraction decomposition of rational functions the factor module
${\mathfrak k}(t)/{\mathfrak k}[t, t^{-1}]$ is a direct sum of chain modules of infinite length which are injective hulls of simple ${\mathfrak k}[t, t^{-1}]$-modules $\coker \pi$ where $\pi$ runs over irreducible polynomials $\pi \in {\mathfrak k}[t]\setminus \{t\}$. In particular, this fact implies that the injective hull of the trivial module $\mathfrak k$ is an artinian chain module %with trivial composition factors 
which is a direct union of its uniserial submodules of finite length. On the other hand, one can see easily that ${\mathfrak k}[t, t^{-1}]_{\ast_1}$ is also a chain module which is an essential extension of its simple socle which is the trivial module. This implies indeed that ${\mathfrak k}[t, t^{-1}]_{\ast_1}$ is injective. This proof shows also that the action by the ordinary derivation 
on ${\mathfrak k}[t, t^{-1}]$ along $x=t-1$ makes ${\mathfrak k}[t, t^{-1}]$ an injective module only in the case of the characteristic 0. 
 
If we take $\g=1+(x_1+\cdots+x_n)+(y_1+\cdots+y_n)$, then $z_i\ast_\g \g=0=z_{n+i}\ast_\g \g$ for every $i=1,\dots, n$ whence ${\mathfrak k}\g$ is the trivial, minimal $D_{2n}$-submodule of $\Lambda_{\ast_\g}$. 
More generally, for any comonic polynomial $\g \in \Lambda$ one has $z_j\ast_\g \g=0$ for every $j=1, \dots, 2n$ whence ${\mathfrak k}\g$ is isomorphic to the trivial $D_{2n}$-module $\mathfrak k$ and $\Lambda \g$ is a $D_{2n}$-submodule of $\Lambda_{\ast_\g}$. 
Hence in the $L(\Lambda)$-module $L(\Lambda)/L(\Lambda)\g \cong \Lambda/\Lambda\g$ one has
for each $ i\in \{1, \dots, n\}$ and $\tilde{\l}=\l+L(\Lambda)\in L(\Lambda)/L(\Lambda)\g$
\begin{align}\label{eqldast} 
x^*_i(\tilde{\l})=z_i\ast_\g \l+L(\Lambda)\g \quad {\text{\rm and}} \quad y^*_i(\tilde{\l})=z_{n+i}\ast_\g \l+L(\Lambda)\g,
\end{align}
that is, the $L(\Lambda)$-module $L(\Lambda)/L(\Lambda)r$ is naturally isomorphic to the $D_{2n}$-module $\Lambda/\Lambda r$ endowed together with the compatible ordinary $\Lambda$-module structure. In the other words, the $D_{2n}$-module $\Lambda/\Lambda \g$ admits a $\Lambda$-module structure making it an $L(\Lambda)$-module. %Hence for $n\geq 2$ we have seen that $\Lambda \g$ or equivalently, $\Lambda_{\ast_1}$ is no longer the injective hull of the simple submodule
% ${\mathfrak k}\g$ or the trivial module $\mathfrak k$, respectively.
 It is noteworthy that in contrast to the case of ordinary polynomials $p\in A$, the partial derivatives of $\g$ do not map one-to-one into $\Lambda/\Lambda \g$. In view of
the equalities \eqref{eql20}, \eqref{eql21} and \eqref{eql22}, the finite-dimensional $\mathfrak k$-subspace $V_\g$ of $\Lambda$ generated by all monomials in $t_i, t^{-1}_i \, (i=1,\dots, n)$ of length at most $N=\max \{|\partial_i \g|, \, |\bar{\partial}_i \g|\, \big | \, i=1, \dots, n\}$ is a $D_{2n}$-submodule of $\Lambda_{\ast_\g}$. Consequently, the $D_{2n}$-submodule ${_\g}V_{\partial}={_\g}V$ of all partial derivatives $\partial_w \g \, (w\in F_n)$ including $\partial_\emptyset \g=\g$ is finite-dimensional over $\mathfrak k$ whence ${_\g}V_{\partial}={_\g}V$ is also a submodule of $\Lambda_{\ast_\g}$ if $\g\in \Lambda$ is a free comonic polynomial. The reason for including $\g$ in ${_\g}V$ is the fact that the lack of the degree function in $\Lambda$ yields the existence of a free comonic polynomial $\g \in \Lambda$ with $\partial_t \g=\g$ for some $t\in \{t_1, \dots, t_n; t^{-1}_1, \dots, t^{-1}_n\}$.

For the sake of transparency let $\Lambda^*$ be a subalgebra of $L(\Lambda)$ generated by $x^*_i, \, y^*_i \, (i=1, \dots, n)$, that is, $\Lambda^*$ is the image of $D_{2n}$ by the rule $z_i\mapsto x^*_i; \, z_{n+i}\mapsto y^*_i \, (i=1, \dots, n)$. For any word $z$ in $\{z_1, \dots, z_{2n}\}$ let $w_z$ be the resulting word in the $x^*_i, y^*_i$ gotten by substituting $z_i=x^*_i, z_{n+i}=y^*_i \,(i=1, \dots, n)$ in $z$. According to \cite[1.3]{fa1} one can describe $\Lambda^*$ by generators and relations as follows. By the usual convention we do not make the difference between $z_j (j=1, \dots, 2n)$ and their images $x^*_i, y^*_i (i=1, \dots, n)$ for simplicity. Let $\zeta_i=-(z_i+z_{i+n})=-(x^*_i+y^*_i)=-x^*_i+t_ix^*_i=(t_i-1)x^*_i=\zeta^2_i$ for every index $i=1, \dots, n$ and $\zeta=-\sum\limits_{i=1}^nx^*_i; \bar{\zeta}_i=-\sum\limits_{i=1}^n\zeta_{n+1}$. Then simple calculation shows $\zeta_i\zeta_j=\d_{ij}; \sum\limits_{i=1}^n \zeta_i=\zeta+\bar{\zeta}=1$. One can see that $\Lambda^*$ is an algebra generated by $2n$ generators $z_j (j=1, \dots, 2n)$ subject to the above described relations. For $n=1$ it is easily to see that $\Lambda^*$ is the commutavive polynomial algebra over $\mathfrak k$ in one variable. However, $\Lambda^*$ in the case of $n\geq 2$ is no longer a domain; it contains zero-divisors as one can see directly from definition or from the defining relations above. These observations demonstrate clearly the essential difference between the free associative algebra $A=A_n$ and the free group algebra $\Lambda=\Lambda_n$ in the case $n\geq 2$. In this case, the subalgebra $A^*$ in $L(A)$ generated by the basic generalized partial derivatives is still a free algebra while the subalgebra $\Lambda^*$ in $L(\Lambda)$ generated by the basic Fox derivatives is no longer a domain and is generated by $2n$ generators. The further study of $\Lambda^*$ is a subject of a subsequent work.

Working with Sato modules over $\Lambda$ one needs to refine a notion of a lattice of ordinary Sato module in Definition \ref{dskew2} to group algebras. Namely, we need the original notion of lattices introduced by Farber in \cite[1.4. Definition]{fa1}. Our definition differs from \cite[1.4. Definition]{fa1} only in emphasis. Namely, we underline the action of both $x^*_i, \, y^*_i \, (i=1, \dots, n)$ by stating precisely a $D_{2n}$-module structure on lattices. 
\begin{definition}\label{df1a} Let  $\mathfrak k$ be an arbitrary commutative ring and ${_\Lambda}M$ a Sato module viewed as an $L(\Lambda)$-module. A $\Lambda^*$-submodule $N$ of $M$ is called a \emph{lattice} of $M$ if $N$ is a finitely generated $\mathfrak k$-module and $M=\Lambda N$ holds. 
\end{definition}
It is worth to note that this definition can be extended obviously word by word to Sato modules over skew free group algebras of Definition \ref{dskew0} and we will need
this observation to extend Proposition \ref{dskewp6} to the similar description of Sato modules over skew free group algebras. For an alternative, thorough approach of Sato modules via lattices over principal ideal domains we refer to the interesting work \cite{fa2} of Farber.

Examples for lattices are the images $U_\g$ and ${_\g}U_{\partial}={_\g}U$ of $V_\g$ and ${_\g}V$, respectively, in $\coker \g=\Lambda/\Lambda \g$.
From the algorithmic point of view it is important to estimate the dimension of ${_\g}U$. Namely, this number is one of the invariants of $\g$.

More generally, as in Section \ref{moho1} every Sato module admits lattices. Namely, for a Sato module $M$ one has $M=\sum\limits_{j=1}^l \Lambda m_j$ with some $m_1, \dots, m_l \in M$. Then for every $j\in \{1, \cdots, l\}$ and $i\in \{1, \cdots, n\}$ there are polynomials $\g^k_{ij}\in \Lambda$ with 
\begin{align}\label{eql24}
x^*_im_j=\sum\limits_{k=1}^l\g^k_{ij}m_k \quad \Longrightarrow \quad y^*_im_j=-\sum\limits_{k=1}^lt_i\g^k_{ij}m_k.
\end{align}
Put 
$$p=\max\Big\{\,\, |\g^k_{ij}|, |t_i\g^k_{ij}| \,\, \Big| \, i=1, \cdots, n; \,  j, k=1, \cdots, l\Big\}$$
and let $U_M$ be the $\mathfrak k$-subspace of $M$ generated by all $wm_j$ where $w\in F=F_n$ runs over reduced words of length at most $p$ and $j\in \{1, \cdots, l\}$.
Then $U_M$ is clearly a lattice of $M$. Consequently, if $_MU$ is the $\mathfrak k$-subspace of $M$ generated by all partial derivatives $\partial_w m_j$ including also $\partial_\emptyset m_j=m_j (\l\in \Lambda)$, %where $w\in F=F_n$  and $j\in \{1, \cdots, l\}$, 
then $_MV$ is also a lattice of $M$.

For a lucid and transparent treatment we make a technical, artificial convention. For every free polynomial $\g\in \Lambda$ we denote by $\eta(\g)$ the set $\{\eta_w(1)\}$ where $w$ runs over all strictly maximal words of $\g$. Moreover, a free  
polynomial $\g$ is called \emph{special} if all of its strictly maximal words have a same head denoted by $t_\g=\eta(\g)$. In this case we write
\begin{align}\label{eql25}
\partial_{t_\g}=\partial_{\eta(\g)}=\begin{cases} \partial_i, \quad {\rm if}\,\, t_\g=t_i\\ \bar{\partial}_i,  \quad {\rm if}\,\, t_\g=t^{-1}_i. \end{cases}
\end {align}

If $\g$ is special with the head $\eta(\g)=t_\g$, then for $t\in F, \, |t|=1$ one has $|\partial_t\g|=|\g|\iff t=t^{-1}_\g$ according to our notation. More generally, for a free polynomial $\g \in \Lambda$ and $t\in \{t_1, \dots, t_n; t^{-1}_1, \dots, t^{-1}_n\}$ the partial differential operator $\partial_t$ does not decreases the length of $\g$. In  
particular, 
$|\partial_t \g|=|\g|$ holds iff $\g$ admits a strictly maximal word $w$ with $\eta_w(1)=t^{-1}$. Furthermore,  we denote the set of heads of strictly maximal words of 
 a polynomial $\g\in \Lambda$ by $\eta_\g(1)$. Hence $\eta_\g(1) \subseteq \{t_1, \dots, t_n; t^{-1}_1, \dots, t^{-1}_n\}$.
Since the equalities \eqref{eql20} and \eqref{eql21} show trivially for a reduced word $w\in F$ and $t\in \{t_1, \dots, t_n; t^{-1}_1, \dots, t^{-1}_n\}$
\begin{align}\label{eql26}
 |\partial_t w| \leq |w| \quad   \quad \& \quad \quad |\partial_t w|=|w| \Longleftrightarrow \eta_w(1)=t^{-1},
\end{align}
we have immediately
\begin{lemma}\label{f2} For $\g \in \Lambda$ and $t\in \{t_1, \dots, t_n; t^{-1}_1, \dots, t^{-1}_n\}$ one has $|\partial_t \g|\leq |\g|$. Moreover, $|\partial_t \g|=|\g|$ holds iff $\g$ has a strictly maximal word $w\, (\Rightarrow |w|=|\g|)$ with the head $t^{-1}\, (\Rightarrow w=t^{-1}\theta_\g(1))$ whence $\partial_t\g$ is special in this case.
\end{lemma}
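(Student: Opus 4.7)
The plan is to expand $\g$ in the $\mathfrak k$-basis of $\Lambda$ consisting of reduced words of $F$. Writing $\g=\sum_{w\in F}g_w w$ with finitely many nonzero $g_w\in\mathfrak k$ and using the $\mathfrak k$-linearity of $\partial_t$, one has $\partial_t\g=\sum_w g_w\partial_t w$. By the word-level inequality \eqref{eql26}, each summand satisfies $|\partial_t w|\leq|w|\leq|\g|$, which immediately yields $|\partial_t\g|\leq|\g|$.

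For the equivalence $|\partial_t\g|=|\g|\iff\g$ has a strictly maximal word with head $t^{-1}$, I would isolate the length-$|\g|$ component of $\partial_t\g$. By \eqref{eql26}, if $|w|<|\g|$, or if $w$ is strictly maximal in $\g$ with $\eta_w(1)\neq t^{-1}$, then $|\partial_t w|<|\g|$, so no such $w$ contributes at length $|\g|$. The remaining contributions come from strictly maximal words $w=t^{-1}\theta_w(1)$ of $\g$. Using \eqref{prif2} together with $\e(t^{-1})=1$ and $\partial_t t^{-1}=-t^{-1}$ (an immediate consequence of $\partial_t(tt^{-1})=\partial_t(1)=0$ and $\partial_t t=1$), one computes
\[
\partial_t w=\e(t^{-1})\partial_t\theta_w(1)+(\partial_t t^{-1})\theta_w(1)=\partial_t\theta_w(1)-w,
\]
and since $\theta_w(1)$ is already reduced with $|\theta_w(1)|=|w|-1<|\g|$, the term $\partial_t\theta_w(1)$ has length strictly less than $|\g|$ by \eqref{eql26}. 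Hence the length-$|\g|$ component of $\partial_t w$ is exactly $-w$, so the length-$|\g|$ component of $\partial_t\g$ is
\[
-\sum_{\substack{w\text{ strictly maximal in }\g\\ \eta_w(1)=t^{-1}}}g_w w.
\]

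Distinct strictly maximal words of $\g$ are distinct reduced words with nonzero coefficients, so no cancellation can occur in this sum. This forces $|\partial_t\g|=|\g|$ precisely when at least one strictly maximal word of $\g$ begins with $t^{-1}$. Moreover, in that case the strictly maximal words of $\partial_t\g$ are exactly those $w$'s, and each of them has head $t^{-1}$; hence $\partial_t\g$ is special with $\eta(\partial_t\g)=t^{-1}$. No serious obstacle arises: the only care needed is keeping track of which word-level contributions survive at length $|\g|$, which is handled cleanly by the displayed formula together with \eqref{eql26}.
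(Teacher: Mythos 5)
Your argument is correct and follows the same route the paper indicates: the lemma is presented as an immediate consequence of the word-level inequality \eqref{eql26}, and your proof simply makes that ``immediately'' precise by isolating the length-$|\g|$ component of $\partial_t\g$ and observing that the top-degree contributions $-g_w w$ from the strictly maximal words $w$ with head $t^{-1}$ are distinct reduced words, hence cannot cancel. The computation $\partial_t w=\partial_t\theta_w(1)-w$ via $\partial_t t^{-1}=-t^{-1}$ and \eqref{prif2}, and the resulting identification of the strictly maximal words of $\partial_t\g$ (showing it is special with head $t^{-1}$), are exactly what the paper leaves implicit.
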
  

The next result is basic in the study of free comonic polynomials.
\begin{proposition}\label{f3} For a comonic polynomial $\g \in \Lambda$ and a polynomial $\l\notin \Lambda\g$ there is a proper word $z$ in $\{z_1, \dots, z_{2n}\}$ such that $z\ast_\g \l\notin \Lambda\g$ is of a length $\leq |\g|$. 
\end{proposition}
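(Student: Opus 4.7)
The plan is to induct on $|\l|$. The base case $|\l|\leq |\g|$ is immediate: set $\mu:=\l-\e(\l)\g$. Since $\g$ is comonic, $\e(\mu)=0$, so $\mu\in I$; since $\e(\l)\g\in \Lambda\g$ while $\l\notin \Lambda\g$, also $\mu\notin \Lambda\g$; and $|\mu|\leq\max(|\l|,|\g|)=|\g|$. Writing $\mu=\sum_{i=1}^n x_i\partial_i\mu$, were every $\partial_i\mu$ in $\Lambda\g$ we would conclude $\mu\in I\cdot\Lambda\g\subseteq \Lambda\g$, a contradiction. Hence some $j\in\{1,\dots,n\}$ gives $\partial_j\mu\notin \Lambda\g$, and the single letter $z=z_j$ satisfies $z\ast_\g\l=\partial_j\mu$, of length $\leq|\mu|\leq|\g|$.

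In the inductive step $|\l|>|\g|$, I will peel off a two-letter prefix $z_{j_2}z_{j_1}$ with $j_1\neq j_2$ that shortens $\l$ strictly while keeping the result outside $\Lambda\g$; the inductive hypothesis applied to this shortened polynomial then completes the proof. Let $\tau_j$ denote the basic letter attached to $z_j$ (so $\tau_i=t_i$ and $\tau_{n+i}=t_i^{-1}$), and set $\l_1=z_{j_1}\ast_\g\l=\partial_{j_1}(\l-\e(\l)\g)$ and $\l_2=z_{j_2}\ast_\g\l_1=\partial_{j_2}\l_1-\e(\l_1)\partial_{j_2}\g$. Since $|\g|<|\l|$, one has $|\l_2|\leq\max(|\partial_{j_2}\l_1|,|\g|)$, so it is enough to ensure $|\partial_{j_2}\l_1|<|\l|$ whenever $j_1\neq j_2$. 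This is automatic if $|\l_1|<|\l|$; if $|\l_1|=|\l|$ then the explicit form of Fox derivatives in \eqref{eql21} forces every strictly maximal word of $\l_1$ to have head $\tau_{j_1}^{-1}$, and since $\tau_{j_2}^{-1}\neq\tau_{j_1}^{-1}$ whenever $j_2\neq j_1$, Lemma \ref{f2} delivers $|\partial_{j_2}\l_1|<|\l_1|=|\l|$.

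It remains to exhibit $j_1\neq j_2$ for which $\l_2\notin\Lambda\g$. Passing to the Sato module $M=\Lambda/\Lambda\g$ (using that $\Lambda\g$ is a $D_{2n}$-submodule of $\Lambda_{\ast_\g}$, as observed after Definition \ref{df1}), the identification \eqref{eqldast} reads $x^*_j\tilde{\nu}=\widetilde{z_j\ast_\g\nu}$. Suppose toward a contradiction that $x^*_{j_2}x^*_{j_1}\tilde{\l}=0$ for all $j_1\neq j_2$ in $\{1,\dots,2n\}$. Since $\tilde{\l}\neq 0$, the Cuntz--Krieger relation \eqref{ck2} for the row $(x_1,\dots,x_n)$, namely $\sum_{i=1}^n x_ix^*_i=1$, forces some $j_1\in\{1,\dots,n\}$ with $\tilde{\l}_1:=x^*_{j_1}\tilde{\l}\neq 0$. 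But then $y^*_i\tilde{\l}_1=x^*_{n+i}\tilde{\l}_1=0$ for every $i\in\{1,\dots,n\}$ because $n+i>n\geq j_1$, and the twin identity $\sum_{i=1}^n y_iy^*_i=1$ (which holds because $(y_1,\dots,y_n)$ is another free basis of $I$) yields $\tilde{\l}_1=0$, a contradiction. Hence a pair $(j_1,j_2)$ with $j_1\neq j_2$ and $\l_2\notin\Lambda\g$ exists; applying the inductive hypothesis to $\l_2$ produces a nonempty word $z'$ with $z'\ast_\g\l_2\notin\Lambda\g$ and $|z'\ast_\g\l_2|\leq|\g|$, and $z:=z'z_{j_2}z_{j_1}$ is the desired proper word.

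The main obstacle, forcing the passage from single letters to two-letter words, is that a single $z_j$ may fail to shorten $\l$: this happens precisely when the strictly maximal words of $\l$ exhaust all $2n$ possible heads $t_i^{\pm 1}$, so that every Fox derivative preserves length by Lemma \ref{f2}. The heart of the argument is that the two Cuntz--Krieger relations $\sum x_ix^*_i=\sum y_iy^*_i=1$ for the $x$- and $y$-bases of the augmentation ideal cooperate to defeat this obstruction: the first produces a first letter $j_1$ whose image in the Sato module is nonzero, and the second rules out that this image is annihilated by every $x^*_{j_2}$ with $j_2\neq j_1$, thereby securing both length reduction and the desired non-vanishing simultaneously.
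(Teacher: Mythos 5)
Your proof is correct and follows the same overall strategy as the paper's --- induction on $|\l|$, reduction of length via Fox derivatives, and a Cuntz--Krieger argument to keep the result outside $\Lambda\g$ --- but you organize the inductive step a bit differently. The paper first fixes a single index $i$ with $z_i\ast_\g\l\notin\Lambda\g$; if that alone shortens $\l$ it applies induction, and only otherwise seeks a second letter $z_j$ with $j\neq i$, invoking the idempotent identity $x_ix^*_i=y_iy^*_i$ in $L(\Lambda)$ to show that $j=n+i$ works when no other $j$ does. You instead uniformly peel a two-letter prefix $z_{j_2}z_{j_1}$ with $j_1\neq j_2$: you verify (correctly, via Lemma~\ref{f2} and the specialness of $\l_1=z_{j_1}\ast_\g\l$ when it has maximal length) that any such pair strictly reduces length, and then establish the existence of a non-vanishing pair by a contradiction that uses both relations $\sum_{i=1}^n x_ix^*_i=1$ and $\sum_{i=1}^n y_iy^*_i=1$ together with the disjointness of the index ranges $\{1,\dots,n\}$ and $\{n+1,\dots,2n\}$. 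Both arguments exploit the same structural obstruction --- a single Fox derivative may preserve $|\l|$ precisely when $\eta(\l)$ exhausts all $2n$ heads, and the two free bases $\{x_i\}$, $\{y_i\}$ of the augmentation ideal together defeat this. Your version swaps the paper's specific idempotent identity for the range-disjointness trick and is a touch more uniform (no case split on whether one letter already shortens); the paper's version produces a shorter word whenever a single letter suffices. Both are valid.
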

\begin{proof} We use induction on the length $|\l|$ of $\l$. If $|\l|\leq |\g|$, the assumption $\l\notin \Lambda\g$, that is, $0\neq \tilde{\l}=\l+L(\Lambda)$ implies by the equality \eqref{eql22} that there is $i\in \{1, \cdots, n\}$ with $0\neq x^*_i\tilde{\l}$. Therefore by the equality \eqref{eqldast} one has $x^*_i\tilde{\l}=z_i\ast_\g \l+L(\Lambda)\g$ whence $z_i\ast_\g \l$ satisfies Proposition \ref{f3} as $|z_i\ast_\g \l| \leq |\g|$. Assume the claim for all polynomials of length at most $|\l|-1\geq |\g|$. 
By Definition \ref{df1} and the equality \eqref{eql22} there is an index $1\leq i\leq 2n$ with $z_i\star_\g \l \notin \Lambda\g$. If $|z_i\ast_\g \l|\leq |\l|-1$, then the claim follows from the induction hypothesis applied to $z_i\ast_\g \l$. If not, by symmetry one can assume $i\leq n$ 
%whence $z_i\star_\g \l\equiv x^*_i\l \, \, ( {\rm mod}\, L_\g)$, 
and $|z_i\ast_\g \l|=|\l|$ holds whence from Definition \ref{df1} and the assumption $|\g|\leq |\l|-1$ one obtains that $z_i\ast_\g \l$ is a special free polynomial by Lemma \ref{f2}. Consequently, there is a strictly maximal word $w$ of $\l$ with $\eta_w(1)=t^{-1}_i(\Rightarrow t^{-1}\theta_w(1)=w)$ and all strictly maximal words  of $z_i\ast_\g \l$ begin with $t^{-1}_i$ from the left. Hence tails of strictly maximal words of $z_i\ast_\g \l$ starting with $t_i$ from the left are of length at most 
$|\l|-2$. However, $z_i\ast_\g \l$ may possess a word of length $|\l|-1$ beginning with $t_i$ from the left. This happens, for example, when $\l$ has a strictly maximal word beginning with $t_i$ from the left. Therefore, if there is some index $j\in \{1, \dots, n, \dots, 2n\}\setminus\{i\}$ such that $z_j\ast_\g (z_i\ast_\g \l)=z_jz_i\ast_\g \l \notin \Lambda\g$, then $|z_jz_i\ast_\g \l|\leq |\l|-1$ and 
so the claim follows from the induction hypothesis applied to $z_jz_i\ast_g \l$. It remains to consider the case when $z^2_i\ast_\g \l \notin \Lambda\g$. By the equality \eqref{eqldast} the fact $x_ix^*_i=y_iy^*_i$ implies $z_{n+i}z_i\ast_\g \l \notin \Lambda\g$. Since $|z_{n+i}z_i\ast_\g \l| \leq |\l|-1$, the induction hypothesis shows the claim in this case, too. Therefore the proof is complete. 
\end{proof}
\begin{remark}\label{rf1}
If $w\in F$ is a reduced word with $\eta_w(1)=t^{-1}_i$, then $|\partial_iw|=|w|$ holds. We have, however, 
$|\bar{\partial}_iw|\leq |w|-1$. This simple example shows that one need both $\partial_i$ and $\bar{\partial}_i$, i.e., $x^*_i$ and $y^*_i$ to reduce a length.
Moreover, if for every $t\in \{t_1, \dots, t_n, t^{-1}_1, \dots, t^{-1}_n\}$ a polynomial $\g$ has a reduced strictly maximal word $w$ with $\eta_w(1)=t$, i.e., $|\eta(\g)|=2n$, then all partial derivatives $\partial_t\g$ are of length $|\g|$. This explains the main point of argument in the proof of Proposition \ref{f3} when we find a word $z$ such that $z\ast_\g \l\notin \Lambda\g $ has a length at most $|\g|$. 
\end{remark} 
As a byproduct of the proof to Proposition \ref{f3} one has
\begin{corollary}\label{f3b} For every nonzero free polynomial $\l \in \Lambda$ of positive length there is a word $w\in F=F_n$ of length at most 2 such that
$\partial_w\l\neq 0$ and $|\partial_w \l|\leq |\l|-1$ holds. Consequently, there is a word $w\in F$ of length at most $2|\l|$ such that $\partial_w \l$ is a nonzero constant in $\mathfrak k$.
\end{corollary}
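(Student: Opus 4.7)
The plan is to derive the first claim---existence of $w$ with $|w|\le 2$, $\partial_w\l \neq 0$, and $|\partial_w\l|\le |\l|-1$---by a direct case analysis on the set $H=\eta_\l(1)\subseteq \{t_1,\dots,t_n,t_1^{-1},\dots,t_n^{-1}\}$ of heads of the strictly maximal words of $\l$, following the same strategy already present in the proof of Proposition \ref{f3} and relying crucially on Lemma \ref{f2}. Since $\l\neq 0$ and $|\l|\ge 1$, $H$ is nonempty. The second part of the corollary will then follow by a straightforward iteration.

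\textbf{Case A:} there is $t\in H$ with $t^{-1}\notin H$. Apply $\partial_t$; by Lemma \ref{f2} we already have $|\partial_t\l|\le |\l|-1$, so only $\partial_t\l\neq 0$ remains. I would expand the length-$(|\l|-1)$ component of $\partial_t\l$ and split it according to the first letter of the contributing reduced words. Each strictly maximal word $w=t\cdot\theta_w(1)$ of $\l$ with coefficient $\alpha_w\neq 0$ contributes $\alpha_w\theta_w(1)$ at length $|\l|-1$; all further contributions from $w$ have length at most $|\l|-2$, since $w$ being reduced forces $\eta_{\theta_w(1)}(1)\neq t^{-1}$ and Lemma \ref{f2} then applies. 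The words $\theta_w(1)$ are pairwise distinct and each has head $\neq t^{-1}$, so $\sum_w\alpha_w\theta_w(1)\neq 0$ in the free $\mathfrak k$-module $\Lambda$. All other length-$(|\l|-1)$ contributions to $\partial_t\l$---namely those coming from strictly maximal words $w'$ of $\l$ whose head lies outside $\{t,t^{-1}\}$ but with $w'_2=t^{-1}$, and from length-$(|\l|-1)$ terms of $\l$ with head $t^{-1}$---all start with $t^{-1}$, and hence land in a linearly independent ``head $=t^{-1}$" coordinate component; they cannot cancel $\sum_w\alpha_w\theta_w(1)$. Hence $\partial_t\l\neq 0$ and $w:=t$ works.

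\textbf{Case B:} $H$ is closed under inversion. Pick any $t\in H$ and apply $\partial_t$. A parallel expansion shows that the length-$|\l|$ component of $\partial_t\l$ equals $-\sum_{w'}\alpha_{w'}w'$, summed over the strictly maximal words $w'=t^{-1}u'$ of $\l$; these are distinct reduced words with nonzero coefficients, all beginning with $t^{-1}$, so $|\partial_t\l|=|\l|$ and $\eta_{\partial_t\l}(1)=\{t^{-1}\}$. Now $\partial_t\l$ meets the hypothesis of Case A with the choice $t':=t^{-1}$, since $(t^{-1})^{-1}=t\notin\eta_{\partial_t\l}(1)$. Applying Case A to $\partial_t\l$ produces $\partial_{t^{-1}}\partial_t\l\neq 0$ with $|\partial_{t^{-1}}\partial_t\l|\le |\l|-1$, so $w:=tt^{-1}$, regarded as a length-two sequence of letters, works.

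The second claim follows by iteration: starting from $\l$, each application of the first part strictly decreases the length while consuming at most two letters, so after at most $|\l|$ applications the length drops to $0$, yielding a nonzero constant in $\mathfrak k$, with the concatenated word of length at most $2|\l|$. The main obstacle is the non-cancellation in Case A, which rests on the structural observation that $\sum_w\alpha_w\theta_w(1)$ and the remaining length-$(|\l|-1)$ contributions sit in distinct head-indexed coordinate subspaces of the free $\mathfrak k$-module $\Lambda$; once this is made precise, the rest is bookkeeping with the product rule and Lemma \ref{f2}.
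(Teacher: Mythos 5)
Your proof is correct and follows the paper's intended argument---Corollary \ref{f3b} is stated as a byproduct of the proof of Proposition \ref{f3}, and the two-step reduction $\partial_{t^{-1}}\partial_t$ when $\partial_t$ alone fails to drop the length (via Lemma \ref{f2} and the fact that $\partial_t\l$ then becomes special with single head $t^{-1}$) is exactly the mechanism used there. Your explicit head-component decomposition to verify the non-vanishing, and your careful note that the witness $tt^{-1}$ is a non-reduced two-letter sequence (consistent with the paper's use of ``not necessarily reduced group words''), correctly fill in details the paper leaves implicit.
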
  
This consequence generalizes the obvious fact that for every ordinary polynomial $p\in A$ of degree at least 1 there is a monomial $w$ such that $\partial_w p$ is a nonzero constant. 
As a next important consequence of Proposition \ref{f3} we have
\begin{corollary}\label{f3a}If $\g\in \Lambda$ is a free comonic polynomial, then ${_\g}U$ is an essential $\Lambda^*$-submodule of $\coker \g=L(\Lambda)/L(\Lambda)\g\cong \Lambda/\Lambda\g(=\coker \g$ as an endomorphism of $\Lambda)$. 
\end{corollary}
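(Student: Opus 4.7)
The plan is to leverage Proposition~\ref{f3} together with a compatibility of the $\Lambda^*$-action with Fox derivation modulo ${_\g}U$, culminating in an application of the Cuntz-Krieger identity.

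First I observe $\tilde{1}\in{_\g}U$: if $\g=1$ then $\coker\g=0$ and the statement is vacuous; otherwise $|\g|\geq 1$, and by Corollary~\ref{f3b}, there is a word $u\in F$ with $\partial_u\g=c\in\mathfrak{k}^*$, so $c\in{_\g}V$ and hence $\tilde 1\in{_\g}U$. From the defining formula $x^*_i\tilde{1}=-\widetilde{\partial_i\g}$ (and its $y$-analog) one also deduces $\widetilde{x^*_i},\widetilde{y^*_i}\in{_\g}U$. By induction on $|w|$ using the Fox product rule~\eqref{eql20}, this upgrades to the key congruence
\[
w^*\tilde{\mu}\equiv\widetilde{\partial_w\mu}\pmod{{_\g}U},\qquad \mu\in\Lambda,\ w\in M_X,
\]
so that $w^*\tilde{\mu}\in{_\g}U$ whenever $\partial_w\mu\in\mathfrak{k}$.

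Given $\tilde\l\neq 0$ in $\coker\g$, Proposition~\ref{f3} allows me to replace $\tilde\l$ by a nonzero $\Lambda^*$-multiple whose polynomial representative has length $\leq|\g|$; the class then lies in the $\Lambda^*$-stable finite-dimensional subspace $V_{|\g|}\subseteq\coker\g$ (stability follows from $|z_i\ast_\g\mu|,|z_{n+i}\ast_\g\mu|\leq\max(|\partial_i\g|,|\partial_i\mu|)\leq|\g|$). Applying Corollary~\ref{f3b} to this representative yields $w_0\in F$ with $\partial_{w_0}\l=c\in\mathfrak{k}^*$, whence $w_0^*\tilde{\l}=c\tilde{1}+u\in{_\g}U$; if this element is nonzero, we are done. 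To guarantee non-vanishing I invoke the Cuntz--Krieger identity~\eqref{eql1}: from $\tilde\l=\sum_{|b|=k}b\cdot(b^*\tilde\l)$, some $b^*\tilde\l$ must be nonzero for every $k$, and by choosing $b$ of the form $b'w_0$ with $|b'|\geq 1$, the composition law gives $\partial_b\l=\partial_{b'}c=0$, so that $b^*\tilde\l\in{_\g}U$ by the congruence.

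The main obstacle is ensuring that within the restricted class $b=b'w_0$ one can still guarantee $b^*\tilde\l\neq 0$ --- one cannot exclude a priori that every such $b$ produces a vanishing correction $u_b=-c\tilde 1$. The plan to overcome this is to exploit both Cuntz--Krieger identities --- the $x$-form $\sum x_ix^*_i=1$ and the dual $y$-form $\sum y_iy^*_i=1$, the latter being valid since $\{y_1,\dots,y_n\}$ is also a right $\Lambda$-basis of $\bar\Lambda$ (indeed $y_iy^*_i=x_ix^*_i$) --- together with the freedom in Corollary~\ref{f3b} in choosing $w_0$, while tracking, via the finite-dimensionality of $V_{|\g|}$, that no simple $\Lambda^*$-submodule of $V_{|\g|}$ can be disjoint from ${_\g}U$. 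The combinatorial subtleties highlighted in Remark~\ref{facre1} --- the failure of $\partial_i$ to reduce words starting with $t_i^{-1}$, which requires interleaving $\partial_i$'s and $\bar\partial_i$'s --- is the technically delicate point; but finite-dimensionality precludes any systematic cancellation and delivers the required nonzero element of $\Lambda^*\tilde{\l}\cap{_\g}U$.
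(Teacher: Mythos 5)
Your opening steps align with the paper's: you establish $\tilde 1\in{_\g}U$ via Corollary \ref{f3b} applied to $\g$ (the paper's $1\in{_\g}V$), derive the congruence $w^*\tilde\mu\equiv\widetilde{\partial_w\mu}\pmod{{_\g}U}$ (which is the content of \eqref{eql27} together with Definition \ref{df1} and \eqref{eqldast}), and use Proposition \ref{f3} to reduce to a representative $\l$ of length at most $|\g|$. All of this is sound.

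The genuine gap is the one you yourself flag at the end. Your strategy is to first choose $w_0$ (via Corollary \ref{f3b} applied to $\l$) so that $\partial_{w_0}\l\in\mathfrak k^*$, observe $w_0^*\tilde\l\in{_\g}U$, and only afterwards worry about whether $w_0^*\tilde\l\ne 0$. The Cuntz--Krieger repair you then attempt does not close the loop: CK gives you \emph{some} $b$ of each length with $b^*\tilde\l\ne 0$, but no control that such a $b$ factors as $w_0b'$ (or $b'w_0$ with the right composition order), and hence no control that $\partial_b\l\in\mathfrak k$. Your concluding sentence --- that ``finite-dimensionality precludes any systematic cancellation'' --- is an assertion, not an argument; nothing in the finite-dimensionality of $V_{|\g|}$ by itself rules out that every $b$ with $\partial_b\l$ constant annihilates $\tilde\l$ while the nonvanishing $b^*\tilde\l$'s all have non-constant representatives (compare Remark \ref{facre1}: for words starting with $t_i^{-1}$, iterating $\partial_i$ alone never reaches a constant).

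The paper avoids separating the two concerns. Instead of fixing $w_0$ first, it iterates directly on the quotient: at each step, CK (\eqref{eql22}) produces an index $i$ with $x_i^*(w^*\tilde\l)\ne 0$, the representative is tracked as $\partial_i(\partial_w\l)$ modulo ${_\g}U$, and Lemma \ref{f2} guarantees that when $\partial_i$ fails to drop the length (because the current representative is special with head $t_i^{-1}$), a second step --- either $z_j$ for $j\ne i$, or $z_{n+i}$ using $x_ix_i^*=y_iy_i^*$ --- does drop it, while preserving nonvanishing of the element in $\coker\g$. So nonvanishing and length reduction are achieved simultaneously, and after at most $2|\l|$ steps the representative is a nonzero constant, yielding $0\ne w_z\tilde\l\in{_\g}U$. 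This interleaving is exactly the mechanism your proposal is missing.
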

\begin{proof} We have
\begin{align}\label{eql27}
1=\e(\g)\equiv -\sum\limits_{i=1}^nx_i\partial_i \g  \,\,\Longrightarrow \,\, x^*_i\equiv -\partial_i \g;\,\,\,\,  y^*_i\equiv -\bar{\partial}_i \g   \mod L(\Lambda)\g,
\end{align}
whence any proper word $w^* \,\, (w\in F)$ can be expressed as a linear combination of $\partial_v\g \,\, (v\in F, |v|\geq 1)$ and vice versa any proper partial derivative   $\partial_v\g$ of $\g$ is also a linear combination of proper words $w^* (w=t^{l_1}_{i_1}\cdots t^{l_m}_{i_m}=w_1\cdots w_m,  w_j=t^{l_j}_{i_j}\in F;  l_1, \dots, l_m\in \Z\setminus {0}; i_1, \dots, i_m\in \{1, \dots, n\}; m\in \N)$ 
over $\mathfrak k$ modulo $L(\Lambda)\g$. Here  $w^*$ is  a monomial $w^*_m\cdots w^*_1$ according to our notation.

On the other hand, for the verification of Corollary \ref{f3a} it suffices by Proposition \ref{f3} to consider an arbitrary free polynomial $\l\notin \Lambda\g$ of length at most $|\g|$. The equality \eqref{eqldast} and Definition \ref{df1} show that $x^*_i\tilde{\l} (y^*_i\tilde{\l})$ can be represented by $\partial_i\l (\bar{\partial}_i\l)$ modulo ${_\g}V+L(\Lambda)\g$ whence $z_j\ast_\g \l (j=1, \dots, 2n)$ is congruent to either $\partial_i \l$ or $\bar{\partial}_i\l$ modulo $\Lambda\g+{_\g}V$, respectively. Now we can refine the length reducing argument in the proof of Proposition \ref{f3} as follows. By symmetry and the Cuntz-Krieger relation (CK1) in \eqref{ck2a} there is an index $i=1\, \dots, n$ such that $0\neq x^*_i\tilde{\l}=z_i\ast_\g \l+L(\Lambda)\g$ can be represented by $\partial_i\l$ modulo $L(\Lambda)\g+{_\g}V$. If $|\partial_i \l|\leq |\l|-1$ holds, then one can continue the process by considering $z_i\ast_\g \l$ instead of $\l$. If $|\partial_i \l|=|\l|$ holds, then $\partial_i \l$ is special by Lemma \ref{f2}. Therefore if there is $j\in \{1, \dots, n, \dots, 2n\}\setminus \{i\}$, then $L(\Lambda)\g\neq z_jz_i\ast_\g \l+L(\Lambda)\g$ can be represented by
$\partial_j(\partial_i\l)$ modulo $L(\Lambda)\g+{_\g}V$ and $|\partial_j(\partial_i \l)|\leq |\partial_i \l|-1$ holds where $\partial_j=\bar{\partial}_{j-n}$ for $j\geq n+1$.
If $0\neq (x^*_i)^2\tilde{\l}$ holds, then as at the end of the proof of Proposition \ref{f3} one has $y^*_ix^*_i\tilde{\l}\neq 0$. In summary, this implies by Corollary \ref{f3b} there is a word $z=z_{j_1}\cdots z_{j_l} \,(j_1, \dots, j_l\in \{1, \dots, 2n\})$ of length at most $2|\l|$ such that $0\neq w_z\tilde{\l}=z\ast_\g \l+L(\Lambda)\g$ can be represented by $0\neq \partial_s \l\in \mathfrak k$ modulo $L(\Lambda)\g+{_\g}V$ where $w_z$ is the resulting word in the $x^*_i, y^*_i$ gotten by substituting $z_i=x^*_i, z_{n+i}=y^*_i \,(i=1, \dots, n)$ in $z$ and similarly $s$ is a not necessarily reduced group word gotten in the reverse order by substituting $x^*_i=t_i, y^*_i=t^{-1}_i$ in $w_z$. Since $1\in {_\g}V$ one obtains that $0\neq w_z\tilde{\l}=z\ast_\g \l+L(\Lambda)\g\in {_\g}V$, completing the proof.
\end{proof}
Moreover, Corollaries \ref{f3a} and \ref{f3b} provide several facts on lattices of Sato modules when $\mathfrak k$ is a field. For a free comonic polynomial $\g$ Lemma \ref{f1a} shows that $L(\Lambda)/L(\Lambda)\g$ and $\Lambda/\Lambda\g$ are isomorphic $\Lambda$-modules. Consequently, in the next result $\coker \g$ can be anyone of them. Namely, we think the cokernel of the $L(\Lambda)$-homomorphism when we intend to emphasize the additional structure granted by Fox derivatives and we think the cokernel of the $\Lambda$-homomorphism when we underline the importance of Sato modules.
\begin{theorem}\label{f3c} If $\mathfrak k$ is field, then ${_\g}U$ is the smallest lattice of $\coker \g$ for every free comonic polynomial $\g \in \Lambda$. 
In particular, ${\g}U$ is a cyclic $\Lambda^*$-module whence also a finite-dimensional $D_{2n}$-module according to Definition \ref{df1} and the Sato module $\coker \g$ is a simple $L(\Lambda)$-module if and only if ${_\g}U$ is a simple $\Lambda^*$-module.
\end{theorem}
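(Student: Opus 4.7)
The plan is to separate the theorem into four parts, all ultimately hinging on the cyclicity identification ${_\g}U=\Lambda^{*}\tilde{1}$, where $\tilde{1}=1+\Lambda\g$ cyclically generates $\coker\g$ by Lemma \ref{f1a}.

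First, I would verify that ${_\g}U$ is a lattice. Finite-dimensionality over $\mathfrak{k}$ is immediate from Lemma \ref{f2}, since every $\partial_w\g$ lies in the finite-dimensional $\mathfrak{k}$-span of reduced group words of length at most $|\g|$. The $\Lambda^{*}$-closure of ${_\g}U$ descends from the $D_{2n}$-closure of ${_\g}V$ inside $\Lambda_{\ast_\g}$ via \eqref{eqldast}, and the generation property $\Lambda\cdot{_\g}U=\coker\g$ follows from $\tilde{1}\equiv -\sum_i x_i\widetilde{\partial_i\g}\mod L(\Lambda)\g$ in \eqref{eql27}, combined with the cyclicity of $\coker\g$ as a $\Lambda$-module.

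Second, I would establish ${_\g}U=\Lambda^{*}\tilde{1}$ by induction on the length of iterated Fox operators. The base cases $x^{*}_i\tilde{1}=-\widetilde{\partial_i\g}$ and $y^{*}_i\tilde{1}=-\widetilde{\bar\partial_i\g}$ are immediate from \eqref{eql27}; for the inductive step, applying a further $z_j$ through Definition \ref{df1} produces a $\mathfrak{k}$-linear combination of terms $\widetilde{\partial_v\g}$ (together with an augmentation correction of the same form), so both inclusions go through. This identifies ${_\g}U$ as a cyclic $\Lambda^{*}$-module; via the surjection $D_{2n}\twoheadrightarrow\Lambda^{*}$ sending $z_i\mapsto x^{*}_i$ and $z_{n+i}\mapsto y^{*}_i$, ${_\g}U$ inherits the structure of a finite-dimensional cyclic $D_{2n}$-module.

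Third, for the minimality of ${_\g}U$ I would prove that every lattice $N$ of $\coker\g$ contains $\tilde{1}$, so that $\Lambda^{*}$-closure forces $N\supseteq\Lambda^{*}\tilde{1}={_\g}U$. The essentiality of ${_\g}U$ from Corollary \ref{f3a} yields $N\cap{_\g}U\neq 0$ but by itself does not isolate $\tilde{1}$. One must combine essentiality with the commutation identity $x^{*}_j\l=\e(\l)x^{*}_j+\partial_j\l$ (and its $y^{*}_j$ analogue) in $L(\Lambda)$ to transform any relation $\tilde{1}=\sum\l_in_i$ coming from $\Lambda N=\coker\g$, iteratively trimming lengths as in Proposition \ref{f3} and Corollary \ref{f3b}, while using the $\Lambda^{*}$-closure of $N$ to keep each intermediate piece inside $N$, until the pure constant $\tilde{1}$ is extracted. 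This inductive length-reduction is the main obstacle, because, as stressed in Remark \ref{facre1}, Fox derivatives on $\Lambda$ need not strictly decrease length, and one must alternate $\partial_i$ with $\bar\partial_i$ pair by pair, following the careful mechanism developed for Corollary \ref{f3b}.

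Finally, the simplicity equivalence drops out from the previous steps. Given a nonzero $L(\Lambda)$-submodule $V\subseteq\coker\g$, essentiality (Corollary \ref{f3a}) gives $V\cap{_\g}U\neq 0$; simplicity of ${_\g}U$ as a $\Lambda^{*}$-module then forces $V\cap{_\g}U={_\g}U$, so $V\supseteq\Lambda\cdot{_\g}U=\coker\g$. Conversely, any nonzero $\Lambda^{*}$-submodule $W\subseteq{_\g}U$ satisfies $L(\Lambda)W=\Lambda W$ by the quasi-normal form \eqref{eql3}, so simplicity of $\coker\g$ as an $L(\Lambda)$-module forces $\Lambda W=\coker\g$, making $W$ a lattice; the minimality of ${_\g}U$ established in Step 3 then gives $W={_\g}U$.
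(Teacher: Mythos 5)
Your decomposition into four parts is sensible, and Steps 1, 2 and 4 are sound. Step 2 (the explicit cyclicity ${_\g}U=\Lambda^{*}\tilde{1}$, obtained by triangular inversion of $w^{*}\tilde{1}$ against $\widetilde{\partial_v\g}$ using the augmentation correction terms) is in fact a cleaner formulation than the paper gives; the paper obtains cyclicity from Corollary \ref{f3b} ($1\in{_\g}V$) and the remark that $\Lambda^{*}\tilde{1}\subseteq{_\g}U$, but never spells out the reverse inclusion. Step 4 is the same argument the paper has in mind.

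Step 3, however, has a genuine gap that you yourself flag as ``the main obstacle'' without closing it. Applying $x^{*}_j$ to both sides of $\tilde{1}=\sum\l_i n_i$ via the commutation identity yields
$-\widetilde{\partial_j\g}=\sum\e(\l_i)(x^{*}_j n_i)+\sum(\partial_j\l_i)n_i$,
so the left-hand side is \emph{no longer} $\tilde{1}$; iterating produces shorter expressions for $\widetilde{\partial_w\g}$, not for $\tilde{1}$. There is no way to recognize $\tilde{1}\in N$ from these relations alone, because you never return to $\tilde{1}$ on the left. The paper's proof supplies exactly the missing ingredient: it fixes a word $w$ with $0\neq c=\partial_w\g\in\mathfrak{k}$ via Corollary \ref{f3b} and \emph{constructs} the element $\g^{*}_w=w^{*}+\sum_{j=1}^{m-1}c_j(\theta_w(j))^{*}\in\Lambda^{*}$ (with $c_j=\e(\partial_{\eta_w(j)}\g)$, see \eqref{eql27b}), which satisfies $\g^{*}_w\equiv -c\bmod L(\Lambda)\g$ and therefore acts on $\tilde{1}$ as multiplication by the nonzero scalar $-c$. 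It then takes an expression $\bar{1}=\sum\l_j p_j$ of minimal total length $d$, applies $\g^{*}_w$ to both sides, and shows the right side then has total length $\leq d-1$ --- except in the degenerate case where every $\l_j$ is special, all with the same head $t^{-1}$, which is handled separately by pre-multiplying by $t$ and then applying $t^{*}$. Both the operator $\g^{*}_w$ that stabilizes $\tilde{1}$ up to scalar and the minimal-total-length descent (with its special-head sub-case) are essential, and your proposal contains neither.
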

The fact that the partial derivatives of a comonic polynomial $\l \in A$ in a free associative algebra $A$ form a smallest lattice, was communicated to me in a private note by Pere Ara\footnote{I am grateful to Professor P. Ara for this kind comment together with an idea of the proof.}.
It is noteworthy that Sato modules need not to have the smallest lattice if $\mathfrak k$ is not a field even when ${\mathfrak k}=\Z$ (cf. \cite{fa1}).
\begin{proof}First Corollary \ref{f3b} implies $1\in {_\g}V$ whence $\bar{1}=1+L(\Lambda)\g \in {_\g}U$ holds. This implies that ${_\g}U$ is a cyclic finite-dimensional $D_{2n}$-submodule, i.e., a $\Lambda^*$-submodule of $\coker \g$ whence it is a lattice of $\coker \g$. To see that ${_\g}U$ is the smallest lattice of $\coker \g$, take
an arbitary lattice $P$ of $\coker \g$ together with a $\mathfrak k$-basis $\{p_1, \dots, p_l\}$ of $P$. Then there are constants 
$c^k_{g_ij} \in {\mathfrak k}\, (i=1, \dots, 2n; \, j, k=1, \dots, l; g_i\in \{t_1,\dots, t_n, t^{-1}_1, \dots, t^{-1}_n\})$ with $h^*_i={\partial_{g_i}}_{\big|I}\, (i=1, \dots, 2n)$ where $I$ is the augmentation ideal of $\Lambda$ such that
\begin{align}\label{eql27a}
g^*_ip_j=h^*_ip_j=\partial_{g_i}p_j=\sum\limits_{k=1}^nc^k_{g_ij}p_k.
\end{align}
By Corollary \ref{f3b} we fix a word $w=w_1\cdots w_l\in F \, (w_i \in \{t_1, \dots, t_n; \, t^{-1}_1, \dots, t^{-1}_n\}; \, i=1, \dots, m)$ such that  
$0\neq c=\partial_w \g\in \mathfrak k$. In the accordance with our notation for each $j=1, \dots, m$ put 
\begin{align}\label{eql27b}\
c_j=\e(\partial_{\eta_w(j)}\g) \quad \text{\rm and} \quad \g^*_w=w^*+\sum\limits_{j=1}^{m-1}c_j(\theta_w(j))^*,
\end{align}
where $u^*=u^*_q\cdots u^*_1$ if $u=u_1\dots, u_q \in F \, (u_j\in \{t_1, \dots, t_n; t^{-1}_1, \dots, t^{-1}_n\})$ with $i=1, \dots, n$
$$u^*_j=\begin{cases}x^*_i \quad \text{\rm if} \quad u_j=t_i\\ y^*_i \quad \text{\rm if} \quad u_j=t^{-1}_i\end{cases}.$$
By multiplying both sides of the equality \eqref{eql27} successively with $w^*_1$, $w^*_2$ and so on until $w^*_m$ and putting non-constant terms from the right handside to the left handside we obtain
\begin{align}\label{eql27ba}
\g^*_w=w^*+\sum\limits_{j=1}^{m-1}c_j(\theta_w(j))^*\equiv -\partial_w\g=-c\in {\mathfrak k}\quad \mod L(\Lambda)\g.
\end{align}

The equality $\coker \g=\Lambda P$ implies that $\bar{1}\in {_\g}U\subseteq \coker \g$ is a linear combination of the $p_j$ with coefficients from $\Lambda$. Among such combinations of $\bar{1}$ we choose an expression 
\begin{align}\label{eql27c}
\bar{1}=\sum\limits_{j=1}^l\l_jp_j
\end{align}
with the possibly smallest total length $d=\sum\limits_{j=1}^l |\l_j|$. To see $\bar{1}\in {_\g}U$ it is enough to show $d=0$ which implies all $\l_j$ in the equality \eqref{eql27c} belong to $\mathfrak k$. We use induction on $d$ to show $\bar{1}\in P$ whence ${_\g}U\subseteq P$ which completes the proof. 
If $d=0$, then all $\l_j$ are constants and so $\bar{1}\in P$ holds. Assume indirectly that $d\geq 1$.  We obtain a contradiction by showing that $\bar{1}$ admits a representation like \eqref{eql27c} of a smaller total length.
Namely, by the equalities \eqref{eql27ba} and \eqref{eql27b} we have 
$$-c\bar{1}=\g^*_w\bar{1}=\g^*_w\left( \sum\limits_{j=1}^l\l_jp_j\right).$$
Therefore one obtains a contradiction if the total length of $\g^*_w\left( \sum\limits_{j=1}^l\l_jp_j\right)$ is at most $d-1$. However it is possible that the total length of $\g^*_w\left( \sum\limits_{j=1}^l\l_jp_j\right)$ as a linear combination of the $p_j$ with coefficients from $\Lambda$ is exactly $d$. This happens only if all free polynomials $\l_j \, (j=1, \dots, l)$ are special and all letters $w_1, \dots, w_m$ are equal to $t^{-1}$ where $t$ must be common letter of all strictly maximal words of all  $\l_j$'s in the representation \eqref{eql27c} of $\bar{1}\in \coker \g$. In this case we argue as follows. The equality \eqref{eql27c} implies that the total length of $t\bar{1}$ is at most $d$ and this length is indeed $d$ if and only if every $\l_j \,(j=1, \dots, l)$ admits a word of length $|\l_j|-1$ which does not start with $t^{-1}$. Consequently, the total length of 
$\bar{1}=t^*(t\bar{1})=t^*\left(t\sum\limits_{j=1}^l\l_jp_j\right)$ has a total length at most $d-1$, a contradiction. Therefore $d=0$ holds whence the proof is complete.
\end{proof}
As an other application of the induction along the total length provided by lattices we have a next consequence
\begin{theorem}\label{f4} If $\g \in \Lambda$ is comonic, then $N=\Lambda (N\cap {_\g}U)$ for every $L(\Lambda)$-submodule $N$ of $M=\coker \g =L(\Lambda)/L(\Lambda)\g\cong \Lambda/\Lambda\g$. In particular, one has $N=\Lambda(N\cap P)$ for any lattice $P$ of $\coker \g$.
\end{theorem}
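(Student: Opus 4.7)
The plan is to prove every $n \in N$ lies in $N' := \Lambda(N \cap {_\g}U)$, from which $N = N'$ follows at once. Fix a $\mathfrak k$-basis $\{u_1,\dots,u_l\}$ of the smallest lattice ${_\g}U$ guaranteed by Theorem \ref{f3c}, and define the length invariant $d(n) = \min\{\sum_{k=1}^l |\lambda_k| : n = \sum_{k=1}^l \lambda_k u_k,\ \lambda_k \in \Lambda\}$. I would proceed by induction on $d(n)$. The base case $d(n)=0$ is immediate: all $\lambda_k$ must be scalars, so $n \in {_\g}U \cap N \subseteq N'$. The engine of the inductive step is the pair of Cuntz--Krieger identities $\sum_i x_i x^*_i = 1 = \sum_i y_i y^*_i$ in $L(\Lambda)$ — the second being a routine consequence of \eqref{ck2} and $y^*_i = -t_i x^*_i$ from \eqref{eql20a} — together with the fact that $N$ is stable under every $x^*_i$ and $y^*_j$ by the assumption that $N$ is an $L(\Lambda)$-submodule.

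Suppose $d(n) \geq 1$ and pick a minimal representation $n = \sum_k \lambda_k u_k$. Using the Fox derivative rule $x^*_i(\lambda_k u_k) = \e(\lambda_k)(x^*_i u_k) + (\partial_i \lambda_k) u_k$ together with $x^*_i u_k \in {_\g}U$ (since ${_\g}U$ is a $\Lambda^*$-submodule), I obtain a representation of $x^*_i n$ of total length $\sum_k |\partial_i \lambda_k| \leq d(n)$. If the inequality is strict, the induction hypothesis immediately gives $x^*_i n \in N'$. Otherwise $|\partial_i \lambda_k| = |\lambda_k|$ for every $k$, and Lemma \ref{f2}, combined with the explicit computation $\partial_i(t^{-1}_i v) = \partial_i v - t^{-1}_i v$, forces every $\lambda_k$ of positive length and every nonzero $\partial_i \lambda_k$ to be special with head $t^{-1}_i$. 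Since $t^{-1}_i$ differs from every $t_j$, a second application of Lemma \ref{f2} (now to $\bar{\partial}_j$) gives $|\bar{\partial}_j \partial_i \lambda_k| < |\lambda_k|$ for every $j$, hence $d(y^*_j x^*_i n) < d(n)$; the induction hypothesis then places $y^*_j x^*_i n \in N'$, and the dual Cuntz--Krieger identity delivers $x^*_i n = \sum_j y_j (y^*_j x^*_i n) \in \Lambda N' \subseteq N'$. Either way $x^*_i n \in N'$ for every $i$, and the first Cuntz--Krieger identity closes the induction: $n = \sum_i x_i (x^*_i n) \in \Lambda N' \subseteq N'$. The ``in particular'' clause is then immediate, since by Theorem \ref{f3c} the lattice ${_\g}U$ is contained in every lattice $P$ of $\coker\g$, so $N \cap {_\g}U \subseteq N \cap P$ and $N = \Lambda(N \cap {_\g}U) \subseteq \Lambda(N \cap P) \subseteq N$.

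The main obstacle is precisely the critical case $d(x^*_i n) = d(n)$, where a single Fox operator $x^*_i$ fails to strictly reduce total length — the phenomenon flagged in Remark \ref{rf1}. Overcoming it requires both families $\{x^*_i\}$ and $\{y^*_i\}$ of Fox operators and both Cuntz--Krieger identities: the structural rigidity imposed by the extremal case of Lemma \ref{f2} forces the obstructing polynomials to be special with head $t^{-1}_i$, which is then broken by any dual operator $\bar{\partial}_j$. This double symmetry, intrinsic to free group algebras and absent from ordinary free associative algebras, is what drives the two-step reduction $y^*_j x^*_i$ and makes the induction close.
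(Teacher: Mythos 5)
Your proof is correct, and its skeleton — induction on the total length $d(n)$ with respect to the smallest lattice ${_\g}U$ of Theorem \ref{f3c}, closure of the induction by the Cuntz--Krieger identity $\sum_i x_i x^*_i = 1$ — is the same as the paper's. The one place where you genuinely diverge is the critical case $d(x^*_i n) = d(n)$, and your variant there is cleaner. The paper handles that case by multiplying $\a = x^*_i n$ by the unit $t_i$, observing that $x^*_j(t_i\a)$ has total length at most $d-1$ for every $j$, reconstructing $t_i\a$ via CK1, and then dividing by $t_i$ again. You instead hit $\a$ directly with the dual operators $y^*_j = (t^{-1}_j)^*$ and reconstruct with the dual identity $\sum_j y_j y^*_j = 1$ (which, as you note, is just a rewriting of CK2 via $y^*_j = -t_j x^*_j$). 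The two mechanisms exploit the same structural fact — Lemma \ref{f2} forces $\partial_i\lambda_k$ to be special with head $t^{-1}_i$, and a derivative with respect to any letter $\neq t^{-1}_i$ then strictly drops the length — but yours avoids the detour through $t_i\a$ and the resulting need to recover $\a = t_i^{-1}(t_i\a)$, so it reads as a more symmetric two-step reduction $y^*_j x^*_i$. This also displays more transparently the point stressed in Remark \ref{rf1}, that both families $\{x^*_i\}$ and $\{y^*_i\}$ are indispensable for the length induction over free group algebras. One small imprecision: you assert that each $\lambda_k$ of positive length is itself special with head $t^{-1}_i$; Lemma \ref{f2} only guarantees that $\lambda_k$ \emph{has} a strictly maximal word with that head (it may have others), while it is $\partial_i\lambda_k$ that is forced to be special. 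Since the next step only uses the statement about $\partial_i\lambda_k$, this is harmless, but the wording should be tightened.
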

\begin{proof} 
First we fix a finite $\mathfrak k$-basis $\{u_1, \dots, u_l\}$ of ${_\g}U$.  As in the proof of Theorem \ref{f3c} there are constants 
$c^k_{w_ij} \in {\mathfrak k}\, (i=1, \dots, 2n; \, j, k=1, \dots, l; w_i\in \{t_1, \dots, t_n, t^{-1}_1, \dots, t^{-1}_n\})$ with $v^*_i={\partial_{w_i}}_{\big|I}\, (j=1, \dots, l)$ where $I$ is the aumentation ideal of $\Lambda$ such that
\begin{align}\label{eql27d}
w^*_iu_j=v^*_iu_j=\partial_{w_i}u_j=\sum\limits_{k=1}^lc^k_{w_ij}u_k.
\end{align}
Moreover, since ${_\g}U$ is a lattice of $\coker \g$ by Theorem \ref{f3c}, every element
$r\in M$ is a linear combination of the $u_j$ with coefficients in $\Lambda$. Among such combinations of $r$ we choose an expression
\begin{align}\label{eql27e}
r=\sum\limits_{j=1}^l\l_ju_j \quad (\l_j\in \Lambda)
\end{align}
with the possibly smallest total length $d=\sum\limits_{j=1}^l |\l_j|$. This $d$ is called the \emph{total length of} $r$ (with respect to the basis $\{u_1, \dots, u_l\}$ of ${_\g}U$). To verify Theorem \ref{f4} it is enough to show $r\in \Lambda(N\cap {_\g}U)$ for every element $r\in N$. We use induction on the total length $d$ of $r$. If $d=0$, then all $\l_j$ are constants, and so 
$r\in N\cap {_\g}U\subseteq \Lambda(N\cap {_\g}U)$ holds.  Assume the claim for all elements of $N$ of the total length at most $d-1 \, (d\geq 1)$ and take an element $r\in N$ of the total length $d$. By the equality \eqref{eql22} one has 
$$r=\sum\limits_{i=1}^nx_ix^*_ir=\sum\limits_{i=1}^n x_i\left(x^*_i \sum\limits_{j=1}^l\l_ju_j\right)=\sum\limits_{i=1}^n x_i\left(\sum\limits_{j=1}^l(x^*_i\l_j)u_j\right).$$ 

For each index $i\in \{1, \cdots, n\}$ by the equality \eqref{eql27d} the induction hypothesis implies %$z_i\star_\g \l\in \Lambda(L(\Lambda)x^*_i\cap U_\g)$
$\sum\limits_{j=1}^l(x^*_i\l_j)u_j \in N$ belongs to $\Lambda(N\cap {_\g}U)$
 if the total length of %$|z_i\ast_\g \l|\leq |\l|-1$.
$\sum\limits_{j=1}^l(x^*_i\l_j)u_j$ is at most $d-1$.

 Hence it is enough to consider the case when the total length of %$|z_i\star_\g \l|= |\l|$ for some index $i$ such that $z_i\star_\g \a \notin \Lambda\g$. 
$\a=\sum\limits_{j=1}^l(x^*_i\l_ju_j)$ is $d$ for some index $i\in \{1, \dots, n\}$. Then $\a$ is special with $\eta_\a(1)=\{t^{-1}_i\}$ by Lemma \ref{f2}.
Consequently, as in the proof of Theorem \ref{f3c} the total length of $x^*_i(t_i\a)$ is at most $d-1$ and so it belongs to $\Lambda(N\cap {_\g}U)$ by the induction hypothesis. On the other hand, the total length of $x^*_j(t_i\a)$ is immediately at most $d-1$ for each index $j\neq i,\, j\in \{1, \dots, n\}$ whence all $x^*_j(t_i\a)$ belong again to $\Lambda(N\cap {_\g}U)$ by the induction hypothesis. Hence the Cuntz-Krieger equality \eqref{ck2} one has $t_i\a$ belongs to $\Lambda(N\cap {_\g}U)$. Therefore we have seen that $\sum\limits_{j=1}^l(x^*_i\l_j)u_j$ is an element of $\Lambda(N\cap {_\g}U)$ for each index $i=1, \dots, n$ whence
 $r\in \Lambda(N\cap {_\g}U)$ holds. This completes the induction finishing the proof of Theorem \ref{f4}.
\end{proof}

\begin{remark}\label{df2} Theorems \ref{f3c} and \ref{f4} show that the submodule-lattice of $L(\Lambda)$-submodule of the Sato module $M=\coker \g=\Lambda/\Lambda\g$ is embedded in the submodule-lattice
of $D_{2n}$-submodules of ${_\g}U$ but this embedding is, in general, not an isomorphism. For example, if 
$W\in \{{_\g}U, U_\g\}$, then the same argument of Theorem \ref{f4} implies $N=\Lambda(N\cap W)$ for any $L(\Lambda)$-submodule $N$ of $M$. In particular, for any lattice $P$ of $M$ and any $L(\Lambda)$-submodule $N$ of $M$ one has $N=\Lambda(N\cap P)$. In particular, if $U$ is the smallest lattice of $M=\coker \g (\e(\g)=1)$, then the submodule lattices of $_{L(\Lambda)}M$ and ${_\Lambda}U$, respectively, are isomorphic.
\end{remark}

Since ${_\g}U$ is finite-dimensional, Theorem \ref{f4} implies trivially

\begin{corollary}\label{f5} For a comomic polynomial $\g\in \Lambda$ the Sato module $\Lambda/\Lambda\l=M$ has a finite length over the Fox algebra $L(\Lambda)$.
\end{corollary}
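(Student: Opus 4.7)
The plan is to deduce the corollary directly from Theorem \ref{f4} combined with the finite-dimensionality of ${_\g}U$ provided by Theorem \ref{f3c}. Since $\mathfrak k$ is a field (which we are assuming in this section) and ${_\g}U$ is a cyclic $\Lambda^*$-module finitely generated as a $\mathfrak k$-space, $\dim_{\mathfrak k} {_\g}U$ is a finite integer $d$.

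First I would define the order-preserving map
\[
\Phi\colon \{\text{$L(\Lambda)$-submodules of } M\}\longrightarrow \{\text{$\mathfrak k$-subspaces of } {_\g}U\}, \qquad N\longmapsto N\cap {_\g}U.
\]
The key step is to observe that $\Phi$ is injective. Indeed, if $N_1,N_2$ are two $L(\Lambda)$-submodules of $M$ with $N_1\cap {_\g}U=N_2\cap {_\g}U$, then Theorem \ref{f4} gives
\[
N_1=\Lambda(N_1\cap {_\g}U)=\Lambda(N_2\cap {_\g}U)=N_2.
\]
Thus any strictly ascending (or descending) chain of $L(\Lambda)$-submodules of $M$ is transported by $\Phi$ into a strictly ascending (or descending) chain of $\mathfrak k$-subspaces of ${_\g}U$.

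Since any chain of $\mathfrak k$-subspaces of the finite-dimensional space ${_\g}U$ has length at most $d=\dim_{\mathfrak k}{_\g}U$, it follows that every chain of $L(\Lambda)$-submodules of $M$ has length at most $d$. Therefore $M$ satisfies both the ascending and descending chain conditions on $L(\Lambda)$-submodules, hence has finite length, with length bounded by $d$.

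No genuine obstacle remains: the work has already been done in Theorems \ref{f3c} and \ref{f4}. The only minor caveat is to notice that $\Phi$ need not be surjective (not every $\mathfrak k$-subspace of ${_\g}U$ arises as $N\cap {_\g}U$ for an $L(\Lambda)$-submodule $N$, as already flagged in Remark \ref{df2}), but surjectivity is irrelevant here; injectivity alone suffices for the length bound.
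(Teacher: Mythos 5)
Your argument is exactly the one the paper intends: the corollary is stated as following "trivially" from Theorem \ref{f4} together with the finite dimensionality of ${_\g}U$, and you have simply made explicit the injectivity of $N\mapsto N\cap{_\g}U$ that underlies that claim. Correct and identical in substance.
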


With the aforementioned notation the inductive argument along the total length used in the proof to Theorem \ref{f4} can be used for any lattice of a Sato module over $\Lambda$ whence one can obtain immediately

\begin{corollary}\label{f6}   Let $M$ be a Sato module together with the $\mathfrak k$-subspace $_MU$ introduced after Definition \ref{df1a}. Then $_MU$ is a finite-dimensional essential 
$\Lambda^*$-submodule of module $M$. The lattice of $L(\Lambda)$-submodules of $M$ is embedded in one of $\Lambda^*$-submodules of $U$. Consequently, every Sato module has a finite length over $L(\Lambda)$.
\end{corollary}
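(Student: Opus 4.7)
The plan is to mimic, in the general Sato-module setting, the three-step argument developed for $\coker\g$ in Corollary \ref{f3a}, Theorem \ref{f3c} and Theorem \ref{f4}, using now the generating set $m_1,\dots,m_l$ of $M$ in place of $\bar 1\in\coker\g$ and the structure constants \eqref{eql24} in place of the relation $1=-\sum x_i\partial_i\g$ modulo $L(\La)\g$.

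First I would record that $_MU$ is a finite-dimensional $\La^*$-submodule of $M$. By \eqref{eql24} we have $x^*_im_j=\sum_k \g^k_{ij}m_k$ and $y^*_im_j=-\sum_k t_i\g^k_{ij}m_k$ with $\g^k_{ij}\in \Lambda$ of bounded length; since each $\partial\in\{\partial_i,\bar\partial_i\}$ is length-decreasing on $\La$ by Lemma \ref{f2}, iterating the $x^*_i,y^*_i$ actions on the $m_j$ produces only $\mathfrak k$-linear combinations of elements $wm_j$ with $w\in F$ of length at most the bound $p=\max\{|\g^k_{ij}|,|t_i\g^k_{ij}|\}$. This is exactly the content of the paragraph following Definition \ref{df1a}, so $_MU$ is a lattice of $M$, and is $\La^*$-invariant by construction.

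Next I would verify essentiality. Fix a $\mathfrak k$-basis $\{u_1,\dots,u_l\}$ of $_MU$ and define the \emph{total length} of $r\in M$ as the minimal $d=\sum|\l_j|$ in any representation $r=\sum \l_j u_j$ with $\l_j\in\La$. The weak Sato identity \eqref{eql8}, $m=\sum x_i x^*_i m$, provides the length-reduction engine exactly as in Proposition \ref{f3}: given $0\ne r\in M$ of total length $d\ge 1$, some $x^*_ir$ is nonzero, and an $x^*_i$ producing total length $d$ on one of the $\l_j$'s forces $\l_j$ to be special in the sense of Lemma \ref{f2} with $\eta(\l_j)=\{t^{-1}_i\}$; then applying $x^*_jt_i$ for $j\ne i$ (or using $x_ix^*_i=y_iy^*_i$ in the remaining degenerate case) yields a strictly shorter representation. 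Iterating, we reach total length $0$, i.e.\ a nonzero $\La^*$-multiple of $r$ lying in $_MU$. Hence $_MU$ is essential in $_{\La^*}M$.

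Finally, I would show that for any $L(\La)$-submodule $N\subseteq M$ one has $N=\La(N\cap{_MU})$, which gives the asserted embedding of submodule lattices. This is the direct analogue of Theorem \ref{f4}: take $r\in N$ of total length $d$ with respect to $\{u_1,\dots,u_l\}$, expand $r=\sum x_i(x^*_ir)$, and apply the same length-reduction (using $x^*_i$, or $t_i$ followed by all $x^*_j$ together with the Cuntz--Krieger relation \eqref{ck2}) to each summand; the induction hypothesis on total length places every $x^*_ir$ in $\La(N\cap{_MU})$, hence so does $r$. Since $_MU$ is a finite-dimensional $\mathfrak k$-space, any descending chain of $\La^*$-submodules stabilises by descending chain condition on $\mathfrak k$-subspaces, so $_{\La^*}{_MU}$ has finite length; the injective correspondence $N\mapsto N\cap{_MU}$ then forces $M$ to have finite length over $L(\La)$.

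The main obstacle is exactly the length-reduction step in the general module setting: in the proofs of Proposition \ref{f3}, Corollary \ref{f3a} and Theorem \ref{f4} the relation $1\equiv -\sum x_i\partial_i\g\pmod{L(\La)\g}$ let one translate derivatives of $\g$ into words in the $x^*_i,y^*_i$ and vice versa. Here we must instead work with the structure constants in \eqref{eql24} and keep careful track of the special case where $\partial_t$ preserves rather than decreases the length of some $\l_j$, i.e.\ when all strictly maximal words of $\l_j$ begin with $t^{-1}$. The trick, identical to the one at the end of the proof of Theorem \ref{f3c}, is to multiply on the left by $t$ first and then apply $t^*$, which reduces the total length by one; the Cuntz--Krieger relation \eqref{ck2} guarantees that the result lies in the submodule we are trying to reach.
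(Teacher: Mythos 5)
Your proposal is correct and takes essentially the same approach the paper signals: the paper's own justification for Corollary \ref{f6} is precisely that the inductive total-length argument of Theorem \ref{f4} (together with the length-reduction technique of Proposition \ref{f3} and Corollary \ref{f3a}) transfers verbatim to an arbitrary Sato module once one uses the finite-dimensional $\Lambda^*$-stable lattice $_MU$ and the structure constants \eqref{eql24} in place of the relation $1\equiv-\sum x_i\partial_i\g$. Your three steps (finite dimension and $\Lambda^*$-invariance of $_MU$, essentiality via the $x_ix^*_i=y_iy^*_i$ trick, and the injective correspondence $N\mapsto N\cap{_MU}$ between $L(\Lambda)$-submodules of $M$ and $\Lambda^*$-submodules of $_MU$) unpack exactly what the paper leaves implicit in its one-line appeal to Theorem \ref{f4}.
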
 
It is noteworthy that in contrast to the case of $\coker \g \, (\e(\g)=1)$ where ${_\g}U$ is the smallest lattice of $\coker \g$, the lattice $_MU$ of $M$ in Corollary \ref{f6} is, in general, not the smallest one. From Definition \ref{df1a} for the case when $\mathfrak k$ is a domain, the argument of Proposition \ref{f3}, Corollaries \ref{f4} and \ref{f5} implies a more general result
\begin{corollary}\label{f61} Assume that $\mathfrak k$ is a domain. Then every lattice $V$ of a Sato module $M$ over $\Lambda$ is an essential $\Lambda^*$-submodule of $M$. Moreover, for every $L(\Lambda)$-submodule $N$ of $M$ one has $N=\Lambda(N\cap V)$ and any $\Lambda^*$-homomorphism$\phi$ from a lattice $V$ of $M$ into another Sato module $P$ extends uniquely to an $L(\Lambda)$-homomorphism of $M$.
\end{corollary}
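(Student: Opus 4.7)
The three assertions parallel Proposition \ref{f3}, Theorem \ref{f3c}, Theorem \ref{f4}, and Corollary \ref{dskewp4}, but now for an arbitrary Sato module $M$, an arbitrary lattice $V$, and $\mathfrak k$ only assumed to be a domain. The plan is to fix once and for all a finite $\mathfrak k$-generating set $v_1,\dots,v_l$ of $V$ and to associate to every element $m\in M$ its \emph{total length}
\[
 d(m)\;=\;\min\Bigl\{\,\sum_{j=1}^l|\lambda_j|\,\Bigm|\,m=\sum_{j=1}^l\lambda_jv_j,\ \lambda_j\in\Lambda\Bigr\},
\]
which is well-defined because $\Lambda V=M$ by the very definition of a lattice. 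Since $V$ is $\Lambda^*$-stable, each $x_i^*v_j$ and $y_i^*v_j$ expands as a $\mathfrak k$-linear combination of $v_1,\dots,v_l$; combined with the basic identities
\[
 x_i^*(\lambda v)\;=\;\epsilon(\lambda)\,x_i^*v+(\partial_i\lambda)v,\qquad y_i^*(\lambda v)\;=\;\epsilon(\lambda)\,y_i^*v+(\bar\partial_i\lambda)v
\]
valid in any $L(\Lambda)$-module, this is exactly the setup needed to carry the length-reduction arguments of Section \ref{fac} over to the present generality.

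For essentiality, I would induct on $d(m)$ for a nonzero $m\in M$. If $d(m)=0$, then $m$ already lies in $\sum\mathfrak kv_j\subseteq V$. If $d(m)\geq1$, the case analysis of Proposition \ref{f3} applies: apply either some $x_i^*$ or some $y_i^*$ to drop $d$, and in the exceptional ``special'' case, where a single Fox operator fails to strictly decrease the total length, chain two operators together, using $x_ix_i^*=y_iy_i^*$ from the Cuntz--Krieger identity \eqref{ck2}, to force a strict drop after two steps. The non-vanishing of the reduced element $zm\in V$ is guaranteed by the weak-Sato identity $m=\sum_ix_i(x_i^*m)$ from \eqref{eql22}. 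The assertion $N=\Lambda(N\cap V)$ is then a verbatim transcription of the proof of Theorem \ref{f4}, with $V$ replacing ${_\gamma}U$ and the same minimization of total length: at each step the relevant $x_i^*$ or $y_i^*$-image stays in $N$ because $N$ is $L(\Lambda)$-stable.

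For the extension of $\phi:V\to P$, uniqueness is immediate because $V$ $\Lambda$-generates $M$ and by \eqref{eql23} every $L(\Lambda)$-homomorphism between Sato modules is a $\Lambda$-homomorphism. For existence, set $\Phi(\sum\lambda_jv_j)=\sum\lambda_j\phi(v_j)$; the crux is well-definedness, i.e., that $\sum\lambda_jv_j=0$ in $M$ implies $\sum\lambda_j\phi(v_j)=0$ in $P$. I would argue this by induction on the total length of the given relation. For $d=0$ the $\lambda_j$ lie in $\mathfrak k$ and the assertion reduces to $\mathfrak k$-linearity of $\phi$. For $d\geq1$, put $q=\sum\lambda_j\phi(v_j)\in P$ and compute
\[
 x_i^*q\;=\;\sum_j\epsilon(\lambda_j)\,x_i^*\phi(v_j)+\sum_j(\partial_i\lambda_j)\phi(v_j)\;=\;\sum_k\mu_k^{(i)}\phi(v_k)
\]
where $\mu_k^{(i)}=\partial_i\lambda_k+\sum_j\epsilon(\lambda_j)\alpha_{ij}^k$ and $x_i^*v_j=\sum_k\alpha_{ij}^kv_k$; by $\Lambda^*$-linearity of $\phi$ and the parallel computation in $M$, the corresponding relation $\sum_k\mu_k^{(i)}v_k=0$ holds in $M$, of total length $\leq d$. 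Applying the same length-reduction scheme as in the first part, one chains further operators until the total length strictly drops, whereupon the induction hypothesis forces $x_i^*q=y_i^*q=0$ for all $i$; the weak-Sato identity $q=\sum x_ix_i^*q$ applied in $P$ then gives $q=0$. Checking $\Lambda$-linearity of $\Phi$, and hence $L(\Lambda)$-linearity via \eqref{eql23}, is routine.

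The main obstacle throughout is exactly the step that was already the heart of Proposition \ref{f3}: a single $x_i^*$ or $y_i^*$ may fail to strictly decrease the total length when the relevant polynomials are special, so one has to combine two Fox operators and exploit the Cuntz--Krieger relations to force the reduction after a bounded number of steps. This is the one place where the symmetric role of the two families $\{x_i^*\}$ and $\{y_i^*\}$, absent in the free associative case, is indispensable.
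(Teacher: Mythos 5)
Your treatment of the first two assertions (essentiality of $V$ in $M$ and $N=\Lambda(N\cap V)$) is exactly what the paper intends: you re-run the length-reduction machinery of Proposition \ref{f3} and Theorem \ref{f4} with the total-length function defined by the chosen $\mathfrak k$-generating set of $V$, tracking the ``special'' polynomials and chaining two Fox operators when a single one fails to drop the length. That is the same route as the paper.

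For the extension of $\phi$ your argument is genuinely different from the paper's. The paper shows that $\psi\colon L(\Lambda)\otimes_{\Lambda^*}V\to M$, $\gamma\otimes v\mapsto\gamma v$, is an isomorphism (injectivity follows from essentiality of the image lattice $\tilde V=\{1\otimes v\}$ in the tensor product, producing a word $w$ with $w^*\tilde m\in\tilde V\setminus\{0\}$), and then obtains $\Phi$ as the composite $M\cong L(\Lambda)\otimes_{\Lambda^*}V\xrightarrow{\mathrm{id}\otimes\phi}L(\Lambda)\otimes_{\Lambda^*}P\to P$. You instead define $\Phi(\sum\lambda_jv_j)=\sum\lambda_j\phi(v_j)$ directly and verify well-definedness by induction on the total length of the relation $\sum\lambda_jv_j=0$, re-running the length reduction in parallel in $M$ and $P$. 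Both are sound. The paper's tensor-product argument is shorter because the hard work (essentiality) is done once and reused; on the other hand, your internal argument sidesteps the question of whether the abstract module $L(\Lambda)\otimes_{\Lambda^*}V$ (which is \emph{not} a priori finitely presented over $\Lambda$) falls under the essentiality result, which the paper applies to it somewhat implicitly. One small misattribution on your side: the identity $x_ix_i^*=y_iy_i^*$ is not the Cuntz--Krieger relation \eqref{ck2} (which is $\sum_ix_ix_i^*=1$); it follows from $y_i=-t_i^{-1}x_i$ and $y_i^*=\bar\partial_i=-t_ix_i^*$. This does not affect the argument. Also, to conclude $x_i^*q=0$ in your special case one must pass through the weak-Sato identity $x_i^*q=\sum_j y_j(y_j^*x_i^*q)$ after establishing $y_j^*x_i^*q=0$ for all $j$; you invoke weak-Sato only for $q$ itself, so this intermediate step should be made explicit.
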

The less trivial claim of Corollary \ref{f61} is an extension of a $\Lambda^*$-homomorphism $\phi\colon V\rightarrow P$ to an $L(\Lambda)$-homomorphism from $M$. The uniqueness of an extension is obvious by the equality $M=\Lambda V$. Although a way extending $\phi$ is immediate, the difficulty lies in the verification of a well-definedness. We argue as follows.
\begin{proof} One can extend $\phi$ trivially if we show $\tilde{M}=L(\Lambda)\otimes_{\Lambda^*} V\cong M$ under the homomorphism $\psi$ induced by the rule $\g\otimes v\mapsto\g v \, (\g \in \Lambda; \, v\in V)$. It is clear that $\psi$ is surjective. For the injectivity of $\psi$ we observe that the $\mathfrak k$-subspace $\tilde{V }=\{1\otimes v \, | \, v\in V \}\subseteq \tilde{M}$ is a lattice of $\tilde{M}$ and the composition 
$v\in V\mapsto 1\otimes v \in \tilde{V}\mapsto 1.v=v\in V$ is the identity on $V$. Consequently, if $\psi(\tilde{m})=0$ for some nonzero element $\tilde{m}\in \tilde{M}$
then there is a word $w\in F$ such that $w^*\tilde{m}=1\otimes v\in \tilde{V}\setminus \{0\}$ and so $0=w^*\psi(\tilde{m})=\psi(w^*\tilde{m})=v\neq 0$, a contradiction. This completes the proof.
\end{proof} 

It is noteworthy that a lattice $V$ of a Sato module $M$ over $\Lambda$ is not an essential submodule if both $V$ and $M$ are considered as modules over a subalgebra generated by either $\{x^*_1, \dots, x^*_n\}$ or $\{y^*_1, \dots, y^*_n\}$ over $\mathfrak k$, respectively, as it is already stated in Remark \ref{rf1}. In particular, this fact shows transparently the big difference between lattices over free algebras and ones over free group algebras. For example, the fact that lattices over (skew) free group algebras are  essential submodules, is not follows from the proof for the case of free algebras. The essential $\Lambda^*$-submodule structure on lattices simplifies considerably several arguments via elementary extensions of lattices presented in \cite[1. Lattices in link modules]{fa1}. In fact, in the argument of Proposition \ref{f3}, Theorem \ref{f4} and Corollary \ref{f5} only the module structures are used, hence the reasoning remains true if instead of the group algebra $\Lambda$ we take the skew free group algebras over a commutative ring $\mathfrak k$. Consequently, we have

\begin{corollary}\label{dskewp51} Let $M$ be a Sato module over a skew free group algebra $R$ with coefficients from $S$ of Definition \ref{dskew0} where $\mathfrak k$ is a commutative ring and $S$ is a domain which is not necessarily commutative. Then $M$ admits a lattice, that is, a finitely generated $R^*$-submodule of $M$ where $R^*$ is a subalgebra of $L(R)$ generated by $S$ and $\{x^*_1, \dots, x^*_n; y^*_1, \dots, y^*_n\}$. Every lattice $V$ of $M$ is an essential $R^*$-submodule of $M$ and $N=R(N\cap V)$ holds for each $L(R)$-submodule $N$ of $M$. Moreover, any $R^*$-homomorphism $\phi$ from a lattice of $M$ to another Sato module extends uniquely to an $L(R)$-homomorphism of $M$. If $S$ is either a skew field or a semisimple artinian algebra, then a lattice of $M$ is a finitely generated free or projective module over $S$.
\end{corollary}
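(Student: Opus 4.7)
The plan is to parallel the sequence of results Proposition \ref{f3}, Theorems \ref{f3c}--\ref{f4}, and Corollary \ref{f61}, substituting the skew free group algebra $R$ for $\Lambda$ and the domain $S$ for $\mathfrak{k}$; as the author remarks, those arguments used only the underlying $R$- and $R^*$-module structures together with the Cuntz--Krieger relations \eqref{ck1}, \eqref{ck2} for the row $(x_1,\dots,x_n)$, so the real labour is to check that each length-reducing move survives the twisted multiplication $x_is=\alpha(s)x_i$ (or the restricted skew analogue from Definition \ref{dskew0}(3)--(4)).

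For the existence of a lattice I would imitate the proof of Theorem \ref{dskewp3}. Fix finitely many $R$-generators $m_1,\dots,m_l$ of $M$ and express $x_i^*m_j$ and $y_i^*m_j$ as $R$-combinations of the $m_k$, as in \eqref{eql24}. Because the identities \eqref{eql10}--\eqref{eql11} convert the relation $x_i^*s$ into a finite $S$-linear combination of the $x_k^*$ with scalars on the left, the $S$-span $V$ of $\{wm_j : w\in F,\ |w|\le p,\ j\le l\}$, with $p$ bounding the lengths of the polynomials that occur as coefficients, is stable under every $x_i^*,y_i^*$ and so is an $R^*$-submodule. It is visibly finitely generated over $S$ and satisfies $RV=M$, yielding a lattice.

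For essentiality of any lattice $V\subseteq M$ and the identity $N=R(N\cap V)$ for an $L(R)$-submodule $N$, I would run the total-length induction of Proposition \ref{f3}, Corollary \ref{f3a} and Theorem \ref{f4}. Pick an $S$-basis $v_1,\dots,v_l$ of $V$ and, for $0\ne m\in M$, choose a representation $m=\sum \lambda_j v_j$ of minimal total length $d=\sum|\lambda_j|$. The Cuntz--Krieger equality $1=\sum_i x_ix_i^*$ produces some $i$ with $x_i^*m\ne 0$; if the total length drops, iterate; if it does not, Lemma \ref{f2} forces each $\lambda_j$ to be special with common initial letter $t_i^{-1}$, and the alternative identity $x_ix_i^*=y_iy_i^*$ together with one further application of $x_j^*$ strictly decreases the total length, exactly as in the proof of Proposition \ref{f3}. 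Because $S$ is a domain, the nonzero scalar coefficients of reduced words stay nonzero under these manipulations, so the induction terminates with a word $w$ such that $0\ne w^*m\in V$. The same descent, performed on an element of $N$ rather than of $M$, yields $N=R(N\cap V)$.

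The extension clause follows the proof of Corollary \ref{f61}: given an $R^*$-homomorphism $\phi\colon V\to P$ into another Sato module, form $\tilde M = L(R)\otimes_{R^*}V$ and the canonical surjection $\psi\colon\tilde M\to M$, $\gamma\otimes v\mapsto\gamma v$. The image $\tilde V=\{1\otimes v\}$ is a lattice of $\tilde M$, and if $\psi(\tilde m)=0$ for some nonzero $\tilde m$, essentiality of $\tilde V$ in $\tilde M$ produces a word $w$ with $0\ne w^*\tilde m=1\otimes v\in\tilde V$, forcing $v=\psi(1\otimes v)=w^*\psi(\tilde m)=0$, a contradiction. Hence $\psi$ is an isomorphism, and the prescription $\gamma\otimes v\mapsto \gamma\phi(v)$ extends $\phi$ to an $L(R)$-homomorphism of $M$; uniqueness is immediate from $M=RV$. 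Finally, when $S$ is a skew field every finitely generated left $S$-module is free, and when $S$ is semisimple artinian every such module is projective, delivering the last sentence. The only step where I expect genuine care is the length bookkeeping of the first two paragraphs: one must verify that rewriting $x_i^*s$ and $sx_i$ via the skew commutation rules never inflates the word length in $F$ but only shuffles coefficients on the $S$-side, which is exactly the content of \eqref{eql10}--\eqref{eql11} and therefore goes through without modification.
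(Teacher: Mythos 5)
Your proposal is correct and follows essentially the same route as the paper: the author also disposes of this corollary by observing that the arguments of Proposition \ref{f3}, Theorem \ref{f4} and Corollary \ref{f5} (and Corollary \ref{f61} for the extension clause, via the isomorphism $L(R)\otimes_{R^*}V\cong M$) use only the module structures and hence transfer verbatim to the skew free group algebra setting, with $S$ a domain guaranteeing the length-reduction bookkeeping. Your write-up merely makes explicit the checks (stability of the $S$-span under the $x_i^*,y_i^*$ via \eqref{eql10}--\eqref{eql11}, and survival of nonzero coefficients under the total-length descent) that the paper delegates to the reader.
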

The uniqueness extension of $\phi$ can be verified in the same way as in Corollary \ref{f61} via the isomorphism $L(R)\otimes V\cong M\colon \g\otimes v\mapsto \g v\in M$. By the equality $\partial_{t^{-1}}=-t\partial_t \, (t\in \{t_1, \dots, t_n; t^{-1}_1, \dots, t^{-1}_n\})$ one can construct $L(R)$ from $R^*$ by adjoining the invertible elements $t_1, \dots, t_n; t^{-1}_1, \dots, t^{-1}_n$ satsisfyng $y^*_i=-t_ix^*_i \, (i=1, \dots, n)$ to $R^*$ and so to obtain an algebra epimorphism $R^*\rightarrow L((R)$. Therefore by the main results of Morita \cite[Theorems 4.3 and 7.1]{mori1} one can obtain $L(R)$ as a localization of $R^*$.

Moreover, the last part of Section \ref{moho1} on a charcterization of Sato modules over skew free algebras in view of Corollary \ref{dskewp51} extends word by word to a description of Sato modules over skew free group algebras. In this manner our argument does not use the result (claimed in \cite{fa2} for ordinary free group algebra $\Lambda$) that the factor module $\hat{R}/\Lambda$ is a torsionfree $\mathfrak k$-module where $\hat{R}$ is the completion of $R$ in the $I$-adic topology. Therefore we have 

\begin{proposition}\label{dskewp60} Let $R$ be a skew free group algebra over $S$ of Definition \ref{dskew0} where $\mathfrak k$ is a principal ideal domain and $S$ is a free $\mathfrak k$-module of finite rank, and $M_I$ the set of square matrices with entries in $I=\sum\limits_{i=1}^n x_iR$. Then the property of being a Sato-module which is torsionfree over $\mathfrak k$, characterizes $\coker \r \,\, (\r \in 1+M_I)$. 
\end{proposition}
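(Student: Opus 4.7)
The plan is to prove the two implications separately, following the template of Proposition \ref{dskewp6} but substituting the lattice machinery of Corollary \ref{dskewp51}, which covers the skew free group algebra setting.

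For the necessity, let $\r \in 1 + M_l(I)$. That $\coker \r$ is a Sato module is Corollary \ref{L2}, which applies since $I$ is a free right $R$-module of rank $n$ with trivial left annihilator. For $\mathfrak k$-torsionfreeness, I would use that $\r$ is invertible over $\hat R$ by Corollary \ref{matrinv}, yielding an exact sequence
\begin{align*}
0 \to R^l \xrightarrow{\r} R^l \to \coker \r \to 0.
\end{align*}
Since $R$ is a free $S$-module and $S$ is $\mathfrak k$-free of finite rank, $R$ is $\mathfrak k$-free, so multiplication by any $0 \neq k \in \mathfrak k$ is injective on $R^l$. The snake lemma then reduces $\mathfrak k$-torsionfreeness of $\coker \r$ to injectivity of $\r \bmod k$ on $(R/kR)^l$. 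But $R/kR$ is again a skew free group algebra over $S/kS$, and the reduction of $\r$ lies in $1 + M_l$ of its augmentation ideal, hence is invertible over the completion of $R/kR$ and a fortiori injective.

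For the sufficiency, let $M$ be a Sato $R$-module torsionfree over $\mathfrak k$. By Corollary \ref{dskewp51}, $M$ admits a lattice $N$: a finitely generated $R^*$-submodule with $M = RN$, hence finitely generated over $\mathfrak k$ since $S$ has finite $\mathfrak k$-rank. As $N \subseteq M$ is $\mathfrak k$-torsionfree and $\mathfrak k$ is a PID, $N$ is $\mathfrak k$-free of some finite rank $l$. I then define
\begin{align*}
\r \colon R \otimes_{\mathfrak k} N \to R \otimes_{\mathfrak k} N, \qquad \r(r \otimes n) = r \otimes n - \sum_{i=1}^n x_i \otimes (x^*_i n),
\end{align*}
which is well defined because $N$ is closed under the $x^*_i$. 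In a $\mathfrak k$-basis of $N$ the matrix of $\r$ lies in $1 + M_l(I)$. The multiplication map $p \colon R \otimes_{\mathfrak k} N \to M$, $r \otimes n \mapsto rn$, annihilates the image of $\r$ because $\sum_i x_i x^*_i = 1$ on $M$, so it factors through the cokernel projection $e$, producing an $R$-linear map $g \colon \coker \r \to M$. The composition $\psi = e \circ \imath \colon N \to \coker \r$, where $\imath(n) = 1 \otimes n$, is $R^*$-equivariant: modulo $\r$ one has $\overline{1 \otimes n} = \sum_i x_i \overline{1 \otimes (x^*_i n)}$, so $x^*_i \psi(n) = \psi(x^*_i n)$. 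By the extension part of Corollary \ref{dskewp51}, $\psi$ extends uniquely to an $L(R)$-homomorphism $\Psi \colon M \to \coker \r$. Since $g \circ \Psi$ is an $L(R)$-endomorphism of $M$ restricting to the inclusion on $N$, uniqueness forces $g \circ \Psi = \mathrm{id}_M$; symmetrically $\Psi \circ g = \mathrm{id}_{\coker \r}$, and $g$ is an isomorphism.

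The chief obstacle I anticipate is confirming that Corollary \ref{dskewp51} has the full force required: any $R^*$-homomorphism from a lattice of a Sato module over a skew free group algebra extends uniquely to an $L(R)$-homomorphism of the whole module. This is the pivot that turns $\psi$ into an inverse of $g$, and rests on the identification $L(R) \otimes_{R^*} V \cong M$ for any lattice $V$. A secondary difficulty is the necessity direction, since \cite{fa2} invokes $\mathfrak k$-torsionfreeness of $\hat R / R$, which is less transparent for skew group algebras; the mod-$k$ reduction proposed above sidesteps this, provided one verifies that $R/kR$ inherits the structural hypotheses needed to invoke Corollary \ref{matrinv}.
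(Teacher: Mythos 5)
Your sufficiency direction follows the paper's intended route: the paper's "proof" of this proposition is a one-line remark that the argument for Proposition~\ref{dskewp6} (skew free algebras) carries over word for word once Theorem~\ref{dskewp3} and Corollary~\ref{dskewp4} are replaced by Corollary~\ref{dskewp51}, and you have reproduced exactly that construction — the operator $\r(r\otimes n)=r\otimes n-\sum_i x_i\otimes x_i^*n$, the factorization $p=g\circ e$, and the use of the lattice-extension property to manufacture an inverse $\Psi$ of $g$. One caution: you verify only $x_i^*$-equivariance of $\psi=e\circ\imath$, but Corollary~\ref{dskewp51} requires a full $R^*$-homomorphism, where $R^*$ is generated by $S$ and both families $x_i^*,\,y_i^*$. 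The $S$-linearity of $\psi$ is not automatic when $S\neq\mathfrak k$, since the tensor is taken over $\mathfrak k$ and $\imath(sn)=1\otimes sn$ is not literally $s\otimes n$ in $R\otimes_{\mathfrak k}N$; you would need to show that $1\otimes sn-s\otimes n$ lies in the image of $\r$ (or argue via the descended endomorphism $\bar\r$ on $R\otimes_S N$), and similarly check $y_i^*$-equivariance. The paper finesses this by deferring to \cite[Lemma 4.3(3)]{fa2}, where $S=\mathfrak k$ and the issue vanishes; it deserves an explicit check in the skew setting.

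Your necessity direction is a genuinely different argument. The paper's route, as set out before Remark~\ref{ree}, realizes $\coker\r$ as a submodule of $(\hat R/R)^l$ via the $\Tor$ long exact sequence and then invokes $\mathfrak k$-torsionfreeness of $\hat R/R$. The paper explicitly verifies this torsionfreeness only for restricted skew free \emph{algebras} and, by citation to \cite{fa2}, for the ordinary free group algebra $\Lambda$; for general skew free \emph{group} algebras (crossed products) it is asserted but not argued. Your mod-$k$ reduction via the snake lemma sidesteps $\hat R/R$ entirely and is, in that sense, cleaner: since $R$ is $\mathfrak k$-free, the $k$-torsion of $\coker\r$ is exactly $\ker(\bar\r)$ on $(R/kR)^l$, and $\bar\r\in 1+M_l(\bar I)$ is injective as soon as $\bigcap_m\bar I^m=0$ in $R/kR$. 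For crossed products $(S/kS)*_{\bar\a}F_n$ this Hausdorff property follows by the same computation as in Proposition~\ref{0int}, because the identity $x_iy_i=y_ix_i=-x_i-y_i$ holds in any crossed product (it involves only the group relation $t_it_i^{-1}=1$, not the coefficients). For Definition~\ref{dskew0}(4) the finite $\mathfrak k$-rank hypothesis on $S$ in fact forces $n=1$ (rank counting against $S=\bigoplus u_i\a(S)$), so that case is degenerate and the reduction is moot. You correctly flag this Hausdorff requirement as the residual obligation; making it explicit would close the argument.
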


\begin{proposition}\label{dskewp61} Let $R$ be a skew free group algebra over a skew field $S$ of Definition \ref{dskew0} and $I=\sum\limits_{i=1}^nx_iR\lhd R$. Then Sato $R$-modules are precisely modules $\coker \r \, (\r \in 1+M_I)$.
\end{proposition}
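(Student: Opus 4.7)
The forward implication is already established in Corollary \ref{L2}, which shows that $\coker\r$ is a Sato module for every $\r\in 1+M_l(I)$. For the converse I plan to imitate the proof of Proposition \ref{dskewp610}, with Corollary \ref{dskewp51} substituted for Theorem \ref{dskewp3} throughout. Concretely, given an arbitrary Sato $R$-module $M$, I would first invoke Corollary \ref{dskewp51} to obtain a lattice $N\subseteq M$; because $S$ is a skew field, the finitely generated $S$-module $N$ is automatically a finite-dimensional $S$-vector space of some rank $l$, whence $R\otimes_S N\cong R^l$ is a free left $R$-module of rank $l$.

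Next, I would use the $L(R)$-action on $M$ to define the $R$-endomorphism
$$\r\colon R\otimes_S N\longrightarrow R\otimes_S N,\qquad r\otimes n\longmapsto r\otimes n-\sum_{i=1}^n x_i\otimes x^*_i n,$$
exactly as in \eqref{eqlc}. Its matrix in any $S$-basis of $N$ has the form $1+Q$ with $Q\in M_l(I)$, so $\coker\r$ is a Sato module by Corollary \ref{L2}. Denoting by $\iota\colon N\to R\otimes_S N$ the inclusion $n\mapsto 1\otimes n$, by $p\colon R\otimes_S N\to M$ the multiplication map $r\otimes n\mapsto rn$, and by $\pi$ the cokernel projection, the Sato-module identity $n=\sum_i x_i(x^*_i n)$ in $M$ forces $p\r=0$. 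Hence $p$ descends to a unique $R$-homomorphism $g\colon \coker\r\to M$ for which $g\circ(\pi\iota)$ is the canonical embedding $N\hookrightarrow M$.

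The decisive step, and the main obstacle, is showing that $g$ is an isomorphism. Surjectivity will follow from $M=RN$. For injectivity I plan to exploit the essentiality of lattices provided by Corollary \ref{dskewp51}: once one verifies that $\psi(N):=\pi\iota(N)$ is itself a lattice of $\coker\r$, it becomes an essential $R^*$-submodule, so any nonzero element of $\ker g$ would be carried by a suitable word $w^*$ into $\psi(N)\cap\ker g$, which is zero because $g\circ\pi\iota$ is the injection $N\hookrightarrow M$. Equivalently, the unique-extension clause of Corollary \ref{dskewp51} allows one to construct the inverse of $g$ from the isomorphism $\psi(N)\cong N\hookrightarrow M$. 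The key obstacle here is precisely the verification that $\psi(N)$ genuinely spans $\coker\r$ as an $R$-module and forms a lattice in it; this is a routine but careful check which transports verbatim from the skew free algebra case of Proposition \ref{dskewp610} once the essential-lattice property of Corollary \ref{dskewp51} is in hand.
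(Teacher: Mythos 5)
Your proposal matches the paper's intended argument: the paper simply remarks that the construction of Proposition \ref{dskewp610} (itself a transcription of the argument in Proposition \ref{dskewp6}, following Farber--Vogel \cite[Lemma 4.3(3)]{fa2}) extends verbatim to the group-algebra case once Corollary \ref{dskewp51} supplies lattices with the essentiality and unique-extension properties, and this is exactly what you do. Your additional care in checking that $\psi(N)$ is an $R^*$-closed lattice of $\coker\r$ and then using essentiality to kill $\ker g$ spells out the step the paper leaves implicit, but it is the same route.
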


In particular, in the case of free group algebras Proposition \ref{dskewp6} is one of the main results in \cite{fa2} and the treatment of these results in \cite{fa1} and \cite{fa2} is one of sources for this approach. Together with Propositions \ref{dskewp60} and \ref{dskewp61} Section \ref{moho1} has a satisfactory end. 

Back to factorizations of free polynomials, we need a preparation for the coming induction. If $\g\in \Lambda$ is a free comonic special polynomial of length 1 which is not a unit, then
${_\g}U$ has obviously dimension 1 whence as an obvious consequence of Theorem \ref{f4} we have
\begin{corollary}\label{f7} If $\g\in \Lambda$ is a free special, proper comonic polynomial of length 1, then $L(\Lambda)\g$ is a maximal left ideal of $L(\Lambda)$ whence $\g$ is an irreducible free polynomial.
\end{corollary}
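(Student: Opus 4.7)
The plan is to reduce the corollary to a small explicit computation of ${_\g}U$, invoke Theorem \ref{f4} to obtain maximality of $L(\Lambda)\g$, and then pull this back to irreducibility of $\g$ in $\Lambda$ via Lemma \ref{f1a}. First I would pin down the shape of $\g$: since $|\g|=1$, the strictly maximal words of $\g$ have length $1$, and being special forces them to share the same head of length $1$; but a length-$1$ word equals its own head, so there is a unique strictly maximal word. Combined with comonicity and the non-unit assumption, this forces $\g=1+ct$ for some $t\in\{t_1,\dots,t_n,t_1^{-1},\dots,t_n^{-1}\}$ and some $c\in{\mathfrak k}\setminus\{0\}$.

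Next, using $\partial_i t_j=\d_{ij}$, $\bar{\partial}_i t^{-1}_j=\d_{ij}$ and the relation $\bar{\partial}_i=-t_i\partial_i$, a short case-check shows that $\partial_w\g$ lies in the two-dimensional ${\mathfrak k}$-space ${\mathfrak k}\cdot 1+{\mathfrak k}\cdot t$ for every $w\in F$, so $\dim_{\mathfrak k} {_\g}V\le 2$. In $\coker\g$ the defining relation $\g\equiv 0$ gives $\bar t\equiv -c^{-1}\bar 1$, so the image ${_\g}U$ is the one-dimensional line ${\mathfrak k}\bar 1$. Theorem \ref{f4} now applies: every $L(\Lambda)$-submodule $N$ of $\coker\g\cong L(\Lambda)/L(\Lambda)\g$ equals $\Lambda(N\cap {_\g}U)$, and since ${_\g}U$ is one-dimensional, $N\cap {_\g}U\in\{0,{_\g}U\}$, whence $N=0$ or $N=\Lambda\bar 1=\coker\g$. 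Thus $\coker\g$ is a simple $L(\Lambda)$-module and $L(\Lambda)\g$ is a maximal left ideal.

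For irreducibility, suppose $\g=\a\b$ in $\Lambda$; rescaling in the field ${\mathfrak k}$ we may assume $\e(\a)=\e(\b)=1$, so both factors are comonic. Then $L(\Lambda)\g\subseteq L(\Lambda)\b\subseteq L(\Lambda)$, and by maximality one inclusion is an equality. Lemma \ref{f1a} transfers each alternative back to $\Lambda$: the case $L(\Lambda)\b=L(\Lambda)$ gives $1\in\Lambda\b$, making $\b$ left-invertible in $\Lambda$, while the case $L(\Lambda)\b=L(\Lambda)\g$ yields $\b=\l\g$ for some $\l\in\Lambda$, and then $\g=\a\l\g$ forces $\a\l=1$ upon cancelling $\g\neq 0$ in the domain $\Lambda$. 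In either case the standard idempotent trick in a domain (if $\m\b=1$ then $\b\m$ is idempotent, hence $0$ or $1$, and $0$ contradicts $\b\neq 0$) upgrades the one-sided inverse to a genuine two-sided unit, contradicting the non-unit assumption. I expect this last bridging step—passing from module-theoretic maximality in $L(\Lambda)$ (an algebra without IBN) to a factorization statement in $\Lambda$ (which does have IBN)—to be the only real subtlety; Lemma \ref{f1a}, with its key identity $\Lambda\cap L(\Lambda)\g=\Lambda\g$ for comonic $\g$, is what makes the transfer clean.
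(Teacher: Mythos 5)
Your proof follows the paper's intended route and is essentially correct: compute that ${_\g}U$ is one-dimensional, invoke Theorem \ref{f4} to get maximality of $L(\Lambda)\g$, and then use Lemma \ref{f1a} to transfer maximality into an irreducibility statement in $\Lambda$. One notational slip: since $\e(t)=1$ for $t\in\{t_1,\dots,t_n,t_1^{-1},\dots,t_n^{-1}\}$, a comonic special length-$1$ proper polynomial must have the form $\g=(1-c)+ct=1+c(t-1)$ with $c\in\mathfrak k\setminus\{0,1\}$, not $\g=1+ct$ (which has $\e(\g)=1+c\neq 1$); this does not affect your conclusion, as $\partial_w\g$ still lives in $\mathfrak k\cdot 1+\mathfrak k\cdot t$ and $\bar t$ is still a scalar multiple of $\bar 1$ in $\coker\g$, so $\dim_{\mathfrak k}{_\g}U=1$. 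The paper's own proof is a one-line appeal to Theorem \ref{f4} with no explicit justification of the ``whence $\g$ is irreducible'' clause; your bridging step — using $\Lambda\cap L(\Lambda)\g=\Lambda\g$ together with the idempotent trick in the domain $\Lambda$ to rule out one-sided inverses — correctly fills this in, and is the same mechanism the paper later deploys (packaged slightly differently, via the isomorphism $\Lambda\b/\Lambda\g\cong\coker\a$) in the proof of Theorem \ref{f10}.
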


There are immediately comonic polynomials of length 1 which are not irreducible. For example,  $\g=t-4t^{-1}=t^{-1}(t-2)(t+2)$ is reducible if $\mathfrak k$ is the field $\Q$ of rationals. Corollary \ref{f7} leads to the initial step in a reduction of the main result.

\begin{proposition}\label{f8} Let $\g, \l \in\Lambda$ be free comonic polynomials and $|\g|\leq 1$. Then $L(\Lambda)\g+L(\Lambda)\l$ is generated by a uniquely determined comonic polynomial $\d\in \Lambda$ of length at most 1.  
\end{proposition}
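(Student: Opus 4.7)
The case $|\g|=0$ is immediate: comonicness forces $\g=1$, so $J:=L(\Lambda)\g+L(\Lambda)\l=L(\Lambda)$ and $\d=1$ works. Henceforth I would assume $|\g|=1$ and write $\g=1+\sum_{i=1}^n(a_ix_i+b_iy_i)$. By Proposition \ref{f1b}, $L(\Lambda)$ is strongly Bezout, so the finitely generated left ideal $J$ is already principal; the content of the proposition is to exhibit a comonic generator of length at most $1$.

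The first step is to observe that for $|\g|\leq 1$, every iterated Fox derivative $\partial_w\g$ (for $w\in F$) is a polynomial of length at most $1$. This follows from a short direct computation using $\partial_it_j=\d_{ij}$, $\partial_it_j^{-1}=-\d_{ij}t_i^{-1}$, and $\bar\partial_i=-t_i\partial_i$: the iterated derivatives only involve the constants $a_i,b_i$ and the length-one monomials $t_i^{\pm 1}$. Consequently, by Theorem \ref{f3c}, the smallest lattice ${_\g}U\subseteq M:=\Lambda/\Lambda\g\cong L(\Lambda)/L(\Lambda)\g$ (the isomorphism by Lemma \ref{f1a}) is contained in the image of the $(2n+1)$-dimensional $\mathfrak k$-subspace $\Lambda_{\leq 1}:=\mathrm{span}_{\mathfrak k}\{1,t_1,\ldots,t_n,t_1^{-1},\ldots,t_n^{-1}\}$.

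Next, setting $N:=L(\Lambda)\bar\l\subseteq M$ and $Q:=L(\Lambda)/J\cong M/N$, if $\bar\l=0$ I take $\d=\g$; otherwise Theorem \ref{f4} gives $N=\Lambda(N\cap{_\g}U)$. Lifting a $\mathfrak k$-basis of $N\cap{_\g}U$ to elements $\mu_1,\ldots,\mu_k\in\Lambda_{\leq 1}$ and using $L(\Lambda)=\Lambda+L(\Lambda)\g$ from Lemma \ref{f1a}, one derives
\[
J=L(\Lambda)\g+L(\Lambda)\mu_1+\cdots+L(\Lambda)\mu_k,
\]
so $J$ is generated as a left $L(\Lambda)$-ideal by finitely many polynomials of length $\leq 1$, one of which ($\g$) is comonic.

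Finally, these generators must be consolidated into a single comonic one of length $\leq 1$. Since $Q$ is $L(\Lambda)$-cyclic with generator $\bar{\bar 1}$, and its lattice $\bar U={_\g}U/(N\cap{_\g}U)$ is a quotient of the image of $\Lambda_{\leq 1}$, the set of $\d\in\Lambda_{\leq 1}$ with $\d\cdot\bar{\bar 1}=0$ in $Q$ is a $\mathfrak k$-subspace of $\Lambda_{\leq 1}$ containing $\g$ and the $\mu_j$; I would select a comonic representative $\d$ in this subspace and verify, via Lemma \ref{f1a} together with the induced isomorphism $L(\Lambda)/L(\Lambda)\d\cong\Lambda/\Lambda\d$ of $\Lambda$-modules, that the candidate inclusion $L(\Lambda)\d\subseteq J$ is an equality by a dimension comparison against $\bar U$. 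The \textbf{main obstacle} lies in this last verification: the principal generator of $J$ produced abstractly by strong Bezout lives in $L(\Lambda)$ and \emph{a priori} need not be represented by a length-$\leq 1$ polynomial. The essential extra input making this possible is that the lattice of the Sato quotient $Q$ is bounded by ${_\g}U$, which for $|\g|\leq 1$ sits inside the length-$\leq 1$ image; uniqueness of $\d$ then holds up to left multiplication by a unit of $L(\Lambda)$ compatible with the length and comonic constraints, reflecting the standard ambiguity in selecting a generator of a principal left ideal.
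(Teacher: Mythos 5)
Your proposal gets as far as reducing the ideal $J=L(\Lambda)\g+L(\Lambda)\l$ to a collection of generators of length $\le 1$, which is a reasonable starting point, but the single step that actually matters — consolidating these into one comonic generator of length $\le 1$ — is left as an explicitly acknowledged ``main obstacle,'' so the argument as written is incomplete. A ``dimension comparison against $\bar U$'' is not enough: you would need to explain why there exists a comonic $\d\in\Lambda_{\le 1}\cap J$ with $L(\Lambda)\d$ all of $J$, and nothing in your sketch forces this; the principal generator supplied by Proposition~\ref{f1b} lives in $L(\Lambda)$, and the lattice bound only controls the Sato quotient, not the generator itself. There is also a minor loose end: the lifts $\mu_j$ of a basis of $N\cap{_\g}U$ need not be comonic (they may have zero augmentation), so you cannot quote Lemma~\ref{f1a} for them directly when switching between $\Lambda$- and $L(\Lambda)$-ideals.

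The paper closes the gap in a much more concrete way. After reducing (via Proposition~\ref{f3} and Theorem~\ref{f3c}) to the case where both $\g$ and $\l$ have length $1$ and are comonic, it simply computes one action $\a=z_1\ast_\g\l=c+dt_1^{-1}$ (with $c={^2a}_1-{^1a}_1$, $d=-({^2b}_1-{^1b}_1)$): if $cd=0$ then $\a$ is a unit and $J=L(\Lambda)$; if $cd\ne0$ then (after normalizing) $\a$ is a special comonic polynomial of length $1$, and Corollary~\ref{f7} says $L(\Lambda)\a$ is a maximal left ideal. Since $L(\Lambda)\a\subseteq J$, either $J=L(\Lambda)\a$ or $J=L(\Lambda)$, and in both cases $J$ is generated by a comonic polynomial of length $\le 1$. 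The missing ingredient in your sketch is precisely this maximality input from Corollary~\ref{f7}: it is what makes the ``length $\le 1$'' generator exist at all, rather than having to be fished out of $J$ by an abstract argument. I would rework your final paragraph to compute an explicit $z_i\ast_\g\l$ and invoke Corollary~\ref{f7}, rather than trying to argue via the strong Bezout property and a dimension count on $\bar U$.
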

\begin{proof} The uniqueness of $\d$ follows from the equality $L(\Lambda)\d\cap \Lambda=\Lambda\d$ by Lemma \ref{f1a} and the fact that $\Lambda$ is a domain. Without loss of generality one can assume that both $\g$ and $\l$ are proper, that is, they are not units of $\Lambda$ and moreover, neither of them is contained in the left ideal of $L(\Lambda)$ generated by the other. Hence $\g=k_1+\sum\limits_{i=1}^n(^1a_it_i+{^1b}_it^{-1}_i)$ holds for some $k_1, {^1a}_i, {^1b}_i\, \in {\mathfrak k}$ with $\e(\g)\neq 0$. 
By Theorem \ref{f3c} and Proposition \ref{f3} together with repeating the argument when it is necessary, one can assume without loss of generality that $\l$ has length 1 with $\e(\l) \neq 0$, i.e., $\l=k_2+\sum\limits_{i=1}^n(^2a_it_i+{^2b}_it^{-1}_i)$ with $k_2, {^2a}_i, {^2b}_i \in \mathfrak k$ and $\e(\l)\neq 0$.
By assumption there is an index $i$, say 1 by symmetry, such that $\a=z_1\star_\g \l=({^2a}_2-{^1a}_1)-({^2b}_1-{^1b}_1)t^{-1}_1=c+dt^{-1}_1\notin \Lambda\g$ with $c={^2a}_1-{^1}a_1, \, d=-({^2b}_1-{^1b}_1)$. If one of $c, d$ is 0, then $\a$ is a unit and so the claim holds. If $cd\neq 0$, then $\a$ is a free special comonic polynomial whence by Consequence \ref{f7} the left ideal $L(\lambda)(z_1\star_\g \l)=\Lambda\a$ contained in $L(\Lambda)\g+L(\Lambda)\l$ is a maximal left ideal of $L(\Lambda)$ implying the statement of Proposition \ref{f8} and so the verification is complete.
\end{proof}

We are now in position to state the first main result of this section.
\begin{theorem}\label{f9} For any two polynomials $\g, \l\in \Lambda$ there is a uniquely determined comonic polynomial $\d \in \Lambda$ of length at most $m=\min \{|\g|, |\l|\}$ called the \emph{greatest common divisor} of $\g, \l$ with the property that $\d$ is a generator (uniquely determinedup to a unit) of the left ideal of $L(\Lambda)$ generated by $\g$ and $\l$, that is, $L(\Lambda)\g+L(\Lambda)\l=L(\Lambda)\d$
\end{theorem}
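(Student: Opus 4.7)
My plan is to induct on $m = \min(|\g|, |\l|)$, after a preliminary reduction to the case where both $\g$ and $\l$ are comonic. This reduction is given by Lemma \ref{f1}: each of $L(\La)\g$ and $L(\La)\l$ is generated by finitely many comonic polynomials of length at most $|\g|$ and $|\l|$ respectively, so I may replace $\g,\l$ by such comonic generators without increasing $m$.

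Uniqueness is handled once. If $L(\La)\d = L(\La)\d'$ with $\d,\d'$ both comonic in $\La$, then intersecting with $\La$ and applying Lemma \ref{f1a} gives $\La\d = \La\d'$; since $\La$ is a domain, $\d' = u\d$ for some unit $u \in \La$, and the comonic hypothesis forces $\e(u) = 1$, so $\d$ is determined up to a left multiplication by a group element.

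For existence the base case $m \leq 1$ is Proposition \ref{f8} (trivial for $m=0$). For $m \geq 2$ I assume WLOG $|\g| = m \leq |\l|$. If $\l \in \La\g$ I take $\d = \g$. Otherwise I work in the Sato module $M = \coker \g \cong L(\La)/L(\La)\g$ (Lemma \ref{f1a}) and consider the submodule $N = L(\La)\bar\l$, which corresponds to $(L(\La)\g + L(\La)\l)/L(\La)\g$. By Theorem \ref{f3c}, ${_\g}U$ is the smallest lattice of $M$; by Theorem \ref{f4}, $N = \La(N \cap {_\g}U)$, so $N \cap {_\g}U$ is a nonzero finite-dimensional $\Lambda^*$-submodule of ${_\g}U$ whose elements lift through ${_\g}V$ to polynomials in $\La$ of length $\leq m$. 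Since $L(\La)$ is strongly Bezout (Proposition \ref{f1b}), $N$ is cyclic over $L(\La)$, and the task reduces to producing a single $\mu \in \La$ with $|\mu| \leq m$ whose image $\bar\mu \in N \cap {_\g}U$ generates $N$ over $L(\La)$. Concretely I would iterate Proposition \ref{f3}, starting from $\bar\l$ and replacing it successively by $\overline{z \ast_\g \l}$ of length $\leq m$; the Cuntz-Krieger identity $\bar\l = \sum x_i \overline{z_i \ast_\g \l}$ guarantees that finitely many such partial-derivative images already generate $N$ over $L(\La)$. A pairwise application of the induction hypothesis to the resulting finite family of comonic polynomials of length $\leq m$ then collapses them to a single comonic $\d$ with $|\d| \leq m$ satisfying $L(\La)\d = L(\La)\g + L(\La)\l$.

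The main obstacle will be the borderline case where iterating Proposition \ref{f3} produces a polynomial of length exactly $m$, so that the pairwise application of the induction hypothesis does not strictly decrease $m$. Here I plan to exploit the finer $\Lambda^*$-structure of ${_\g}U$ granted by Theorem \ref{f3c}: since ${_\g}U$ is cyclic over $\Lambda^*$ with generator $\bar 1$, the $\Lambda^*$-submodule $N \cap {_\g}U$ has a finite composition series over $\Lambda^*$, and at each honest step of the iteration the $\mathfrak k$-dimension of the as-yet-uncovered part of $N \cap {_\g}U$ strictly decreases, forcing termination. This is the same delicate bookkeeping that underlies the "WLOG $|\l|=1$" reduction in the proof of Proposition \ref{f8}, where Theorem \ref{f3c} and Proposition \ref{f3} are invoked together for exactly this purpose.
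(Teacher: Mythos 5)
Your overall strategy — reduce to comonic polynomials via Lemma~\ref{f1}, handle uniqueness via Lemma~\ref{f1a} and the domain property of $\Lambda$, then induct on $m=\min\{|\g|,|\l|\}$ using Proposition~\ref{f8} as base case and Proposition~\ref{f3}, Theorem~\ref{f3c}, Theorem~\ref{f4} in the inductive step — is the same as the paper's, and the easy parts of your argument are correct. However, you have correctly located but not resolved the central difficulty: what to do when the inductive step stalls at two comonic polynomials both of length exactly $m$, so that the induction hypothesis on $m$ cannot be applied. Your proposed remedy, a secondary induction on ``the $\mathfrak k$-dimension of the as-yet-uncovered part of $N\cap {_\g}U$,'' is not a proof: ${_\g}U$ is fixed once $\g$ is, and nothing in the iteration you describe visibly makes that dimension drop, so no termination argument follows. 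Invoking the strongly Bezout property (Proposition~\ref{f1b}) to get cyclicity of $N$ also does not help, since that gives a generator in $L(\Lambda)$ with no control on whether it is represented by a comonic polynomial of length $\le m$; that control is exactly what the theorem asserts and what must be manufactured.

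The paper closes the gap with a specific device you have not found. Having replaced $\l$ by $\l_1=z_i\ast_\g\l$ and $\g$ by $\g_1=z_j\ast_{\l_1}\g$, both still of length $m$, the proof notes via Lemma~\ref{f2} that $\l_1$ and $\g_1$ are \emph{special}, with all strictly maximal words beginning with $t_i^{-1}$ and $t_j^{-1}$ respectively. Since $t_i,t_j$ are units in $\Lambda$, one may rewrite $L(\Lambda)\l_1+L(\Lambda)\g_1=L(\Lambda)(t_i\l_1)+L(\Lambda)(t_j\g_1)$; the key observation is that although $t_i\l_1$ and $t_j\g_1$ may still have length $m$, \emph{every} partial derivative $\partial_w(t_i\l_1)$, $\partial_w(t_j\g_1)$ with $|w|=1$ has length at most $m-1$, precisely because the strictly maximal words were cancelled by the pre-multiplication. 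This is the length-reducing maneuver that makes the induction on $m$ close. It has no counterpart in the free associative algebra case because there the degree already drops under derivatives; here the availability of the group units $t_i$ is essential, and this is what your sketch is missing. Without this (or an equivalent trick), the borderline case stands open and the proof does not go through.
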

\begin{proof} The uniqueness of $\d$ is a consequence of the equality $L(\Lambda)\d\cap \Lambda=\Lambda \d$ granted by lemma \ref{f1a} and the fact that $\Lambda$ is a domain. By Lemma \ref{f1} and the simple induction on a number of generators which are free comonic polynomials, one can assume without loss of generality that $\g, \l$ are comonic and neither of them is contained in the left ideal of $L(\Lambda)$ generated by the other. We use induction on $m=\min \{|\g|, |\l|\}$. The case $m=1$ holds by Proposition \ref{f8}. Assume now the statement for any two free comonic polynomials such the the minimum of their length is at most $m-1\, (m\geq 2)$ and consider free comonic polynomials $\g, \, \l$ with $m=\min \{|\g|, |\l|\}$. Without loss of generality one can assume $m=|\g|$.

By Proposition \ref{f3}, Theorem \ref{f4} and the induction hypothesis one can also assume additionally that $|\l|=m=|\g|$. Put $J=L(\Lambda)\g+L(\Lambda)\l$. We have to show that $J$ is a principal left ideal of $L(\Lambda)$ generated by some comonic polynomial of $\Lambda$ of length at most $m$. According to our notation, ${_\g}V$ is a finite-dimensional $\mathfrak k$-subspace of $\Lambda$ spanned by all partial derivatives $\partial_w \g \, (w\in F)$ including $\g=\partial_\emptyset \g$. Then 
$L(\Lambda)=L(\Lambda)\g+\Lambda=L(\Lambda)\g+\Lambda({_\g}V)$ holds
by Lemma \ref{f1a} and Theorem \ref{f4}. Since $J/L(\Lambda)\g$ is a $L(\Lambda)$-submodule of $\coker \g$, by Theorem \ref{f4} there is a uniquely determined finite-dimensional $\mathfrak k$-submodule $P$ of ${_\g}V$ satisfying $J=L(\Lambda)\g+\Lambda P$. 

If there is a comonic free polynomial $\m\in P\setminus L(\Lambda)\g$ of length at most $m-1$, then the induction hypothesis implies $J\supseteq J_1=L(\Lambda)\g+L(\Lambda)\m=L(\Lambda)\g_1$ for some comonic free polynomial $\g_1$ of length at most $m-1$. Hence the equality $J=J_1+L(\Lambda)\l=L(\Lambda)\g_1+L(\Lambda)\l$ implies again by the induction hypothesis that $J$ is a left principal ideal generated by some comonic polynomial $\d$ of length at most $|\g_1| \leq m-1$.

Consequently, it remains to consider the case when $P$ admits a $\mathfrak k$-basis consisting of free comonic polynomials of length $m$ modulo $L(\Lambda)\g$. 
In this case there exists by the equality \eqref{eql22} an index $i\in \{1, \cdots, n\}$ such that $\l_1=z_i\star_{\g} \l \notin \Lambda\g$ and $|\l_1|=m$ holds. Moreover, by Lemma \ref{f1} one can again assume that $\l_1$ is comonic. Then $\l_1$ is a special polynomial by Lemma \ref{f2}.
Consequently, all strictly maximal words of $\l_1=-\e(\l)\partial_i\g+\partial_i\l$ begin with $t^{-1}_i$. By replacing $\g$ with $\l_1$ and $\l$ with $\g$ we consider the left ideal $J_1=L(\Lambda)\l_1+L(\Lambda)\g\subseteq J$ with $|\g|=m=|\l_1|$.  Put $P_1=J_1\cap {_{\l_1}V}$. As in the first part of the proof one can assume again without loss of generality that none of $\l_1$ and $\g$ is contained in the left ideal of $L(\Lambda)$ generated by the other and $P_1$ has a $\mathfrak k$-basis consisting of free comonic polynomials of length $m$. For the further argument it is worth to keep in mind that all partial derivatives $\partial_t\g \, (t\in \{t_1, \dots, t_n, t^{-1}_1, \dots, t^{-1}_n\})$ may be of the length $m$. Namely, it is the case when $|\eta(\g)|=2n$. Again by the equality \eqref{eql22} one can assume that there is an index $j\in \{1, \dots, n\}$ such that $\g_1=z_j\ast_{\l_1} \g\notin L(\Lambda)\l_1$ with $|\g_1|=m$. Then $\g_1$ is a free special polynomial by Lemma \ref{f2}. Let $H=L(\Lambda)\l_1+L(\Lambda)\g_1\subseteq J_1$. Then we have $H=L(\Lambda)(t_i\l_1)+L(\Lambda)(t_j\g_1)$ with $|t_i\l_1|=m=|t_j\g_1|$. Furthermore all partial derivatives $\partial_w t_i\l_1$ and $\partial_w t_i\g_1$ of $t_i\l_1, t_j\g_1 \,(w\in \{t_1, \dots, t_n, t^{-1}_1, \dots, t^{-1}_n\})$, respectively, have the length at most $m-1$ because $\l_1$ and $\g_1$ are special with $\eta(\l_1)=t^{-1}_i$ and $\eta(\g_1)=t^{-1}_j$. Consequently, the induction hypothesis  together with the argument in the first part of the proof implies that
$L(\Lambda)\l_1+L(\Lambda)(z_1\ast_{\l_1}\g_1)=H_1$ is a principal left ideal of $L(\Lambda)$ generated by a comonic polynomial $\a$ of length at most $m-1$. Hence
$H=H_1+L(\Lambda)\g_1=L(\Lambda)\a+L(\Lambda)\g_1$ is also a left principal ideal of $L(\Lambda)$ generated by some comonic polynomial $\b$ of length at most $m-1$ by the induction hypothesis. Therefore $J_1=H+L(\Lambda)\g$ is also a left principal ideal of $L(\Lambda)$ generated by an appropriate comonic polynomal $\m$ of length at most $m-1$ in view of the induction hypothesis. In summary we have again from the induction hypothesis that $J=J_1+L(\Lambda)\l$ is a left principal ideal of $L(\Lambda)$ generated by a free comonic polynomial of length at most $m-1$ whence the proof is complete.
\end{proof}

It is clear that Theorem \ref{f9} provides an algorithm to find a greatest common divisor of any two free polynomials in the case of a finite field $\mathfrak k$. However, as one can see from the proof that this way is not very economical, namely it goes very slowly to the end. Namely, one has to reduce a length step by step via both a positive and a negative power of each free variable $t_i \, (i=1, \dots, n)$. For further comments and remarks concerning the way to find greatest comon divisors we refer to \cite{am}.

As obvious consequence of Theorems \ref{f4} and \ref{f9} we have immediately
\begin{corollary}\label{f9a} Any proper left ideal of $L(\lambda)$ which intersects $\Lambda$ non-trivially, is a principal left ideal of $L(\Lambda)$ generated by a uniquely
determined free comonic polynomial of $\Lambda$. In particular, any left ideal of $L(\Lambda)$ generated by a free polynomial of $\Lambda$ is either $L(\Lambda)$ or can be generated by a free comonic polynomial of $\Lambda$.
\end{corollary}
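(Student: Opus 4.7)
The plan is to derive this from Theorem~\ref{f9}, the finite length statement in Corollary~\ref{f5}, and Lemmas~\ref{f1} and~\ref{f1a}. Let $J$ be a proper left ideal of $L(\Lambda)$ with $J\cap\Lambda\neq 0$. First I would pick any nonzero $\g_0\in J\cap\Lambda$; by Lemma~\ref{f1} the left ideal $L(\Lambda)\g_0$ is already generated by finitely many free comonic polynomials of $\Lambda$, so I may assume from the start that $\g_0$ itself is free and comonic. Then $\coker\g_0=L(\Lambda)/L(\Lambda)\g_0$ has finite length over $L(\Lambda)$ by Corollary~\ref{f5}, so its submodule $J/L(\Lambda)\g_0$ is finitely generated over $L(\Lambda)$; I would lift a finite generating set to $\m_1,\dots,\m_m\in J$.

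Next I would replace each $\m_i$ by an element of $\Lambda$. Lemma~\ref{f1a} gives $L(\Lambda)=\Lambda+L(\Lambda)\g_0$, so there exist $\l_i\in\Lambda$ with $\m_i\equiv\l_i\pmod{L(\Lambda)\g_0}$, whence $J=L(\Lambda)\g_0+\sum_{i=1}^m L(\Lambda)\l_i$ is a finite sum of principal left ideals generated by polynomials of $\Lambda$. A further application of Lemma~\ref{f1} to each $\l_i$ allows me to assume every generator is a free comonic polynomial of $\Lambda$. I would then apply Theorem~\ref{f9} repeatedly, pairing two generators at a time: for free comonic $\p,\s\in\Lambda$, Theorem~\ref{f9} produces a free comonic $\d\in\Lambda$ with $L(\Lambda)\p+L(\Lambda)\s=L(\Lambda)\d$. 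Induction on the number of generators then collapses $J$ to a single free comonic generator $\d\in\Lambda$.

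Uniqueness (in the same sense as in Theorem~\ref{f9}) follows from the quoted argument: $L(\Lambda)\d\cap\Lambda=\Lambda\d$ by Lemma~\ref{f1a}, combined with $\Lambda$ being a domain, forces $\Lambda\d_1=\Lambda\d_2$ whenever $L(\Lambda)\d_1=L(\Lambda)\d_2$ with both $\d_i$ comonic. The ``in particular'' clause is immediate: for $\g\in\Lambda$ nonzero, either $L(\Lambda)\g=L(\Lambda)$ or $L(\Lambda)\g$ is a proper left ideal containing the nonzero element $\g\in\Lambda$, so $L(\Lambda)\g\cap\Lambda\neq 0$ and the first part applies. The hard part is the finite generation of $J$ as an $L(\Lambda)$-module; this is where Corollary~\ref{f5} is indispensable, since $L(\Lambda)$ is not assumed to be left Noetherian and so finite generation of arbitrary left ideals cannot be taken for granted. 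Once $J$ has been rewritten as a finite sum of principal left ideals generated by free comonic polynomials of $\Lambda$, the rest is a routine iteration of Theorem~\ref{f9}.
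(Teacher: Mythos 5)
Your argument is correct and essentially fills in the detail the paper leaves implicit. The paper cites Theorem~\ref{f4} and Theorem~\ref{f9}; you use Corollary~\ref{f5} (which the paper itself derives immediately from Theorem~\ref{f4}) together with Lemmas~\ref{f1}, \ref{f1a} and Theorem~\ref{f9}, so the route is the same in substance: finite length of $\coker\g_0$ over $L(\Lambda)$ gives finite generation of $J/L(\Lambda)\g_0$, Lemma~\ref{f1a} pushes the generators into $\Lambda$, Lemma~\ref{f1} makes them comonic, and iterated application of Theorem~\ref{f9} collapses the finite list to a single comonic generator. The uniqueness argument you give is literally the one the paper uses in Theorem~\ref{f9}, and the reduction of the ``in particular'' clause is handled correctly.
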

For the thorough discussion of factorizations in domains we need a notion of similarity.
Note that two polynomials $a$ and $b$ of an algebra $C$ are said to be \emph{similar} if the corresponding left modules $C/Ca, C/Cb$ are isomorphic, or equivalently, the corresponding right modules $C/aC, C/bC$ are isomorphic. Consequently, in a domain $C$ any generator $b$ of a stricly cyclic module $C/Ca$ is similar to $a$. This one sided property leads to the paradise of elements similar to $a$, provides a unified way to control factorizations of a polynomials.

We are now able to characterize free comonic irreducible polynomials via Sato modules over the Leavitt localization $L(\Lambda)$ and their smallest lattices which are finite-dimensional modules over $\Lambda^*$ which can be generated by $2n$ elements.
\begin{theorem}\label{f10} Let $\p\in \Lambda$ be a free comonic polynomial which is not a unit and $L(\Lambda)$ the Fox algebra of $\Lambda$ of Fox derivatives, that is, the Leavitt localization of $\Lambda$ inverting the row $(t_1-1=x_1, \dots, t_n-1=x_n)$ where $\Lambda$ is the group algebra of a free group of rank $n\geq 2$ over a field $\mathfrak k$. Let ${_\p}U$ be the smallest lattice of $\coker \p$ generated by images of partial derivatives of $\p$. Then the following are equivalent.
\begin{enumerate}
\item $\p$ is irreducible;
\item $\coker \p=\Lambda/\Lambda\p$ is a simple $L(\Lambda)$-module; 
\item ${_\p}U$ is a simple $\Lambda^*$-module which is finite-dimensional over $\mathfrak k$.
\end{enumerate} 
\end{theorem}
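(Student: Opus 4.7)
The plan is to establish the two equivalences $(2)\Leftrightarrow(3)$ and $(1)\Leftrightarrow(2)$ separately. The first uses the lattice framework of Theorems \ref{f3c} and \ref{f4} together with Remark \ref{df2}; the second translates between factorizations of $\pi$ in $\Lambda$ and $L(\Lambda)$-submodules of $\coker\pi$ via Corollary \ref{f9a} and Lemma \ref{f1a}.

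For $(2)\Leftrightarrow(3)$, Theorem \ref{f3c} already supplies that ${_\pi}U$ is the (unique) smallest lattice of $\coker\pi$ and is finite-dimensional over $\mathfrak k$, so the finite-dimensionality clause in (3) is automatic. Assume first that ${_\pi}U$ is simple over $\Lambda^*$ and let $N$ be a nonzero $L(\Lambda)$-submodule of $\coker\pi$; then $N=\Lambda(N\cap {_\pi}U)$ by Theorem \ref{f4} forces $N\cap{_\pi}U\neq 0$, and being a nonzero $\Lambda^*$-submodule of a simple module, it must equal ${_\pi}U$, so $N=\Lambda\cdot{_\pi}U=\coker\pi$. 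Conversely, assume $\coker\pi$ is simple and let $P$ be any nonzero $\Lambda^*$-submodule of ${_\pi}U$. A short computation using the identities $x^*_i\lambda=\epsilon(\lambda)x^*_i+\partial_i\lambda$ (and the analogous formula for $y^*_i$) valid in $L(\Lambda)$ shows that $\Lambda P$ is stable under $\Lambda^*$ and hence is an $L(\Lambda)$-submodule; since $P\neq 0$, simplicity gives $\Lambda P=\coker\pi$, so $P$ is a finite-dimensional lattice, and by minimality of ${_\pi}U$ we conclude $P={_\pi}U$.

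For $(1)\Rightarrow(2)$, I argue contrapositively. Suppose $\coker\pi$ has a proper nonzero $L(\Lambda)$-submodule, corresponding to a left ideal $J$ with $L(\Lambda)\pi\subsetneq J\subsetneq L(\Lambda)$. Since $0\neq\pi\in J\cap\Lambda$, Corollary \ref{f9a} supplies a comonic $\gamma\in\Lambda$ with $J=L(\Lambda)\gamma$, and Lemma \ref{f1a} gives $\pi\in L(\Lambda)\gamma\cap\Lambda=\Lambda\gamma$, so $\pi=\beta\gamma$ for some $\beta\in\Lambda$. The proper inclusions prevent either factor from being a unit: if $\gamma$ were a unit of $\Lambda$ then $J=L(\Lambda)$, while if $\beta$ were a unit then $J=L(\Lambda)\pi$. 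Hence $\pi$ is reducible. Conversely, if $\pi=\alpha\beta$ is a nontrivial factorization, one rescales by constants so that both factors are comonic, and then $L(\Lambda)\pi\subseteq L(\Lambda)\beta\subseteq L(\Lambda)$. Both inclusions are strict: if $L(\Lambda)\beta=L(\Lambda)$, then $1\in\Lambda\beta$ by Lemma \ref{f1a}, giving a left inverse of $\beta$ in the domain $\Lambda$ and thus a two-sided inverse, a contradiction; and if $L(\Lambda)\beta=L(\Lambda)\pi$, then $\beta\in\Lambda\pi=\Lambda\alpha\beta$, producing a left inverse of $\alpha$, again a contradiction.

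The main obstacle is the careful passage between $\Lambda$ and the much larger $L(\Lambda)$: one must confirm that non-units of $\Lambda$ remain non-invertible in $L(\Lambda)$, which is handled by Lemma \ref{f1a} together with the domain property of $\Lambda$, and one must verify that $\Lambda P$ is closed under $\Lambda^*$ for $\Lambda^*$-submodules $P$ of ${_\pi}U$, which rests on the explicit commutation formulas for $x^*_i$ and $y^*_i$ against elements of $\Lambda$ inside $L(\Lambda)$.
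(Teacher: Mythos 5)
Your proof is correct and follows essentially the same strategy as the paper's. The paper defers $(2)\Leftrightarrow(3)$ entirely to Theorem~\ref{f3c} (whose stated "In particular" clause contains this equivalence, though its proof only spells out the smallest-lattice property), while you supply the missing details directly: the forward direction via $N=\Lambda(N\cap{_\pi}U)$ from Theorem~\ref{f4}, and the converse via the commutation identity $x^*_i\lambda=\epsilon(\lambda)x^*_i+\partial_i\lambda$ to show $\Lambda P$ is $\Lambda^*$-stable. This is a genuine and welcome elaboration that makes explicit what the paper implicitly packages as a consequence of Remark~\ref{df2}. For $(1)\Leftrightarrow(2)$ the two arguments coincide in substance: the paper proves $(1)\Rightarrow(2)$ directly using Theorem~\ref{f9} (gcd) and Proposition~\ref{L1} to show every properly larger left ideal must contain a unit, whereas you argue contrapositively via Corollary~\ref{f9a} and Lemma~\ref{f1a} to extract a nontrivial factorization; the paper's $(2)\Rightarrow(1)$ invokes the isomorphism $\Lambda/\Lambda\alpha\cong\Lambda\beta/\Lambda\pi$, while you verify strictness of the chain $L(\Lambda)\pi\subsetneq L(\Lambda)\beta\subsetneq L(\Lambda)$ directly from Lemma~\ref{f1a} and the domain property of $\Lambda$. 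These are interchangeable packagings of the same ideas; yours is arguably a bit more self-contained in its justification of strictness.
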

\begin{proof} The equivalence $(2)\Longleftrightarrow (3)$ is already verified in Theorem \ref{f3c}. Therefore it is enough to see the equivalence $(1) \iff (2)$. 

First let $\p\in \Lambda$ be a comonic irreducible polynomial. To see that $\coker \p \cong L(\Lambda)/L(\Lambda)\p$ is a simple $L(\Lambda)$-module, it is enough to show $L(\Lambda)\p+L(\Lambda)\g=L(\Lambda)$ for every $\g\in \Lambda\setminus \Lambda\g$. By Lemma \ref{f1} it is enough to restrict to the case when $\g$ is comonic. By Theorem \ref{f9} we have $L(\Lambda)\p+L(\Lambda)\g=L(\Lambda)\d$ for a uniquely determined comonic polynomial $\d \in \Lambda$. Hence $\d$ is a right factor of $\p$ by Proposition \ref{L1}. Therefore $\d$ must be a unit in view of the fact that $\p$ is irreducible. Hence $L(\Lambda)/L(\Lambda)\p$ is a simple $L(\Lambda)$-module.

Conversely, assume that $L(\Lambda)/L(\Lambda)\p$ is a simple $L(\Lambda)$-module. We have to show that $\p$ is irreducible. Namely, if $\p$ is reducible, i.e.,   $\p=\a\b$ for $\a, \b \in\Lambda$ which are non-units. Then $\a$ and $\b$ are also comonic because $\p$ is comonic. Since $\Lambda$ is a domain, $\b$ defines an injective $\Lambda$-homomorphism $\b\colon \Lambda\rightarrow \Lambda\b$ sending $\Lambda\a\mapsto 
\Lambda\p\colon \r\a\mapsto \r\a\b$ for every polynomial $\r\in \Lambda$. Consequently,
$\b$ induces an isomorphism $\Lambda/\Lambda\a\cong \Lambda\b/\Lambda\p$ (cf. also \cite[Lemma 1]{ar1}) whence $\Lambda\b/\Lambda\p$ is a proper submodule of
$\Lambda/\Lambda\p$ which by the indirect assumption contradicts to the simplicity of the $L(\Lambda)$-module $\coker \p=\Lambda/\Lambda\p$ in view of the equality \eqref{eql23}.
\end{proof}
Theorem \ref{f10} has the following nice consequence by Corollary \ref{f61}. 
\begin{corollary}\label{f10a} Let $\p \in \Lambda$ be a free comonic irreducible polynomial and $U={_\g}U$ the smallest lattice of $\coker \p$ which is a finite-dimensional $D_{2n}$-module. Then the endomorphism rings $\End\left(_{L(\Lambda)}\coker \p \right)$ and $\End\left(_{D_{2n}}U\right)$ are isomorphic finite-dimensional skew algebras over $\mathfrak k$.
\end{corollary}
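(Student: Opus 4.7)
The plan is to apply Schur's lemma to both simple modules and then match the two endomorphism rings by restriction in one direction and by the unique lattice-extension of Corollary \ref{f61} in the other. By Theorem \ref{f10}, irreducibility of $\p$ makes $\coker\p$ a simple $L(\Lambda)$-module and makes $U={_\p}U$ a simple $\Lambda^*$-module, which is the same as a simple $D_{2n}$-module since $\Lambda^*$ is the image of $D_{2n}$ in $L(\Lambda)$ under $z_i\mapsto x^*_i$, $z_{n+i}\mapsto y^*_i$. Hence, setting $D_1=\End_{L(\Lambda)}(\coker\p)$ and $D_2=\End_{D_{2n}}(U)$, Schur's lemma makes both $D_1$ and $D_2$ skew fields over $\mathfrak k$. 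Since $U$ is finite-dimensional over $\mathfrak k$ (by Theorem \ref{f10}), $D_2\subseteq\End_{\mathfrak k}(U)$ is automatically finite-dimensional over $\mathfrak k$.

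To construct the isomorphism $D_1\cong D_2$ the first step is to show $\phi(U)=U$ for every $\phi\in D_1$. If $\phi=0$ this is trivial. If $\phi\neq 0$, Schur makes $\phi$ an $L(\Lambda)$-automorphism, hence a $\mathfrak k$-linear bijection respecting the $\Lambda^*$-action and the $\Lambda$-generation of $\coker\p$; therefore $\phi(U)$ is again a lattice of $\coker\p$. By the minimality of $U$ among lattices (Theorem \ref{f3c}) one obtains $U\subseteq\phi(U)$; applying the same reasoning to $\phi^{-1}\in D_1$ gives $U\subseteq\phi^{-1}(U)$, i.e., $\phi(U)\subseteq U$. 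Hence $\phi(U)=U$, and restriction $R(\phi)=\phi|_U$ is a well-defined ring homomorphism $R\colon D_1\to D_2$.

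Conversely, for every $\psi\in D_2$, viewing $\psi$ as a $\Lambda^*$-homomorphism from the lattice $U$ into the Sato module $\coker\p$, Corollary \ref{f61} produces a unique $L(\Lambda)$-endomorphism $E(\psi)\in D_1$ whose restriction to $U$ equals $\psi$. The uniqueness clause immediately makes $E$ a ring homomorphism, and makes $R$ and $E$ mutually inverse: $R(E(\psi))=\psi$ by construction, while $E(R(\phi))=\phi$ because $\phi$ itself is an $L(\Lambda)$-endomorphism extending $\phi|_U$ and so must coincide with the unique such extension $E(\phi|_U)$. This yields the required isomorphism of finite-dimensional skew $\mathfrak k$-algebras.

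The main obstacle is the first step, $\phi(U)=U$: it requires combining Schur's lemma with the minimality of $U$ as a lattice and with the elementary but crucial fact that an $L(\Lambda)$-automorphism of $\coker\p$ sends lattices to lattices. All three ingredients are needed, and the conclusion cannot be reached solely from the existence/uniqueness part of Corollary \ref{f61}.
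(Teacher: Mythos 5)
Your proposal is correct and follows essentially the same route as the paper's proof: use Theorem~\ref{f10} and Schur's lemma to see that both $\coker\p$ and $U$ are simple with skew-field endomorphism rings, use minimality of the lattice ${_\p}U$ to show every $L(\Lambda)$-automorphism of $\coker\p$ preserves $U$ (giving the restriction map), and use the unique extension property of Corollary~\ref{f61} to construct the inverse. The only difference is that you spell out the invariance $\phi(U)=U$ via the ``$\phi(U)$ is a lattice, then apply minimality to both $\phi$ and $\phi^{-1}$'' argument, a step the paper asserts rather tersely.
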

\begin{proof} Since $\coker \p$ is a simple $L(\Lambda)$-module and ${_\p}U$ is its smallest lattice, any nonzero endomorphism of $\coker \p$ is an automorphism whence it leaves ${_\p}U$ invariantly. Consequently, $\End\left(_{L(\Lambda)}\coker \p\right)$ embeds in $\End({_\p}U)$. On the other hand, any $\Lambda^*$-endomorphism of ${_\p}U$ over extends uniquely to an $L(\Lambda)$-endomorphism of $\coker \p$ by Corollary \ref{f61} which shows immediately the isomorphism
$\End\left(_{L(\Lambda)}\coker \p \right)\cong \End\left(_{D_{2n}}U\right)\cong \End\left(_{\Lambda^*}U\right)$ and so the proof is complete.
\end{proof}
$\End({_\p}U)$ is called the \emph{(division) finite-dimensional algebra} associated to a  free comonic irreducible polynomial $\p\in \Lambda$. Theorem \ref{f10} suggests a question of central
importance, namely, to characterize free comonic irreducible polynomials $\p$ such that $\End_\Lambda({_\p}U)$ is non-commutative.
We are now in position to transfer the factorization theory of ordinary polynomials in non-commuting variables developed together with F. Mantese in \cite{am} to free polynomials as follows. First recall that for any free comonic polynomial $\g\in \Lambda$ the smallest lattice ${_\g}U$ of $\coker \g$ is a finite-dimensional $\mathfrak k$-subspace which is a $\Lambda^*$- and hence a $D_{2n}$-module. In particular, ${_\g}U$ is a factor of ${_\g}V$, the $\mathfrak k$-subspace of $\Lambda_{\ast_\g}$ spanned by all partial derivatives $\partial_w\g \, (w\in F)$ introduced in Definition \ref{df1}, under the canonical map $\Lambda_{\ast_\g}\rightarrow\Lambda_{\ast_\g}/\Lambda\g$. In this way one assigns a finite-dimensional $D_{2n}$-module to each free comonic polynomial $\g \in \Lambda$.
Using this assignment the main goal of this work is to show that a study of a free comonic polynomial $\g$ is equivalent to an investigation of its finite-dimensional module ${_\g}U$ over $D_{2n}$ or more precisely, over $\Lambda^*$. This offers an efficiently algorithmical study of polynomials instead of considering Cohn's infinite-dimensional strictly cyclic modules together with their category $\mathfrak C$.
\begin{theorem}\label{goal1} Let $\g, \l \in \Lambda$ be {comonic} polynomials of  positive degree and their smallest lattices ${_\g}U$ and ${_\l}U$ which are finite-dimensional left $D_{2n}$-modules with respect to the $\star$-action defined by $\g$ and  $\l$, respectively. Then:
\begin{enumerate}
\item $\g$ is an irreducible polynomial if and only if ${_\g}U$ is a simple $D_{2n}$-module. 
\item If $\g=\p_1\cdots\p_m$ is a factorization of $\g$ into a product of irreducible polynomials, then $m$ is the common length of both the $L(\Lambda)$-module $\coker \g$ and the $D_{2n}$-module ${_\g}U$ and their composition factors are isomorphic to the simple modules $\coker \p_i$ and ${\p_i}U$, $i=1, \cdots m$, respectively.
More precisely, any irreducible factorization $\g=\p_1\cdots \p_l$ induces a composition chain 
\begin{align}\label{eql30a}
0\subseteq \frac{\Lambda\p_2\cdots \p_l}{\Lambda\g}\subseteq \frac{\Lambda\p_3 \cdots \p_l}{\Lambda\g}\subseteq \cdots \subseteq\frac{\Lambda\p_l}{\Lambda\g}\subseteq \frac{\Lambda}{\Lambda\g}
\end{align}
of $\coker \g$ and so a corresponding composition chain
\begin{align}\label{eql30b}
0\subseteq (_{\p_1}U)\p_2\cdots \p_l\subseteq (_{\p_1\p_2}U)\p_3\cdots \p_l\subseteq \cdots \subseteq (_{\p_1\cdots\p_{l-1}}U)\p_l\subseteq {_{\g}U}
\end{align}
of ${_\g}U$.
In particular $m$ is an invariant of $\g$ and any two factorizations of $\g$ are unique in the sense that they correspond to composition chains of either $\coker \g$ or ${_\g}U$, respectively.  However, each simple subfactor $\frac{\Lambda\p_{j-1}\cdots \p_l}{\Lambda \p_j\cdots \p_l}$ or $\frac{\Lambda}{\Lambda\p_l}$ determines an irreducible factor
$\p_j$ or $\p_l$ of $\g$,respectively, only up to the similarity.
\item $V_\l\cong V_\g$ as $D_{2n}$-modules if and only if $\Lambda/\Lambda\g=\coker \g \cong \coker \l=\Lambda/\Lambda\l$ as both $\Lambda$- and $L(\Lambda)$-modules, that is, $\g$ and $\l$ are similar over both $\Lambda$ and $L(\Lambda)$.
\end{enumerate} 
\end{theorem}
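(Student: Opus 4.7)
\emph{Proof plan.} Statement (1) is essentially a repackaging of Theorem~\ref{f10}: since the $\mathfrak k$-subalgebra $\Lambda^*\subseteq L(\Lambda)$ generated by the Fox derivatives coincides with the image of $D_{2n}$ under $z_i\mapsto x^*_i,\ z_{n+i}\mapsto y^*_i$, a $\mathfrak k$-subspace of ${_\g}U$ is a $\Lambda^*$-submodule iff it is a $D_{2n}$-submodule. The three conditions ``$\g$ irreducible'', ``$\coker\g$ simple over $L(\Lambda)$'', and ``${_\g}U$ simple over $D_{2n}$'' are therefore all equivalent by Theorem~\ref{f10}.

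For (2) I start from the factorization $\g=\p_1\p_2\cdots\p_l$ and form the descending chain of left ideals $\Lambda\g\subseteq \Lambda\p_2\cdots\p_l\subseteq\cdots\subseteq \Lambda\p_l\subseteq \Lambda$, which passes to \eqref{eql30a} in $\coker\g$. The key point, exactly as in the proof of Theorem~\ref{f10}, is that right multiplication by $\p_{j+1}\cdots\p_l$ is injective because $\Lambda$ is a domain and therefore induces a $\Lambda$-isomorphism
\[
\Lambda/\Lambda\p_j\ \cong\ \Lambda\p_{j+1}\cdots\p_l/\Lambda\p_j\p_{j+1}\cdots\p_l;
\]
by \eqref{eql23} and Theorem~\ref{f10} each such quotient is simple as an $L(\Lambda)$-module, so \eqref{eql30a} is a composition series of length $l$. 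Jordan--H\"older then forces $l=m$ to be an invariant of $\g$ and the composition factors to be unique up to isomorphism, which recovers each $\p_j$ only up to similarity by the very definition of the latter. Finally, Remark~\ref{df2} (and Theorem~\ref{f4}) gives an order-isomorphism $N\mapsto N\cap{_\g}U$ between the submodule lattice of ${_{L(\Lambda)}}\coker\g$ and that of ${_{\Lambda^*}}{_\g}U$; transporting \eqref{eql30a} under this bijection produces \eqref{eql30b} and exhibits ${_\g}U$ as a $D_{2n}$-module of length $l$ with the same (isomorphism classes of) composition factors.

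For (3) I pass through the smallest lattices. An $L(\Lambda)$-isomorphism $\coker\g\cong\coker\l$ is automatically $D_{2n}$-linear by \eqref{eql23}, and by Theorem~\ref{f3c} must carry the intrinsically characterised smallest lattice to the smallest lattice, giving a $D_{2n}$-isomorphism ${_\g}U\cong{_\l}U$; this lifts to $V_\g\cong V_\l$ once one observes that the distinguished trivial line $\mathfrak k\g\subseteq V_\g$ is intrinsic, namely $\mathfrak k\g=\{v\in V_\g\mid z_j\ast_\g v=0,\ j=1,\dots,2n\}$, which is one-dimensional whenever $\g$ has positive length (since $z_i\ast_\g \g=0=z_{n+i}\ast_\g \g$, and any other fixed vector would force every $\partial_i\g$ and $\bar\partial_i\g$ to vanish). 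Conversely, a $D_{2n}$-isomorphism $V_\g\cong V_\l$ must send $\mathfrak k\g$ to $\mathfrak k\l$ and descends to a $D_{2n}$-isomorphism of smallest lattices ${_\g}U\cong{_\l}U$; Corollary~\ref{f61} then extends it uniquely to an $L(\Lambda)$-isomorphism $\coker\g\cong\coker\l$, which by \eqref{eql23} is also a $\Lambda$-isomorphism.

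\emph{Main obstacle.} The most delicate point is (3): keeping track of the twisted $\ast_\g$-action on $V_\g$ and verifying that a $D_{2n}$-isomorphism of the ambient spaces really does restrict to one of the smallest lattices inside the cokernels, and conversely. The intrinsic characterisation of the trivial line $\mathfrak k\g$ and the ``unique extension'' mechanism of Corollary~\ref{f61} together resolve this; once the problem is reduced to the smallest lattices, Theorem~\ref{f3c} gives descent and Corollary~\ref{f61} gives lift. Parts (1) and (2) are by contrast a clean application of Theorem~\ref{f10} and the Jordan--H\"older theorem to the composition series produced from the factorisation.
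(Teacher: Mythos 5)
Parts (1) and (2) of your proposal are correct and follow essentially the paper's route. The only organizational difference in (2): the paper proceeds by induction on $l$, peeling off $\p_1$ and showing directly that ${_\g U}/({_{\p_1}U})\p_2\cdots\p_l\cong{_\a}U$ for $\a=\p_2\cdots\p_l$ via the Leibniz identity $\partial_t(\p_1\a)=\partial_t\a+(\partial_t\p_1)\a$, whereas you build the whole chain \eqref{eql30a} at once and invoke Jordan--H\"older plus the lattice correspondence of Remark \ref{df2}/Theorem \ref{f4}. Both arguments rest on the same key step (the right-multiplication isomorphism $\Lambda/\Lambda\p_j\cong\Lambda\p_{j+1}\cdots\p_l/\Lambda\p_j\cdots\p_l$, then \eqref{eql23} and Theorem \ref{f10}), so this is a minor variant and both arrive at \eqref{eql30a} and \eqref{eql30b}.

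Part (3), however, contains a genuine gap, and it is exactly at the point you yourself flagged as the ``main obstacle.'' You correctly deduce ${_\g U}\cong{_\l U}$ from $\coker\g\cong\coker\l$, and you correctly go back up by Corollary \ref{f61}; this matches what the paper's two-sentence proof is actually doing. But the proposed passage between ${_\g U}$ and $V_\g$, in either direction, is not justified: the kernel of the canonical $D_{2n}$-map $V_\g\to\coker\g$ is $V_\g\cap\Lambda\g$, which in general strictly contains the line $\mathfrak k\g$, so knowing that an isomorphism preserves $\mathfrak k\g$ gives you neither a lift to $V_\g$ nor a descent from it. Worse, (3) with $V_\g$ as literally defined in the paper is false: $\dim_{\mathfrak k}V_\g$ depends only on $N_\g=\max_i\{|\partial_i\g|,|\bar\partial_i\g|\}=|\g|$, yet right multiplication by the unit $t_1$ gives a $\Lambda$-module (hence, by \eqref{eql23}, an $L(\Lambda)$-module) isomorphism $\Lambda/\Lambda\g\cong\Lambda/\Lambda\g t_1$ between cokernels of two comonic polynomials of different lengths whenever $|\g t_1|\neq|\g|$ (for instance $\g=2-t_1$ over a field of characteristic $\neq 2$, so $\g t_1=2t_1-t_1^2$), and then $V_\g\not\cong V_{\g t_1}$ on dimension grounds although $\coker\g\cong\coker(\g t_1)$. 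The statement must be read as concerning the smallest lattices ${_\g U}$ and ${_\l U}$, which is precisely what your intermediate steps establish and what the paper's citation of Corollary \ref{f61} and \eqref{eql23} addresses; the detour through the trivial line inside $V_\g$ should be removed rather than patched.
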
 
\begin{proof} The statement (1) is already verified in Theorem \ref{f10}.

We use induction on $l$ to check the Statement (2). The case $l=1$ is already proved in Theorem \ref{f10}. We assume the Statement (2) for all free comonic polynomials which have an irreducible factorization of length at most $l-1 (l\geq 2)$ and take an arbitrary free comonic polynomial $\g\in \Lambda$ with an irreducible factorization $\g=\p_1\cdots \p_l=\p_1\a; \a=\p_2\cdots \p_l$. Then $\a$ is a product of $\l-1$ irreducible free comonic polynomials and so the induction hypothesis is applicable to $\a$. The equality $\p_1\a=\g$ implies the induced isomorphism $\coker \p_1\cong \Lambda\a/\Lambda\g$ which is also a simple $L(\Lambda)$-module  according to
Theorem \ref{f10}. This shows by the induction hypothesis the first part of the Statement (2). Since $_{\p_1}U$ is the smallest lattice of $\coker \p_i$, we have that $(_{\p_1}U)\a$ is also the smallest lattice of $\Lambda\a/\Lambda\g$. Again by Theorem \ref{f10} $(_{\p_1}U)\a$ is a simple $D_{2n}$-, or equivalently, $\Lambda^*$-module whence it is a minimal $\Lambda^*$-submodule of $\coker \g$. Consequently, $(_{\p_1}U)\a$ is a $\Lambda^*$-submodule of ${_\g}U$ by Corollary \ref{f3c}.
For each $t\in \{t_1, \dots, t_n; t^{-1}_1, \dots, t^{-1}_n\}$ the equality $\partial_t \g=\partial_t (\p_1\a)=\partial_t\a+(\partial_t\p_1)\a$ implies the canonical isomorphism
${_\g}U/(_{\p_1}U)\a\cong {{_\a}U}$ and therefore the second claim of the Statement (2) follows immediately from the induction hypothesis.
The necessity of Statement (3) is an obvious consequence of Corollary \ref{f61}. The sufficiency follows from the fact that any $L(\Lambda)$- or $\Lambda$-homomorphism  is also a $\Lambda^*$-homomorphism between (weak) Sato modules.
\end{proof}
Before we deduce an interesting consequence of the proof for the second claim of the Statement (2) of Theorem \ref{goal1} we need a notion of the L\"owy submodule and the socle series. Namely, by definition the \emph{socle} $s(M)$ of a module $_RM$ is either 0 or the sum of proper minimal submodules of $M$. The \emph{socle series} of $M$ is defined by transfinite induction in the following manner
\begin{enumerate}
\item $s_0(M)=0$,
\item $s_{\a+1}(M)/s_\a(M)=s(M/s_\a(M))$,
\item $s_\a(M)=\sum\limits_{\b < \a}s_\b(M)$ when $\a$ is a limit ordinal, 
\item $S(M)=s_\a(M)$ where $\a$ is the first ordinal called the \emph{L\"owy length} of $M$, for which $s_a(M)=s_{\a+1}(M)$. Furthermore $S(M)$ is called the \emph{L\"owy submodule} of $_RM$.
\end{enumerate}
It is obvious that the L\"owy length of a module $_RM$ of finite length is at most $|M|$ and they are equal if and only if $_RM$ is a chain module. A module $M$ is called \emph{semi-artinian} if $M=S(M)$ holds. For each $x\in S(M)$ one defines ${\mathcal O}(x)$ to be the smallest ordinal $\b$ for which $x\in s_\b(M)$ holds. Then ${\mathcal O}(x)$ is not a limit ordinal and ${\mathcal O}(y) < {\mathcal O}(x)$ if $Ry\subseteq Rx$ with $Ry\neq Rx$. This implies immediately that $S(M)$ satisfies the minimum condition on cyclic submodules whence the L\"owy submodule $S(M)$ of $M$ is the largest submodule of $M$ satisfying the minimum conditionon cyclic submodules \cite[Proposition 2.6]{s1}. We are now in position to verify the following important property of $\End(\coker \g)$
for every free comonic polynomial $\g \in \Lambda$.
\begin{theorem}\label{f13} Let $\g\in \Lambda$ be a free comonic polynomial. Then the endomorphism ring $\End(\coker \g)=\End(\Lambda/\Lambda\g)$ is a finite-dimensional $\mathfrak k$-algebra. In particular, the full subcategory of $\coker \g$ where $\g$ runs over proper free comonic polynomials of $\Lambda$, is embedded equivalently into the category of Sato modules.  
\end{theorem}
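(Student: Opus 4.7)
The plan is to reduce the computation of $\End_{L(\Lambda)}(\coker\g)$ (which coincides with $\End_\Lambda(\coker\g)$ by equality \eqref{eql23}) to the composition factors of $\coker\g$ via the finite $L(\Lambda)$-length filtration guaranteed by Corollary \ref{f5}, and then to bound each piece using the irreducible case supplied by Corollary \ref{f10a}. I would proceed by induction on the $L(\Lambda)$-length of $\coker\g$.

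The base case, when $\g$ is irreducible, is exactly Corollary \ref{f10a}, which gives $\End_{L(\Lambda)}(\coker\g)\cong\End_{\Lambda^*}({_\g}U)$, a finite-dimensional division algebra over $\mathfrak k$. For the inductive step, apply Theorem \ref{goal1} to extract an irreducible left factor of $\g$, writing $\g=\p\d$ with $\p$ irreducible, to obtain the short exact sequence
\begin{equation*}
0 \longrightarrow \coker\p \longrightarrow \coker\g \longrightarrow \coker\d \longrightarrow 0,
\end{equation*}
in which $\coker\p$ is simple and $\coker\d$ has strictly smaller $L(\Lambda)$-length. Applying $\Hom_{L(\Lambda)}(-,\coker\g)$ and $\Hom_{L(\Lambda)}(\coker\d,-)$ to this sequence yields the left exact sequences
\begin{equation*}
0 \to \Hom_{L(\Lambda)}(\coker\d,\coker\g) \to \End_{L(\Lambda)}(\coker\g) \to \Hom_{L(\Lambda)}(\coker\p,\coker\g)
\end{equation*}
and
\begin{equation*}
0 \to \Hom_{L(\Lambda)}(\coker\d,\coker\p) \to \Hom_{L(\Lambda)}(\coker\d,\coker\g) \to \End_{L(\Lambda)}(\coker\d).
\end{equation*}
By the inductive hypothesis $\End_{L(\Lambda)}(\coker\d)$ is finite-dimensional, so the task reduces to bounding the two corner $\Hom$ spaces involving the simple module $\coker\p$.

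For $\Hom_{L(\Lambda)}(\coker\p,\coker\g)$, the simplicity of $\coker\p$ forces every nonzero morphism to be an injection into the socle of $\coker\g$; since $\coker\g$ has finite length, this socle decomposes into finitely many simple Sato modules, and consequently $\Hom_{L(\Lambda)}(\coker\p,\coker\g)$ identifies with a finite direct sum of copies of $\End_{L(\Lambda)}(\coker\p)$, which is finite-dimensional by Corollary \ref{f10a}. The dual argument, applied to $\Hom_{L(\Lambda)}(\coker\d,\coker\p)$, shows that every morphism to the simple module $\coker\p$ vanishes on the radical of $\coker\d$ and hence factors through the maximal semisimple quotient of $\coker\d$, again a finite direct sum of simple Sato modules, so that this $\Hom$ space is once more a finite direct sum of copies of $\End_{L(\Lambda)}(\coker\p)$. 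Plugging these bounds into the two displayed exact sequences delivers $\dim_{\mathfrak k}\End_{L(\Lambda)}(\coker\g)<\infty$.

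The main obstacle is that the Hom–Ext long exact sequences naturally continue with $\Ext^1$ contributions whose finite-dimensionality over $\mathfrak k$ is unclear a priori (since $L(\Lambda)$ is not a domain for $n\geq 2$, standard resolution arguments break down); the above argument sidesteps this by retaining only the four-term truncations and exploiting the simplicity of $\coker\p$ in both variables. Finally, the concluding statement that the full subcategory $\{\coker\g\mid\g\text{ proper free comonic}\}$ is embedded equivalently into the category of Sato modules is immediate, as the assignment $\g\mapsto\coker\g$ realizes this subcategory as a full subcategory of the abelian category of Sato modules in view of \eqref{eql23}.
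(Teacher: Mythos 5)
Your proof is correct, but it takes a genuinely different route from the paper's. You proceed by induction on the $L(\Lambda)$-length of $\coker\g$ (granted by Corollary \ref{f5}), peel off an irreducible left factor $\g=\p\d$ to obtain the short exact sequence $0\to\coker\p\to\coker\g\to\coker\d\to 0$, and then bound $\End(\coker\g)$ via the two left-exact $\Hom$-truncations, exploiting Schur's lemma together with the fact that the socle and top of a finite-length module are finite semisimple. This is sound: $\Hom(\coker\p,\coker\g)$ lands in $\Hom(\coker\p,\mathrm{soc}\,\coker\g)$ and $\Hom(\coker\d,\coker\p)$ factors through the finite semisimple top of $\coker\d$, so both reduce to finitely many copies of $\End(\coker\p)$, finite-dimensional by Corollary \ref{f10a}. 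The paper instead gives a one-step structural argument: it shows that the smallest lattice ${_\g}U$ is exactly the L\"owy submodule of $\coker\g$ over $\Lambda^*$, hence the unique largest finite-dimensional $\Lambda^*$-submodule; being characteristic, ${_\g}U$ is preserved by every endomorphism, and Corollary \ref{f61} provides the inverse extension, yielding the explicit isomorphism $\End(\coker\g)\cong\End_{\Lambda^*}({_\g}U)$. The paper's approach buys a concrete identification of the endomorphism ring with that of a finite-dimensional $D_{2n}$-module, which the paper then reuses; yours buys a more elementary, purely homological argument that bypasses the L\"owy-submodule characterization entirely. One small point worth making explicit in your write-up: the existence of an irreducible left factor $\p$ is not quite what Theorem \ref{goal1}(2) asserts (that assumes a factorization is given); it is cleaner to pick any simple $L(\Lambda)$-submodule of $\coker\g$ (which exists by finite length), note by Corollary \ref{f9a} and Lemma \ref{f1a} that it has the form $\Lambda\d/\Lambda\g\cong\coker\p$ with $\g=\p\d$ for comonic $\p,\d$, and then invoke Theorem \ref{f10} to see $\p$ is irreducible.
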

\begin{proof} By Corollary \ref{f61} any $\Lambda^*$-endomorphism of ${_\g}U$ extends uniquely to a $L(\Lambda)$- and so a $\Lambda$-endomorphism of $\coker \g$.
On the other hand, Theorem \ref{goal1} shows that the L\"owy length of both ${_\g}U$ and $_{\Lambda^*}\coker \g$ is at most $|{_{L(\Lambda)}}\coker\g|$ and $s(\coker \g)$ is contained in $_{\g}U$ by Corollary \ref{f6}. Moreover, it is trivial from the proof to the second claim of the Statement (2) of Theorem \ref{goal1} and Corollary \ref{f6} by a simple induction that ${_\g}U$ is the L\"owy submodule of $\coker \g$, that is, ${_\g}U$ is the largest finite-dimensional $\Lambda^*$-submodule of $\coker \g$. Consequently, every $\Lambda$-endomorphism of $\coker \g$ sends ${_\g}U$ into itself. This shows the isomorphism $\End(\coker \g)\cong \End(_{\Lambda^*}U)$ whence the theorem holds.
\end{proof}

As another obvious but useful consequence of Theorem \ref{f10} and Lemma \ref{f1}  we have
\begin{corollary}\label{f14} Let $\g$ be a polynomial without constant, i.e., $\e(\g)=0$. If $\g$ is irreducible, then $L(\Lambda)\g=L(\Lambda)$
holds.
\end{corollary}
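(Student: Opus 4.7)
My plan is to argue by contradiction. Suppose that $L(\Lambda)\g \neq L(\Lambda)$. Since $\g$ is irreducible it is in particular nonzero, and $\g \in L(\Lambda)\g \cap \Lambda$, so the proper left ideal $L(\Lambda)\g$ intersects $\Lambda$ non-trivially. Corollary \ref{f9a} then supplies a uniquely determined comonic polynomial $\d \in \Lambda$ with $L(\Lambda)\g = L(\Lambda)\d$.

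The next step is to transfer this identity from the Fox algebra back to $\Lambda$. Because $\d$ is comonic, Lemma \ref{f1a} gives $L(\Lambda)\d \cap \Lambda = \Lambda\d$, and $\g$ lies in this intersection, so there exists $\a \in \Lambda$ with $\g = \a\d$. This genuine factorization inside $\Lambda$ is what allows the irreducibility hypothesis on $\g$ to be brought to bear.

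Finally, the irreducibility of $\g$ forces one of $\a, \d$ to be a unit in $\Lambda$. The factor $\d$ cannot be a unit, for otherwise $L(\Lambda)\d = L(\Lambda)$ would contradict the assumption that $L(\Lambda)\g$ is proper. Hence $\a$ is a unit. Applying the augmentation $\e \colon \Lambda \to \mathfrak{k}$, which is a ring homomorphism to the field $\mathfrak{k}$, to the equation $\g = \a\d$ yields
$$\e(\g) = \e(\a)\e(\d) = \e(\a) \cdot 1 = \e(\a),$$
and $\e(\a)$ is a nonzero element of $\mathfrak{k}$ because $\a$ is a unit. This contradicts the hypothesis $\e(\g) = 0$, so we must have $L(\Lambda)\g = L(\Lambda)$.

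I do not anticipate any real obstacle here; the argument is a direct assembly of Corollary \ref{f9a}, Lemma \ref{f1a}, and the elementary fact that units of $\Lambda$ have nonzero augmentation. The only conceptual point worth isolating is the translation in the second step: one must ensure that the equality of left ideals in $L(\Lambda)$ produces a genuine factorization in $\Lambda$, and it is precisely Lemma \ref{f1a} (which uses that $\d$ is comonic) that makes this step legitimate.
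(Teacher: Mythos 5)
Your proof is correct and is essentially the route the paper intends: obtain a comonic generator $\d$ of the proper left ideal $L(\Lambda)\g$, descend via $L(\Lambda)\d\cap\Lambda=\Lambda\d$ to a genuine factorization $\g=\a\d$ in $\Lambda$, and let irreducibility together with the augmentation force $\e(\g)\neq 0$, a contradiction. You invoke Corollary \ref{f9a} and Lemma \ref{f1a} where the paper points to Theorem \ref{f10} and Lemma \ref{f1}, but these citations belong to the same cluster of structure results (Lemma \ref{f1a} and Theorem \ref{f9} underlie Corollary \ref{f9a}, and Theorem \ref{f10} is itself derived from them), so the logical content coincides.
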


\vskip 0.3cm 
{\bf Acknowledgement.} The author expresses his gratitude to Professor Gene Abrams for his critical comments, questions and noble English assistance improving the mathematical quality of this work and making it transparent and enjoyable to the readers.

\bibliographystyle{amsplain}

\end{document}